\numberwithin{equation}{section}
\DeclareMathOperator{\codim}{codim}
\newcommand{\C}{\mathbb{C}}
\newcommand{\N}{\mathbb{N}}
\newcommand{\Z}{\mathbb{Z}}
\newcommand{\PP}{\mathbb{P}}
\newcommand{\F}{\mathbb{F}}
\newcommand{\E}{\mathbb{E}}
\newcommand{\ve}{\varepsilon}
\newcommand{\1}{^{-1}}
\newcommand{\f}[2]{\frac{#1}{#2}}
\newtheorem{theorem}{Theorem}[section]
\newtheorem{definition}[theorem]{Definition}
\newtheorem{corollary}[theorem]{Corollary}
\newtheorem{lemma}[theorem]{Lemma}
\numberwithin{theorem}{section}
\title{Subsets of $\F_p^n\times\F_p^n$ without L-shaped configurations}
\author{Sarah Peluse}
\address{School of Mathematics, Institute for Advanced Study, Princeton, NJ, USA}
\address{Department of Mathematics, Princeton University, Princeton, NJ, USA}
\email{speluse@princeton.edu}
\begin{document}
\begin{abstract}
  Fix a prime $p\geq 11$. We show that there exists a positive integer $m$ such that any subset of $\F_p^n\times\F_p^n$ containing no nontrivial configurations of the form $(x,y),(x,y+z),(x,y+2z),(x+z,y)$ must have density $\ll 1/\log_{m}{n}$, where $\log_{m}$ denotes the $m$-fold iterated logarithm. This gives the first reasonable bound in the multidimensional Szemer\'edi theorem for a two-dimensional four-point configuration in any setting.
\end{abstract}

\maketitle

\section{Introduction}\label{intro}

Szemer\'edi's famous theorem on arithmetic progressions, which states that any subset of
the integers with positive upper density contains arbitrarily long arithmetic
progressions, has the following multidimensional generalization due to Furstenberg and
Katznelson~\cite{FurstenbergKatznelson78}:

\begin{theorem}\label{FK}
  Let $X$ be a finite, nonempty subset of $\Z^d$. If $S\subset[N]^d$ contains no
  nontrivial homothetic copy $a+bX$ of $X$, then $|S|=o(N^d)$.
\end{theorem}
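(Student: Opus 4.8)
The plan is to deduce Theorem~\ref{FK} from the hypergraph removal lemma, in the spirit of the Ruzsa--Szemer\'edi proof of the corners theorem (the case $X=\{(0,0),(1,0),(0,1)\}$, via triangle removal) and its higher-uniformity extension due to Gowers and to Nagle--R\"odl--Schacht--Skokan. We may assume $|X|=k\geq 2$. The argument has two stages: a combinatorial reduction of the statement for an arbitrary finite $X$ to a single ``simplex'' configuration, and then an encoding of simplices as complete subhypergraphs so that the removal lemma applies.

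First I would reduce to the case that $X$ affinely spans $\Z^d$. After translating, assume $0\in X$ and write $X=\{0,w_1,\dots,w_{k-1}\}$; if $W:=\spn_\QQ\{w_1,\dots,w_{k-1}\}$ has dimension $d'<d$, then $[N]^d$ is covered by $\asymp N^{d-d'}$ translates of a lattice commensurable with $W\cap\Z^d$, so if $|S|\geq\delta N^d$ then $S$ has density $\geq\delta$ inside one such translate; since every homothetic copy of $X$ lies in a single translate of the span of $X$, we may pass to that translate, identify it with a box in $\Z^{d'}$, and thereby assume $d'=d$. Now relabel $X=\{u_1,\dots,u_k\}$ with $u_1=0$, and consider the homomorphism $\phi\colon\Z^k\to\Z^d$, $\phi(t)=\sum_i t_iu_i$, whose image has finite index in $\Z^d$. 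Put $M\asymp_X N$ and, after translating $S$ into the image of $\phi$, set $T=\{t\in[M]^k:\phi(t)\in S\}$. Since $\phi(b+re_j)=\phi(b)+ru_j$, the simplex $\{b,b+re_1,\dots,b+re_k\}$ is mapped by $\phi$ onto $\phi(b)+rX$; hence if $S$ contains no nontrivial copy of $X$ then $T$ contains no $\{b,b+re_1,\dots,b+re_k\}$ with $r\neq 0$, and a short count using that the fibres of $\phi$ are translates of a $(k-d)$-dimensional lattice gives $|T|\gg_X\delta M^k$. It therefore suffices to prove: any $T\subseteq[M]^k$ with no nontrivial simplex satisfies $|T|=o(M^k)$.

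For this I would use the standard $(k+1)$-partite $k$-uniform hypergraph $H$ whose vertex classes $V_1,\dots,V_k$ record the $k$ coordinates and whose class $V_0$ records the coordinate sum $x_1+\cdots+x_k$. Given one vertex from each of $k$ of the $k+1$ classes, the remaining coordinate of a would-be point of $[M]^k$ is determined (either the missing one among $V_1,\dots,V_k$, or the sum, is read off), and we include that $k$-element hyperedge precisely when the resulting point lies in $T$. A direct computation shows that copies of $K_{k+1}^{(k)}$ in $H$ correspond exactly to simplices $\{b,b+re_1,\dots,b+re_k\}$ in $T$, with the displacement $r=x_0-(x_1+\cdots+x_k)$ coming out the same in every coordinate direction; the degenerate case $r=0$ gives one copy of $K_{k+1}^{(k)}$ for each point of $T$, and these copies are pairwise edge-disjoint. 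If $T$ has no nontrivial simplex, then $H$ has exactly $|T|$ copies of $K_{k+1}^{(k)}$, and deleting fewer than $|T|$ edges cannot destroy them all; by the removal lemma for $K_{k+1}^{(k)}$ applied with parameter $\ve$, once $M$ is large the associated $\delta\,((k+1)M)^{k+1}$ exceeds $M^k\geq|T|$, forcing $|T|\leq\ve\,((k+1)M)^k$. Letting $\ve\to 0$ yields $|T|=o(M^k)$ and hence Theorem~\ref{FK}.

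The genuinely deep ingredient is the hypergraph removal lemma for $K_{k+1}^{(k)}$; if it is taken as a black box, the remaining work is the bookkeeping of the second paragraph -- handling $X$ that is neither in general position nor full-dimensional, and verifying that the density of $T$ survives the pullback -- together with the routine check that degenerate simplices give edge-disjoint copies of $K_{k+1}^{(k)}$. I expect this clique-encoding and edge-disjointness step to be the fiddliest piece of elementary combinatorics, while all the real difficulty is hidden inside the removal lemma, whose known proofs proceed via hypergraph regularity and yield only tower-type (hence ineffective) bounds -- consistent with Theorem~\ref{FK} being stated with no quantitative rate. The alternative, historically first, route is the original ergodic-theoretic argument of Furstenberg and Katznelson: the correspondence principle reduces Theorem~\ref{FK} to a multiple recurrence theorem for a finite family of commuting measure-preserving transformations, proved through an infinitary structure theory; I would treat this as a fallback rather than my primary plan.
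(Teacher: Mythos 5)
The paper does not prove Theorem~\ref{FK}: it is stated as background and attributed to Furstenberg and Katznelson, with the hypergraph-regularity proofs of Nagle--R\"odl--Schacht--Skokan, Gowers, and Tao also cited as known alternatives. Your sketch is a correct outline of that latter route --- pull back through the linear map $e_i\mapsto u_i$ to reduce to the simplex configuration $\{b,b+re_1,\dots,b+re_k\}$, encode the pulled-back set $T$ via the $(k+1)$-partite $k$-uniform hypergraph whose copies of $K_{k+1}^{(k)}$ correspond to simplices, observe that the degenerate ($r=0$) cliques are in edge-disjoint bijection with points of $T$, and invoke the removal lemma --- and the bookkeeping you flag (passing to the affine span, finite-index pullback and boundary effects, edge-disjointness) is indeed routine. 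Since the paper supplies no proof of its own for this theorem, there is no paper argument to compare against; your proposal matches the hypergraph-removal approach the paper references.
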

Here we use the standard notation $[N]:=\{1,\dots,N\}$. There has been great interest over
the past few decades in proving a quantitative version of this theorem with reasonable
bounds, i.e., with an upper bound for $|S|$ whose savings over the trivial bound of $N^d$
grows at least as quickly as a finite number of iterated logarithms. Indeed, Gowers has
posed the problem of proving such a result on several
occasions~\cite{Gowers98S,Gowers01S,Gowers01}, and others, such as Graham~\cite{Graham97},
have asked for bounds for sets lacking particular multidimensional configurations. While
reasonable bounds are known in Szemer\'edi's theorem due to work of
Gowers~\cite{Gowers98,Gowers01}, none are known in the Furstenberg--Katznelson theorem in
general. Furstenberg and Katznelson's original proof, which was via ergodic theory,
produces no explicit bounds, while the hypergraph regularity proofs of Nagle, R\"odl,
Schacht, and Skokan~\cite{NagleRodlSchacht06,RodlSkokan04}, Gowers~\cite{Gowers07}, and
Tao~\cite{Tao06} each give a saving over the trivial bound of inverse Ackermann type.

Reasonable bounds in Theorem~\ref{FK} are currently known for only one genuinely
multidimensional configuration: two-dimensional corners
\begin{equation}\label{corner}
  (x,y),(x,y+z),(x+z,y),
\end{equation}
(and, thus, their linear images,) due to work of Shkredov~\cite{Shkredov06I,Shkredov06II},
who proved that any subset of $[N]\times[N]$ containing no nontrivial corners has size at
most $\ll N^2/(\log\log{N})^c$ for some absolute constant $c>0$. No reasonable bounds are
known for any two-or-more-dimensional four-point configuration, such as three-dimensional
corners,
\begin{equation}\label{3dim}
  (x,y,z),(x,y,z+w),(x,y+w,z),(x+w,y,z),
\end{equation}
or axis-aligned squares,
\begin{equation}\label{square}
  (x,y),(x,y+z),(x+z,y),(x+z,y+z).
\end{equation}
The latter of these two configurations is the topic of a conjecture of
Graham~\cite{Graham97}, which states that any subset $S\subset\N\times\N$ for which
$\sum_{(x,y)\in S}\f{1}{x^2+y^2}$ diverges must contain an axis-aligned square. Graham
also conjectured, more generally, that if $\sum_{(x,y)\in S}\f{1}{x^2+y^2}$ diverges, then
$S$ must contain a homothetic copy of $[m]\times[m]$ for every positive integer $m$. This
is a two-dimensional generalization of the famous and still unresolved conjecture of
Erd\H{o}s that every subset $T\subset\N$ for which $\sum_{n\in T}\f{1}{n}$ diverges must
contain arbitrarily long arithmetic progressions.

Proving reasonable bounds for sets lacking the four-point configurations~\eqref{3dim}
and~\eqref{square} seems to be out of reach. This is because no one has managed yet to
prove anything useful about a certain two-dimensional directional uniformity norm that
naturally appears in the study of these configurations. Details on this difficulty can be
found in the work of Austin~\cite{Austin13I,Austin13II}, where he demonstrates how
enormously complicated and difficult even 100\% and 99\% inverse theorems can be for
directional uniformity norms.

The purpose of this paper is to identify the first two-dimensional four-point
configuration for which reasonable bounds in the multidimensional Szemer\'edi theorem can
be proven, and to prove such bounds in the finite field model setting. We will study the
configuration
\begin{equation}\label{Lshape}
  (x,y),(x,y+z),(x,y+2z),(x+z,y),
\end{equation}
which, when plotted on a two-dimensional integer grid, takes the shape of the capital
letter ``L''. Because of this, we refer to~\eqref{Lshape} as an \textit{L-shaped
  configuration}, and an L-shaped configuration with $z\neq 0$ as a \textit{nontrivial}
L-shaped configuration.

\begin{theorem}\label{main}
  There exists a natural number $m$ and a constant $C>0$ such that the following
  holds. Fix a prime $p\geq 11$, and set $N:=p^n$. If $n\geq C$, then all
  $S\subset\F_p^n\times\F_p^n$ containing no nontrivial L-shaped configurations satisfy
  \[
    |S|\ll\f{N^2}{\log_{m}{N}}.
  \]
\end{theorem}
The $m$ obtained in the theorem is huge, so we do not attempt to compute it. The bulk of the size of $m$ comes from our use of a recent quantitative inverse theorem for the $U^{10}$-norm on $\F_p^n$ due to Gowers and Mili\'cevi\'c, who in~\cite{GowersMilicevic20} give a rough upper bound for the number of iterated exponentials appearing in their result. Based on this, $m$ is likely at least 24 trillion. The use of this inverse theorem is necessary in our proof, and no amount of care to argue efficiently in the rest of the argument can reduce $m$ by much. So, we have not tried to optimize the proof of Theorem~\ref{main}, choosing instead to present the simplest argument that gives a reasonable upper bound.

It is likely that the proof of Theorem~\ref{main} can be adapted to the integer setting to prove a reasonable bound for subsets of $[N]\times[N]$ lacking L-shaped configurations, with the bound obtained being far more reasonable than the bound in Theorem~\ref{main}. This is because the quantitative aspects of Manners's~\cite{Manners2018} inverse theorem for the $U^{s}$-norm on cyclic groups are better than those of Gowers and Mili\'cevi\'c's inverse theorem when $s>4$. It is also likely that Theorem~\ref{main} can be extended to more general L-shaped configurations with a longer vertical ``leg'',
\[
  (x,y),(x,y+z),\dots,(x,y+mz),(x+z,y),
\]
in both the finite field model and integer settings. We expect, however, that understanding L-shaped configurations with two longer ``legs'',
\[
  (x,y),(x,y+z),\dots,(x,y+mz),(x+z,y),\dots,(x+\ell z,y),
\]
is significantly more difficult, for some of the same reasons that proving reasonable bounds for sets lacking three-dimensional corners or axis-aligned squares seems out of reach.

While progress in proving a quantitative version of the multidimensional Szemer\'edi
theorem has so far been extremely limited, there has been a bit more success in proving
reasonable bounds for sets lacking multidimensional configurations with more degrees of
freedom than those in Theorem~\ref{FK}. Prendiville~\cite{Prendiville15} has proven
reasonable bounds for subsets of $[N]^d$ lacking any sufficiently nondegenerate three- or
four-term matrix progression, and one consequence of his work is that any subset of
$[N]\times [N]$ containing no four vertices of any square (not necessarily axis-aligned)
has size at most $\ll N^2/(\log\log{N})^{c'}$ for some absolute constant
$c'>0$.

The remainder of this paper is organized as follows. In Section~\ref{outline}, we give a detailed outline of our proof of Theorem~\ref{main}, including statements of the three main components of the density-increment argument: control of the count of L-shaped configurations by directional uniformity norms, obtaining a density-increment on a structured set, and pseudorandomizing the structured set previously obtained. After introducing additional technical preliminaries in Sections~\ref{prelims} and~\ref{phipsd}, we prove these three main components in Sections~\ref{gvn},~\ref{inverse}, and~\ref{pseudo}, respectively. We then carry out the density increment argument in Section~\ref{dinc}, proving Theorem~\ref{main}.

\section*{Acknowledgments}
The author thanks Ben Green and Freddie Manners for helpful discussions, and Ben Green,
Noah Kravitz, Terry Tao, and the anonymous referees for useful comments on earlier drafts.

The author is supported by the NSF Mathematical Sciences Postdoctoral Research Fellowship Program under Grant No. DMS-1903038 and the Oswald Veblen fund, and also gratefully acknowledges the support and hospitality of the Hausdorff Institute for Mathematics, where the bulk of this paper was written.

\section{Outline of the proof of Theorem~\ref{main}}\label{outline}

We begin this section by introducing the minimum amount of notation and preliminaries
needed to understand our proof outline. We will use the standard asymptotic notation
$O,\Omega,$ and $o$, along with Vinogradov's notation $\ll,\gg,$ and $\asymp$. For any two
quantities $A$ and $B$, the relations $A=O(B)$, $B=\Omega(A)$, $A\ll B$, and $B\gg A$ all
mean that $|A|\leq C|B|$ for some absolute constant $C>0$. We will write $O(B)$ to
represent a quantity that is $\ll B$ and $\Omega(A)$ to represent a quantity that is
$\gg A$. When any of these asymptotic symbols appears with a subscript, the implied
constant is allowed to depend on the parameters in the subscript. Since we fix a prime $p$
in Theorem~\ref{main}, the implied constants appearing throughout the paper will sometimes
depend on $p$ even though we will not alert the reader to this with a subscript. We will
use $\log_m$ to denote the $m$-fold iterated logarithm, so that $\log_1:=\log$ and
$\log_{i}:=\log\circ\log_{i-1}$ for all $i>1$, as well as $\exp^m$ to denote the $m$-fold
iterated exponential, so that $\exp^1=\exp$ and $\exp^{i}:=\exp\circ\exp^{i-1}$ for all
$i>1$.

We will frequently denote the indicator function of a set $A$ by the letter $A$ itself, so that
\[
  A(x):=\begin{cases} 1 & x\in A \\ 0 & x\notin A \end{cases}.
\]
For any pair of finite sets $X\subset Y$ with $Y\neq\emptyset$, we denote the density of $X$ in $Y$ by
\[
  \mu_Y(X):=\f{|X|}{|Y|}.
\]
For any function $f:X\to\C$, we denote the average of $f$ over $X$ by
\[
  \E_{x\in X}f(x):=\f{1}{|X|}\sum_{x\in X}f(x).
\]
When $X=\F_p^n$, we will usually drop ``$\in X$'' and just write $\E_x$ for $\E_{x\in \F_p^n}$. Whenever $f$ satisfies $|f(x)|\leq 1$ for all $x$ in its domain, we say that it is \textit{$1$-bounded}. Note that the indicator function of any set is $1$-bounded.

For any $f:\F_p^n\to\C$ and $\xi\in \F_p^n$, we define the Fourier coefficient of $f$ at $\xi$ using the normalization
\[
  \widehat{f}(\xi):=\E_{x}f(x)e_p(-\xi\cdot x),
\]
where $e_p(z):= e^{2\pi iz/p}$ and $\cdot$ denotes the usual dot product in $\F_p^n$. With this choice of normalization, the Fourier inversion formula and Parseval's identity read
\[
  f(x)=\sum_{\xi\in\F_p^n}\widehat{f}(\xi)e_p(\xi\cdot x)
\]
and
\[
  \E_{x}|f(x)|^2=\sum_{\xi\in\F_p^n}\left|\widehat{f}(\xi)\right|^2,
\]
respectively. 

Let $H$ be any abelian group and $g:H\to \C$. For any $h\in H$, we define the function $\Delta_hg:H\to\C$ by
\[
  \Delta_hg(x):=g(x)\overline{g(x+h)},
\]
and, for any $h_1,\dots,h_s\in H$, define the $s$-fold iterated differencing operator $\Delta_{h_1,\dots,h_s}$ by
\[
  \Delta_{h_1,\dots,h_s}g:=\Delta_{h_1}\cdots\Delta_{h_s}g.
\]
Note that $\Delta_{h_1,\dots,h_s}g=\Delta_{h_{\sigma(1)},\dots,h_{\sigma(s)}}g$ for any permutation $\sigma$ of $\{1,\dots,s\}$.

Now we can recall the definition of the Gowers uniformity norms.

\begin{definition}
  Let $s\in\N$, $H$ be an abelian group, and $f:H\to\C$. The $U^s$-norm of $f$ is defined by
  \[
    \|f\|_{U^s(H)}^{2^s}:=\E_{x,h_1,\dots,h_s\in H}\Delta_{h_1,\dots,h_s}f(x)
  \]
\end{definition}
The basic properties of these norms can be found in~\cite{Tao12}. One such fact needed in the upcoming outline is the inverse theorem for the $U^2$-norm, which is a simple consequence of Fourier inversion and Parseval's identity.

\begin{lemma}\label{inverseU2}
  Let $H$ be an abelian group and $f:H\to\C$ be $1$-bounded. If $\|f\|_{U^2(H)}\geq\delta$, then there exists a $\psi\in \widehat{H}$ such that
  \[
    |\E_{x\in H}f(x)\psi(x)|\geq\delta^2.
  \]
\end{lemma}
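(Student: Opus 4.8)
The plan is to reduce everything to the Fourier side, exactly as one does over $\F_p^n$ in the excerpt, but now working with the Pontryagin dual $\widehat H$ of the (finite) abelian group $H$. Write $\widehat f(\psi):=\E_{x\in H}f(x)\overline{\psi(x)}$ for $\psi\in\widehat H$; the orthogonality relations for characters give the Fourier inversion formula $f(x)=\sum_{\psi\in\widehat H}\widehat f(\psi)\psi(x)$ and Parseval's identity $\E_{x\in H}|f(x)|^2=\sum_{\psi\in\widehat H}|\widehat f(\psi)|^2$. The first step is to unfold the definition of the $U^2$-norm,
\[
  \|f\|_{U^2(H)}^{4}=\E_{x,h_1,h_2\in H}f(x)\overline{f(x+h_1)}\,\overline{f(x+h_2)}f(x+h_1+h_2),
\]
substitute Fourier inversion for each of the four copies of $f$, and carry out the resulting averages over $x,h_1,h_2$. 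Each average collapses to an orthogonality constraint on the four characters, and after simplification one is left with the clean identity $\|f\|_{U^2(H)}^{4}=\sum_{\psi\in\widehat H}|\widehat f(\psi)|^{4}$.

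The second step is the standard ``pull out the sup'' estimate: since
\[
  \sum_{\psi\in\widehat H}|\widehat f(\psi)|^{4}\leq\Bigl(\max_{\psi\in\widehat H}|\widehat f(\psi)|^{2}\Bigr)\sum_{\psi\in\widehat H}|\widehat f(\psi)|^{2},
\]
Parseval's identity together with the hypothesis that $f$ is $1$-bounded gives $\sum_{\psi}|\widehat f(\psi)|^{2}=\E_x|f(x)|^2\leq 1$. Combining with the assumption $\|f\|_{U^2(H)}\geq\delta$ yields $\max_{\psi}|\widehat f(\psi)|^{2}\geq\delta^{4}$, i.e.\ there is some $\psi_0\in\widehat H$ with $|\widehat f(\psi_0)|\geq\delta^{2}$. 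Finally, since $\overline{\psi_0}$ is again a character of $H$ and $|\E_{x\in H}f(x)\overline{\psi_0}(x)|=|\widehat f(\psi_0)|\geq\delta^2$, taking $\psi:=\overline{\psi_0}$ completes the proof.

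There is no real obstacle here: the only points requiring a modicum of care are the bookkeeping in the change of variables that produces the identity $\|f\|_{U^2(H)}^{4}=\sum_\psi|\widehat f(\psi)|^4$, and the implicit assumption that $H$ is finite (so that the averages $\E_{x\in H}$ and the discrete Fourier transform make sense)—which is the only setting in which the lemma is used in this paper. Everything else is a direct application of Fourier inversion, Parseval, and $1$-boundedness.
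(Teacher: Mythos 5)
Your argument is correct and is precisely the computation the paper has in mind when it says the lemma is ``a simple consequence of Fourier inversion and Parseval's identity'' (the paper does not write out a proof). The identity $\|f\|_{U^2(H)}^4=\sum_{\psi\in\widehat H}|\widehat f(\psi)|^4$, the bound $\sum_\psi|\widehat f(\psi)|^2\leq 1$ from $1$-boundedness and Parseval, and the pull-out-the-sup step all check out, as does your remark that $H$ must be finite for the discrete averages to make sense.
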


We will also sometimes need the notion of the $U^2$-norm on an affine subspace $w+V$ of $\F_p^n$, which is defined by $\|f\|_{U^2(w+V)}:=\|f(\cdot -w)\|_{U^2(V)}$. The corresponding inverse theorem for these norms follows from Lemma~\ref{inverseU2}.

\subsection{A review of Shkredov's argument in the finite field model setting}

Before we outline the proof of Theorem~\ref{main}, it will be instructive to review Shkredov's argument for corners~\eqref{corner} in the finite field model setting. A detailed account of the argument can be found in the expositions of Green~\cite{Green05note,Green05}.

Shkredov's proof proceeds via a density-increment argument. As in all analytic approaches to Szemer\'edi's theorem and its generalizations, we begin by defining a multilinear average over the configuration of interest. For $g_0,g_1,g_2:\F_p^n\times \F_p^n\to\C$, set
\[
  \Lambda_{\llcorner}(g_0,g_1,g_2):=\E_{x,y,z}g_0(x,y)g_1(x,y+z)g_2(x+z,y).
\]
Then, for any $S\subset \F_p^n\times \F_p^n$, the quantity $\Lambda_{\llcorner}(S,S,S)$ equals the normalized count,
\[
  \f{\#\left\{x,y,z\in \F_p^n:(x,y),(x,y+z),(x+z,y)\in S\right\}}{p^3},
\]
of the number of corners in $S$. Setting $N:=p^n=|\F_p^n|$, we let $\sigma:=|S|/N^2$ denote the density of $S$ in $\F_p^n\times \F_p^n$ and $g_S:=S-\sigma$ denote the balanced function of $S$. It follows from the trilinearity of $\Lambda_{\llcorner}$ that
\[
  \Lambda_{\llcorner}(S,S,S)=\sigma\Lambda_{\llcorner}(1,S,S)+\Lambda_{\llcorner}(g_S,S,S).
\]
Since $\Lambda_{\llcorner}(1,S,S)\geq\sigma^2$ by the Cauchy--Schwarz inequality, if the normalized count of corners in $S$ is far below the $\sim\sigma^3$ expected for a random set of density $\sigma$, which is the case when $S$ has no nontrivial corners and $N$ is sufficiently large in terms of $\sigma$, then $|\Lambda_{\llcorner}(g_S,S,S)|$ must be large.

It can then be shown, by an appropriate sequence of applications of the Cauchy--Schwarz inequality, that $g_S$ must have large \textit{box norm}
\[
  \|g_S\|_{\square}:=\left(\E_{x,y,x',y'}g_S(x,y)\overline{g_S(x,y')g_S(x',y)}g_S(x',y')\right)^{1/4}.
\]
If $\|g_S\|_{\square}$ is large, it follows by an averaging argument that $S$ has density at least $\sigma+\Omega(\sigma^{O(1)})$ on a product set $A\times B$ for some large $A,B\subset \F_p^n$.

One may then hope to continue the density-increment argument by proving the following generalization of the result just sketched: if $S$ is a subset of density $\sigma$ of a product set $T=A\times B$ and contains no nontrivial corners, then $S$ has density at least $\sigma+\Omega(\sigma^{O(1)})$ on a product set $T'$ contained in $T$.

It turns out, however, that the Cauchy--Schwarz argument mentioned previously yields a lower bound on the box norm of large enough size only when $A$ and $B$ are sufficiently \textit{Fourier pseudorandom}, meaning that their balanced functions $A-|A|/N$ and $B-|B|/N$ both have small $U^2$-norm. The components of the product set just obtained are essentially arbitrary aside from being large. They are, in particular, not guaranteed to be Fourier pseudorandom.

To overcome this difficulty, Shkredov introduced a \textit{pseudorandomizing} step into his proof. He used an energy increment argument incorporating the $U^2$-inverse theorem to partition $\F_p^n\times \F_p^n$ into products of large affine subspaces of the form
\begin{equation}\label{productofsubspaces}
  (u+V)\times (w+V),
\end{equation}
for most of which the sets $(A-u)\cap V$ and $(B-w)\cap V$ are Fourier pseudorandom in $V$. By an averaging argument, there must exist such a product of affine subspaces~\eqref{productofsubspaces} on which the restrictions of $A$ and $B$ are both sufficiently dense and Fourier pseudorandom, and such that $S$ still has increased density $\sigma+\Omega(\sigma^{O(1)})$ on the intersection of $T$ with $(u+V)\times (w+V)$.

By passing to this product of cosets and using that corners are preserved by translation
and invertible linear transformations of the form $(x,y)\mapsto (Ex,Ey)$, one can then
continue the density-increment argument with $\F_p^n\times \F_p^n$ replaced by
$\F_p^{n'}\times \F_p^{n'}$, where $n'=\dim{V}$. If $S\subset T$ contains no nontrivial
corners and $A$ and $B$ are sufficiently Fourier pseudorandom, then $g_S$ must have large
box norm localized to $T$. One must then prove that $S$ has a further density-increment on
a product set contained in $T$, which is, fortunately, of exactly the same difficulty
whether $T=\F_p^n\times\F_p^n$ or some other large product set. By applying the
pseudorandomizing procedure to the factors of the product set just produced, one can then
deduce that if $S$ is a subset of density $\sigma$ of a product set $T=A\times B$, where
$A$ and $B$ are large and sufficiently Fourier pseudorandom, and $S$ contains no
nontrivial corners, then $S$ has density at least $\sigma+\Omega(\sigma^{O(1)})$ on a
product set $T'=A'\times B'$ contained in $T$, where $A'$ and $B'$ are also large and
sufficiently Fourier pseudorandom. The density increment iteration can be carried out
repeatedly to produce a good bound for subsets of $\F_p^n\times \F_p^n$ lacking corners.

\subsection{An outline of our argument}\label{ssoutline}

The obstructions to uniformity for L-shaped configurations are not just (skew) product sets, as was the case for corners, but also very general sets of the form
\[
  \left\{(x,y)\in \F_p^n\times \F_p^n: y\in u_x+V_x\right\},
\]
where each $u_x+V_x$ is an affine subspace of $\F_p^n$. For example, assume that $n\geq 3$, and consider the set
\[
  \left\{(x,y)\in \F_p^n\times \F_p^n: x\cdot y=0\right\}.
\]
This set has density
\[
  \f{(N-1)N/p+N}{N^2}\sim\f{1}{p}
\]
in $\F_p^n\times \F_p^n$, but
\[
  \left[(N-1)-(p-1)\right]\left(\f{N}{p}-1\right)\f{N}{p^2}+\left(\f{N}{p}-1\right)(p-1)\f{N}{p}+N+2(N-1)\f{N}{p}\sim\f{N^3}{p^3}
\]
L-shaped configurations, in contrast to the $\sim N^3/p^4$ expected in a random subset of $\F_p^n\times \F_p^n$ of density $1/p$. Similarly, the number of L-shaped configurations in the sets
\[
  \left\{(x,y)\in \F_p^n\times \F_p^n: \phi(x)\cdot y=0\right\}
\]
and
\[
  \left\{(x,y)\in \F_p^n\times \F_p^n: y_1=u(x)\right\},
\]
where $\phi(x)\in \F_p^n$ and $u(x)\in\F_p$ are now chosen uniformly at random, is also $\sim N^3/p^3$ with high probability, while the sets have density $\sim 1/p$ with high probability. These new sorts of obstructions to uniformity are the main reason why the study of L-shaped configurations is significantly more difficult than that of corners, and must be taken into account to prove Theorem~\ref{main}.

For any functions $g_0,g_1,g_2,g_3:\F_p^n\times \F_p^n\to\C$, we define
\begin{equation}\label{lambda}
  \Lambda(g_0,g_1,g_2,g_3):=\E_{x,y,z}g_0(x,y)g_1(x,y+z)g_2(x,y+2z)g_3(x+z,y),
\end{equation}
so that $\Lambda(S,S,S,S)$ equals the normalized count of L-shaped configurations in any subset $S$ of $\F_p^n\times \F_p^n$. The multilinearity of $\Lambda$ implies that
\begin{equation}\label{telescope}
  |\Lambda(S,S,S,S)-\sigma^4|\leq \sigma^2|\Lambda(1,1,g_S,S)|+\sigma|\Lambda(1,g_S,S,S)|+|\Lambda(g_S,S,S,S)|,
\end{equation}
where, as before, $g_S=S-\sigma$ is the balanced function of $S$. Thus, if the normalized count of L-shaped configurations in $S$ is far from the random normalized count $\sigma^4$, one of $|\Lambda(1,1,g_S,S)|$, $|\Lambda(1,g_S,S,S)|$, or $|\Lambda(g_S,S,S,S)|$ must be large. In particular, when $S$ contains no nontrivial L-shaped configurations and $N$ is sufficiently large in terms of $\sigma$, one of these quantities will be larger than $\sigma^4/2$. It then follows from several applications of the Cauchy--Schwarz inequality that one of the following directional uniformity norms of $g_S$ must be larger than $\sigma^4/2$:
\begin{equation}\label{eq:star1}
  \|g\|_{\star_1}:=\left(\E_{x,y,h_1,h_2,h_3}\Delta_{(0,h_1),(0,h_2),(h_3,0)}g(x,y)\right)^{1/8},
\end{equation}
\begin{equation}\label{eq:star2}
  \|g\|_{\star_2}:=\left(\E_{x,y,h_1,h_2}\Delta_{(0,h_1),(-h_2,h_2)}g(x,y)\right)^{1/4},
\end{equation}
or
\begin{equation}\label{eq:star3}
  \|g\|_{\star_3}:=\left(\E_{x,y,h_1}\Delta_{(-h_1,2h_1)}g(x,y)\right)^{1/2}.
\end{equation}
Here $\|\cdot\|_{\star_3}$ is only a semi-norm, while $\|\cdot\|_{\star_1}$ and
$\|\cdot\|_{\star_2}$ are genuine norms. Since these are all Gowers box norms, one can
find a proof that they are (semi-)norms in Appendix B of~\cite{GreenTao10}. The norm
$\|\cdot\|_{\star_1}$ had previously been studied, in the setting of cyclic groups, in work of Shkredov~\cite{Shkredov09}.

Directional uniformity norms with two differencing parameters,
\[
\left[\E_{x,y,h,k\in H}\Delta_{hv_1,kv_2}g(x,y)\right]^{1/4},
\]
for fixed nonzero $v_1,v_2\in H\times H$, are well-understood. Either $v_1$ and $v_2$ are
scalar multiples of each other, in which case the norm is just the $U^2$-norm on
$\langle v_1\rangle$ averaged over cosets of $\langle v_1\rangle$, or they are linearly
independent, as in the definition of $\|\cdot\|_{\star_2}$, in which case the norm is,
after a change of variables, equivalent to the two-dimensional box norm. Directional
uniformity norms with three differencing parameters,
\[
\left[\E_{x,y,h_1,h_2,h_3\in H}\Delta_{h_1v_1,h_2v_2,h_3v_3}g(x,y)\right]^{1/8},
\]
for fixed nonzero $v_1,v_2,v_3\in H\times H$ analogously fall into one of three cases:
either $v_1,v_2,$ and $v_3$ are collinear, lie on exactly two lines, or are in general
position. In the first case, the norm is just the $U^3$-norm on $\langle v_1\rangle$
averaged over cosets of $\langle v_1\rangle$. In the third case, the norm is linearly
equivalent to the intractable norm that arises in the study of $3$-dimensional corners and
axis-aligned squares. The norm $\|\cdot\|_{\star_1}$ we encounter falls into the second
case, and the study and fruitful use of this norm turns out to be possible (though still
complicated) due to its structure as a ``$U^1\times U^2$-norm''.

The upshot is that if $S$ contains no nontrivial L-shaped configurations, then it must
have density at least $\sigma+\Omega(\sigma^{O(1)})$ on a set of the form
\begin{equation}\label{Tform}
  T:=\left\{(x,y)\in \F_p^n\times \F_p^n: B(y)C(x+y)D(2x+y)\Phi(x,y)=1\right\},
\end{equation}
where $\Phi\subset A\times\F_p^n$ is of the form
\begin{equation}\label{Phiform}
  \Phi:=\left\{(x,y)\in A\times \F_p^n: y\in u+V_x\right\},
\end{equation}
for some element $u\in\F_p^n$ and collection of subspaces $\{V_x:x\in A\}$ of
$\mathbb{F}_p^n$, where $A,B,C,D\subset \F_p^n$ are large and $\codim V_x$ is small for
each $x\in A$. Note that this set $\Phi$ is not quite as general as the one appearing at
the very beginning of this subsection, as the element $u$ of $\F_p^n$ does not vary with
$x$. It takes some extra work to show that we can guarantee $\Phi$ to be of this special
form, which turns out to be necessary for our density-increment iteration. We say more
about this point in Section~\ref{inverse}.

We would like to continue the density increment iteration and show that $S':= T\cap S$,
which also lacks L-shaped configurations, has a further density increment of at least the
same size as the first on a subset $T'$ of $T$ of the same general
form~\eqref{Tform}. Analogously to Shkredov's argument for corners, we can only hope to do
this if $A,B,C,D,$ and $\Phi$ are sufficiently pseudorandom, for some appropriate notions
of pseudorandomness. We will need to control the count of L-shaped configurations by the
norms $\|\cdot\|_{\star_1}$, $\|\cdot\|_{\star_2}$, and $\|\cdot\|_{\star_3}$ defined
in~\eqref{eq:star1},~\eqref{eq:star2}, and~\eqref{eq:star3} with no loss of density factors, i.e., show that
\begin{equation}\label{csoutline1}
\frac{|\Lambda(f_0,f_1,f_2,f_3)|}{\Lambda(T,T,T,T)}\geq\delta\implies   \f{\|f_0\|_{\star_1}}{\|T\|_{\star_1}}\gg_\delta 1,
\end{equation}
\begin{equation}\label{csoutline2}
\frac{|\Lambda(T,f_1,f_2,f_3)|}{\Lambda(T,T,T,T)}\geq\delta\implies   \f{\|f_1\|_{\star_2}}{\|T\|_{\star_2}}\gg_\delta 1,
\end{equation}
and
\begin{equation}\label{csoutline3}
\frac{|\Lambda(T,T,f_2,f_3)|}{\Lambda(T,T,T,T)}\geq\delta\implies \f{\|f_2\|_{\star_3}}{\|T\|_{\star_3}}\gg_\delta 1,
\end{equation}
and also obtain a density-increment with no loss of density factors when some localized norm $\|\cdot\|_{\star_i}$ of the balanced function of a set is large, i.e., show that if
\[
  \f{\|g_S\|_{\star_1}}{\|T\|_{\star_1}}, \f{\|g_S\|_{\star_2}}{\|T\|_{\star_2}},\text{ or }\f{\|g_S\|_{\star_3}}{\|T\|_{\star_3}}\geq \delta,
\]
where now $g_S:=S-\sigma T$, then there exists a subset $T'\subset T$ of the same general form,
\[
  T':=\{(x,y)\in \F_p^n\times \F_p^n:B'(y)C'(x+y)D'(2x+y)\Phi'(x,y)=1\},
\]
as $T$ on which $S$ has a density increment
\[
  \E_{(x,y)\in T'}S(x,y)\geq\sigma+\Omega_\delta(1)
\]
depending only on $\delta$. Such results are needed so that the density increment obtained at each step of the iteration is independent of the step. If one is not sufficiently careful, it is easy to end up with a density increment that gets smaller as the subset $T$ of $\F_p^n\times\F_p^n$ gets sparser, which is not enough to close the density increment iteration.

To carry out these arguments, we will need $A,B,C,$ and $D$ to be pseudorandom with
respect to the $U^{10}(\F_p^n)$-norm. The situation for $\Phi$ is more complicated, and
deciding on a good measure of pseudorandomness for $\Phi$ that is amenable to a
Shkredov-like pseudorandomization procedure and can also be used to analyze the various
averages appearing throughout our argument is one of the challenges of the proof of
Theorem~\ref{main}. A suitable condition on $\Phi$ turns out to be that it is pseudorandom
with respect to the $U^{8}(\F_p^n\times \F_p^n)$-norm. This condition is not, on its own,
immediately useful in the arguments of Sections~\ref{gvn} and~\ref{inverse}, since the
various averages that appear are not controllable by the $U^{8}(\F_p^n\times\F_p^n)$-norm
of $\Phi$. It takes a bit of work to show that it implies a roughly equivalent statement
about the typical codimensions of certain affine subspaces obtained from $\Phi$. We prove
this in Section~\ref{phipsd}, deriving some new results on the combinatorics of
approximate polynomials along the way.

The proof of the implications~\eqref{csoutline1},~\eqref{csoutline2}, and~\eqref{csoutline3} when $A,B,C,D,$ and $\Phi$ are sufficiently pseudorandom consists of many careful applications of the Cauchy--Schwarz inequality, along with appeals to standard facts about the number of linear configurations of bounded Cauchy--Schwarz complexity in products of pseudorandom sets intersected with subspaces of bounded codimension. We carry out this argument in Section~\ref{gvn}.

Obtaining a large enough density-increment when $\|g_S\|_{\star_1}$ is large for $S\subset T$ requires some new ideas and a significant amount of extra work beyond the proof of the non-localized case, in contrast to the situation for the box norm localized to product sets, where the argument is the same as the non-localized case. In order to get such a density-increment that only depends on $\delta$ and not on the densities of $A,B,C,D,$ or $\Phi$, one of the key ingredients is a density-preserving inverse theorem for the $U^2(\Phi(x,\cdot))$-norms on pseudorandom sets derived from $A,B,C,$ and $D$, which we prove using a version of the transference principle. We carry out this argument in Section~\ref{inverse}.

As was the case for corners, the sets $A',B',C',D',$ and $\Phi'$ obtained in the previous paragraph are not guaranteed to be pseudorandom. We must also carry out a pseudorandomizing procedure to locate a product of large affine subspaces of the form $(u+V)\times (w+V)$ on which $A',B',C',D',$ and $\Phi'$ are sufficiently pseudorandom and $S$ still has a large density increment on $T'\cap [(u+V)\times(w+V)]$. Our pseudorandomization procedure is similar to Shkredov's, but with some new complications coming from our desire for $A',B',C',$ and $D'$ and $\Phi'$ to be pseudorandom with respect to the $U^{10}(\F_p^n)$- and $U^{8}(\F_p^n\times\F_p^n)$-norms, respectively, and from $\Phi'$'s particular structure as a union of affine subspaces in the second factor of $\F_p^n\times\F_p^n$. To handle the first complication, we use a recent quantitative inverse theorem of Gowers and Mili\'cevi\'c~\cite{GowersMilicevic20} for the $U^{s}$-norms on vector spaces over finite fields, combined with a result of Cohen and Tal~\cite{CohenTal15} that allows us to partition $\F_p^n$ into large affine subspaces on which any finite collection of bounded degree polynomials are all constant. The structure of $\Phi'$ has the potential to cause issues in a Shkredov-like pseudorandomization argument, since the intersection of $\Phi'$ with a cell may no longer be the union of affine subspaces all having the same dimension. We will explain how this complication is dealt with in Section~\ref{pseudo}, since it requires a bit of set up.

\subsection{Key intermediate results}

We finish this section by stating the key intermediate results needed to prove
Theorem~\ref{main} that we just described in the outline. Recall that $g_S=S-\sigma$
denotes the balanced function of $S$.
\begin{lemma}[Estimation of $\Lambda(T,T,T,S)$]\label{lower}
  There exist absolute constants $0<c_1<1<c_2$ such that the following holds.  Let $d$ be a nonnegative integer, and set $\rho:=p^{-d}$.  Suppose that $A,B,C,D\subset \F_p^n$ have densities $\alpha,\beta,\gamma,\delta$, respectively, and that $\Phi\subset \F_p^n\times \F_p^n$ takes the form
  \[
  \Phi=\left\{(x,y)\in A\times \F_p^n: y\in u+V_x\right\},
  \]
  where each $V_x$ is a subspace of $\F_p^n$ of codimension $d$. Define $T\subset \F_p^n\times \F_p^n$ by~\eqref{Tform} and suppose that $S\subset T$ has density $\sigma$ in $T$. Let $\ve\leq c_1(\sigma\alpha\beta\gamma\delta\rho)^{c_2}$ and assume that 
  \[
    \|A-\alpha\|_{U^{5}(\F_p^n)},\|B-\beta\|_{U^{5}(\F_p^n)},\|C-\gamma\|_{U^{5}(\F_p^n)},\|D-\delta\|_{U^{5}(\F_p^n)},\|\Phi-\alpha\rho\|_{U^{2}(\F_p^n\times\F_p^n)}<\ve.
  \]
  Then
  \[
    \Lambda(T,T,T,S)\gg \sigma\alpha^2\beta^3\gamma^3\delta^3\rho^3.
  \]
\end{lemma}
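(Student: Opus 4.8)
The plan is to unfold $\Lambda(T,T,T,S)$ by expanding $T(x,y) = B(y)C(x+y)D(2x+y)\Phi(x,y)$ in each of its first three slots, and to show that the main term — the one obtained by replacing each of $A,B,C,D$ by its density and $\Phi$ by $\alpha\rho$ — is $\gg \sigma\alpha^2\beta^3\gamma^3\delta^3\rho^3$, while all remaining error terms are small because of the $U^5$- and $U^2$-pseudorandomness hypotheses. Concretely, I would write $B = \beta + (B-\beta)$, etc., and $\Phi = \alpha\rho + (\Phi - \alpha\rho)$, and multiply everything out. Since $T$ appears in three slots and $\Phi$, $B$, $C$, $D$ each appear three times, the main term is
\[
  \alpha^2\beta^3\gamma^3\delta^3\rho^3 \cdot \E_{x,y,z}\,\widetilde{S}(x,y),
\]
where $\widetilde S := S/(\text{the product of densities cut out by }T)$ is a $1$-bounded function supported on $T$ with $\E_{(x,y)\in T}\widetilde S = \sigma$; after accounting for $\E_{(x,y)}T(x,y) \asymp \alpha\beta\gamma\delta\rho$ (which itself follows from the pseudorandomness hypotheses, via the same expansion applied to $\Lambda(1,1,1,T)$ or a direct count), this main term is $\asymp \sigma\alpha^2\beta^3\gamma^3\delta^3\rho^3$. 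The crux is then the error analysis: every product of terms that includes at least one balanced factor $B-\beta$, $C-\gamma$, $D-\delta$, or $\Phi-\alpha\rho$ must be shown to be $O(\varepsilon^{\Omega(1)}\alpha^{O(1)}\cdots\rho^{O(1)})$, which, given the choice $\varepsilon \leq c_1(\sigma\alpha\beta\gamma\delta\rho)^{c_2}$ with $c_2$ large enough, is a small fraction of the main term.

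For the error terms I would proceed by repeated applications of the Cauchy–Schwarz inequality to isolate a single balanced factor in a position where the remaining $1$-bounded functions can be absorbed, thereby bounding the term by a Gowers-type average of that balanced factor. For the $A,B,C,D$ factors, each lives on an affine copy of $\F_p^n$ inside $\F_p^n\times\F_p^n$ cut out by the linear forms $y$, $x+y$, $2x+y$, and (through $A$) the form $x$; since these four forms together with the differencing directions have bounded Cauchy–Schwarz complexity, a bounded number of Cauchy–Schwarz steps dominates such a term by $\|B-\beta\|_{U^{5}(\F_p^n)}^{\Omega(1)}$ (or the analogue for $A,C,D$) times a harmless power of the densities — here one uses the standard fact, to be quoted from the preliminaries, that $1$-bounded functions can be controlled by $U^s$-norms along a complexity-$(s-1)$ system, and that one extra Hölder/Cauchy–Schwarz step converts the naive $\varepsilon$ from a $U^k$-norm with smaller $k$ into the needed power. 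For the $\Phi - \alpha\rho$ factor, since $\Phi$ is a function on $\F_p^n\times\F_p^n$ and the configuration $(x,y),(x,y+z),(x,y+2z),(x+z,y)$ has bounded complexity \emph{as a two-dimensional configuration}, a few Cauchy–Schwarz steps dominate the term by $\|\Phi-\alpha\rho\|_{U^{2}(\F_p^n\times\F_p^n)}^{\Omega(1)}$, again times a power of the densities coming from the other (now $1$-bounded, $T$-localized) factors. Throughout, one has to be a little careful that after a Cauchy–Schwarz step the "dual" functions that get duplicated remain $1$-bounded so that no uncontrolled loss occurs.

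The step I expect to be the main obstacle is organizing the Cauchy–Schwarz scheme so that the \emph{same} bounded number of steps simultaneously (i) isolates the chosen balanced factor, (ii) leaves the other factors as genuinely $1$-bounded functions rather than functions with large $L^\infty$ norm, and (iii) yields a $U^k$-average with $k$ \emph{small enough} that the hypothesis $\|\cdot\|_{U^5} < \varepsilon$ (respectively $\|\cdot\|_{U^2} < \varepsilon$) applies — in particular the indices $5$ and $2$, rather than something larger, are exactly what the configuration's complexity permits, and verifying this complexity bookkeeping for each of the (boundedly many) error terms is the technical heart of the argument. Once each error term is pinned at $O(\varepsilon^{c})$ for an explicit $c>0$, choosing $c_1$ small and $c_2$ large so that $c_1(\sigma\alpha\beta\gamma\delta\rho)^{c_2}$ beats all of them, summed against the main term's lower bound $\asymp \sigma\alpha^2\beta^3\gamma^3\delta^3\rho^3$, completes the proof; I would also need the preliminary estimate $\E_{(x,y)}T(x,y)\gg\alpha\beta\gamma\delta\rho$ (so that $\sigma$, the density of $S$ \emph{in} $T$, genuinely contributes the claimed factor), which is itself a special case of the same expand-and-estimate computation with $S$ replaced by $T$.
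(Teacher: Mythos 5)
Your plan has a fundamental gap: the ``expand $T$ and estimate every term'' scheme fails for the $\Phi$ factors, and the failure is structural, not a matter of carrying out the Cauchy--Schwarz bookkeeping more carefully.

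Start with the main term. Replacing $\Phi$ by $\alpha\rho$ in each of the three $T$-slots and $B,C,D$ by their densities gives $(\alpha\rho)^3\beta^3\gamma^3\delta^3\,\E_{x,y,z}S(x+z,y)=\alpha^3\rho^3\beta^3\gamma^3\delta^3\cdot\sigma\mu_{\F_p^n\times\F_p^n}(T)\asymp\sigma\alpha^4\beta^4\gamma^4\delta^4\rho^4$, which is \emph{smaller} than the target $\sigma\alpha^2\beta^3\gamma^3\delta^3\rho^3$ by a factor of $\alpha^2\beta\gamma\delta\rho$. (Your displayed main term has $\alpha^2\rho^3$ rather than $\alpha^3\rho^3$, and a normalization of $\widetilde S$ that can't simultaneously be $1$-bounded and have $\E_{(x,y)\in T}\widetilde S=\sigma$; these inconsistencies are not typos but symptoms of the underlying problem.) Because the densities are $\leq 1$, the ``error'' terms you want to discard must therefore \emph{sum} to roughly the full answer $\sigma\alpha^2\beta^3\gamma^3\delta^3\rho^3$, which is far larger than any power of $\ve$. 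So at least one of them cannot be $O(\ve^{\Omega(1)})$, and your stated plan -- ``all remaining error terms are small because of the $U^5$- and $U^2$-pseudorandomness hypotheses'' -- is false.

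The culprit is the three $\Phi$ slots. Since the L-shape's first three points share the same $x$-coordinate and $\Phi(x,\cdot)$ is an affine subspace indicator, $y,y+z\in u+V_x$ forces $z\in V_x$ and hence $y+2z\in u+V_x$ automatically, so
\[
\E_{x,y,z}\,\Phi(x,y)\Phi(x,y+z)\Phi(x,y+2z)=\alpha\rho^2,
\]
not $(\alpha\rho)^3$. Concretely, even the single ``error'' average $\E_{x,y,z}(\Phi-\alpha\rho)(x,y)(\Phi-\alpha\rho)(x,y+z)$ equals $\rho^2\,\E_x\bigl(A(x)-\alpha\bigr)^2=\alpha(1-\alpha)\rho^2$, which is of the order of the main contribution and is completely insensitive to $\|\Phi-\alpha\rho\|_{U^2(\F_p^n\times\F_p^n)}$. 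The $U^2(\F_p^n\times\F_p^n)$ hypothesis on $\Phi$ does \emph{not} control averages where several $\Phi$ slots agree in a coordinate projection: the linear forms $(x,y),(x,y+z),(x,y+2z)$ project to the same form $x$ in the first factor, a degeneracy that the scalar-valued Cauchy--Schwarz complexity calculus of Theorem~\ref{gtgvn} is not built to detect. So the phrase ``bounded complexity as a two-dimensional configuration'' does not buy you a $U^2(\F_p^n\times\F_p^n)$ bound here.

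The paper's proof avoids this entirely by never expanding $\Phi$ about its density. After a change of variables it writes $\Lambda(T,T,T,S)=\E_{x,y}S(x,y)\mu(x,y)$ and shows that the weight $\mu(x,y):=\E_z[\cdots]$ concentrates around $\alpha\beta^2\gamma^2\delta^2\rho^2$ by computing its first and second moments. In those moment computations, the $\Phi$ factors are handled \emph{structurally}: for fixed $x$, one integrates a $1$-bounded $z$-dependent factor (whose $U^2(\F_p^n)$-centered norm is controlled via Lemma~\ref{cs2}) against the affine subspace $\{z:\Phi(x,\cdot)=1\}$ using Lemma~\ref{subspaceavg}, which correctly produces the $\rho$-factors with the right multiplicities. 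The $U^2(\F_p^n\times\F_p^n)$-pseudorandomness of $\Phi$ then enters only through Lemma~\ref{intersection}, which guarantees that the intersections of shifted subspaces appearing in the second moment have the generic codimension for almost all $x$. That is the ingredient your proposal is missing, and it cannot be replaced by a generalized von Neumann bound against $\|\Phi-\alpha\rho\|_{U^2(\F_p^n\times\F_p^n)}$.
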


As a consequence, we get that if $\ve$ is small enough, $n$ is large enough, and $S\subset T$ has no nontrivial L-shaped configurations, then
\[
  \max(|\Lambda(g_S,S,S,S)|,|\Lambda(T,g_S,S,S)|,|\Lambda(T,T,g_S,S)|)\gg\sigma^4\alpha^2\beta^3\gamma^3\delta^3\rho^3.
\]

\begin{lemma}[Control by $\|\cdot\|_{\star_i}$ norms]\label{control}
   Let $d$ be a nonnegative integer, and set $\rho:=p^{-d}$. Suppose that $A,B,C,D\subset \F_p^n$ have densities $\alpha,\beta,\gamma,\delta$, respectively, and that $\Phi\subset \F_p^n\times \F_p^n$ takes the form
  \[
  \Phi=\left\{(x,y)\in A\times \F_p^n: y\in u+V_x\right\},
  \]
  where each $V_x$ is a subspace of $\F_p^n$ of codimension $d$. Assume that 
  \[
    \|A-\alpha\|_{U^{8}(\F_p^n)},\|B-\beta\|_{U^{8}(\F_p^n)},\|C-\gamma\|_{U^{8}(\F_p^n)},\|D-\delta\|_{U^{8}(\F_p^n)},\|\Phi-\alpha\rho\|_{U^{6}(\F_p^n\times\F_p^n)}<\ve.
  \]
  Define $T\subset \F_p^n\times \F_p^n$ by~\eqref{Tform} and suppose that $f_0,f_1,f_2,f_3:\F_p^n\times \F_p^n\to\C$ are $1$-bounded functions supported on $T$. Then
  \begin{equation}\label{gvn1}
    |\Lambda(f_0,f_1,f_2,f_3)|^8\leq\alpha^{14}\beta^{20}\gamma^{16}\delta^{16}\rho^{18}\|f_0\|_{\star_1}^8+O\left(\f{\ve^{\Omega(1)}}{\rho^{O(1)}}\right)
  \end{equation}
  \begin{equation}\label{gvn2}
    |\Lambda(T,f_1,f_2,f_3)|^4\leq \alpha^{8}\beta^{8}\gamma^{10}\delta^{8}\rho^{8}\|f_1\|_{\star_2}^4+O\left(\f{\ve^{\Omega(1)}}{\rho^{O(1)}}\right)
  \end{equation}
  and
  \begin{equation}\label{gvn3}
    \left|\Lambda(T,T,f_2,f_3)\right|^2\leq\alpha\beta^{3}\gamma^{3}\delta^{4}\rho^{3}\|f_2\|_{\star_3}^2+O\left(\f{\ve^{\Omega(1)}}{\rho^{O(1)}}\right).
  \end{equation}
\end{lemma}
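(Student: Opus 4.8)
The plan is to prove each of the three inequalities by a sequence of Cauchy--Schwarz applications that peels off one function of the configuration at a time, exactly as in the classical generalized von Neumann theorem, but keeping careful track of which indicator functions are being retained versus discarded so that the final power of $\alpha,\beta,\gamma,\delta,\rho$ matches the claimed exponents. Write out the definition $\Lambda(f_0,f_1,f_2,f_3)=\E_{x,y,z}f_0(x,y)f_1(x,y+z)f_2(x,y+2z)f_3(x+z,y)$. For \eqref{gvn1}, since all $f_i$ are supported on $T$ and $T(x,y)\le B(y)C(x+y)D(2x+y)\Phi(x,y)$, I would first bound $|f_1|,|f_2|,|f_3|$ by the relevant factors of $T$ and then Cauchy--Schwarz three times in the variables dual to $z$ and to the two positions of $y$ in $f_1,f_2$, introducing the differencing parameters $h_1,h_2$ (in the $y$-direction, coming from duplicating the $z$ variable and the $f_1$/$f_2$ slots) and $h_3$ (in the $x$-direction, coming from the $f_3$ slot), so that $f_0$ survives as $\Delta_{(0,h_1),(0,h_2),(h_3,0)}f_0$ averaged over $x,y,h_1,h_2,h_3$, i.e.\ as $\|f_0\|_{\star_1}^8$. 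The auxiliary weights produced by the other three functions, after the Cauchy--Schwarz duplications, become averages of products of $B,C,D,\Phi$ evaluated along a bounded-complexity linear system in the variables $x,y,h_1,h_2,h_3$; replacing each of $B,C,D$ by $\alpha$-type densities plus a $U^8$-small error, and $\Phi$ by $\alpha\rho$ plus a $U^6$-small error, and using the standard generalized von Neumann estimate for the number of solutions to a bounded-Cauchy--Schwarz-complexity linear system in pseudorandom sets (the linear forms involved have complexity well under $8$, respectively $6$), turns these weights into the deterministic product $\alpha^{14}\beta^{20}\gamma^{16}\delta^{16}\rho^{18}$ up to an error of the shape $O(\ve^{\Omega(1)}/\rho^{O(1)})$.

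For \eqref{gvn2} the mechanism is the same but shorter: now $f_0$ is replaced by the full indicator $T$, so one of the three Cauchy--Schwarz steps is unnecessary, and I would Cauchy--Schwarz twice to isolate $f_1$ in the slots $(x,y+z)$ and $(x,y+2z)$. The pair of differencing directions one naturally obtains is $(0,h_1)$ (from duplicating $z$) and, after a change of variables absorbing the relation between the two $y$-shifts, the skew direction $(-h_2,h_2)$ — which is precisely the direction in the definition of $\|\cdot\|_{\star_2}$; the point that these two directions are linearly independent is what makes $\|\cdot\|_{\star_2}$ a box norm rather than a $U^2$-norm, as already noted in the outline. The surviving weights are again a linear average of $B,C,D,\Phi$ along a system of complexity $<8$ (resp.\ $<6$ for $\Phi$), and pseudorandomness collapses them to $\alpha^8\beta^8\gamma^{10}\delta^8\rho^8$ plus the usual error. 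For \eqref{gvn3}, $f_0$ and $f_1$ are both the full indicator $T$, so a single Cauchy--Schwarz in $z$ suffices to produce $\Delta_{(-h_1,2h_1)}f_2$ averaged over $x,y,h_1$, i.e.\ $\|f_2\|_{\star_3}^2$, and the remaining weight is a linear average of $B,C,D,\Phi$ along an even simpler system.

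The main obstacle — and the reason for being fussy — is bookkeeping rather than any single hard estimate: I must check that at every Cauchy--Schwarz step I extend the average only over new variables that do \emph{not} already appear in the function I wish to retain, so that the retained function genuinely assembles into the stated $\star_i$-norm and not some larger expression; and I must verify that each of the linear systems in $B,C,D,\Phi$ produced as a byproduct really does have Cauchy--Schwarz complexity at most $7$ in the $\F_p^n$ variables (so that $U^8$-control applies) and at most $5$ in the $\F_p^n\times\F_p^n$ variable (so that $U^6$-control of $\Phi$ applies), and that the ``main term'' one extracts — the count of solutions when each set is replaced by its density — is exactly the monomial in $\alpha,\beta,\gamma,\delta,\rho$ claimed. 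The exponents $14,20,16,16,18$ etc.\ are not a priori obvious; they must be read off by counting, for each of $B,C,D,\Phi$, how many times that set's indicator appears in the fully expanded Cauchy--Schwarz tree, and this is the computation I would carry out carefully but which I will not reproduce here. One further small point to dispatch: $\Phi$ is only a subset of $A\times\F_p^n$, so wherever an evaluation of $\Phi$ at a point with $x$-coordinate outside $A$ would have been needed, the corresponding factor of $A$ in the $T$-domination handles it, and the $U^8$-pseudorandomness of $A$ lets one insert or remove these $A$-factors at the cost of the same type of error term.
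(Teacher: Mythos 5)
Your structural plan is the right one and matches the paper's skeleton: three, two, and one rounds of Cauchy--Schwarz, respectively, assemble the retained function into $\|\cdot\|_{\star_1}^8$, $\|\cdot\|_{\star_2}^4$, and $\|\cdot\|_{\star_3}^2$, and the differencing directions you identify are correct. But there is a genuine gap at the step you dispose of with ``the standard generalized von Neumann estimate\ldots\ turns these weights into the deterministic product.'' After the Cauchy--Schwarz steps, what one actually has is $\E_{x,y,\mathbf{h}}\,\Delta_{\mathbf{h}}f_0(x,y)\cdot\mu(x,y,\mathbf{h})$, where $\mu$ is a further average over $z$ of a product of factors of $B,C,D,\Phi$, and $f_0$ is \emph{arbitrary} (not assumed pseudorandom). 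You cannot simply telescope each factor of $B,C,D,\Phi$ as density-plus-uniform-error and invoke Theorem~\ref{gtgvn} on the full system, because (i) the full expanded linear system — eight forms from $\Delta_{\mathbf h}f_0$ plus over two dozen forms from $B,C,D,\Phi$, living over a mix of $\F_p^n$ and $\F_p^n\times\F_p^n$ — would have to have Cauchy--Schwarz complexity at most $7$ (resp.\ $5$) for your $U^8$/$U^6$ hypotheses to bite, and you have not shown this; and (ii) more fundamentally, no such pointwise replacement of $\mu$ by a constant is available. What the proof actually requires, and what the paper does, is to establish an $L^2$-concentration statement for the weight: compute $\E_{x,y,\mathbf h}\mu$ and $\E_{x,y,\mathbf h}\mu^2$, show both are close to the expected powers of $\alpha,\beta,\gamma,\delta,\rho$, and deduce via Cauchy--Schwarz that the variance is $\ll \ve^{\Omega(1)}/\rho^{O(1)}$. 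One can then split $\mu$ into its mean plus a small-$L^2$ remainder and bound the remainder's contribution by $1$-boundedness of $\Delta_{\mathbf h}f_0$.

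Moreover, even this moment computation is not a routine application of pseudorandomness. The factors of $\Phi$ at shifted arguments produce products like $\Delta_{(h_2,-2h_2)}\Phi(x,y+2z)\,\Delta_{(-h_1,h_1),(h_3,-2h_3)}\Phi(x+z,y)$, whose support in $y$ is an intersection of several affine subspaces whose codimension can accidentally drop; when that happens the fiber average jumps by a factor of $p^{O(d)}$ and ruins the estimate. This is exactly what Lemma~\ref{intersection}, proved from $U^{2s+2}$-pseudorandomness of $\Phi$ via the approximate-polynomial machinery of Section~\ref{phipsd}, is needed to exclude (on a set of proportion $1-O(\ve^{\Omega(1)}/\rho^{O(1)})$). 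Your proposal does not mention this at all, and without it the ``deterministic product'' claim has no justification. The same remarks apply to the weights $\mu'$ and $\mu''$ arising in the proofs of \eqref{gvn2} and \eqref{gvn3}. In short: the Cauchy--Schwarz bookkeeping you outline is correct, but the real content of the lemma — the $L^2$-concentration of $\mu$ and the control of $\Phi$-fiber codimensions via Lemma~\ref{intersection} — is absent from your plan, and it is where the $U^8/U^6$ hypotheses, Lemma~\ref{cs2}, Lemma~\ref{subspaceavg}, and Lemma~\ref{intersection} are all actually used.
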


Thus, if $\ve$ is small enough, $n$ is large enough, and $S\subset T$ has no nontrivial L-shaped configurations, then one of
\[
\f{\|g_S\|_{\star_1}}{\|T\|_{\star_1}},\f{\|g_S\|_{\star_2}}{\|T\|_{\star_2}},\text{ or }\f{\|g_S\|_{\star_3}}{\|T\|_{\star_3}}
\]
is $\gg\sigma^{O(1)}$.
\begin{theorem}[$\|g_S\|_{\star_1}$, $\|g_S\|_{\star_2}$, or $\|g_S\|_{\star_3}$ large implies a density-increment]\label{starinverse}
  There exist absolute constants $0<c_1<1<c_2,c_3$ such that the following holds.  Let $d$ be a nonnegative integer, and set $\rho:=p^{-d}$.  Suppose that $A,B,C,D\subset \F_p^n$ have densities $\alpha,\beta,\gamma,\delta$, respectively, and that $\Phi\subset \F_p^n\times \F_p^n$ takes the form
  \[
  \Phi=\left\{(x,y)\in A\times \F_p^n: y\in u+V_x\right\},
  \]
  where each $V_x$ is a subspace of $\F_p^n$ of codimension $d$. Let $\sigma,\tau>0$ and
\[
  \ve\leq c_1(\sigma\tau\alpha\beta\gamma\delta\rho)^{c_2}\exp(-(64/\tau^8)^{c_3}),
\]
  and assume that 
  \[
    \|A-\alpha\|_{U^{10}(\F_p^n)},\|B-\beta\|_{U^{10}(\F_p^n)},\|C-\gamma\|_{U^{10}(\F_p^n)},\|D-\delta\|_{U^{10}(\F_p^n)},\|\Phi-\alpha\rho\|_{U^{8}(\F_p^n\times\F_p^n)}<\ve.
  \]

  Define $T\subset \F_p^n\times \F_p^n$ by~\eqref{Tform} and assume that $S\subset T$ has density $\sigma$ in $T$. Suppose that
  \[
    \|g_S\|_{\star_1}\geq\tau\alpha^{1/4}\beta^{1/2}\gamma\delta\rho^{3/4},
  \]
  \[
    \|g_S\|_{\star_2}\geq\tau\alpha^{1/2}\beta\gamma^{1/2}\delta\rho,
  \]
  or
  \[
    \|g_S\|_{\star_3}\geq\tau\alpha\beta\gamma\delta^{1/2}\rho.
  \]
  Then, $S$ has density at least $\sigma+\Omega(\tau^{O(1)})$ on a subset $T'\subset T$ of the form
  \[
    T':=\left\{(x,y)\in \F_p^n\times\F_p^n:B'(y)C'(x+y)D'(2x+y)\Phi'(x,y)=1\right\},
  \]
  where the densities of $A',B',C',D'\subset \F_p^n$ are all $\gg(\sigma\tau\alpha\beta\gamma\delta\rho)^{O(1)}$, and the set $\Phi'\subset \F_p^n\times \F_p^n$ takes the form
  \[
    \Phi'=\{(x,y)\in A'\times \F_p^n:y\in u'+V_x'\},
  \]
  where each $V_x'$ is a subspace of $\F_p^n$ of codimension $d+1$.
\end{theorem}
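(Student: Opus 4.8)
The plan is to treat the three hypotheses on $\|g_S\|_{\star_1},\|g_S\|_{\star_2},\|g_S\|_{\star_3}$ as three essentially independent sub-arguments, in increasing order of difficulty; in each case one unpacks the relevant (semi-)norm, uses an inverse or averaging argument to locate a structured subset of $T$ on which $S$ is denser by $\Omega(\tau^{O(1)})$, and then checks that this subset can be put into the prescribed form with $\Phi'$ of codimension exactly $d+1$. Two facts are used throughout. First, since $B,C,D$ are $U^{10}$-pseudorandom and each $V_x$ has bounded codimension $d$, the standard counting lemmas for configurations of bounded complexity in products of pseudorandom sets intersected with bounded-codimension subspaces show that for $x\in A$ the slice $Z_x:=\{y\in u+V_x:B(y)C(x+y)D(2x+y)=1\}$ (which supports $S(x,\cdot)$ and $T(x,\cdot)$) has density $\mu_x\asymp\beta\gamma\delta$ inside $u+V_x$, is itself pseudorandom there, and that the relevant linear averages equidistribute. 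Second, the $U^8(\F_p^n\times\F_p^n)$-pseudorandomness of $\Phi$ is invoked, via the codimension statements of Section~\ref{phipsd}, to control the averages in which $x$ and $V_x$ are coupled. A final common device: whenever a sub-argument produces a $T'$ in which $\Phi'$ still has codimension $d$, one intersects $\Phi'$ with a generically chosen hyperplane $\{y:\psi\cdot(y-u)=0\}$; since inside each fibre this is a uniformly random affine hyperplane of $u+V_x$, a short averaging argument shows the density increment and the largeness of all components survive up to constants while the codimension is raised to $d+1$.

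For the $\star_3$ hypothesis, write $\|g_S\|_{\star_3}^2=\E_w|G(w)|^2$ where $G(w):=\E_x g_S(x,w-2x)$ averages $g_S$ over the coset $\{2x+y=w\}$. Since $|S|=\sigma|T|$, $G$ has mean zero, and since $\|G\|_\infty=O(t_w)$ with $t_w:=\E_x T(x,w-2x)\asymp\alpha\beta\gamma\rho$ on the support $D$ of $t$, the excess density $\delta_w:=G(w)/t_w$ is $O(1)$-bounded. The weighted identities $\E_w t_w\delta_w=0$ and $\E_w t_w\delta_w^2\gg\tau^2\alpha\beta\gamma\delta\rho$ then yield, by a positivity argument, a set $E\subseteq D$ with $|E|\gg\tau^2\delta\,N$ on which $\delta_w\gg\tau^2$, so $S$ has density $\ge\sigma+\Omega(\tau^2)$ on $T':=T\cap\{2x+y\in E\}$ (here $B'=B$, $C'=C$, $D'=E$, $\Phi'=\Phi$). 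For the $\star_2$ hypothesis, the invertible linear map $(x,y)\mapsto(x,x+y)$ turns $\|\cdot\|_{\star_2}$ into the two-dimensional box norm, so $\tilde g_S(x,y):=g_S(x,y-x)$ has large box norm when restricted to the sheared set $\tilde T$. A localized box-norm inverse argument---set up as in Shkredov's corners argument, but with the ambient product replaced by $\tilde T$ and the resulting intersections of several translates of the $V_x$ controlled by the $U^8$-pseudorandomness of $\Phi$---produces a density increment on a set of the form $T\cap\{x\in A'',\ x+y\in C''\}$; absorbing $x\in A''$ into the base set of $\Phi'$ ($A'=A\cap A''$, $C'=C\cap C''$, $B'=B$, $D'=D$, $\Phi'=\Phi|_{A'\times\F_p^n}$) puts this in the required shape. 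In both cases $\Phi'$ still has codimension $d$, so one finishes with the hyperplane intersection above.

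The $\star_1$ hypothesis is the crux. Writing $\|g_S\|_{\star_1}^8=\E_{x,x'}\bigl\|g_S(x,\cdot)\,\overline{g_S(x',\cdot)}\bigr\|_{U^2(\F_p^n)}^4$ exhibits $\|\cdot\|_{\star_1}$ as a ``$U^1\times U^2$-norm''---$U^2$ in $y$, glued over $x$ by a box ($U^1$) structure---and after a sequence of Cauchy--Schwarz applications its largeness passes to a dense set $X\subseteq A$ of $x$ for which the slice $g_S(x,\cdot)$, a $1$-bounded function supported on the pseudorandom set $Z_x$ of density $\mu_x$ in $u+V_x$, is non-uniform relative to $\mu_x$: either $\|g_S(x,\cdot)\|_{U^2(u+V_x)}$ is large relative to $\mu_x$ with its mass away from the zero frequency, or the zero-frequency part dominates, which just says $\sigma_x:=|S(x,\cdot)|/|Z_x|$ deviates from $\sigma$. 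The new ingredient handles the first alternative: a density-preserving $U^2$-inverse theorem on $u+V_x$, proved by a transference argument (in the style of the usual energy-increment / Hahn--Banach decomposition) that uses the $U^{10}$-pseudorandomness of $B,C,D$ to model $g_S(x,\cdot)/\mu_x$ by a genuinely bounded function, so that the ordinary $U^2$-inverse theorem applies and delivers, for each $x\in X$, a frequency $\phi(x)\in\F_p^n$ with $\bigl|\E_{y\in u+V_x}g_S(x,y)e_p(-\phi(x)\cdot y)\bigr|\gg\tau^{O(1)}\mu_x$, with no loss of density factors beyond the explicit powers of $\tau$. (This density-preservation, i.e.\ the cost of the transference, is what forces the $\exp(-(64/\tau^8)^{c_3})$ term in the hypothesis on $\ve$; the $\star_2$ and $\star_3$ cases tolerate a much milder $\ve$.) A pigeonhole over the $p$ cosets of $\ker(\phi(x)|_{V_x})$ then isolates, for a still-dense set of $x$, a single coset on which $S(x,\cdot)$ is denser than average by $\gg\tau^{O(1)}$; the remaining, somewhat delicate points are to arrange that this coset is the one through the fixed offset $u$, so that $\Phi':=\{(x,y)\in A'\times\F_p^n:y\in u+(V_x\cap\ker\phi(x))\}$ has a genuinely $x$-independent offset and codimension $d+1$, and to dispose of the second alternative by a case analysis: either the anomalous slices are negligibly rare, and the coset argument is run on the rest, or they have positive density, in which case---since the slice densities average to $\sigma$---there is a positive-density family of slices on which $S$ is denser, and one falls back to the hyperplane-intersection trick.

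I expect the main obstacle to be the density-preserving $U^2$-inverse theorem: carrying out the transference so that (i) it consumes no pseudorandomness of $B,C,D$ stronger than $U^{10}$, (ii) the output correlation genuinely scales like $\mu_x$ with only a $\tau^{O(1)}$ loss (and not, say, an $\exp(-\tau^{-O(1)})$ loss, which would be fatal to closing the density increment in Section~\ref{dinc}), and (iii) it is uniform over the family of affine subspaces $u+V_x$ as $x$ ranges over $X$. A secondary but pervasive obstacle is bookkeeping: ensuring that each of the many Cauchy--Schwarz steps, pigeonholes, and averaging arguments costs only powers of $\tau$ and not of $\alpha,\beta,\gamma,\delta,\rho$, and that the final $\Phi'$ really has the form $\{(x,y)\in A'\times\F_p^n:y\in u'+V_x'\}$ with a uniform offset $u'$ and codimension exactly $d+1$---the ``extra work'' alluded to in the outline.
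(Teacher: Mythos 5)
Your overall architecture matches the paper's: simple averaging/pigeonhole arguments for $\|\cdot\|_{\star_3}$ and $\|\cdot\|_{\star_2}$, a density-preserving localized $U^2$-inverse theorem obtained by transference (the paper uses Zhao's dense model lemma, Lemma~\ref{densemodel}) as the crux of the $\|\cdot\|_{\star_1}$ case, a pigeonhole over cosets to find the enriching hyperplane slice, and a final averaging step (the paper's Lemma~\ref{uxtou}) to replace the $x$-dependent offsets by a single fixed $u'$. You also correctly diagnose the source of the $\exp(-(64/\tau^8)^{c_3})$ loss as the transference cost, and correctly observe that the $\star_2,\star_3$ cases are much cheaper. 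The $\star_3$ argument via the fiber densities on cosets $\{2x+y=w\}$, and the $\star_2$ argument via the sheared box norm and a ``good slice'' pigeonhole, both match the paper's substance.

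There is, however, a genuine gap in your $\star_1$ outline. You write, correctly, $\|g_S\|_{\star_1}^8=\E_{x,x'}\|g_S(x,\cdot)\,\overline{g_S(x',\cdot)}\|_{U^2}^4$, but then assert that ``a sequence of Cauchy--Schwarz applications'' passes this to a dense set of single $x$ for which the individual slice $g_S(x,\cdot)$ is $U^2$-non-uniform (relative to $\mu_x$). This reduction does not hold: a large $U^2$-norm of the product $g_S(x,\cdot)\,\overline{g_S(x',\cdot)}$ does not imply a large $U^2$-norm of either factor, and no sequence of Cauchy--Schwarz steps converts the two-parameter quantity into a single-slice statement. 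The paper's argument therefore never reduces to single slices: it runs the dense model lemma and $U^2$-inverse theorem directly on the two-parameter family $S_{x,h}(y)=\Delta_{(h,0)}S(x,y)=S(x,y)S(x+h,y)$, supported on the codimension-$2d$ subspace $\Phi_{x,h}$, producing a frequency $\xi_{x,h}$ depending on both $x$ and $h$; the reduction to a single $h$ happens only at the very end, by a pigeonhole over $h$ that in turn requires a substantial amount of extra bookkeeping (estimating $\|Q_{x,h}-\sigma\beta\gamma^2\delta^2\|_{U^2(\Phi_{x,h-x})}$, Lemma~\ref{RPhi1}, and careful control of the average of $S(x,y)S(h,y)\Psi_h(x,y)$). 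The single-slice version of the inverse step you describe does appear in the paper, but only inside the proof of the auxiliary Lemma~\ref{deg2inverse} (the $\Delta_{(0,h),(0,k)}$ case), which is one of the fall-back branches when the $T/g_S$ cross terms dominate, not the main branch. This distinction is not just bookkeeping: it changes the shape of the output $\Phi'$ (in the paper $V'_x$ involves the intersection $V_x\cap V_h\cap\ker\xi_{x,h-x}$ for a pigeonholed $h$, not merely $V_x\cap\ker\phi(x)$), and the pigeonhole in $h$ is one of the harder technical points of the section. Your proposal, as written, would get stuck at the step where it claims individual slices are non-uniform.

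One smaller remark: your device of intersecting a codimension-$d$ set $\Phi'$ with a random hyperplane through $u'$ to raise its codimension to exactly $d+1$ is sound (most hyperplanes do not contain any $V_x$, and the density increment survives on average), but it is not needed for the paper's larger argument---Lemma~\ref{densityinc} and Lemma~\ref{pseudoprop} both tolerate $0\le d'\le d+1$, and the paper's proofs of the $\star_2$ and $\star_3$ cases in fact return $\Phi'$ of codimension $d$ without raising it.
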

The first three lemmas combined tell us that if $S$ has density $\sigma$ and contains no nontrivial L-shaped configurations, then one can find a subset $T$ of $\F_p^n\times \F_p^n$ of the form~\eqref{Tform} on which $S$ has density at least $\sigma+\Omega(\sigma^{O(1)})$. The next lemma tells us that, after restricting to a product of large affine subspaces, we can get this same conclusion with $A,B,C,D,$ and $\Phi$ as pseudorandom as we need, which will allow us to continue the density-increment iteration.
\begin{lemma}\label{pseudoprop}
  There exist absolute constants $0<c_1<1<c_2,c,c'$ such that the following holds.  Let $d$ be a nonnegative integer, and set $\rho:=p^{-d}$.  Let $\ve'>0$, and suppose that $A,B,C,D\subset \F_p^n$ have densities $\alpha,\beta,\gamma,\delta$, respectively, and that $\Phi\subset \F_p^n\times \F_p^n$ and takes the form
  \[
    \Phi=\left\{(x,y)\in A\times \F_p^n:y\in u+V_x\right\},
  \]
  where each $V_x$ is a subspace of $\F_p^n$ of codimension $d$. Define $T\subset \F_p^n\times \F_p^n$ by~\eqref{Tform}, and assume that $S\subset T$ has density $\sigma+\tau$ in $T$, as well as that
  \[
    n\geq\exp^2\left(c_2\f{\exp^c(c'/\ve')}{d\tau\mu_{\F_p^n\times\F_p^n}(T)}\right).
  \]
  Then there exists a subspace $V\leq \F_p^n$ of dimension
  \[
    \dim{V}\gg n^{c_1^{O\left(\exp^c\left(c'/\ve'\right)/d\tau\mu_{\F_p^n\times\F_p^n}(T)\right)}},
  \]
  $u,w\in \F_p^n$, and $0\leq i\leq d$ such that, on setting $\mathcal{C}=(u+V)\times (w+V)$,
  \begin{itemize}
  \item $B':= B\cap(w+V)$,
  \item $C':=C\cap (u+w+V)$,
  \item $D':= D\cap (2u+w+V)$,
  \item $\Psi':=\Phi\cap\mathcal{C}$ and $\Phi':=\left\{(x,y)\in\Psi':\E_{z\in w+V}\Psi'(x,z)=p^{-i}\right\}$,
  \item $A':=\left\{x\in u+V:\E_{z\in w+V}\Phi'(x,z)\neq 0\right\}$,
  \item $\alpha'=\mu_{u+V}(A')$,
  \item $\beta':=\mu_{w+V}(B')$,
  \item $\gamma':=\mu_{u+w+V}(C')$,
  \item $\delta':=\mu_{2u+w+V}(D')$,
  \item $\rho':=p^{-i}$, and
  \item $T':=\left\{(x,y)\in\mathcal{C}:B'(y)C'(x+y)D'(2x+y)\Phi'(x,y)=1\right\}$,
  \end{itemize}
  we have
  \[
    \|A'-\rho'\|_{U^{10}(u+V)},\|B'-\beta'\|_{U^{10}(w+V)},\|C'-\gamma'\|_{U^{10}(u+w+V)},\|D'-\delta'\|_{U^{10}(2u+w+V)},\|\Phi'-\alpha'\rho'\|_{U^{8}(\mathcal{C})}<2\ve,
\]
$\alpha',\beta',\gamma',\delta'\gg\tau \mu_{\F_p^n\times \F_p^n}(T)/4$, and
\[
  \mu_{\mathcal{C}}(S\cap T')\geq\left(\sigma+\f{\tau}{4}\right)\mu_{\mathcal{C}}(T').
\]
\end{lemma}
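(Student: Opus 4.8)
The plan is to run a Shkredov-style pseudorandomization through a single energy-increment iteration that simultaneously tames the high Gowers norms of $B$, $C$, $D$, of the base set of $\Phi$, and of $\Phi$ itself. Throughout we maintain a subspace $V\leq\F_p^n$, starting from $V=\F_p^n$, together with the partition of $\F_p^n\times\F_p^n$ into the product cells $\mathcal C=(u+V)\times(w+V)$. At each stage we ask whether, outside a family of cells of total measure at most $\tau\mu_{\F_p^n\times\F_p^n}(T)/100$, all of the balanced functions of $B$ on $w+V$, of $C$ on $u+w+V$, of $D$ on $2u+w+V$, and of $A$ on $u+V$ have $U^{10}$-norm below $\ve'$, and $\Phi\cap\mathcal C$ satisfies the codimension-genericity condition that, by Section~\ref{phipsd}, is essentially equivalent to having small $U^{8}(\mathcal C)$-norm relative to its density. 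If every condition holds, we stop.

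If not, then on a family of cells of measure $\gg\tau\mu_{\F_p^n\times\F_p^n}(T)$ one of these objects fails to be pseudorandom, and I would extract an obstruction: for $B$, $C$, $D$, and the base set this is a classical polynomial of degree at most $9$ produced by the Gowers--Mili\'cevi\'c inverse theorem~\cite{GowersMilicevic20} (classical because $p\geq 11>10$), while for $\Phi$ one first passes to the codimension reformulation of Section~\ref{phipsd} and then applies an inverse theorem to a function built from the linear forms cutting out the fibres $V_x$, again obtaining bounded-degree polynomials on the base $\F_p^n$. Feeding all of these polynomials into the Cohen--Tal theorem~\cite{CohenTal15} yields a subspace of codimension bounded in terms of $p$, $d$, and $\ve'$ on which they are simultaneously constant; replacing $V$ by its intersection with that subspace refines the partition in a way that increases the $L^2$-energy, summed over $B$, $C$, $D$, the base set, and the appropriate $\Phi$-quantity, by $\gg\eta^2\tau\mu_{\F_p^n\times\F_p^n}(T)$, where $\eta$ is the (iterated-exponentially small) correlation delivered by the inverse theorems. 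Since these energies are bounded, the iteration halts after $O\!\left(\exp^{c}(c'/\ve')/\bigl(d\tau\mu_{\F_p^n\times\F_p^n}(T)\bigr)\right)$ steps, each costing a bounded amount of codimension proportional to $d$; unwinding this loss gives the claimed lower bound on $\dim V$, which is positive and large precisely when $n$ satisfies the stated hypothesis.

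Once the iteration terminates with a good $V$, the next step is to locate the cell. A weighted average over the product cells $\mathcal C$, using $\sum_{\mathcal C}|S\cap\mathcal C|=(\sigma+\tau)\sum_{\mathcal C}|T\cap\mathcal C|$ together with the trivial bound $0\leq\mu_{\mathcal C}(S\cap T\cap\mathcal C)\leq\mu_{\mathcal C}(T\cap\mathcal C)$ coming from $S\subseteq T$, shows that the cells on which $S$ has relative density at least $\sigma+\tau/2$ in $T$ carry a $\gg\tau$-fraction of the mass of $T$; since the cells failing pseudorandomness, or with $\mu_{\mathcal C}(T\cap\mathcal C)\ll\tau\mu_{\F_p^n\times\F_p^n}(T)$, together carry at most a $\tau/100$-fraction of that mass, we may fix $\mathcal C=(u+V)\times(w+V)$ satisfying all of these. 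With $\Psi'=\Phi\cap\mathcal C$, I then partition $T\cap\mathcal C$ according to the codimension $i\in\{0,\dots,d\}$ in $w+V$ of the slice $(u+V_x)\cap(w+V)$, i.e.\ according to which level piece $\Phi'_i=\{(x,y)\in\Psi':\E_{z\in w+V}\Psi'(x,z)=p^{-i}\}$ the point lies in, and let $T'_i$ be the corresponding part of $T\cap\mathcal C$. From $\sum_i|S\cap T'_i|=|S\cap T\cap\mathcal C|\geq(\sigma+\tau/2)\sum_i|T'_i|$ and the contrapositive of the pigeonhole inequality (if $|S\cap T'_i|<(\sigma+\tau/4)|T'_i|$ for every $i$, summing contradicts the previous line), some $i$ satisfies $\mu_{\mathcal C}(S\cap T'_i)\geq(\sigma+\tau/4)\mu_{\mathcal C}(T'_i)$; this $i$, with $\Phi':=\Phi'_i$, its base $A'$, and the induced $B'$, $C'$, $D'$, is the output and $\rho'=p^{-i}$. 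The lower bounds $\alpha',\beta',\gamma',\delta'\gg\tau\mu_{\F_p^n\times\F_p^n}(T)/4$ follow because $T'_i$ is nonempty of mass $\gg\tau\mu_{\F_p^n\times\F_p^n}(T)|\mathcal C|$, so its projections to $u+V$, $w+V$, $u+w+V$, and $2u+w+V$, which lie inside $A'$, $B'$, $C'$, $D'$ respectively, cannot be too sparse.

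The remaining point, and the one I expect to be the main obstacle, is that the restricted sets are still pseudorandom, in particular that $\|\Phi'-\alpha'\rho'\|_{U^{8}(\mathcal C)}<2\ve'$. For $A'$, $B'$, $C'$, $D'$ this is essentially immediate from having chosen a pseudorandom cell, the one wrinkle being that $A'$ also relies on the control of $x\mapsto V_x$, since it is the set of $x\in A\cap(u+V)$ whose slice has codimension exactly $i$. For $\Phi'$ the difficulty is that passing from $\Psi'=\Phi\cap\mathcal C$, which is pseudorandom because $\mathcal C$ is a good cell, to the level piece $\Phi'_i$ is a nonlinear operation that could in principle destroy uniformity. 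The way through is to observe that the codimension-genericity condition characterising $U^{8}$-pseudorandomness of $\Psi'$ forces the map $x\mapsto\codim_{w+V}\bigl((u+V_x)\cap(w+V)\bigr)$ to take a single value for all but a $(\ve')^{\Omega(1)}$-fraction of $x\in u+V$; hence, for that generic value $i$, the sets $\Phi'_i$ and $\Psi'$ differ only on a subset of $\mathcal C$ of density $(\ve')^{\Omega(1)}$ (the non-generic values of $i$ carry negligible $T$-mass and can be discarded at the averaging stage), so the triangle inequality for the $U^{8}$-norm, together with the linear identification of $\mathcal C$ with $\F_p^{\dim V}\times\F_p^{\dim V}$, gives the desired bound with the stated factor of two to spare. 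Making this last estimate — quantifying precisely how the $U^{8}$-pseudorandomness of $\Psi'$ degrades when one passes to the level piece $\Phi'_i$ — is where the bulk of the work in this lemma lies.
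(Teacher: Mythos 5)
Your overall architecture — a Shkredov-style energy increment over product cells $(u+V)\times(w+V)$, driven by Gowers--Mili\'cevi\'c and Cohen--Tal, followed by an averaging step that locates a good cell and a level $i$ — matches the paper's plan. However, the step you yourself flag as ``the main obstacle'' is handled in a way that does not work, and the paper's device for resolving it is the genuine new idea in this lemma; you have not reproduced it.

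Your proposed resolution is to first make $\Psi'=\Phi\cap\mathcal{C}$ pseudorandom in $U^{8}(\mathcal{C})$, then argue that the codimension-genericity characterisation from Section~\ref{phipsd} forces $x\mapsto\codim_{w+V}\bigl((u+V_x)\cap(w+V)\bigr)$ to take a single value for all but a small fraction of $x\in A\cap(u+V)$, so that $\Phi'_i\approx\Psi'$ for the generic $i$ and the $U^{8}$ bound transfers by the triangle inequality. There are two problems here. First, Lemma~\ref{intersection} and the entire codimension-genericity framework of Section~\ref{phipsd} are formulated under the hypothesis that \emph{all} fibres $V_x$ have the \emph{same} codimension $d$; they simply do not apply to $\Psi'$, whose fibres $(u+V_x)\cap(w+V)$ can have codimension anywhere between $0$ and $d$. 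You would need a new structural statement, and you have not supplied one. Second, and more fundamentally, even granting some such statement, there is no obvious way to apply the $U^{8}$ inverse theorem to $\Psi'$ inside the energy iteration in the first place: $\Psi'$ is not a set of the shape needed by any of the tools developed in Section~\ref{phipsd}, and pseudorandomising ``$\Phi$ itself'' is not a well-posed operation when its fibre codimensions are mixed.

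The paper sidesteps both problems by never asking $\Psi'$ to be pseudorandom. Instead it introduces the nested family of level sets
\[
\Phi^{\leq i}_{\mathcal{C}}=\left\{(x,y)\in\Phi\cap\mathcal{C}:\E_{z\in w+V}\Phi(x,z)\geq p^{-i}\right\},\qquad 0\leq i\leq d,
\]
and runs the energy increment with \emph{all} $d+1$ densities $\phi^{\leq i}(\mathcal{C})$ included in the energy, alongside $\beta(\mathcal{C}),\gamma(\mathcal{C}),\delta(\mathcal{C})$. The crucial point, recorded in Lemma~\ref{msdgeq}, is that $\Phi^{\leq i}\cap\mathcal{C}'\subset\Phi^{\leq i}_{\mathcal{C}'}$ for any subcell $\mathcal{C}'$ — an inclusion, not an equality — so the Cauchy--Schwarz energy monotonicity still holds for these quantities even though the level sets do not refine cleanly. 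Once the iteration terminates with each $\Phi^{\leq i}_{\mathcal{C}}$ being $U^{8}(\mathcal{C})$-uniform on the surviving cells, the uniformity of $\Phi^i_{\mathcal{C}}=\Phi^{\leq i}_{\mathcal{C}}\setminus\Phi^{\leq i-1}_{\mathcal{C}}$ is immediate from the triangle inequality, and this $\Phi^i_{\mathcal{C}}$ has fibres of \emph{constant} codimension $i$, so it is of exactly the form required by~\eqref{Phiform}. No claim about codimensions concentrating on a single value is ever needed — the final pigeonhole over $i$ accommodates any distribution of codimensions. A smaller, related discrepancy: you propose also to pseudorandomise the base set $A$ directly in the iteration, but this is unnecessary; once $\Phi'$ is $U^{8}$-uniform, the uniformity of its base $A'$ follows from Lemma~\ref{PhipsdApsd}, which is why $A$ is absent from the paper's energy functional.
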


By combining the previous four lemmas and using that L-shaped configurations are preserved
by translation and invertible linear transformations of the form $(x,y)\mapsto (Ex,Ey)$,
we thus deduce the following density-increment lemma, which we will iterate in
Section~\ref{dinc} to prove Theorem~\ref{main}.
\begin{lemma}\label{densityinc}
  There exist absolute constants $0<c_1<1<c_2,c_3,c_4,c,c'$ such that the following holds. Let $d$ be a nonnegative integer, and set $\rho:=p^{-d}$. Suppose that that $A,B,C,D\subset\F_p^n$ have densities $\alpha,\beta,\gamma,\delta$, respectively, and that $\Phi\subset\F_p^n\times\F_p^n$ takes the form
  \[
    \Phi=\{(x,y)\in A\times\F_p^n:y\in u+V_x\},
  \]
  where each $V_x$ is a subspace of $\F_p^n$ of codimension $d$. Define $T$ by~\eqref{Tform} and let $S\subset T$ have density $\sigma$ in $T$. Let $\ve\leq(\sigma\alpha\beta\gamma\delta\rho)^{c_2}\exp(-(64/\sigma)^{c_3})$, and assume that
  \[
    \|A-\alpha\|_{U^{10}(\F_p^n)},\|B-\beta\|_{U^{10}(\F_p^n)},\|C-\gamma\|_{U^{10}(\F_p^n)},\|D-\delta\|_{U^{10}(\F_p^n)},\|\Phi-\alpha\rho\|_{U^8(\F_p^n\times\F_p^n)}<\ve.
  \]

  Let $\ve'>0$, and suppose that $S$ has no nontrivial L-shaped configurations. Then either
  \begin{enumerate}
  \item $n<\exp^2\left(\f{c_4\exp^c(c'/\ve')}{(\sigma\alpha\beta\gamma\delta\rho)^{c_2}}\right)$ or
  \item there exists natural numbers $n'$ and $d'$ satisfying
    \[
      n'\gg n^{c_1^{\exp^c\left(c'/\ve'\right)/(\sigma\alpha\beta\gamma\delta\rho)^{c_2}}}
    \]
    and $0\leq d'\leq d+1$, subsets $A',B',C',D'\subset\F_p^{n'}$ of densities $\alpha',\beta',\gamma',\delta'$, respectively, a subset $\Phi'\subset\F_p^{n'}\times\F_p^{n'}$ of the form
    \[
      \Phi'=\left\{(x,y)\in A'\times\F_p^{n'}:y\in u'+V_x'\right\},
    \]
    where each $V_x'$ is a subspace of $\F_p^{n'}$ of codimension $d'$ (so that $\Phi'$ has density
    $\alpha'\rho'$, where $\rho':=p^{-d'}$), and a subset $S'\subset T'$, where
    \[
      T':=\left\{(x,y)\in\F_p^{n'}\times\F_p^{n'}:B'(y)C'(x+y)D'(2x+y)\Phi'(x,y)=1\right\},
    \]
    of density at least $\sigma+\Omega(\sigma^{O(1)})$ in $T'$, such that
    \[
      \|A'-\alpha'\|_{U^{10}(\F_p^{n'})},\|B'-\beta'\|_{U^{10}(\F_p^{n'})},\|C'-\gamma'\|_{U^{10}(\F_p^{n'})},\|D'-\delta'\|_{U^{10}(\F_p^{n'})},\|\Phi'-\alpha'\rho'\|_{U^8(\F_p^{n'}\times\F_p^{n'})}<\ve',
    \]
    $\alpha',\beta',\gamma',\delta'\geq(\sigma\alpha\beta\gamma\delta\rho)^{c_2}$, and $S'$ contains no nontrivial L-shaped configurations.
  \end{enumerate}
\end{lemma}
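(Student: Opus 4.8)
The plan is to deduce the lemma by feeding the previous four results — Lemmas~\ref{lower} and~\ref{control}, Theorem~\ref{starinverse}, and Lemma~\ref{pseudoprop} — into one another in that order, and finishing with a linear change of coordinates; all the content lives in those inputs, so the task is to match their parameters. We may assume $\sigma>0$, and, since alternative~(1) is what we prove otherwise, we may assume $n$ is at least the threshold there (whose precise shape we fix at the end, once we know what the inputs demand). Since the Gowers norms are monotone in their degree, the hypotheses $\|A-\alpha\|_{U^{10}}<\ve$, etc., and $\|\Phi-\alpha\rho\|_{U^{8}}<\ve$ imply the weaker pseudorandomness needed in Lemmas~\ref{lower} and~\ref{control}, and, for suitable $c_2,c_3$, our bound $\ve\le(\sigma\alpha\beta\gamma\delta\rho)^{c_2}\exp(-(64/\sigma)^{c_3})$ meets those lemmas' smallness conditions on $\ve$ (the $\exp$-factor being present precisely to absorb leading constants and, later, the double exponential in Theorem~\ref{starinverse}). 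With $g_S:=S-\sigma T$, which is $1$-bounded and supported on $T$, multilinearity of $\Lambda$ gives, as in~\eqref{telescope} but with $1$ replaced by $T$,
\[
  \Lambda(S,S,S,S)=\sigma^3\Lambda(T,T,T,S)+\sigma^2\Lambda(T,T,g_S,S)+\sigma\Lambda(T,g_S,S,S)+\Lambda(g_S,S,S,S).
\]
Since $S$ has no nontrivial L-shaped configuration, $\Lambda(S,S,S,S)=\sigma\,\mu_{\F_p^n\times\F_p^n}(T)/N\le p^{-n}$, which is much smaller than $\sigma^4\alpha^2\beta^3\gamma^3\delta^3\rho^3$ once $n$ exceeds the threshold in~(1), while Lemma~\ref{lower} gives $\Lambda(T,T,T,S)\gg\sigma\alpha^2\beta^3\gamma^3\delta^3\rho^3$; hence, as recorded after Lemma~\ref{lower}, one of $|\Lambda(g_S,S,S,S)|$, $|\Lambda(T,g_S,S,S)|$, $|\Lambda(T,T,g_S,S)|$ is $\gg\sigma^4\alpha^2\beta^3\gamma^3\delta^3\rho^3$. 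Feeding this into the matching estimate of Lemma~\ref{control} (whose error $O(\ve^{\Omega(1)}/\rho^{O(1)})$ is negligible for our $\ve$), the remark after Lemma~\ref{control} yields that one of $\|g_S\|_{\star_1}/\|T\|_{\star_1}$, $\|g_S\|_{\star_2}/\|T\|_{\star_2}$, $\|g_S\|_{\star_3}/\|T\|_{\star_3}$ is $\gg\sigma^{O(1)}$; since the products appearing in the three hypotheses of Theorem~\ref{starinverse} are, up to absolute constants, $\|T\|_{\star_1}$, $\|T\|_{\star_2}$, $\|T\|_{\star_3}$, one of those hypotheses holds with $\tau$ a fixed positive power of $\sigma$.

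Next I would apply Theorem~\ref{starinverse} with this $\tau$. Its condition $\ve\le c_1(\sigma\tau\alpha\beta\gamma\delta\rho)^{c_2}\exp(-(64/\tau^8)^{c_3})$ follows from ours once $c_2,c_3$ are large enough relative to the constants in Theorem~\ref{starinverse} and to the power of $\sigma$ defining $\tau$ (here $\tau=\Omega(\sigma^{O(1)})$ bounds $(64/\tau^8)^{c_3}$ by $(64/\sigma)^{O(1)}$), and the $U^{10}$- and $U^{8}$-pseudorandomness hypotheses are exactly ours. The theorem produces $T_1\subset T$ of the form~\eqref{Tform}, built from sets $A_1,B_1,C_1,D_1$ of densities $\gg(\sigma\alpha\beta\gamma\delta\rho)^{O(1)}$ and a $\Phi_1$ of the form~\eqref{Phiform} with fibre codimension $d+1$, on which $S$ has density $\sigma+\Omega(\sigma^{O(1)})$. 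Put $S_1:=S\cap T_1\subset T_1$; it has no nontrivial L-shaped configuration and has density $\sigma+\tau_1$ in $T_1$ with $\tau_1=\Omega(\sigma^{O(1)})$. Now I would invoke Lemma~\ref{pseudoprop} with $(T,S,d,\sigma,\tau)$ there replaced by $(T_1,S_1,d+1,\sigma,\tau_1)$ and with the target pseudorandomness $\ve'$; its hypothesis on $n$ holds because $d+1\ge1$, $\tau_1\gg\sigma^{O(1)}$, and $\mu_{\F_p^n\times\F_p^n}(T_1)\gg(\sigma\alpha\beta\gamma\delta\rho)^{O(1)}$ — the last point being that $\mu_{\F_p^n\times\F_p^n}(T)\asymp\alpha\beta\gamma\delta\rho$ by a generalized von Neumann estimate using the pseudorandomness of $A,B,C,D,\Phi$, while the set produced by Theorem~\ref{starinverse} keeps a polynomial fraction of this — so that, for a suitable $c_4$, Lemma~\ref{pseudoprop}'s requirement is implied by our standing lower bound on $n$. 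Lemma~\ref{pseudoprop} then hands us a subspace $V\le\F_p^n$ with $n':=\dim V\gg n^{c_1^{\exp^c(c'/\ve')/(\sigma\alpha\beta\gamma\delta\rho)^{c_2}}}$, vectors $u,w$, an index $0\le i\le d+1$, and sets $A',B',C',D',\Phi'$ on cosets of $V$, with $\ve'$-pseudorandomness in the relevant Gowers norms, with $\alpha',\beta',\gamma',\delta'\gg\tau_1\,\mu_{\F_p^n\times\F_p^n}(T_1)\gg(\sigma\alpha\beta\gamma\delta\rho)^{O(1)}$, with $\Phi'$ of the form~\eqref{Phiform} with fibre codimension $i$, and with $S\cap T'$ of density $\ge\sigma+\tau_1/4$ on the corresponding $T'\subset\mathcal C:=(u+V)\times(w+V)$.

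Finally I would transfer everything from $\mathcal C$ to $\F_p^{n'}\times\F_p^{n'}$: fixing a linear isomorphism $L\colon V\to\F_p^{n'}$, the affine isomorphism $(x,y)\mapsto(L(x-u),L(y-w))$ is the composition of a translation with a map $(x,y)\mapsto(Lx,Ly)$, so — using crucially that both factors of $\mathcal C$ are cosets of the \emph{same} subspace $V$, which makes the constraint $z\in V$ in an L-shaped configuration inside $\mathcal C$ consistent and lets one $L$ act on both coordinates — it carries L-shaped configurations in $\mathcal C$ bijectively to those in $\F_p^{n'}\times\F_p^{n'}$ (preserving nontriviality), carries $T'$ to a set of the form~\eqref{Tform} with $d':=i$, $\rho':=p^{-i}$ built from the pushforwards of $B',C',D'$ and a $\Phi'$ of the form~\eqref{Phiform} (translating the second coordinate first, if needed, so that the fibres have a common representative), and preserves all the relevant Gowers norms. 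Letting $S'$ be the image of $S\cap T'$ and renaming, $S'\subset T'$ has no nontrivial L-shaped configuration, has density $\ge\sigma+\tau_1/4=\sigma+\Omega(\sigma^{O(1)})$ in $T'$, the sets $A',B',C',D'$ have densities $\ge(\sigma\alpha\beta\gamma\delta\rho)^{c_2}$ for a suitable $c_2$, and $\|A'-\alpha'\|_{U^{10}}$, $\|B'-\beta'\|_{U^{10}}$, $\|C'-\gamma'\|_{U^{10}}$, $\|D'-\delta'\|_{U^{10}}$, $\|\Phi'-\alpha'\rho'\|_{U^{8}}<\ve'$, which is alternative~(2). The only parts needing genuine care are the ordering of the absolute constants $c_1,\dots,c_4,c,c'$ so that the single hypotheses on $\ve$ and $n$ simultaneously feed all four inputs, and the measure bound $\mu_{\F_p^n\times\F_p^n}(T_1)\gg(\sigma\alpha\beta\gamma\delta\rho)^{O(1)}$ — not stated in Theorem~\ref{starinverse} but clear from its proof — which is what makes Lemma~\ref{pseudoprop}'s quantitative output, and hence the threshold in alternative~(1), come out correctly; I expect this last point to be the main thing to pin down.
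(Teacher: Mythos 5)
Your proposal reconstructs the proof the paper leaves implicit: Lemma~\ref{densityinc} is stated there as a consequence of Lemmas~\ref{lower},~\ref{control}, Theorem~\ref{starinverse}, and Lemma~\ref{pseudoprop}, with no further argument supplied, and you assemble the pieces correctly. The telescoping of $\Lambda(S,S,S,S)$ with $g_S=S-\sigma T$, the observation that $\Lambda(S,S,S,S)=\sigma\mu_{\F_p^n\times\F_p^n}(T)/p^n$ when $S$ has no nontrivial L-shapes (which is negligible once alternative~(1) fails), the passage through Lemma~\ref{control} to make one of the normalized $\star_i$-norms of $g_S$ of size $\gg\sigma^{O(1)}$, the application of Theorem~\ref{starinverse} with $\tau$ a fixed power of $\sigma$ (explaining the $\exp(-(64/\sigma)^{c_3})$ factor in the hypothesis on $\ve$), the application of Lemma~\ref{pseudoprop} to $S_1=S\cap T_1$ with $d$ replaced by $d+1$, and the final change of coordinates $(x,y)\mapsto(L(x-u),L(y-w))$ to move from $\mathcal C$ to $\F_p^{n'}\times\F_p^{n'}$ are all as intended. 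Your identification of the lower bound $\mu_{\F_p^n\times\F_p^n}(T_1)\gg(\sigma\alpha\beta\gamma\delta\rho)^{O(1)}$ as the one datum not stated in Theorem~\ref{starinverse} but needed for Lemma~\ref{pseudoprop}'s hypothesis on $n$ is exactly right; it does follow from the proofs (e.g.\ in Lemma~\ref{deg1inverse}, and in each branch of Theorem~\ref{starinverse}'s proof, the density of the new $T'$ is verified to be $\asymp\alpha'\beta'\gamma'\delta'\rho'$ via Lemmas~\ref{Tdensity} and~\ref{fibersize}).

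One point in your write-up is too glib, and it also marks the one place the paper's own sketch skips a step: the parenthetical ``translating the second coordinate first, if needed, so that the fibres have a common representative.'' After Lemma~\ref{pseudoprop} restricts $\Phi$ to a cell $\mathcal C=(u_{\mathcal C}+V)\times(w+V)$, the fibre of $\Phi'$ over $x$ is $(w+V)\cap(u+V_x)$. When the original $u$ does not lie in $w+V$ there is no reason for these cosets of $V\cap V_x$ to pass through a common point — one can cook up explicit examples in $\F_p^4$ with three $x$'s whose fibres in $V$ have empty triple intersection — and no fixed translation of the $y$-coordinate can repair this. So the common-$u'$ form that Lemma~\ref{densityinc} asserts for $\Phi'$, and which is genuinely needed for the next round (Lemma~\ref{intersection} requires it), does not come for free from Lemma~\ref{pseudoprop} as stated. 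The natural fix is to follow the pseudorandomization with an application of Lemma~\ref{uxtou}, taking $K$ to be the factor $\{(x,y):B'(y)C'(x+y)D'(2x+y)=1\}$; its hypotheses on $K$ can now be checked via Lemma~\ref{cs2} precisely because $B',C',D'$ have just been made pseudorandom, and the extra loss is only a factor $\rho'\tau_1$ in $\alpha'$, which is absorbed by the $(\sigma\alpha\beta\gamma\delta\rho)^{c_2}$ in the conclusion. This is a substantive additional step — not merely a change of variables — and should be spelled out. With it supplied, the argument is complete and matches the paper's intent.
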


\section{Additional preliminaries}\label{prelims}

In this section, we present some more preliminaries that were not needed for the outline of the proof of Theorem~\ref{main}, but will be convenient to have for the proof itself. We begin with the notion of \textit{Cauchy--Schwarz complexity}, first defined by Green and Tao in~\cite{GreenTao10}.

\begin{definition}[Cauchy--Schwarz complexity]
  Let $\psi_1,\dots,\psi_d:(\F_p^n)^r\to \F_p^n$ be a collection of linear forms in $r$ variables. We say that $\psi_1,\dots,\psi_d$ has Cauchy--Schwarz complexity at most $s$ if, for every $j\in[d]$, there exists a partition of $\{\psi_1,\dots,\psi_d\}\setminus\{\psi_j\}$ into at most $s+1$ subsets such that $\psi_j$ is not contained in the linear span of any of the subsets.

  The smallest $s$ such that $\{\psi_1,\dots,\psi_d\}$ has Cauchy--Schwarz complexity at most $s$ is called the Cauchy--Schwarz complexity of $\{\psi_1,\dots,\psi_d\}$.
\end{definition}
For example, four term arithmetic progressions,
\[
  x,x+y,x+2y,x+3y,
\]
have Cauchy--Schwarz complexity $2$.

Any system of linear forms of complexity at most $s$ can be shown to be controlled by the
$U^{s+1}$-norm using repeated applications of the Cauchy--Schwarz inequality. In
particular, carrying out the proof of the generalized von Neumann theorem of Green and Tao
in~\cite{GreenTao10} in the finite field model setting (where the technical details are
much simpler) produces the following useful result.

\begin{theorem}\label{gtgvn}
  Let $\psi_1,\dots,\psi_d:(\F_p^n)^r\to \F_p^n$ be a collection of linear forms in $r$
  variables with Cauchy--Schwarz complexity at most $s$. For any $1$-bounded functions
  $f_1,\dots,f_d:\F_p^n\to\C$, we have
  \[
    \left|\E_{x_1,\dots,x_r}\prod_{j=1}^df_j(\psi_j(x_1,\dots,x_r))\right|\leq \min_{1\leq j\leq d}\|f_j\|_{U^{s+1}(\F_p^n)}.
  \]
  Further, if all of $f_1,\dots,f_d$ are supported on a set $A\subset\F_p^n$ of density
  $\alpha$ that satisfies $\|A-\alpha\|_{U^{s+1}(\mathbb{F}_p^n)}<\alpha\ve$, then
  \[
    \left|\E_{x_1,\dots,x_r}\prod_{j=1}^df_j(\psi_j(x_1,\dots,x_r))\right|\leq \alpha^{d-1}\min_{1\leq j\leq d}\|f_j\|_{U^{s+1}(\F_p^n)}+O_{d,s}\left(\ve^{\Omega_{d,s}(1)}\right).
  \]
\end{theorem}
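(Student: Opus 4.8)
The plan is to prove both parts by the standard Cauchy--Schwarz unwinding, keeping track of the set support in the second part. First I would recall the key point about Cauchy--Schwarz complexity: if $\{\psi_1,\dots,\psi_d\}$ has complexity at most $s$, then for each fixed $j$ we may partition the remaining forms $\{\psi_i:i\neq j\}$ into $s+1$ classes $\mathcal{P}_0,\dots,\mathcal{P}_s$ so that $\psi_j$ lies outside the span of each $\mathcal{P}_k$. Working one class at a time, one applies the Cauchy--Schwarz inequality $s+1$ times: at the $k$-th step one doubles the variables appearing in the forms of $\mathcal{P}_k$ (introducing a new shift parameter for those variables), which converts the factor $f_j(\psi_j)$ into a difference $\Delta_{h}f_j$ and, after all $s+1$ steps, into an iterated difference $\Delta_{h_1,\dots,h_{s+1}}f_j$. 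Because $\psi_j$ is not in the span of $\mathcal{P}_k$, at each step the linear form $\psi_j$ genuinely acquires the new parameter $h_k$, so the linear change of variables taking the resulting average into the defining average of $\|f_j\|_{U^{s+1}}^{2^{s+1}}$ is surjective; this is exactly where the complexity hypothesis is used. Averaging over the spectator variables and using $1$-boundedness of the other factors (so their contribution is at most $1$ in absolute value after each Cauchy--Schwarz step) yields
\[
  \left|\E_{x_1,\dots,x_r}\prod_{j=1}^d f_j(\psi_j(x_1,\dots,x_r))\right|^{2^{s+1}}\leq \|f_j\|_{U^{s+1}(\F_p^n)}^{2^{s+1}},
\]
and taking $2^{s+1}$-th roots and then the minimum over $j$ gives the first inequality.

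For the second part I would run exactly the same sequence of Cauchy--Schwarz steps, but now I cannot simply bound the spectator factors $f_i(\psi_i)$, $i\neq j$, by $1$, since doing so would lose the $\alpha^{d-1}$ saving. Instead, after the unwinding I split each $f_i=\alpha\,\mathbf{1}+(f_i-\alpha\,\mathbf{1})$ using that $f_i$ is supported on $A$ of density $\alpha$; more precisely, since $f_i = f_i A$, I would write the final average as a sum of a main term in which every spectator form contributes the constant $\alpha$ (giving $\alpha^{d-1}$ times the $U^{s+1}$-expression for $f_j$) plus error terms, each of which contains at least one factor of the balanced function $A-\alpha$ evaluated on some nontrivial linear form. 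Each such error term is itself an average of a product of $1$-bounded functions over a system of linear forms of complexity at most $s$ (the forms obtained from the $\psi_i$ after the doublings, which one checks still has complexity $\leq s$, or can be bounded crudely), with one of the functions equal to $A-\alpha$; applying the first part of the theorem to that system bounds it by $\|A-\alpha\|_{U^{s+1}}<\alpha\ve\leq\ve$. Summing the $O_{d,s}(1)$ error terms and taking roots produces the claimed $O_{d,s}(\ve^{\Omega_{d,s}(1)})$; the root-taking converts the $\ve$ into $\ve^{2^{-(s+1)}}$, which is absorbed into the $\Omega_{d,s}(1)$ exponent.

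The main obstacle is purely bookkeeping rather than conceptual: one must verify that the linear change of variables after the $s+1$ Cauchy--Schwarz steps is genuinely surjective onto the variables defining the $U^{s+1}$-norm (so that no spurious averaging loss appears), and one must organize the expansion $f_i = \alpha + (f_i-\alpha)$ across all spectator forms simultaneously so that the resulting error terms are each manifestly governed by $\|A-\alpha\|_{U^{s+1}}$. I would handle the first point by a clean induction on $s$, carefully recording which variables have been doubled and checking the non-degeneracy condition provided by the complexity hypothesis at each stage; I would handle the second by a multilinear expansion, noting that a term with $k\geq 1$ balanced factors is bounded, after one more application of the first part of the theorem, by $\|A-\alpha\|_{U^{s+1}}^{?}$, where even $k=1$ already suffices. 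This mirrors the generalized von Neumann theorem in~\cite{GreenTao10}, specialized to $\F_p^n$ where carrying extra $\gcd$ or divisibility conditions is unnecessary, so the argument simplifies considerably.
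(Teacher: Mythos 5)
Your first paragraph describes the standard Cauchy--Schwarz unwinding behind the first inequality, which is fine; the paper gives no proof of Theorem~\ref{gtgvn} itself, only the remark that it follows by transporting Green and Tao's generalized von Neumann argument to $\F_p^n$, so for that part there is nothing to compare.

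The second part is where the proposal has a genuine gap, and it is in the central decomposition. You propose to split each spectator as $f_i = \alpha + (f_i-\alpha)$, but the hypothesis controls $\|A-\alpha\|_{U^{s+1}}$, not $\|f_i-\alpha\|_{U^{s+1}}$: the $f_i$ are arbitrary $1$-bounded functions supported on $A$, and their balanced functions need not be small in any uniformity norm (take $f_i = A\cdot\chi$ for a wild character $\chi$). What the argument actually needs is to treat $A$ as a pseudorandom majorant exactly as Green and Tao treat $\nu$: since $|f_i|^2\leq A$, each Cauchy--Schwarz step should use the weighted estimate $|\E_u f(u)g(u)|^2\leq(\E_u A(u))(\E_u A(u)|g(u)|^2)$ in place of the crude bound $|f_i|^2\leq 1$. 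Each step then contributes a factor $\E A=\alpha$ outside and leaves an $A$-form inside the surviving average, so that after the full unwinding one is left with $\E\bigl[\prod_i A(\phi_i)\bigr]\Delta_{h_1,\dots,h_{s+1}}f_j(\psi_j(\cdot))$ for explicit linear forms $\phi_i$. It is this product of $A(\phi_i)$'s, not the $f_i$, that should be expanded as $\prod\bigl(\alpha+(A-\alpha)(\phi_i)\bigr)$; your aside ``since $f_i=f_iA$'' was reaching for this but the decomposition as stated does not do the job.

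Even granting that correction, the phrase ``which one checks still has complexity $\leq s$, or can be bounded crudely'' hides the only step in the whole proof that requires a real argument, and the crude bound is in fact not available. Each error term is an average over a specific linear system built from the $\phi_i$ carrying $(A-\alpha)$ together with the $2^{s+1}$ cube forms for $f_j$, and one must genuinely verify that this system has Cauchy--Schwarz complexity at most $s$ so that the first part of the theorem controls it via $\|A-\alpha\|_{U^{s+1}}$. This is exactly the point at which Green and Tao instead impose a full linear-forms condition on their majorant rather than a single Gowers-norm bound, so the claim needs a proof here; it holds, roughly because each $\phi_i$ involves the auxiliary variable that was doubled at the corresponding Cauchy--Schwarz step and the cube forms for $f_j$ do not, but that observation must be made explicit. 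With that in place, the terms carrying exactly one factor of $A-\alpha$ in fact vanish identically upon averaging over the doubled variable, and the remaining terms give the stated $O_{d,s}(\ve^{\Omega_{d,s}(1)})$ after extracting $2^{s+1}$-th roots, as you say.
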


We will use the following immediate corollary of Theorem~\ref{gtgvn} numerous times
throughout the proof of Theorem~\ref{main}.

\begin{corollary}\label{Usuniformity}
  Let $\psi_1,\dots,\psi_d:(\F_p^n)^r\to \F_p^n$ be a collection of linear forms in $r$
  variables with Cauchy--Schwarz complexity at most $s$. Suppose that
  $f_1,\dots,f_d:\F_p^n\to\C$ are $1$-bounded functions having average values
  $\alpha_1,\dots,\alpha_d$, respectively, and that
  \[
    \|f_j-\alpha_j\|_{U^{s+1}(\F_p^n)}\leq \ve_j
  \]
  for all $1\leq j\leq d$. Then
  \[
    \left|\E_{x_1,\dots,x_r}\prod_{j=1}^df_j(\psi_j(x_1,\dots,x_r))-\prod_{j=1}^d\alpha_j\right|\leq d\max_{1\leq j\leq d}\ve_j.
  \]
\end{corollary}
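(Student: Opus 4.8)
The plan is to deduce Corollary~\ref{Usuniformity} directly from the generalized von Neumann inequality, Theorem~\ref{gtgvn}, by a routine telescoping identity, exactly as its billing as a corollary suggests. First I would set $g_j := f_j - \alpha_j$ for each $j \in [d]$; since $f_j$ is $1$-bounded and $|\alpha_j| = |\E_x f_j(x)| \le 1$, each $g_j$ is $2$-bounded, and by hypothesis $\|g_j\|_{U^{s+1}(\F_p^n)} \le \ve_j$. The point is that $\prod_{j=1}^d f_j = \prod_{j=1}^d(\alpha_j + g_j)$, and upon expanding, the only term not involving any $g_j$ is the constant $\prod_{j=1}^d \alpha_j$, whose average over $x_1,\dots,x_r$ is itself; this is precisely the quantity being subtracted off, so everything reduces to bounding the remaining terms.

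Rather than expand into $2^d$ pieces, I would use the telescoping decomposition (writing $\vec{x}=(x_1,\dots,x_r)$ for brevity, and reading empty products as $1$)
\[
  \prod_{j=1}^d f_j(\psi_j(\vec x)) - \prod_{j=1}^d \alpha_j = \sum_{k=1}^d \Bigl(\prod_{j<k}\alpha_j\Bigr)\, g_k(\psi_k(\vec x)) \prod_{j>k} f_j(\psi_j(\vec x)).
\]
Averaging over $\vec x$ and using the triangle inequality together with $|\alpha_j| \le 1$, it suffices to show that for each $k \in [d]$ one has
\[
  \Bigl|\E_{\vec x}\, g_k(\psi_k(\vec x))\prod_{j>k} f_j(\psi_j(\vec x))\Bigr| \le \ve_k,
\]
since summing these bounds over $k$ yields $\sum_{k=1}^d \ve_k \le d\max_{1 \le j \le d}\ve_j$, as claimed.

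To establish this last inequality I would apply Theorem~\ref{gtgvn} to the sub-collection of linear forms $\psi_k, \psi_{k+1}, \dots, \psi_d$, placing the $1$-bounded function $g_k/2$ at $\psi_k$ and the $1$-bounded functions $f_j$ at $\psi_j$ for $j > k$. The observation that makes this legitimate is that any sub-collection of $\{\psi_1,\dots,\psi_d\}$ again has Cauchy--Schwarz complexity at most $s$: a partition of $\{\psi_1,\dots,\psi_d\}\setminus\{\psi_k\}$ witnessing the complexity bound for $\psi_k$ restricts to a partition of $\{\psi_{k+1},\dots,\psi_d\}$, and deleting forms only shrinks the relevant spans, so $\psi_k$ still lies outside each part. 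Theorem~\ref{gtgvn} then gives
\[
  \Bigl|\E_{\vec x}\, g_k(\psi_k(\vec x))\prod_{j>k}f_j(\psi_j(\vec x))\Bigr| = 2\Bigl|\E_{\vec x}\, \tfrac{g_k}{2}(\psi_k(\vec x))\prod_{j>k}f_j(\psi_j(\vec x))\Bigr| \le 2\Bigl\|\tfrac{g_k}{2}\Bigr\|_{U^{s+1}(\F_p^n)} = \|g_k\|_{U^{s+1}(\F_p^n)} \le \ve_k,
\]
using the homogeneity of the $U^{s+1}$-norm. This finishes the argument.

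There is no genuine obstacle here — this is a direct corollary — but the two small points worth stating with a line of care are that restricting to a sub-collection of linear forms does not increase the Cauchy--Schwarz complexity, so that Theorem~\ref{gtgvn} applies at every stage of the telescope, and that the functions $g_k$ are only $2$-bounded rather than $1$-bounded, a discrepancy absorbed harmlessly by the factor of $2$ above.
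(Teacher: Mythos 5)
Your proof is correct and is precisely the telescoping argument that the paper leaves implicit when it bills Corollary~\ref{Usuniformity} as an ``immediate'' consequence of Theorem~\ref{gtgvn}. The two points you flag at the end — that sub-collections inherit the Cauchy--Schwarz complexity bound (restricting the witnessing partitions only shrinks spans), and that the balanced functions $g_k = f_k - \alpha_k$ are merely $2$-bounded, handled by rescaling and homogeneity of $\|\cdot\|_{U^{s+1}}$ — are indeed the only things that need a word, and you have stated them correctly.
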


We will also need the Gowers--Cauchy--Schwarz inequality.

\begin{lemma}[Gowers--Cauchy--Schwarz inequality]\label{GCSU}
  Let $s$ be a natural number, $H$ be an abelian group, and $f_\omega:H\to\C$ for every $\omega\in\{0,1\}^s$. We have
  \[
    \left|\E_{x,h_1,\dots,h_s\in H}\prod_{\omega\in\{0,1\}^s}f_\omega(x+\omega\cdot(h_1,\dots,h_s))\right|\leq\prod_{\omega\in\{0,1\}^s}\|f_\omega\|_{U^s(H)}.
  \]
\end{lemma}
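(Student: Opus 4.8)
The plan is to establish this classical inequality by the standard method of $s$ successive applications of the Cauchy--Schwarz inequality, one for each differencing direction. It is convenient to phrase the argument in terms of the \emph{Gowers inner product}
\[
  \langle(f_\omega)_{\omega\in\{0,1\}^s}\rangle:=\E_{x,h_1,\dots,h_s\in H}\prod_{\omega\in\{0,1\}^s}\mathcal{C}^{|\omega|}f_\omega\bigl(x+\omega\cdot(h_1,\dots,h_s)\bigr),
\]
where $\mathcal{C}$ denotes complex conjugation and $|\omega|:=\omega_1+\dots+\omega_s$; directly from the definition of the $U^s$-norm one has $\langle(f_\omega)_\omega\rangle=\|f\|_{U^s(H)}^{2^s}$ whenever $f_\omega=f$ for every $\omega$. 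The left-hand side of the lemma equals $|\langle(\mathcal{C}^{|\omega|}f_\omega)_\omega\rangle|$, and, since the $U^s$-norm is invariant under complex conjugation, its right-hand side equals $\prod_{\omega}\|\mathcal{C}^{|\omega|}f_\omega\|_{U^s(H)}$; so it suffices to prove that $|\langle(f_\omega)_\omega\rangle|\le\prod_{\omega\in\{0,1\}^s}\|f_\omega\|_{U^s(H)}$ for arbitrary functions $f_\omega:H\to\C$.

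For $s=1$ this is the ordinary Cauchy--Schwarz inequality: the substitution $y=x+h_1$ gives $\langle(f_0,f_1)\rangle=\bigl(\E_x f_0(x)\bigr)\overline{\bigl(\E_y f_1(y)\bigr)}$, whose modulus is $\|f_0\|_{U^1(H)}\|f_1\|_{U^1(H)}$. For the inductive step I would single out the last coordinate, writing $\omega=(\omega',\omega_s)$ with $\omega'\in\{0,1\}^{s-1}$, and perform the $x$- and $h_s$-averages first. Since the $\omega_s=1$ functions all see the shift by $h_s$, the substitution $y=x+h_s$ decouples the two faces of the cube and yields
\[
  \langle(f_\omega)_\omega\rangle=\E_{h_1,\dots,h_{s-1}}\Bigl(\E_x G_0(x)\Bigr)\overline{\Bigl(\E_y G_1(y)\Bigr)},\qquad G_j(x):=\prod_{\omega'\in\{0,1\}^{s-1}}\mathcal{C}^{|\omega'|}f_{(\omega',j)}\bigl(x+\omega'\cdot(h_1,\dots,h_{s-1})\bigr).
\]
Applying the Cauchy--Schwarz inequality in the variables $h_1,\dots,h_{s-1}$ and then expanding each modulus-squared by introducing a fresh variable in place of the difference of the two copies of the base point, one recognizes $\E_{h_1,\dots,h_{s-1}}|\E_x G_j(x)|^2$ as a Gowers inner product over $\{0,1\}^s$ once more, but now carrying the \emph{same} function on both faces. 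This produces the bound $|\langle(f_\omega)_\omega\rangle|^2\le\langle(f^{(0)}_\omega)_\omega\rangle\,\langle(f^{(1)}_\omega)_\omega\rangle$, where $f^{(j)}_\omega:=f_{(\omega',j)}$ no longer depends on $\omega_s$.

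I would then iterate this step through the remaining directions $h_{s-1},\dots,h_1$. After all $s$ applications, every surviving inner product has had its function rendered constant in $\omega$, equal to $f_\eta$ for some $\eta\in\{0,1\}^s$, and keeping track of the splittings shows that the $2^s$ resulting labels $\eta$ run over $\{0,1\}^s$ exactly once each. Since $\langle(f_\eta)_\omega\rangle=\|f_\eta\|_{U^s(H)}^{2^s}$, this yields
\[
  |\langle(f_\omega)_\omega\rangle|^{2^s}\le\prod_{\eta\in\{0,1\}^s}\|f_\eta\|_{U^s(H)}^{2^s},
\]
and taking $2^s$-th roots completes the proof. There is no genuine obstacle here; the only point needing care is the bookkeeping of the conjugation exponents $\mathcal{C}^{|\omega|}$ and of which function occupies which face of the cube after each Cauchy--Schwarz step and re-expansion. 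Alternatively, one may simply cite the treatment of the Gowers--Cauchy--Schwarz inequality in Appendix~B of~\cite{GreenTao10} or in~\cite{Tao12}.
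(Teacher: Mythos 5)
Your proof is correct: it is the classical $s$-fold Cauchy--Schwarz argument for the Gowers inner product, with the conjugation bookkeeping ($\mathcal{C}^{|\omega|}$ versus the paper's conjugation-free statement) handled properly via the invariance of $\|\cdot\|_{U^s}$ under conjugation. The paper offers no proof of this lemma at all --- it is stated as a standard fact with the reference to~\cite{Tao12} and Appendix~B of~\cite{GreenTao10} --- and your argument is exactly the one given in those sources.
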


Next, we record the basic fact that a function on $\F_p^n$ with small $U^2$-norm has small average on affine subspaces of small codimension.

\begin{lemma}\label{subspaceavg}
  Let $f:\F_p^n\to \C$ be a $1$-bounded function satisfying $\|f\|_{U^2(\F_p^n)}<\ve$ and $w+V\subset \F_p^n$ be an affine subspace of codimension $d$. Then
  \[
    \left|\E_{x\in w+V}f(x)\right|< p^d\ve.
  \]
\end{lemma}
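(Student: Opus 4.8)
The plan is to pass to the Fourier side and bound the average of $f$ over $w+V$ by a short sum of Fourier coefficients. First I would expand $f$ via the inversion formula $f(x)=\sum_{\xi}\widehat f(\xi)e_p(\xi\cdot x)$ and use that $\E_{v\in V}e_p(\xi\cdot v)$ equals $1$ when $\xi$ lies in the annihilator $V^\perp:=\{\xi\in\F_p^n:\xi\cdot v=0\text{ for all }v\in V\}$ and $0$ otherwise. Writing $x=w+v$ with $v\in V$, this yields the exact identity
\[
\E_{x\in w+V}f(x)=\sum_{\xi\in V^\perp}\widehat f(\xi)\,e_p(\xi\cdot w),
\]
and $|V^\perp|=p^d$ since $V$ has codimension $d$. (Equivalently, one can Fourier-expand the indicator function of $w+V$ directly: it equals $p^{-d}\sum_{\xi\in V^\perp}e_p(\xi\cdot(x-w))$.)

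Next, I would apply the triangle inequality to get $\left|\E_{x\in w+V}f(x)\right|\leq\sum_{\xi\in V^\perp}|\widehat f(\xi)|$, and then bound each individual Fourier coefficient by $\|f\|_{U^2(\F_p^n)}$. The input here is the standard identity $\|f\|_{U^2(\F_p^n)}^4=\sum_{\xi\in\F_p^n}|\widehat f(\xi)|^4$ (see~\cite{Tao12}), which gives, for every single frequency $\xi$, the estimate $|\widehat f(\xi)|^4\leq\sum_{\eta}|\widehat f(\eta)|^4=\|f\|_{U^2(\F_p^n)}^4<\ve^4$, hence $|\widehat f(\xi)|<\ve$. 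Summing this over the $p^d$ elements of $V^\perp$ produces $\left|\E_{x\in w+V}f(x)\right|<p^d\ve$, as claimed.

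There is essentially no serious obstacle here; the argument is entirely routine. The only points that require a little care are getting the Fourier expansion of the affine subspace correct (equivalently, correctly identifying $V^\perp$ as the exact set of surviving frequencies and its cardinality $p^d$) and recalling the precise normalization in the identity relating the $U^2$-norm to the $\ell^4$-norm of the Fourier transform. I note that this argument in fact yields the slightly stronger bound $p^{3d/4}\ve$ via Hölder's inequality, $\sum_{\xi\in V^\perp}|\widehat f(\xi)|\leq|V^\perp|^{3/4}\bigl(\sum_\xi|\widehat f(\xi)|^4\bigr)^{1/4}$, but the cruder estimate $p^d\ve$ is cleaner to state and more than suffices for the later applications.
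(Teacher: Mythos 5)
Your proof is correct and follows essentially the same route as the paper's: both Fourier-expand the indicator of the affine subspace (identifying $V^\perp$ with $|V^\perp|=p^d$ as the surviving frequency set), bound each Fourier coefficient by $\|f\|_{U^2(\F_p^n)}$, and sum over the $p^d$ frequencies; the only cosmetic difference is that the paper cites the Gowers--Cauchy--Schwarz inequality for the single-coefficient bound $|\widehat f(\xi)|\leq\|f\|_{U^2}$, whereas you derive it from the identity $\|f\|_{U^2}^4=\sum_\xi|\widehat f(\xi)|^4$. Your remark that Hölder would sharpen $p^d$ to $p^{3d/4}$ is a nice observation, though unnecessary here.
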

\begin{proof}
  The indicator function of $V$ can be written as
  \[
    \f{1}{p^d}\sum_{\xi\in V^\perp}e_p(\xi\cdot x),
  \]
  so we have that
  \[
    \left|\E_{x}f(x-w)V(x)\right|\leq\f{1}{p^d}\sum_{\xi\in V^\perp}\left|\E_{x}f(x)e_p\left(\xi\cdot x\right)\right|\leq \|f\|_{U^2(\F_p^n)},
  \]
  by the Gowers--Cauchy--Schwarz inequality. Since $|V|=|\F_p^n|/p^d$, the desired result follows.
\end{proof}

The last lemma of this section will be used to analyze the various averages appearing in the proofs of Lemmas~\ref{lower},~\ref{control}, and~\ref{starinverse}.

\begin{lemma}\label{cs2}
  Let $\psi_1,\dots,\psi_d\in\mathbb{F}_p[x_1,\dots,x_t,y]$ be a collection of linear
  forms such that the coefficient of $y$ in each of $\psi_1,\dots,\psi_d$ is nonzero, and
  $F:(\F_p^n)^t\times \F_p^n\to[0,1]$ be a function of the form
  \[
    F(\mathbf{x},y)=\prod_{j=1}^df_j(\psi_j(\mathbf{x},y))
  \]
  for some $1$-bounded functions $f_j:\F_p^n\to\C$ with average value $\beta_j$, each
  satisfying $\|f_j-\beta_j\|_{U^s(\F_p^n)}<\ve$. If the Cauchy--Schwarz complexity of the
  set of linear forms
  \begin{equation}\label{eq:linearforms}
  \bigcup_{j=1}^d\left\{\psi_j(\mathbf{x},y),\psi_j(\mathbf{x},y+h),\psi_j(\mathbf{x},y+k),\psi_j(\mathbf{x},y+h+k)\right\}
  \end{equation}
  in the variables $x_1,\dots,x_t,y,h,k$, is at most $s-1$, then
  \[
    \PP\left(\mathbf{x}\in (\F_p^n)^t:\left\|F(\mathbf{x},\cdot)-\prod_{j=1}^d\beta_j\right\|_{U^2(\F_p^n)}\geq\ve^{1/8}\right)\ll_{d}\sqrt{\ve}.
  \]
\end{lemma}
\begin{proof}
  Set $\beta:=\prod_{j=1}^d\beta_j$. Note that $\E_{\mathbf{x}\in(\F_p^n)^t}\|F(\mathbf{x},\cdot)-\beta\|_{U^2(\F_p^n)}^4$ equals
  \[
  \sum_{\omega\in\{0,1\}^4}(-\beta)^{|\omega|}\E_{\mathbf{x}\in(\F_p^n)^t}\E_{y,h,k}f_1^{\omega}(\mathbf{x},y)f_2^{\omega}(\mathbf{x},y+h)f_3^{\omega}(\mathbf{x},y+k)f_4^{\omega}(\mathbf{x},y+h+k),
  \]
  where
  \[
    f_i^{\omega}(\mathbf{x},y)=\begin{cases} 1 & \omega_i=1 \\ F(\mathbf{x},y) & \omega_i=0 \end{cases}
  \]
  for each $\omega\in\{0,1\}^4$ and $1\leq i\leq 4$. Since~\eqref{eq:linearforms} has Cauchy--Schwarz
  complexity at most $s-1$ by hypothesis, Corollary~\ref{Usuniformity} implies that
  \[
    \E_{\mathbf{x}\in(\F_p^n)^t}\E_{y,h,k}f_1^{\omega}(\mathbf{x},y)f_2^{\omega}(\mathbf{x},y+h)f_3^{\omega}(\mathbf{x},y+k)f_4^{\omega}(\mathbf{x},y+h+k)=\beta^{4-|\omega|}+O_{d}(\ve)
  \]
  for every $\omega\in\{0,1\}^4$. Thus,
  \[
    \E_{\mathbf{x}\in(\F_p^n)^t}\|F(\mathbf{x},\cdot)-\beta\|_{U^2(\F_p^n)}^4\ll_{d}\ve
  \]

  Markov's inequality then gives
  \[
    \PP(\mathbf{x}\in (\F_p^n)^t : \|F(\mathbf{x},\cdot)-\beta\|_{U^2(\F_p^n)}\geq r)\ll_{d}\f{\ve}{r^4}
  \]
  for every $r>0$. Taking $r=\ve^{1/8}$ gives the conclusion of the lemma.
\end{proof}

\section{Pseudorandomness of $\Phi$}\label{phipsd}

The main purpose of this section is to show that if
$\|\Phi-\alpha\rho\|_{U^{2s+2}(\F_p^n\times\F_p^n)}$ is small and $\Phi$ is a subset of
the form
\[
  \Phi=\left\{(x,y)\in A\times \F_p^n: y\in u+V_x\right\},
\]
where each $V_x\leq\mathbf{F}_p^n$ is a subspace of density $\rho$ in $\mathbf{F}_p^n$ and
$A\subset\mathbf{F}_p^n$ has density $\alpha$ in $\mathbf{F}_p^n$, then, whenever
$\psi_1,\dots,\psi_r\in\F_p[x_1,\dots,x_m]$ is a collection of linear forms of
Cauchy--Schwarz complexity at most $s$ and $w_1,\dots,w_r\in\F_p^n$, the affine subspaces
\[
  \left\{y\in \F_p^n:\prod_{i=1}^r\Phi(\psi_i(\mathbf{x}),y+w_i)=1\right\}
\]
typically have maximum possible codimension. This allows us to transform the condition
that $\Phi$ is $U^{8}(\F_p^n\times\F_p^n)$-pseudorandom into a more useful property for
evaluating the various averages that arise in the proof of Theorem~\ref{main}.

\begin{lemma}\label{intersection}
  For each nonnegative integer $s$ and positive integer $r$, there exist constants
  $C_{s,r},c_{s,r}>0$ such that the following holds.  Let $d$ be a nonnegative integer, and set $\rho:=p^{-d}$.  Let $\delta>0$, $A\subset\F_p^n$ have density $\alpha$, and $\Phi\subset \F_p^n\times \F_p^n$ be a set of the form
  \[
    \Phi=\left\{(x,y)\in A\times \F_p^n:y\in u+V_x\right\},
  \]
  where each $V_x\leq \F_p^n$ is a subspace of codimension $d$. Assume that 
  \[
    \|\Phi-\alpha\rho\|_{U^{2s+2}(\F_p^n\times\F_p^n)}<C_{s,r}(\alpha\delta\rho)^{c_{s,r}}.
  \]
  Let $\psi_1,\dots,\psi_r\in\F_p[x_1,\dots,x_m]$ be a collection of linear forms of Cauchy--Schwarz complexity at most $s$. Then, for all but at most a $\delta$-proportion of $m$-tuples $\mathbf{x}\in(\F_p^{n})^m$ for which $\psi_1(\mathbf{x}),\dots,\psi_r(\mathbf{x})\in A$, we must have that
  \[
    \codim{\left\{y\in \F_p^n:\prod_{i=1}^r\Phi(\psi_i(\mathbf{x}),y+w_i)=1\right\}}= rd
  \]
  for all $w_1,\dots,w_r\in\F_p^n$.
\end{lemma}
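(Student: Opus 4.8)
The plan is to reduce the codimension statement to a second-moment / concentration estimate, which can then be verified via the hypothesis on $\|\Phi-\alpha\rho\|_{U^{2s+2}}$ combined with Theorem~\ref{gtgvn}. First I would observe that, for a fixed $\mathbf{x}$ with all $\psi_i(\mathbf{x})\in A$, each set $\{y:\Phi(\psi_i(\mathbf{x}),y+w_i)=1\}$ is an affine subspace of codimension exactly $d$ (a translate of $V_{\psi_i(\mathbf{x})}$). The intersection of $r$ affine subspaces of codimension $d$ has codimension at most $rd$, with equality iff the intersection has the minimum possible size $p^{n-rd}$ (or is empty — but nonemptiness is automatic here since $u+\bigcap V_{\psi_i(\mathbf x)}$ translated appropriately, after checking the $w_i$ are compatible; in fact the relevant count below already forces nonemptiness on the typical fibre). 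So the conclusion $\codim=rd$ for all $w_1,\dots,w_r$ is equivalent to the single statement
\[
  \sum_{w_1,\dots,w_r\in\F_p^n}\left(\E_{y\in\F_p^n}\prod_{i=1}^r\Phi(\psi_i(\mathbf{x}),y+w_i)\right) = p^{rn}\cdot\rho^{r}
\]
being the extremal value; more robustly, I would work with the quantity $\E_{y,w_1,\dots,w_r}\prod_i \Phi(\psi_i(\mathbf{x}),y+w_i)$, which after the substitution $w_i\mapsto w_i - y$ factors as $\prod_i \E_{w_i}\Phi(\psi_i(\mathbf{x}),w_i) = \prod_i \rho\cdot A(\psi_i(\mathbf{x})) = \rho^r$ on the good event. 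The point is that the codimension is "defective" (i.e.\ $<rd$ for some choice of $w$'s) precisely when some higher moment of the fibrewise averages exceeds its minimal value, and such defect can be detected by a polynomial-in-$\mathbf{x}$ average.

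Concretely, I would fix the $L^2$-type test statistic
\[
  \mathcal{E}(\mathbf{x}) := \E_{y,y',w_1,\dots,w_r}\ \prod_{i=1}^r \Phi(\psi_i(\mathbf{x}),y+w_i)\,\Phi(\psi_i(\mathbf{x}),y'+w_i),
\]
which, on the locus where all fibres $\Phi(\psi_i(\mathbf{x}),\cdot)$ are cosets of codimension-$d$ subspaces, equals $\E_{w}\prod_i \Phi(\psi_i(\mathbf x),w)^2/\rho^{\,?}$... more cleanly: each inner factor $\E_{y}\Phi(\psi_i(\mathbf x),y+w_i)$ is either $0$ or $\rho$, so $\mathcal{E}(\mathbf x)=\prod_i\E_{w_i}(\text{that indicator})^2 = \rho^{r}\cdot\prod_i A(\psi_i(\mathbf x))$ when every pairwise intersection has full codimension, and is strictly larger otherwise (since the average of a $\{0,\rho\}$-valued function squared dominates $\rho$ times its average, with equality iff the function is a.e.\ $0$ or the subspace structure is rigid — I'd make this precise using that these are genuinely indicator functions of cosets). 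Then I would compute $\E_{\mathbf{x}\in(\F_p^n)^m}\mathcal{E}(\mathbf{x})$: expanding, this is an average of a product of $2r$ copies of $\Phi$ evaluated at linear forms in the variables $(\mathbf{x},y,y',w_1,\dots,w_r)$ (viewed as $\F_p^n\times\F_p^n$-valued, with the first coordinate $\psi_i(\mathbf x)$ independent of $y,y',w$), and this system of linear forms on $\F_p^n\times\F_p^n$ has Cauchy--Schwarz complexity at most $2s+1$ — here I use that $\psi_1,\dots,\psi_r$ has complexity $\le s$ in the $\mathbf x$-variables, so the "doubled" system in the extra pairing variable has complexity controlled by $2s+1$, matching the $U^{2s+2}$ hypothesis. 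Applying the second (pseudorandomness-weighted) half of Theorem~\ref{gtgvn} with $A\times\F_p^n$ as the ambient dense set — or rather applying it to the functions $\Phi$ directly, which are supported on the set $\{(x,y):x\in A\}$ of density $\alpha$ — gives
\[
  \E_{\mathbf{x}}\mathcal{E}(\mathbf{x}) = (\text{main term}) + O_{s,r}\!\left((\alpha\delta\rho)^{\Omega_{s,r}(1)}\cdot\alpha^{-O(1)}\rho^{-O(1)}\right),
\]
where the main term equals $\E_{\mathbf x}\big[\rho^r\prod_i A(\psi_i(\mathbf x))\big]$ up to the same error, by a parallel (and easier) computation of the "diagonal" average.

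The upshot is that $\E_{\mathbf{x}}\big(\mathcal{E}(\mathbf{x}) - \rho^r\prod_i A(\psi_i(\mathbf x))\big)$ is nonnegative (pointwise, by the convexity/rigidity remark above) and has expectation $O_{s,r}((\alpha\delta\rho)^{c'})$ for a suitable $c'=c'_{s,r}>0$, once $C_{s,r}$ and $c_{s,r}$ in the hypothesis are chosen so that the error term is absorbed. By Markov's inequality, the set of $\mathbf{x}$ with $\psi_i(\mathbf x)\in A$ for all $i$ (a set of density $\asymp \alpha^r$ up to $U^{s+1}$-error, again by Theorem~\ref{gtgvn}) on which $\mathcal{E}(\mathbf{x}) - \rho^r\prod_i A(\psi_i(\mathbf x)) \geq \delta\cdot\rho^r\cdot(\text{something})$ has relative density $<\delta$, provided the exponents are chosen correctly; and outside this bad set the defect must vanish, i.e.\ every $r$-fold intersection of the fibres (over all translates $w_i$) has codimension exactly $rd$. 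I would finish by tracking the exact power of $\alpha,\delta,\rho$ needed so that Markov's inequality yields the $\delta$-proportion claimed, which pins down $c_{s,r}$.

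The main obstacle I anticipate is the rigidity step: showing that the pointwise inequality $\mathcal{E}(\mathbf{x})\geq \rho^r\prod_i A(\psi_i(\mathbf x))$ is \emph{strict} by a quantitative amount whenever \emph{some} choice of translates $w_1,\dots,w_r$ produces a deficient intersection. This requires unpacking that $\E_y\prod_i\Phi(\psi_i(\mathbf x),y+w_i)$, as a function of $(w_1,\dots,w_r)$, is the indicator (scaled by $\rho^r/\rho^{\,r-\codim/d}$...) of a coset arrangement, and that the gap between $\E[F^2]$ and $\rho\,\E[F]$ for such $F$ is bounded below by $p^{-O(rd)}$ times $\E[F]$ when $F$ is not identically zero and the subspace is not full-codimension — a clean but slightly fiddly linear-algebra computation that needs to be done carefully to get the $\rho$-dependence (hence $\delta\rho$ rather than just $\delta$) right in the final bound. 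Everything else is a routine, if lengthy, bookkeeping of Cauchy--Schwarz complexities and error terms via Theorem~\ref{gtgvn}.
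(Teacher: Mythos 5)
Your diagnostic statistic $\mathcal{E}(\mathbf{x})$ is a genuinely clean idea: after the substitution $w_i\mapsto w_i-y$ one finds
\[
\mathcal{E}(\mathbf{x})=\rho^r\,p^{-\codim\left(\bigcap_i V_{\psi_i(\mathbf{x})}\right)}\prod_i A(\psi_i(\mathbf{x})),
\]
so it detects exactly the codimension defect, with a clean gap of a factor $p$ whenever the defect is nonzero. (Small correction: the extremal value is therefore $\rho^{2r}\prod_i A(\psi_i(\mathbf{x}))$, not $\rho^r\prod_i A(\psi_i(\mathbf{x}))$, and the quantity $\mathcal{E}(\mathbf{x})-\rho^{2r}\prod_i A(\psi_i(\mathbf{x}))$ is the one that is pointwise nonnegative; with $\rho^r$ as reference the sign is actually the opposite.) However, the step you flag as ``routine bookkeeping of Cauchy--Schwarz complexities'' is where the argument breaks, and it cannot be repaired in the way you envision.

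The issue is that the generalized von Neumann theorem (Theorem~\ref{gtgvn}/Corollary~\ref{Usuniformity}) does \emph{not} hold for the doubled system of $\F_p^n\times\F_p^n$-valued forms $(\psi_i(\mathbf{x}),y+w_i)$, $(\psi_i(\mathbf{x}),y'+w_i)$, even though that system has finite Cauchy--Schwarz complexity in the naive linear-span sense. Already for $r=1$, $s=0$: the two forms $(\psi_1(\mathbf{x}),y+w_1)$ and $(\psi_1(\mathbf{x}),y'+w_1)$ are not scalar multiples of one another, so the partition-into-one-part criterion is satisfied and the ``vanilla'' complexity is $0$; yet
\[
\E_{\mathbf{x},y,y',w_1}\Phi(\psi_1(\mathbf{x}),y+w_1)\Phi(\psi_1(\mathbf{x}),y'+w_1)=\E_{\mathbf{x}}\big(\E_z\Phi(\psi_1(\mathbf{x}),z)\big)^2=\alpha\rho^2,
\]
while the counting-lemma prediction $(\alpha\rho)^2$ (with $U^1$-error $\|\Phi-\alpha\rho\|_{U^1}=0$) is off by a factor of $\alpha$. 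Equivalently, $\E[(\Phi-\alpha\rho)(\psi_1,y+w_1)(\Phi-\alpha\rho)(\psi_1,y'+w_1)]=\alpha\rho^2(1-\alpha)$ is not small even though the $U^1$-norm of $\Phi-\alpha\rho$ is identically zero. The reason is that the proof of a counting lemma needs more than the linear-span condition when the forms take values in a nontrivial product group: it needs the isolated form, restricted to the common kernel of a partition part, to \emph{surject} onto the whole group $\F_p^n\times\F_p^n$. In your doubled system this fails, because the two forms indexed by the same $i$ share the $\F_p^n$-component $\psi_i(\mathbf{x})$, so after fixing the second form the first can only sweep out the slice $\{\psi_i(\mathbf{x})\}\times\F_p^n$. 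Subtracting $\rho A$ instead of $\alpha\rho$ fixes the $r=1$ case (the average then vanishes identically), but for $r\ge 2$ the expansion still produces terms like $\E_{\mathbf{x},h}(\Phi-\rho A)(\psi_1(\mathbf{x}),u+h)(\Phi-\rho A)(\psi_2(\mathbf{x}),u+h)\cdots$ whose linear forms share the second coordinate $u+h$, and the same obstruction reappears.

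This is not an incidental technical snag: the degenerate averages you run into are precisely the directional-uniformity expressions like $\E_{x,h_1,\dots,h_{s+1}}|\E_y\Delta_{(h_1,0),\dots,(h_{s+1},0)}(\Phi-\rho A)(x,y)|^2$ that constitute the real content of the lemma, and controlling them by $\|\Phi-\alpha\rho\|_{U^{2s+2}(\F_p^n\times\F_p^n)}$ is the hard part. The paper's proof (induction on $(r,s)$, extracting an approximate polynomial $\phi(z)\in V_z^\perp$ from a pigeonholed linear relation among the normal vectors, and then Lemma~\ref{cubes} to force many vanishing $(2s+2)$-dimensional derivatives and hence a large $\|\Phi-\rho A\|_{U^{2s+2}}$) is doing exactly the work that your invocation of ``Cauchy--Schwarz complexity $\le 2s+1$'' tries to skip. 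In short, the reduction to a nonnegative test statistic plus Markov is sound as a framing, but the second moment of that statistic is not a system of linear forms in general position, so Theorem~\ref{gtgvn} simply does not apply, and a substitute argument of the paper's complexity is required.
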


We begin by showing that if $\Phi$ is pseudorandom with respect to the $U^{s}(\F_p^n\times\F_p^n)$-norm, then $A$ is pseudorandom with respect to the $U^{s}(\F_p^n)$-norm. This result will also be useful at a few other points in the proof of Theorem~\ref{main}.

\begin{lemma}\label{PhipsdApsd}
    Let $d$ be a nonnegative integer, and set $\rho:=p^{-d}$. Let $A\subset\F_p^n$ have
    density $\alpha$ and $\Phi\subset\F_p^n\times\F_p^n$ be of the form
  \[
    \Phi=\left\{(x,y)\in A\times\F_p^n: y\in u+V_x\right\},
  \]
  where each $V_x$ is a subspace of $\F_p^n$ of codimension $d$. Then, for every natural number $s$, we have
  \[
    \|A-\alpha\|_{U^s(\F_p^n)}\leq\f{1}{\rho}\|\Phi-\alpha\rho\|_{U^s(\F_p^n\times\F_p^n)}.
  \]
\end{lemma}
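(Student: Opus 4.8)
The plan is to deduce everything from a single structural observation about $\Phi$ -- that each of its fibers $\Phi(x,\cdot)$ is either zero or the indicator of an affine subspace of codimension $d$ -- together with the elementary fact that the $U^s$-norm on $\F_p^n\times\F_p^n$ does not increase under averaging out the second coordinate. First I would record that, for every $x\in\F_p^n$,
\[
  \E_{y\in\F_p^n}\Phi(x,y)=\rho\,A(x),
\]
since for $x\in A$ the slice $\Phi(x,\cdot)$ is the indicator of $u+V_x$, which has cardinality $|\F_p^n|/p^{d}=\rho|\F_p^n|$, and for $x\notin A$ the slice vanishes identically. Writing $g:=\Phi-\alpha\rho$ and $\tilde g(x):=\E_{y\in\F_p^n}g(x,y)$, linearity of the fiberwise average then gives $\tilde g=\rho(A-\alpha)$.

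Next I would prove the general claim that $\|\tilde g\|_{U^s(\F_p^n)}\le\|g\|_{U^s(\F_p^n\times\F_p^n)}$ for any $g\colon\F_p^n\times\F_p^n\to\C$. Let $G\colon\F_p^n\times\F_p^n\to\C$ be the function $G(x,y):=\tilde g(x)$. After the reindexing $t\mapsto t-y$ in the definition of $\tilde g$, one has $G(x,y)=\E_{t\in\F_p^n}g(x,y+t)$, so $G$ is the average over $t\in\F_p^n$ of the translates $(x,y)\mapsto g(x,y+t)$ of $g$. Since the $U^s$-norm is translation invariant and, for $s\ge 2$, a genuine norm (see~\cite{Tao12}), the triangle inequality in its averaged form yields
\[
  \|G\|_{U^s(\F_p^n\times\F_p^n)}\le\E_{t\in\F_p^n}\bigl\|g(\cdot\,,\,\cdot+t)\bigr\|_{U^s(\F_p^n\times\F_p^n)}=\|g\|_{U^s(\F_p^n\times\F_p^n)}.
\]
On the other hand, unwinding the definition of the Gowers norm shows $\|G\|_{U^s(\F_p^n\times\F_p^n)}=\|\tilde g\|_{U^s(\F_p^n)}$, because $G((x,y)+\omega\cdot((h_1,k_1),\dots,(h_s,k_s)))=\tilde g(x+\omega\cdot(h_1,\dots,h_s))$ does not depend on $y$ or on the $k_i$, so those expectations are trivial. (The case $s=1$ needs no argument, as $\|A-\alpha\|_{U^1}$ and $\|\Phi-\alpha\rho\|_{U^1}$ both vanish; alternatively the same proof applies since $\|\cdot\|_{U^1}=|\E(\cdot)|$ is still subadditive.)

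Finally, applying the claim with $g=\Phi-\alpha\rho$ and substituting $\tilde g=\rho(A-\alpha)$ gives
\[
  \rho\,\|A-\alpha\|_{U^s(\F_p^n)}=\|\tilde g\|_{U^s(\F_p^n)}\le\|\Phi-\alpha\rho\|_{U^s(\F_p^n\times\F_p^n)},
\]
and dividing by $\rho>0$ completes the proof. I do not expect any genuine obstacle: the entire content is the non-expansiveness of the $U^s$-norm under the fiberwise average, which is really just the statement that averaging out one coordinate is convolution with the uniform probability measure on $\{0\}\times\F_p^n$. One could instead expand both Gowers norms directly and compare term by term, but the translation-averaging argument avoids that bookkeeping and works uniformly in $s$.
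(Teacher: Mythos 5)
Your proof is correct, and it is in essence the same argument as the paper's, but packaged more conceptually. The paper expands $\|A-\alpha\|_{U^s}^{2^s}$, inserts the identity $(A-\alpha)(x)=\rho^{-1}\E_k(\Phi-\alpha\rho)(x,k)$ at each vertex of the cube, introduces extra averaging variables $y,\ell_1,\dots,\ell_s$ and shifts $k_\omega$, and then invokes the Gowers--Cauchy--Schwarz inequality together with translation invariance; if you unwind this, you find the paper is computing exactly $\|G\|_{U^s(\F_p^n\times\F_p^n)}^{2^s}$ for your function $G(x,y)=\tilde g(x)$. You instead observe directly that $G$ is the average over $t$ of the translates $g(\cdot\,,\cdot+t)$ and then apply the triangle inequality for $\|\cdot\|_{U^s}$ plus translation invariance, which is cleaner to state. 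The two routes are not independent: the fact that $\|\cdot\|_{U^s}$ is subadditive (for $s\ge 2$) is itself proved via Gowers--Cauchy--Schwarz, so your argument black-boxes into the norm property what the paper makes explicit. One small plus of your version is the clean intermediate statement, valid for arbitrary $g$, that averaging out one coordinate is a non-expansive operation on $U^s$; one small plus of the paper's is that it makes no appeal to the norm property and so is self-contained given only Lemma~\ref{GCSU}. Your remark handling $s=1$ (where both sides vanish, or alternatively $|\E(\cdot)|$ is still subadditive) is correct and worth keeping.
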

\begin{proof}
  We write
  \[
    \|A-\alpha\|_{U^s(\F_p^n)}^{2^s}=\E_{x,h_1,\dots,h_s}\prod_{\omega\in\{0,1\}^s}(A-\alpha)(x+\omega\cdot(h_1,\dots,h_s))
  \]
  and then, for each $\omega\in\{0,1\}^s$, insert the identity
  \[
    (A-\alpha)(x+\omega\cdot(h_1,\dots,h_s))=\f{1}{\rho}\E_{k_\omega}(\Phi-\alpha\rho)(x+\omega\cdot(h_1,\dots,h_s),k_\omega)
  \]
  to get that $\|A-\alpha\|_{U^s(\F_p^n)}^{2^s}$ equals
  \[
    \f{1}{\rho^{2^s}}\E_{x,h_1,\dots,h_s}\E_{\substack{k_\omega\in\F_p^n \\ \omega\in\{0,1\}^s}}\prod_{\omega\in\{0,1\}^s}(\Phi-\alpha\rho)(x+\omega\cdot(h_1,\dots,h_s),k_\omega).
  \]
  We can average the above quantity over $y,\ell_1,\dots,\ell_s$ and make the change of variables $k_\omega\mapsto k_\omega+y+\omega\cdot(\ell_1,\dots,\ell_s)$ to get that it equals
  \begin{equation}\label{bigavg}
    \f{1}{\rho^{2^s}}\E_{\substack{k_\omega\in\F_p^n \\ \omega\in\{0,1\}^s}}\E_{x,y,h_1,\dots,h_s,\ell_1,\dots,\ell_s}\prod_{\omega\in\{0,1\}^s}(\Phi-\alpha\rho)((x,k_\omega+y)+\omega\cdot((h_1,\ell_1),\dots,(h_s,\ell_s))).
  \end{equation}
  Applying the Gowers--Cauchy--Schwarz inequality bounds~\eqref{bigavg} above by
  \[
    \f{1}{\rho^{2^s}}\E_{\substack{k_\omega\in\F_p^n \\ \omega\in\{0,1\}^s}}\prod_{\omega\in\{0,1\}^s}\|(\Phi-\alpha\rho)(\cdot,k_\omega+\cdot)\|_{U^{s}(\F_p^n\times\F_p^n)}=\f{1}{\rho^{2^s}}\|\Phi-\alpha\rho\|^{2^s}_{U^{s}(\F_p^n\times\F_p^n)},
  \]
  which gives us the conclusion of the lemma.
\end{proof}

To prove Lemma~\ref{intersection}, we will need the notion of an \textit{approximate polynomial} of bounded degree, which we define using the additive discrete difference operator $\partial$. For $\phi:H\to H$ and $h\in H$, define $\partial_h\phi:H\to H$ by
\[
  \partial_h\phi(x):=\phi(x)-\phi(x+h),
\]
and, for $h_1,\dots,h_s\in H$, the $s$-fold additive difference operator $\partial_{h_1,\dots,h_s}$ by
\[
  \partial_{h_1,\dots,h_s}f:=\partial_{h_1}\cdots\partial_{h_s}f.
\]

\begin{definition}
  Let $H$ be an abelian group, $A\subset H$, and $\phi:A\to H$. We say that $\phi$ is an \textit{$\ve$-approximate polynomial of degree at most $s-1$} on $A$ if
  \[
    \partial_{h_1,\dots,h_{s}}\phi(x)=0
  \]
  for at least an $\ve$-proportion of $(s+1)$-tuples $(x,h_1,\dots,h_s)\in H^{s+1}$ for which $x+\omega\cdot(h_1,\dots,h_s)\in A$ for all $\omega\in\{0,1\}^s$.
\end{definition}
We will also need the following result, which is the key combinatorial input into the
proof of Lemma~\ref{intersection}.
\begin{lemma}\label{cubes}
  For each nonnegative integer $s$, there exist constants $C_s,c_s>0$ such that the
  following holds. Let $A\subset\F_p^n$ have density $\alpha$, with
  \[
    \|A-\alpha\|_{U^{2s+2}(\F_p^n)}<C_s(\alpha\delta)^{c_s}
  \]
  and $\phi: A\to\F_p^n$ be a $\delta$-approximate polynomial of degree at most $s$ on
  $A$. Then, for at least a $\Omega_s(\delta^{O_s(1)})$-proportion of $(2s+2)$-dimensional
  parallelopipeds
  \[
    (x+\omega\cdot(h_1,\dots,h_{2s+2}))_{\omega\in\{0,1\}^{2s+2}}
  \]
  in $A^{2^{2s+2}}$, the derivative of $\phi$ on the $(2s+1)$-dimensional face
  $(x+\omega\cdot(h_1,\dots,h_{2s+2}))_{\substack{\omega\in\{0,1\}^{2s+2} \\
      \omega_i=\epsilon}}$,
  \[
    \sum_{\substack{\omega\in\{0,1\}^{2s+2} \\ \omega_{i}=\epsilon}}(-1)^{|\omega|}\phi(x+\omega\cdot(h_1,\dots,h_{2s+2})),
  \]
  vanishes for all $1\leq i\leq 2s+2$ and $\epsilon=0,1$.
\end{lemma}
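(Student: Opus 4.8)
The plan is to run a density-increment / iterated Cauchy--Schwarz argument on the approximate-polynomial hypothesis, extracting more and more derivative relations until we have control on a full $(2s+2)$-dimensional parallelepiped. First I would restate the hypothesis $\partial_{h_1,\dots,h_s}\phi(x)=0$ for $\delta$-many $(s+1)$-tuples with all $s$-cube vertices in $A$, and set up the counting measure: writing $\mathbf{1}_{\partial\phi=0}$ for the indicator that $\partial_{h_1,\dots,h_{s+1}}\phi(x)=0$ --- note the hypothesis is really about degree $\le s$, so the $(s+1)$-fold derivative vanishes --- I would use the pseudorandomness of $A$ to replace ``averages over cube-vertex-tuples in $A$'' with ``$\alpha^{2^s}$ times unrestricted averages'' at controlled cost, via Theorem~\ref{gtgvn} applied to the system of $2^s$ linear forms $x+\omega\cdot(h_1,\dots,h_s)$, which has Cauchy--Schwarz complexity $\le 2s+1$ (hence the need for $U^{2s+2}(\F_p^n)$-pseudorandomness of $A$ in the hypothesis).

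The core of the argument is then a purely combinatorial ``parallelepiped-stacking'' step, carried out in the clean setting where $A=\F_p^n$ (transferred back at the end). The idea: if $\phi$ has its $(s+1)$-fold additive derivative vanishing for $\delta$-many tuples, then by a Cauchy--Schwarz / popularity argument one can glue two such vanishing $s$-cubes sharing a common $(s-1)$-face to upgrade to vanishing of an $(s+1)$-fold derivative on $\Omega_s(\delta^{O_s(1)})$-many configurations, and iterating this gluing $s+1$ more times --- once in each of the remaining coordinate directions $h_{s+2},\dots,h_{2s+2}$ --- produces a genuine $(2s+2)$-dimensional parallelepiped all of whose $2s+2$ codimension-one faces (in each of the two parities $\epsilon=0,1$) carry a vanishing $(2s+1)$-fold derivative of $\phi$. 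The bookkeeping here is the usual one for box-norm-type Cauchy--Schwarz: each gluing step costs a square root and multiplies the proportion of good configurations by a bounded power, so after $O_s(1)$ steps we retain an $\Omega_s(\delta^{O_s(1)})$-proportion, while the error terms coming from the transference remain $\le C_s(\alpha\delta)^{c_s}$-small provided $c_s,C_s$ are chosen appropriately at the end.

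Finally I would transfer back to general $A$: the gluing steps are run inside the measure weighted by indicator functions of $A$ at each cube vertex, and every average encountered is an average of $\phi$-independent indicators $A(\cdot)$ over a linear system of bounded Cauchy--Schwarz complexity (at most $2s+1$), so Theorem~\ref{gtgvn} and Corollary~\ref{Usuniformity} let us pass freely between the $A$-restricted count and $\alpha^{(\text{number of vertices})}$ times the unrestricted count, with total error $O_s(\ve^{\Omega_s(1)})$ where $\ve=\|A-\alpha\|_{U^{2s+2}(\F_p^n)}/\alpha$. Choosing $C_s$ small and $c_s$ appropriately ensures this error is dominated by the main term $\Omega_s(\alpha^{2^{2s+2}}\delta^{O_s(1)})$, yielding the stated proportion of good parallelepipeds in $A^{2^{2s+2}}$. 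I expect the main obstacle to be the combinatorial gluing step: one must set it up so that each Cauchy--Schwarz application genuinely adds a new vanishing-derivative direction without destroying the ones already obtained, which requires carefully choosing, at each stage, which pair of faces to glue and tracking that the union of all linear forms involved stays within Cauchy--Schwarz complexity $2s+1$ so that the transference to $A$ goes through uniformly across all $O_s(1)$ steps.
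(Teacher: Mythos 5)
Your transference framework via Theorem~\ref{gtgvn} is sound, but the core combinatorial step is missing, and the ``gluing'' you describe cannot produce the conclusion. Under any natural reading --- for instance, stacking two parallel $(s+1)$-cubes to get $\partial_{h_1,\dots,h_{s+2}}\phi=0$ and then iterating in $h_{s+3},\dots,h_{2s+2}$ --- you only obtain vanishing of derivatives that still \emph{contain all} of the original differencing directions $h_1,\dots,h_{s+1}$. The lemma, however, requires $\partial_{h_1,\dots,\widehat{h_i},\dots,h_{2s+2}}\phi(x+\epsilon h_i)=0$ for \emph{every} $1\leq i\leq 2s+2$ and both $\epsilon$, including $i\leq s+1$, which \emph{omits} one of those original directions; such face derivatives are simply not reached by stacking in new directions. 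This is already visible at $s=1$: from $\partial_{h_1,h_2}\phi=0$, stacking in $h_3$ and $h_4$ yields $\partial_{h_1,h_2,h_3}\phi$, $\partial_{h_1,h_2,h_4}\phi$ and $\partial_{h_1,h_2,h_3,h_4}\phi$, but not $\partial_{h_2,h_3,h_4}\phi$ or $\partial_{h_1,h_3,h_4}\phi$, which the conclusion demands.

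The paper closes this gap with two ideas your sketch lacks. First, an induction on $s$: one differences in a direction $h_{s+1}$ so that $\partial_{h_{s+1}}\phi$ becomes an approximate polynomial of degree at most $s-1$ on $A\cap(A-h_{s+1})$ for a positive proportion of $h_{s+1}$, and applies the induction hypothesis to it. Second, and crucially, a pigeonhole/level-set device: after summing over $h_{s+1}$, one pigeonholes a reference vertex $y_{\mathbf{h}}$ so that the face-derivative values $v_{i,\epsilon,\mathbf{h}}=\partial_{h_1,\dots,\widehat{h_i},\dots,h_{2s}}\phi(y_{\mathbf{h}}+\epsilon h_i)$ are all \emph{simultaneously} matched on a dense set $X_{\mathbf{h}}\subset A_{\mathbf{h}}$. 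The inequality $\|X_{\mathbf{h}}\|_{U^1}\leq\|X_{\mathbf{h}}\|_{U^2}$ then supplies many parallelograms $(x,x+k,x+k',x+k+k')$ inside $X_{\mathbf{h}}$, and the simultaneous matching is exactly what forces \emph{every} $(2s+1)$-fold face derivative of the $(2s+2)$-dimensional parallelepiped $(x+\omega\cdot(h_1,\dots,h_{2s},k,k'))_{\omega}$ to telescope to zero. Without a mechanism of this kind --- a way to lock in a single common reference value across all face derivatives at once --- an iterated Cauchy--Schwarz along the lines you propose does not prove the lemma.
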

\begin{proof}
  We proceed by induction on $s$, beginning with the case $s=0$. If $\phi$ is a
  $\delta$-approximate polynomial of degree at most $0$ on $A$, then
  $\E_{x,y\in A}1_{\phi(x)=\phi(y)}\geq\delta$, so that, by the pigeonhole principle,
  there exists some $z\in\F_p^n$ for which
  $\mu_{A}\left(\left\{x\in A:\phi(x)=z\right\}\right)\geq \delta$. Set
  $X:=\{x\in A:\phi(x)=z\}$, and consider the set of quadruples
  \[
    X':=\left\{(x,x+h,x+k,x+h+k)\in A^4:x,x+h,x+k,x+h+k\in X\right\}.
  \]
   Note that if $(x,x+h,x+k,x+h+k)\in X'$, then
  \[
    \phi(x)=\phi(x+h)=\phi(x+k)=\phi(x+h+k)=z,
  \]
  so certainly the derivatives
  \[
    \phi(x)-\phi(x+h),\phi(x)-\phi(x+k),\phi(x+h)-\phi(x+h+k),\text{ and }\phi(x+k)-\phi(x+h+k)
  \]
  of $\phi$ on each of the $1$-dimensional faces of the parallelopiped $(x,x+h,x+k,x+h+k)$
  vanish. Since
  \[
    \alpha\delta\leq\|X\|_{U^1(\F_p^n)}\leq\|X\|_{U^2(\F_p^n)}=\left(\f{|X'|}{p^{3n}}\right)^{1/4},
  \]
  we must have $|X'|\geq (\alpha\delta)^4p^{3n}$. Taking $c_0=8$, the total number of
  quadruples $(x,x+h,x+k,x+h+k)$ in $A^4$ is $(\alpha^4+O(C_0\alpha^8))p^{3n}$ by
  Corollary~\ref{Usuniformity}, which means that $X'$ consists of at least a
  $\delta^4/2$-proportion of parallelograms $(x,x+h,x+k,x+h+k)$ in $A^4$ if $C_0$ is
  chosen small enough. Thus, for at least a $\delta^4/2$-proportion of parallelograms
  $(x,x+h,x+k,x+h+k)$ in $A^4$, the derivative of $\phi$ on each $1$-dimensional face
  vanishes, as desired.

  Now suppose that the result holds for a general degree $s-1\geq 0$, and let $\phi$ be a
  $\delta$-approximate polynomial of degree at most $s$ on $A$. By
  Corollary~\ref{Usuniformity},
  \[
    \E_{h\in\F_p^n}\|\Delta_{h}A-\alpha^2\|_{U^{2s}(\F_p^n)}^{2^{2s}}\ll_s C_s(\alpha\delta)^{c_s},
  \]
  so that, as long as $C_s$ is sufficiently small and $c_s$ is sufficiently large, it
  follows from Markov's inequality that
  \[
    \|\Delta_hA-\alpha^2\|_{U^{2s}(\F_p^n)}<\frac{C_{s-1}(\alpha\delta/2)^{2c_{s-1}}}{2},
  \]
  and thus
\[
  |\mu_{\F_p^n}(A\cap (A-h))-\alpha^2|<\frac{C_{s-1}(\alpha\delta/2)^{2c_{s-1}}}{2}
\]
as well, for all but a $O(\delta^2)$-proportion of $h\in\F_p^n$ . Thus, for at least a
$\Omega(\delta)$-proportion of $h_{s+1}\in\F_p^n$, we have
$\|\Delta_{h_{s+1}}A-\alpha^2\|_{U^{2s}(\F_p^n)}<C_{s-1}(\alpha\delta/2)^{2c_{s-1}}/2$,
$|\mu_{\F_p^n}(A\cap (A-h_{s+1}))-\alpha^2|<C_{s-1}(\alpha\delta/2)^{2c_{s-1}}/2$, and
that the function $\partial_{h_{s+1}}\phi$ is a $O(\delta)$-approximate polynomial of
degree at most $s-1$ on $A\cap(A-h_{s+1})$.  Denoting the set of such $h_{s+1}$ by $H$, so
that $\mu_{\F_p^n}(H)\gg \delta$, the induction hypothesis then says that, for each
$h\in H$, there are at least a $\Omega_{s}(\delta^{O_{s}(1)})$-proportion of
$2s$-dimensional parallelopipeds
$(x+\omega\cdot (h_1,\dots,h_{2s}))_{\omega\in\{0,1\}^{2s}}$ in $(A\cap(A-h))^{2^{2s}}$
for which the derivative of $\partial_h\phi$ on each $(2s-1)$-dimensional face vanishes.

Summing over all $h\in H$, it follows that, for at least a
$\Omega_{s}(\delta^{O_s(1)})$-proportion of $(2s+2)$-tuples $(x,y,h_1,\dots,h_{2s})$ for
which $(x+\omega\cdot(h_1,\dots,h_{2s}))_{\omega\in\{0,1\}^{2s}}$ and
$(y+\omega\cdot(h_1,\dots,h_{2s}))_{\omega\in\{0,1\}^{2s}}$ are both in $A^{2^{2s}}$, one
has
  \[
    \partial_{h_1,\dots,\widehat{h_i},\dots,h_{2s}}\phi(x+\epsilon h_i)=\partial_{h_1,\dots,\widehat{h_i},\dots,h_{2s}}\phi(y+\epsilon h_i)
  \]
  for all $i=1,\dots,2s$ and $\epsilon=0,1$. By Corollary~\ref{Usuniformity},
  \[
    \E_{h_1,\dots,h_{2s}}\|\Delta_{h_1,\dots,h_{2s}}A-\alpha^{2^{2s}}\|_{U^{1}(\F_p^n)}^2\ll_sC_s(\alpha\delta)^{c_s},
  \]
  and so, by Markov's inequality,
  \begin{equation}\label{deltaAdensity}
    \mu_{\F_p^n}\left(\bigcap_{\omega\in\{0,1\}^{2s}}[A-\omega\cdot(h_1,\dots,h_{2s})]\right)=\alpha^{2^{2s}}+O\left([C_s(\alpha\delta)^{c_s}]^{\Omega(1)}\right)
  \end{equation}
  for all but a $O([C_s(\alpha\delta)^{c_{s}}]^{\Omega(1)})$-proportion of
  $(h_1,\dots,h_{2s})$ in $(\F_p^n)^{2s}$. By taking $C_s$ small enough and $c_s$ large
  enough, there are therefore at least a $\Omega_{s}(\delta^{O_s(1)})$-proportion of
  $2s$-tuples $(h_1,\dots,h_{2s})$ in $(\F_p^n)^{2s}$ for which~\eqref{deltaAdensity}
  holds and, for at least a $\Omega_{s}(\delta^{O_s(1)})$-proportion of pairs
  $(x,y)\in A^2$ such that $(x+\omega\cdot(h_1,\dots,h_{2s}))_{\omega\in\{0,1\}^{2s}}$ and
  $(y+\omega\cdot(h_1,\dots,h_{2s}))_{\omega\in\{0,1\}^{2s}}$ are both in $A^{2^{2s}}$,
  one also has
  \[
    \partial_{h_1,\dots,\widehat{h_i},\dots,h_{2s}}\phi(x+\epsilon h_i)=\partial_{h_1,\dots,\widehat{h_i},\dots,h_{2s}}\phi(y+\epsilon h_i)
  \]
  for all $i=1,\dots,2s$ and $\epsilon=0,1$. For each such $2s$-tuple $\mathbf{h}$, it
  follows from the pigeonhole principle that there exists a $y_{\mathbf{h}}$ in the set
  \[
    A_{\mathbf{h}}:=\bigcap_{\omega\in\{0,1\}^{2s}}(A-\omega\cdot(h_1,\dots,h_{2s}))
  \]
    such that, for at least a $\Omega_{s}(\delta^{O_s(1)})$-proportion of $x\in A_{\mathbf{h}}$, one has
  \[
    \partial_{h_1,\dots,\widehat{h_i},\dots,h_{2s}}\phi(x+\epsilon h_i)=\partial_{h_1,\dots,\widehat{h_i},\dots,h_{2s}}\phi(y_{\mathbf{h}}+\epsilon h_i)
  \]
  for all $i=1,\dots,2s$ and $\epsilon=0,1$.

  Now set $v_{i,\epsilon,\mathbf{h}}:=\partial_{h_1,\dots,\widehat{h_i},\dots,h_{2s}}\phi(y_{\mathbf{h}}+\epsilon h_i)$,
  \[
    X_{\mathbf{h}}:=\left\{x\in A_{\mathbf{h}}:\partial_{h_1,\dots,\widehat{h_i},\dots,h_{2s}}\phi(x+\epsilon h_i)=v_{i,\epsilon,\mathbf{h}}\text{ for all }i=1,\dots,2s\text{ and }\epsilon=0,1\right\},
  \]
  so that $\mu_{A_{\mathbf{h}}}(X_{\mathbf{h}})\gg_s\delta^{O_s(1)}$, and
  \[
    X_{\mathbf{h}}':=\left\{(x,x+k,x+k',x+k+k')\in A_{\mathbf{h}}^4:x,x+k,x+k',x+k+k'\in X_{\mathbf{h}}\right\}.
  \]
  Note that $(x,x+k,x+k',x+k+k')\in X'_{\mathbf{h}}$ if and only if
  \[
    \partial_{h_1,\dots,\widehat{h_i},\dots,h_{2s}}\phi(x+\epsilon h_i+\omega'\cdot(k,k'))=v_{i,\epsilon,\mathbf{h}}
  \]
  for all $i=1,\dots,2s$, $\epsilon=0,1$, and $\omega'\in\{0,1\}^2$. Thus, whenever $(x,x+k,x+k',x+k+k')\in X'_{\mathbf{h}}$,  we have
  \begin{align*}
    \partial_{h_1,\dots,h_{2s},k}\phi(x)=\partial_{h_1,\dots,h_{2s},k'}\phi(x)=\partial_{h_1,\dots,h_{2s},k}\phi(x+k')&=\partial_{h_1,\dots,h_{2s},k'}\phi(x+k) \\
                                                                                                             &=v_{1,0,\mathbf{h}}-v_{1,1,\mathbf{h}}-(v_{1,0,\mathbf{h}}-v_{1,1,\mathbf{h}}) \\
    &=0
  \end{align*}
  and
  \[
    \partial_{h_1,\dots,\widehat{h_i},\dots,h_{2s},k,k'}\phi(x+\epsilon h_i)=v_{i,\ve,\mathbf{h}}-v_{i,\ve,\mathbf{h}}-v_{i,\ve,\mathbf{h}}+v_{i,\ve,\mathbf{h}}=0
  \]
  for all $i=1,\dots,2s$ and $\epsilon=0,1$. That is, the derivative of $\phi$ vanishes on
  all $(2s+1)$-dimensional faces of the $(2s+2)$-dimensional parallelopiped
  $(x+\omega\cdot(h_1,\dots,h_{2s},k,k'))_{\omega\in\{0,1\}^{2s+2}}$.

  As in the $s=0$ case,
  \[
    \delta^{O_s(1)}\mu_{\F_p^n}(A_{\mathbf{h}})\ll_s\|X_{\mathbf{h}}\|_{U^1(\F_p^n)}\leq \|X_{\mathbf{h}}\|_{U^2(\F_p^n)}=\left(\f{\# X'_{\mathbf{h}}}{p^{3n}}\right)^{1/4},
  \]
  so that
  \[
    \# X'_{\mathbf{h}}\gg_s \delta^{O_s(1)}\mu_{\F_{p}^n}(A_{\mathbf{h}})^4p^{3n}
  \]
  for at least a $\Omega_s(\delta^{O_s(1)})$-proportion of $2s$-tuples $\mathbf{h}$. Each
  ordered quadruple $(x,\mathbf{h},k,k')$ for which
  $(x,x+k,x+k',x+k+k')\in X_{\mathbf{h}}'$ corresponds to a unique $(2s+2)$-dimensional
  parallelopiped
  \[
    P(x,\mathbf{h},k,k')=(x+\omega\cdot(h_1,\dots,h_{2s},k,k'))_{\omega\in\{0,1\}^{2s+2}}
  \]
  in $A^{2s+2}$. Thus
  \begin{align*}
    \#\{P(x,\mathbf{h},k,k'):\mathbf{h}\in(\F_p^n)^{2s}\text{ and }&(x,x+k,x+k',x+k+k')\in
                                                         X_{\mathbf{h}}'\} \\
    &\gg_s\delta^{O_s(1)}\left(\alpha^{2^{2s+2}}+O\left([C_s(\alpha\delta)^{c_s}]^{\Omega(1)}\right)\right)p^{(2s+3)n}.
  \end{align*}
  In comparison, the number of $(2s+2)$-dimensional parallelopipeds in $A$ is
  \[
    \left(\alpha^{2^{2s+2}}+O\left(C_s(\alpha\delta)^{c_s}\right)\right)p^{(2s+3)n}
  \]
    by Corollary~\ref{Usuniformity}. The conclusion of
  the lemma now follows as long as $C_s$ is sufficiently small and $c_s$ is sufficiently large.
\end{proof}
With a bit more work, it is possible to prove a version of Lemma~\ref{cubes} with $(2s+2)$-dimensional parallelopipeds replaced by $(s+2)$-dimensional parallelopipeds (which is optimal), and thus a version of Lemma~\ref{intersection} with the $U^{2s+2}$-norm replaced by the $U^{s+2}$-norm, but this would make a negligible difference in Theorem~\ref{main}.

Now we can prove Lemma~\ref{intersection}.
\begin{proof}[Proof of Lemma~\ref{intersection}]
  We proceed by induction on $r$ and $s$, beginning with the $r=1$, $s=0$ case\footnote{Note that if a system of $r$ linear forms has finite Cauchy--Schwarz complexity, then it has Cauchy--Schwarz complexity at most $r-1$.}. Since $\codim\{y\in\F_p^n:\Phi(x,y)=1\}=d$ for all $x\in A$, certainly
  \[
    \codim\{y\in\F_p^n:\Phi(\psi_1(\mathbf{x}),y+w_1)=1\}=\codim\left(\{y\in\F_p^n:\Phi(\psi_1(\mathbf{x}),y)=1\}-w_1\right)=d
  \]
  for all $\mathbf{x}\in(\F_p^n)^m$ for which $\psi_1(\mathbf{x})\in A$, and this case follows trivially without even needing the assumption that $\|\Phi-\alpha\rho\|_{U^2(\F_p^n\times\F_p^n)}$ is small.

  Now let $r\geq 2$ or $s\geq 1$, and assume that the result holds for all pairs of integers $(r',s')$ satisfying
  \begin{enumerate}
  \item $0\leq r'<r$ and $1\leq s'\leq s$ or
  \item $1\leq s'<s$,
  \end{enumerate}
  and let $C'$ be at most the minimum of $C_{r',s'}$ and $c'$ be at least the maximum of
  $c_{r',s'}$ over all such pairs with $r'<\max\left(r,2^{s+1}\right)$. As long as
  $\|\Phi-\alpha\rho\|_{U^{2s+2}(\F_p^n\times\F_p^n)}<C'(\alpha\delta\rho^{2r}/2)^{c'2^r}$,
  it follows from the induction hypothesis that for all but a $O(\delta)$-proportion of
  $\mathbf{x}\in(\F_p^n)^m$ for which $\psi_1(\mathbf{x}),\dots,\psi_r(\mathbf{x})\in A$,
  we must have
\[
\E_{y}\prod_{i=1}^r(\Phi-\rho)(\psi_i(\mathbf{x}),y+w_i)=\E_{y}\prod_{i=1}^r\Phi(\psi_i(\mathbf{x}),y+w_i)-\rho^r
\]
for all $w_1,\dots,w_r\in\F_p^n$. If the codimension of some
$\{y\in\F_p^n:\prod_{i=1}^{r}\Phi(\psi_i(\mathbf{x}),y+w_i)=1\}$ is not $rd$ for one of
these typical $\mathbf{x}$, then it is either $n$ or at most $rd-1$, which means that
\begin{equation}\label{phirho}
  \left|\E_{y}\prod_{i=1}^r(\Phi-\rho)(\psi_i(\mathbf{x}),y+w_i)\right|\geq\frac{\rho^r}{2}
\end{equation}
in either case, since $p\geq 2$. Squaring both sides of~\eqref{phirho}, multiplying by
$\prod_{i=1}^rA(\psi_i(\mathbf{x}))$, averaging over all $\mathbf{x}\in(\F_p^n)^m$,
swapping the order of summation, and applying Lemma~\ref{PhipsdApsd} (to deduce the
uniformity of $A$) and Theorem~\ref{gtgvn} yields
\[
\E_{y,z}\|(\Phi-\rho A)(\cdot,y)(\Phi-\rho A)(\cdot,z)\|_{U^{s+1}(\F_p^n)}\gg\delta\rho^{2r}\left(\alpha+O_{s,r}\left((C')^{\Omega_s(1)}\alpha^{c'-1}\right)\right).
\]
By H\"older's inequality, we then have
\[
\E_{x,h_1,\dots,h_{s+1}}\left|\E_{y}\Delta_{(h_1,0),\dots,(h_{s+1},0)}(\Phi-\rho A)(x,y)\right|^2\gg_s\left(\delta\rho^{2r}\right)^{2^{s+1}}\left(\alpha^{2^{s+1}}+O_{s,r}\left((C')^{\Omega_s(1)}\alpha^{c'-1}\right)\right).
\]
It follows from this, the induction hypothesis, our assumption that
$\|\Phi-\alpha\rho\|_{U^{2s+2}(\F_p^n\times\F_p^n)}<C'(\alpha\delta\rho^{2r}/2)^{c'2^r}$,
and Lemma~\ref{PhipsdApsd} that
\[
\E_{x,h_1,\dots,h_{s+1}}\left|\E_{y}\Delta_{(h_1,0),\dots,(h_{s+1},0)}\Phi(x,y)-\rho^{2^{s+1}}A(x)\right|^2\gg_s(\delta\rho^{2r})^{2^{s+1}}\left(\alpha^{2^{s+1}}+O_{s,r}\left((C')^{\Omega_s(1)}\alpha^{c'-1}\right)\right),
\]
since the Cauchy--Schwarz complexity of any proper subset of
\[
  \left\{x+\omega\cdot(h_1,\dots,h_{s+1}):\omega\in\{0,1\}^{s+1}\right\}
\]
is at most $s-1$.

Thus, by taking $C'$ sufficiently small and $c'$ sufficiently large, we get that
\begin{equation}\label{codimwrong}
\codim\left\{y\in\F_p^n:\Delta_{(h_1,0),\dots,(h_{s+1},0)}\Phi(x,y)=1\right\}\neq 2^{s+1}d
\end{equation}
for at least a $\Omega_s((\delta\rho)^{O_s(1)})$-proportion of $(s+2)$-tuples $(x,h_1,\dots,h_{s+1})\in(\F_p^n)^{s+2}$ for which $(x+\omega\cdot(h_1,\dots,h_{s+1}))_{\omega\in\{0,1\}^{s+1}}\in A^{2^{s+1}}$. The condition~\eqref{codimwrong} implies that, for each such $(x,h_1,\dots,h_{s+1})$, there exist vectors $v_{x,\mathbf{h},\omega}\in V^{\perp}_{x+\omega\cdot\mathbf{h}}$, $\omega\in\{0,1\}^{s+1}$, not all of which are zero, such that
\[
\sum_{\omega\in\{0,1\}^{s+1}}v_{x,\mathbf{h},\omega}=0.
\]
Fix a basis $\{\gamma_{z,1},\dots,\gamma_{z,d}\}$ of $V_z^\perp$ for each $z\in\F_p^n$. We can write every vector $v_{x,\mathbf{h},\omega}$ in terms of this basis, giving us that
\[
\sum_{\omega\in\{0,1\}^{s+1}}\sum_{j=1}^db_{x,\mathbf{h},\omega,j}\gamma_{x+\omega\cdot\mathbf{h},j}=0
\]
for some $2^{s+1}d$-tuple of constants $(b_{x,\mathbf{h},\omega,j})_{\omega\in\{0,1\}^{s+1},1\leq j\leq d}$, not all of which are zero.

We apply the pigeonhole principle to deduce that there is some $2^{s+1}d$-tuple of constants $(b_{\omega,j})_{\omega\in\{0,1\}^{s+1},1\leq j\leq d}$, not all of which are zero, such that
\[
\sum_{\omega\in\{0,1\}^{s+1}}\sum_{j=1}^db_{\omega,j}\gamma_{x+\omega\cdot\mathbf{h},j}=0
\]
for at least a $\Omega_s((\delta\rho)^{O_s(1)})$-proportion of $(s+2)$-tuples $(x,h_1,\dots,h_{s+1})\in(\F_p^n)^{s+2}$ for which $(x+\omega\cdot(h_1,\dots,h_{s+1}))_{\omega\in\{0,1\}^{s+1}}\in A^{2^{s+1}}$. Defining
\[
  \phi_{\omega}(z):=\sum_{j=1}^db_{\omega,j}\gamma_{z,j}
\]
for each $\omega\in\{0,1\}^{s+1}$, the above says that, for at least a
$\Omega_s((\delta\rho)^{O_s(1)})$-proportion of $(s+2)$-tuples
$(x,h_1,\dots,h_{s+1})\in(\F_p^n)^{s+2}$ for which
$(x+\omega\cdot(h_1,\dots,h_{s+1}))_{\omega\in\{0,1\}^{s+1}}\in A^{2^{s+1}}$, we must have
\[
\sum_{\omega\in\{0,1\}^{s+1}}\phi_\omega(x+\omega\cdot\mathbf{h})=0,
\]
where at least one of the functions $\phi_{\omega}:\F_p^n\to\F_p^n$ does not have the zero
vector in its image (because $\phi_\omega(z)$ is always a nontrivial linear combination of
linearly independent vectors). Let $\phi$ be any such $\phi_\omega$. It then follows by
applying Corollary~\ref{Usuniformity} to the inside average of
\[
\E_{y}\E_{x,h_1,\dots,h_{s+1}}e_p\left(y\cdot\sum_{\omega\in\{0,1\}^{s+1}}\phi_\omega(x+\omega\cdot \mathbf{h})\right)A\left(\phi_\omega(x+\omega\cdot \mathbf{h})\right)
\]
that $\partial_{h_1,\dots,h_{s+1}}\phi(x)=0$ for at least a
$\Omega_s((\delta\rho)^{O_s(1)})$-proportion of $(s+2)$-tuples
$(x,h_1,\dots,h_{s+1})\in(\F_p^n)^{s+2}$ for which
$(x+\omega\cdot(h_1,\dots,h_{s+1}))_{\omega\in\{0,1\}^{s+1}}\in A^{2^{s+1}}$, again
provided that $C'$ is sufficiently small and $c'$ is sufficiently large. That is, $\phi$
is a $\Omega_s((\delta\rho)^{O_s(1)})$-approximate polynomial of degree at most $s$ on
$A$.

If $C'$ is small enough and $c'$ is large enough, Lemmas~\ref{PhipsdApsd} and~\ref{cubes} then imply that for at least a $\Omega_{s}((\alpha\delta\rho)^{O_s(1)})$-proportion of $(2s+2)$-dimensional parallelopipeds $(x+\omega\cdot(h_1,\dots,h_{2s+2}))_{\omega\in\{0,1\}^{2s+2}}$ in $A^{2^{2s+2}}$, the derivative of $\phi$ on each $(2s+1)$-dimensional face vanishes, i.e.,
\[
\sum_{\substack{\omega\in\{0,1\}^{2s+2} \\ \omega_i=\epsilon}}(-1)^{|\omega|}\phi(x+\omega\cdot(h_1,\dots,h_{2s+2}))=0
\]
for all $i=1,\dots,2s+2$ and $\epsilon=0,1$. Call the set of $(2s+3)$-tuples
$(x,h_1,\dots,h_{2s+2})$ corresponding to such $(2s+2)$-dimensional parallelopipeds
$X$. Consider $\|\Phi-\rho A\|_{U^{2s+2}(\F_p^n\times\F_p^n)}^{2^{2s+2}}$, which, plugging
in the expression
\[
  \rho A(x)\sum_{0\neq v\in V_x^\perp}e_p(v\cdot (y-u))
\]
for $\Phi-\rho A$, equals
\[
  \rho^{2^{2s+2}}\E_{x,h_1,\dots,h_{2s+2}}\Delta_{h_1,\dots,h_{2s+2}}A(x)\sum_{\substack{0\neq v_\omega\in V_{x+\omega\cdot\mathbf{h}}^\perp \\ \omega\in\{0,1\}^{2s+2}}}\prod_{\substack{1\leq i\leq 2s+2 \\ \epsilon=0,1}}1_{\mathbf{0}}\left(\sum_{\substack{\omega\in\{0,1\}^{2s+2} \\ \omega_i=\epsilon}}(-1)^{|\omega|}v_\omega\right).
\]
The above has size
$\gg_s(p-1)(\alpha\rho)^{2^{2s+2}}(\alpha\delta\rho)^{O_s(1)}\gg_s(\alpha\delta\rho)^{O_s(1)}$,
coming from the contribution of $v_\omega=\lambda\phi(x+\omega\cdot(h_1,\dots,h_{2s+2}))$
for each $\lambda\in\F_p^\times$ and $(x,h_1,\dots,h_{2s+2})\in X$. On the other hand, we
have
\[
  \|\Phi-\rho A\|_{U^{2s+2}(\F_p^n\times\F_p^n)}=\|\Phi-\rho\alpha+\rho\alpha-\rho A\|_{U^{2s+2}(\F_p^n\times\F_p^n)}\leq\|\Phi-\rho\alpha\|_{U^{2s+2}(\F_p^n\times\F_p^n)}+\rho\|A-\alpha\|_{U^{2s+2}(\F_p^n)},
\]
so that
\[
  (\alpha\delta\rho)^{O_s(1)}\ll_s \|\Phi-\rho A\|_{U^{2s+2}(\F_p^n\times\F_p^n)}< 2C_{s,r}(\alpha\delta\rho)^{c_{s,r}}.
\]
Taking $C_{s,r}$ sufficiently small and $c_{s,r}$ sufficiently large will thus yield a contradiction if
\[
  \codim\left\{y\in\F_p^n:\prod_{i=1}^r\Phi(\psi_i(\mathbf{x}),y+w_i)=1\right\}\neq rd
\]
for some $w_1,\dots,w_r\in\F_p^n$ for a $\delta$-proportion of $\mathbf{x}\in(\F_p^n)^m$ for which $\psi_1(\mathbf{x}),\dots,\psi_r(\mathbf{x})\in A$.
\end{proof}

\section{Control by directional uniformity norms}\label{gvn}

This section is devoted to proving Lemmas~\ref{lower} and~\ref{control}. Our arguments mostly consist of careful, repeated applications of the Cauchy--Schwarz inequality to ensure that there is no loss of density factors and using the results of Sections~\ref{prelims} and~\ref{phipsd} to analyze the resulting averages. As a simple warm-up, we begin by showing that if $T\subset \F_p^n\times\F_p^n$ has the form~\eqref{Tform} and $B,C,$ and $D$ are sufficiently pseudorandom, then $T$ has density close to the product density $\alpha\beta\gamma\delta\rho$.

\begin{lemma}\label{Tdensity}
   Let $d$ be a nonnegative integer, and set $\rho:=p^{-d}$. Let $\ve>0$ and assume that $A,B,C,D\subset \F_p^n$ have densities $\alpha,\beta,\gamma,$ and $\delta$, respectively, and satisfy
  \[
    \|B-\beta\|_{U^{4}(\F_p^n)},\|C-\gamma\|_{U^{4}(\F_p^n)},\|D-\delta\|_{U^{4}(\F_p^n)}<\ve
  \]
  and that $\Phi\subset \F_p^n\times \F_p^n$ takes the form
  \[
    \Phi=\left\{(x,y)\in A\times \F_p^n: y\in u+V_x\right\},
  \]
  where each $V_x$ is a subspace of $\F_p^n$ of codimension $d$. Then the set $T\subset \F_p^n\times \F_p^n$ defined by~\eqref{Tform} has density
  \[
    \alpha\beta\gamma\delta\rho+O(\ve^{1/8}).
  \]
\end{lemma}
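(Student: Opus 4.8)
The plan is to expand $T(x,y) = B(y)C(x+y)D(2x+y)\Phi(x,y)$ and average over $(x,y)\in\F_p^n\times\F_p^n$, replacing the indicator $\Phi$ by its ``expected'' value $\alpha\rho\cdot A(x)$ and using the pseudorandomness of $B,C,D$ to decouple the three remaining linear factors. Concretely, first I would use the definition of $\Phi$ to write, for each fixed $x\in A$,
\[
  \E_{y}\Phi(x,y)B(y)C(x+y)D(2x+y) = \E_{y\in u+V_x}B(y)C(x+y)D(2x+y)\cdot \rho,
\]
since $\{y : (x,y)\in\Phi\} = u+V_x$ has density $\rho = p^{-d}$ in $\F_p^n$ when $x\in A$ (and the average is $0$ when $x\notin A$). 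So $\mu(T) = \rho\,\E_{x}A(x)\,\E_{y\in u+V_x}B(y)C(x+y)D(2x+y)$.

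Next I would estimate the inner average $\E_{y\in u+V_x}B(y)C(x+y)D(2x+y)$ for each fixed $x$. The three forms $y\mapsto y$, $y\mapsto x+y$, $y\mapsto 2x+y$ (as affine-linear maps of the single variable $y$) are pairwise non-proportional as \emph{linear} forms in $y$ only in the sense that their \emph{differences} are nonzero constants; more precisely, after restricting $y$ to the affine subspace $u+V_x$ and writing $y = u+v$ with $v\in V_x$, these become three affine functions of $v$ whose linear parts are all equal to the identity on $V_x$. Thus this is genuinely a ``$U^1$-type'' / additive-combinatorial count: it is the normalized count of triples, and one shows via Lemma~\ref{subspaceavg} (applied with the restriction of $B$, $C$, $D$ to appropriate affine subspaces of codimension $\le d$) together with Corollary~\ref{Usuniformity} that
\[
  \E_{y\in u+V_x}B(y)C(x+y)D(2x+y) = \mu_{u+V_x}(B)\,\mu_{x+u+V_x}(C)\,\mu_{2x+u+V_x}(D) + O(p^{O(d)}\ve).
\]
Here the point is that each of $B,C,D$ restricted to the relevant codimension-$d$ affine subspace has density close to its global density $\beta,\gamma,\delta$ — this is exactly Lemma~\ref{subspaceavg}, which says $|\E_{w+V}(B-\beta)| < p^d\ve$ whenever $\|B-\beta\|_{U^2} < \ve$ — and the $U^4$ hypothesis certainly implies the $U^2$ one.

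Then I would substitute back: $\mu(T) = \rho\,\E_x A(x)\big[(\beta + O(p^d\ve))(\gamma+O(p^d\ve))(\delta+O(p^d\ve)) + O(p^{O(d)}\ve)\big] = \rho\alpha\beta\gamma\delta + O(p^{O(d)}\ve)$. Since $p$ is fixed and $d$ is bounded (indeed $\rho = p^{-d}$ so $p^d = 1/\rho$ is a bounded quantity in the regime where the statement is non-vacuous — and in any case $\ve^{1/8}$ is stated rather than $\ve$ precisely to absorb such polynomial-in-$p^d$ losses), the error is $O(\ve^{1/8})$ after possibly shrinking, giving the claimed density $\alpha\beta\gamma\delta\rho + O(\ve^{1/8})$.

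The main obstacle — really the only subtle point — is handling the $x$-dependence of the affine subspace $u+V_x$ cleanly: one must be careful that the ``$B,C,D$ are dense and roughly equidistributed on $u+V_x$'' estimate holds \emph{for every} $x\in A$ uniformly (not just on average over $x$), which is why one invokes Lemma~\ref{subspaceavg} pointwise in $x$ rather than a generalized von Neumann argument averaged over $x$. A secondary bookkeeping point is that $x+u+V_x$ and $2x+u+V_x$ are cosets of $V_x$ depending on $x$, but since the translate does not affect the density of $C$ or $D$ on that coset beyond the $p^d\ve$ error from Lemma~\ref{subspaceavg}, this causes no trouble. Everything else is a routine expansion and collection of error terms.
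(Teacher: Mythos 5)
The central step of your argument is a pointwise-in-$x$ claim that is false. You assert that for each fixed $x\in A$,
\[
  \E_{y\in u+V_x}B(y)C(x+y)D(2x+y) = \mu_{u+V_x}(B)\,\mu_{x+u+V_x}(C)\,\mu_{2x+u+V_x}(D) + O(p^{O(d)}\ve),
\]
and you justify this by Lemma~\ref{subspaceavg} plus Corollary~\ref{Usuniformity}. But Lemma~\ref{subspaceavg} only controls the density of each single set on the coset; it says nothing about the density of their intersection. And Corollary~\ref{Usuniformity} cannot be applied to the system of forms $\{y,\,x+y,\,2x+y\}$ in the single variable $y$ for a fixed $x$: those three forms all have the same homogeneous part (the identity on $y$), so any one of them lies in the span of each of the others, and the Cauchy--Schwarz complexity is infinite. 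Concretely, if $B=C=D$ and $x=0$, then the left-hand side is $\E_{y\in u+V}B(y)=\beta+O(p^d\ve)$, while the right-hand side is $\beta^3+O(p^d\ve)$. These disagree whenever $\beta\notin\{0,1\}$. So no bound of this type can hold for every $x\in A$.

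Your final paragraph identifies the crux ("one must be careful that the estimate holds \emph{for every} $x\in A$ uniformly") but then draws exactly the wrong conclusion from it: the uniform-in-$x$ statement is precisely what fails, and the correct remedy is the opposite of the one you propose. The estimate on $\E_{y\in u+V_x}F(x,y)$ with $F(x,y)=B(y)C(x+y)D(2x+y)$ can only hold for \emph{most} $x$, which is enough because the outer average is over $x$. This is what Lemma~\ref{cs2} provides: it shows that $\|F(x,\cdot)-\beta\gamma\delta\|_{U^2(\F_p^n)}\ge\ve^{1/8}$ for at most an $O(\sqrt\ve)$-proportion of $x$, by exploiting that the system of linear forms in the variables $x,y,h,k$ appearing in the fourth-moment expansion of $\E_x\|F(x,\cdot)-\beta\gamma\delta\|_{U^2}^4$ has bounded Cauchy--Schwarz complexity (here $\le 3$), so Corollary~\ref{Usuniformity} applies to the \emph{averaged-over-$x$} quantity. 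One then applies Lemma~\ref{subspaceavg} pointwise only for those good $x$, and the exceptional set of $x$ contributes $O(\sqrt\ve)$ to the answer. This averaging step is where the $U^4$ hypothesis is actually used (not just $U^2$), and it is also where the exponent $1/8$ in the error term originates; your suggestion that $\ve^{1/8}$ is there merely to absorb $p^{O(d)}$ factors is not right, since $\ve$ is a free parameter and $p^{O(d)}\ve$ need not be bounded by $\ve^{1/8}$ in general.
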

\begin{proof}
  The density of $T$ in $\F_p^n\times\F_p^n$ can be written as
  \[
    \E_{x,y}F(x,y)\Phi(x,y),
  \]
  where $F(x,y):=B(y)C(x+y)D(2x+y)$. Set $L(x,y):=\{y,x+y,2x+y\}$. Lemma~\ref{cs2} says that
  \[
    \PP(x\in \F_p^n:\|F(x,\cdot)-\beta\gamma\delta\|_{U^2(\F_p^n)}>\ve^{1/8})\ll\sqrt{\ve},
  \]
  since
  \[
  L(x,y) \cup L(x,y+h) \cup L(x,y+k) \cup L(x,y+h+k)
  \]
  has Cauchy--Schwarz complexity at most $3$. Thus, Lemma~\ref{subspaceavg} yields
  \[
    \E_{y}F(x,y)\Phi(x,y)=\left(\beta\gamma\delta\rho+O(\ve^{1/8})\right)A(x)
  \]
  for all but a $O(\sqrt{\ve})$-proportion of $x\in \F_p^n$, so that
  \[
  \E_{x,y}F(x,y)\Phi(x,y)=\alpha\beta\gamma\delta\rho+O(\ve^{1/8}).
  \]
\end{proof}

Now we can prove Lemma~\ref{lower}.
\begin{proof}[Proof of Lemma~\ref{lower}]
  The quantity of interest $\Lambda(T,T,T,S)$ is
  \[
    \E_{x,y,z} B(y+z)B(y+2z)C(x+y)C(x+y+2z)D(2x+y)D(2x+y+z)\Phi(x,y)\Phi(x,y+z)S(x+z,y),
  \]
  which, after a change of variables, can be written as
  \[
    \E_{x,y}S(x,y)\mu(x,y),
  \]
  where $\mu(x,y)$ equals
  \[
    \E_{z}B(y+z)B(y+2z)C(x+y-z)C(x+y+z)D(2x+y-2z)D(2x+y-z)\Phi(x-z,y)\Phi(x-z,y+z).
  \]
  We will show that $\mu(x,y)$ is very close to the constant value $\alpha\beta^2\gamma^2\delta^2\rho^2$ for almost every pair $(x,y)\in \F_p^n\times\F_p^n$, from which it will then follow that $\Lambda(T,T,T,S)$ is close to $\sigma\alpha^2\beta^3\gamma^3\delta^3\rho^3$.

  The first moment $\E_{x,y}\mu(x,y)$ equals
  \[
    \E_{x,y,z}B(y+z)B(y+2z)C(x+y)C(x+y+2z)D(2x+y)D(2x+y+z)\Phi(x,y)\Phi(x,y+z).
  \]
  Applying Lemma~\ref{cs2} yields
  \[
    \PP\left((x,y)\in \F_p^n\times\F_p^n:\|F(x,y,\cdot)-\beta^2\gamma\delta\|_{U^2(\F_p^n)}\geq\ve^{1/8}\right)\ll\sqrt{\ve},
  \]
  where
  \[
    F(x,y,z):=B(y+z)B(y+2z)C(x+y+2z)D(2x+y+z).
  \]
  It therefore follows from Lemma~\ref{subspaceavg} that
  \[
  \E_{z}F(x,y,z)\Phi(x,y+z)=\left(\beta^2\gamma\delta\rho+O(\ve^{1/8})\right)A(x)
  \]
  for all but a $O(\sqrt{\ve})$-proportion of $(x,y)\in\F_p^n\times\F_p^n$. Thus,
  \[
  \E_{x,y}\mu(x,y)=\beta^2\gamma\delta\rho\E_{x,y}C(x+y)D(2x+y)\Phi(x,y)+O\left(\ve^{1/8}\right).
  \]
  By arguing as in the proof of Lemma~\ref{Tdensity}, we have
  \[
    \E_{x,y}C(x+y)D(2x+y)\Phi(x,y)=\alpha\gamma\delta\rho+O\left(\ve^{1/8}\right),
  \]
  and thus conclude that
  \[
  \E_{x,y}\mu(x,y)=\alpha\beta^2\gamma^2\delta^2\rho^2+O\left(\ve^{1/8}\right).
  \]
  
  The second moment $\E_{x,y}\mu(x,y)^2$ equals
  \begin{align*}
    \E_{x,y,z,h}\bigg(&\Delta_{h}B(y+z)\Delta_{2h}B(y+2z) \Delta_{-h}C(x+y)\Delta_{h}C(x+y+2z) \\
                                 &\Delta_{-2h}D(2x+y)\Delta_{-h}D(2x+y+z)\Delta_{(-h,0)}\Phi(x,y)\Delta_{(-h,h)}\Phi(x,y+z)\bigg).
  \end{align*}
  Applying Lemma~\ref{cs2} again yields
  \[
    \PP\left((x,y,h)\in\F_p^n\times\F_p^n\times\F_p^n:\|G(x,y,h,\cdot)-\beta^4\gamma^2\delta^2\|_{U^2(\F_p^n)}>\ve^{1/8}\right)\ll\sqrt{\ve}
  \]
  where
  \[
    G(x,y,h,z):=\Delta_{h}B(y+z)\Delta_{2h}B(y+2z)\Delta_{h}C(x+y+2z)\Delta_{-h}D(2x+y+z),
  \]
  and applying Lemma~\ref{intersection} yields
  \[
  \PP\left((x,x-h)\in A\times A:\codim\left\{z\in\F_p^n:\Delta_{(-h,h)}\Phi(x,y+z)=1\right\}\neq 2d\right)\ll\f{\ve^{\Omega(1)}}{\rho^{O(1)}}
\]
for all $y\in\F_p^n$. Thus, by Lemma~\ref{subspaceavg},
  \[
  \E_{y}G(x,y,h,z)\Delta_{(-h,h)}\Phi(x,y+z)=\beta^4\gamma^2\delta^2\rho^2+O(\ve^{1/8})
  \]
  for all but a $O(\ve^{\Omega(1)}/\rho^{O(1)})$-proportion of $(x,x+h,y)\in A\times A\times\F_p^n$, so that
  \[
  \E_{x,y}\mu(x,y)^2=\beta^4\gamma^2\delta^2\rho^2\E_{x,y,h}\Delta_{-h}C(x+y)\Delta_{-2h}D(2x+y)\Delta_{(-h,0)}\Phi(x,y)+O\left(\f{\ve^{\Omega(1)}}{\rho^{O(1)}}\right).
\]
By Lemmas~\ref{cs2} and~\ref{intersection} again, we have
\[
  \PP\left((x,h)\in\F_p^n\times\F_p^n:\|H(x,h,\cdot)-\gamma^2\delta^2\|_{U^2(\F_p^n)}>\ve^{1/8}\right)\ll\sqrt{\ve}
\]
and
\[
  \PP\left((x,x-h)\in A\times A:\codim\left\{y\in\F_p^n:\Delta_{(-h,0)}\Phi(x,y)=1\right\}\neq 2d\right)\ll\f{\ve^{\Omega(1)}}{\rho^{O(1)}},
\]
where
\[
  H(x,h,y):=\Delta_{-h}C(x+y)\Delta_{-2h}D(2x+y),
\]
so that
\[
  \E_{x,y,h}\Delta_{-h}C(x+y)\Delta_{-2h}D(2x+y)\Delta_{(-h,0)}\Phi(x,y)=\gamma^2\delta^2\rho^2\E_{x,h}\Delta_{-h}A(x)+O\left(\f{\ve^{\Omega(1)}}{\rho^{O(1)}}\right)
\]
by Lemma~\ref{subspaceavg}. Using Corollary~\ref{Usuniformity} to estimate $\E_{x,h}\Delta_{-h}A(x)$, we thus conclude that
\[
  \E_{x,y}\mu(x,y)^2 = \alpha^2\beta^4\gamma^4\delta^4\rho^4+O\left(\f{\ve^{\Omega(1)}}{\rho^{O(1)}}\right)
\]

  Our estimates for the first and second moments of $\mu$ imply that $\mu$ has variance $\E_{x,y}|\mu(x,y)-\alpha\beta^2\gamma^2\delta^2\rho|^2\ll\ve^{\Omega(1)}/\rho^{O(1)}$. It follows that
  \begin{align*}
    \E_{x,y}S(x,y)\mu(x,y) &= \sigma\alpha^2\beta^3\gamma^3\delta^3\rho^3+O\left(\E_{x,y}|\mu(x,y)-\alpha\beta^2\gamma^2\delta^2\rho^2|\right) \\
                           &= \sigma\alpha^2\beta^3\gamma^3\delta^3\rho^3+O\left([\E_{x,y}|\mu(x,y)-\alpha\beta^2\gamma^2\delta^2\rho^2|^2]^{1/2}\right) \\
                           &= \sigma\alpha^2\beta^3\gamma^3\delta^3\rho^3+O\left(\f{\ve^{\Omega(1)}}{\rho^{O(1)}}\right).
  \end{align*}
  When $c_1$ is sufficiently small and $c_2$ is sufficiently large, this gives the desired lower bound for $\Lambda(T,T,T,S)$.
\end{proof}

To finish this section, we prove Lemma~\ref{control}.

\begin{proof}[Proof of Lemma~\ref{control}]
  We prove~\eqref{gvn1},~\eqref{gvn2}, and then~\eqref{gvn3}, proceeding in decreasing order of the number of applications of Cauchy--Schwarz required. For~\eqref{gvn1}, we make the change of variables $z\mapsto z-x-y$ to write $\Lambda(f_0,f_1,f_2,f_3)$ as
  \[
    \E_{x,y,z}f_0(x,y)f_1(x,z-x)f_2(x,2z-2x-y)f_3(z-y,y),
  \]
  which, by applying the Cauchy--Schwarz inequality in the $x$ and $z$ variables, has modulus squared bounded by
  \[
    \mu_{\F_p^n\times\F_p^n}(T)\cdot\E_{x,z}B(z-x)D(x+z)\left|\E_yf_0(x,y)f_2(x,2z-2x-y)f_3(z-y,y)\right|^2.
  \]
  The first factor equals $\alpha\beta\gamma\delta\rho+O(\ve^{1/8})$ by Lemma~\ref{Tdensity}.   Expanding the square and making a change of variables, the second factor equals
  \[
    \E_{x,y,z,h_1}B(z-x)D(x+z)\Delta_{(0,h_1)}f_0(x,y)\Delta_{(0,-h_1)}f_2(x,2z-2x-y)\Delta_{(-h_1,h_1)}f_3(z-y,y).
  \]
  By another application of the Cauchy--Schwarz inequality in the $y,z,$ and $h_1$
  variables, the modulus squared of this is at most
  \begin{equation}\label{2.5b}
    \E_{y,z,h_1}\Delta_{h_1}B(y)C(z+y)\Delta_{-h_1}D(2z+y)\Delta_{(-h_1,h_1)}\Phi(z,y)
  \end{equation}
  times
  \begin{align*}
    \E_{y,z,h_1} \bigg(&C(z)\Delta_{(-h_1,h_1)}\Phi(z-y,y)\left|\E_{x}B(z-x)D(x+z)\Delta_{(0,h_1)}f_0(x,y)\Delta_{(0,-h_1)}f_2(x,2z-2x-y)\right|^2\bigg).
  \end{align*}
  The first factor~\eqref{2.5b} can be estimated in the same manner as the averages appearing in the proof of Lemma~\ref{lower}, and equals $\alpha^2\beta^2\gamma\delta^2\rho^2+O(\ve^{\Omega(1)}/\rho^{O(1)})$.   Expanding the square and making a change of variables, we get that the second factor equals
  \begin{align*}
    \E_{x,y,z,h_1,h_2}\bigg(&\Delta_{-h_2}B(y-z)C(x+y-z)\Delta_{h_2}D(2x+y-z)\Delta_{(-h_1,h_1)}\Phi(x+z,y-2z) \\
                  &\Delta_{(0,h_1),(h_2,0)}f_0(x,y-2z)\Delta_{(0,-h_1),(h_2,-2h_2)}f_2(x,y)\bigg).
  \end{align*}
  A final application of the Cauchy--Schwarz inequality in the $x,y,h_1,$ and $h_2$ variables bounds the modulus squared of this by
  \begin{equation}\label{2.5c}
    \E_{x,y,h_1,h_2}\Delta_{-h_1,-2h_2}B(y)\Delta_{-h_1,-h_2}C(x+y)\Delta_{-h_1}D(2x+y)\Delta_{(0,-h_1),(h_2,-2h_2)}\Phi(x,y)
  \end{equation}
  times
  \begin{align*}
    \E_{x,y,h_1,h_2}&\Delta_{-h_1,-2h_2}B(y)\Delta_{-h_1,-h_2}C(x+y)\Delta_{-h_1}D(2x+y)\Delta_{(h_2,-2h_2)}\Phi(x,y)\\
    &\cdot|\E_{z}\Delta_{-h_2}B(y-z)C(x+y-z)\Delta_{h_2}D(2x+y-z)\Delta_{(-h_1,h_1)}\Phi(x+z,y-2z)\\
    &\ \ \ \ \ \ \ \Delta_{(0,h_1),(h_2,0)}f_0(x,y-2z)|^2
  \end{align*}
  By Lemmas~\ref{cs2} and~\ref{intersection}, we have
  \[
    \PP\left((x,y,h_2)\in\F_p^n\times\F_p^n\times\F_p^n:\|F(x,y,h_2,\cdot)-\beta^2\gamma^2\delta\|_{U^2(\F_p^n)}>\ve^{1/8}\right)\ll\sqrt{\ve},
  \]
  \[
    \PP\left((x,h_2)\in\F_p^n\times\F_p^n:\|G(x,h_2,\cdot)-\beta^2\gamma^2\delta\|_{U^2(\F_p^n)}>\ve^{1/8}\right)\ll\sqrt{\ve},
  \]
  \[
    \PP\left((x,x+h_2)\in A\times A:\codim\left\{h_1\in\F_p^n:\Delta_{(h_2,-2h_2)}\Phi(x,y-h_1)=1\right\}\neq 2d\right)\ll\f{\ve^{\Omega(1)}}{\rho^{O(1)}}
  \]
  for all $y\in\F_p^n$, and
  \[
    \PP\left((x,x+h_2)\in A\times A:\codim\left\{y\in\F_p^n:\Delta_{(h_2,-2h_2)}\Phi(x,y)=1\right\}\neq 2d\right)\ll\f{\ve^{\Omega(1)}}{\rho^{O(1)}},
  \]
  where
  \[
    F(x,y,h_2,h_1):=\Delta_{-2h_2}B(y-h_1)\Delta_{-h_2}C(x+y-h_1)D(2x+y-h_1)
  \]
  and
  \[
    G(x,h_2,y):=\Delta_{-2h_2}B(y)\Delta_{-h_2}C(x+y)D(2x+y).
  \]
  It then follows from Lemma~\ref{subspaceavg} that~\eqref{2.5c} equals
  \[
    \beta^2\gamma^2\delta\rho^2\E_{x,y,h_1,h_2}\Delta_{-2h_2}B(y)\Delta_{-h_2}C(x+y)D(2x+y)\Delta_{(h_2,-2h_2)}\Phi(x,y)+O\left(\f{\ve^{\Omega(1)}}{\rho^{O(1)}}\right),
  \]
  which equals
  \[
    \beta^4\gamma^4\delta^2\rho^4\E_{x,h_2}\Delta_{h_2}A(x)+O\left(\f{\ve^{\Omega(1)}}{\rho^{O(1)}}\right)= \alpha^2\beta^4\gamma^4\delta^2\rho^4+O\left(\f{\ve^{\Omega(1)}}{\rho^{O(1)}}\right).
  \]
  
  It remains to relate
\begin{align*}
    \E_{x,y,h_1,h_2}&\Delta_{-h_1,-2h_2}B(y)\Delta_{-h_1,-h_2}C(x+y)\Delta_{-h_1}D(2x+y)\Delta_{(h_2,-2h_2)}\Phi(x,y)\\
    &\cdot|\E_{z}\Delta_{-h_2}B(y-z)C(x+y-z)\Delta_{h_2}D(2x+y-z)\Delta_{(-h_1,h_1)}\Phi(x+z,y-2z)\\
    &\ \ \ \ \ \ \ \Delta_{(0,h_1),(h_2,0)}f_0(x,y-2z)|^2
  \end{align*}
  to $\|f_0\|_{\star_1}$. Expanding the square and making a change of variables yields
  \begin{align*}
    \E_{x,y,z,h_1,h_2,h_3}\big(&\Delta_{-h_1,-2h_2}B(y+2z)\Delta_{-h_2,-h_3}B(y+z) \\
                          &\Delta_{-h_1,-h_2}C(x+y+2z)\Delta_{-h_3}C(x+y+z) \\
    &\Delta_{-h_1}D(2x+y+2z)\Delta_{h_2,-h_3}D(2x+y+z)\\
                          &\Delta_{(h_2,-2h_2)}\Phi(x,y+2z)\Delta_{(-h_1,h_1),(h_3,-2h_3)}\Phi(x+z,y) \\
                          &\Delta_{(0,h_1),(h_2,0),(0,-2h_3)}f_0(x,y)\big),
  \end{align*}
  which can be written as
  \[
    \E_{x,y,h_1,h_2,h_3}\Delta_{(0,h_1),(h_2,0),(0,-2h_3)}f_0(x,y)\mu(x,y,h_1,h_2,h_3),
  \]
  where
  \begin{align*}
    \mu(x,y,h_1,h_2,h_3):= \E_{z}\big(&\Delta_{-h_1,-2h_2}B(y+2z)\Delta_{-h_2,-h_3}B(y+z) \\
                          &\Delta_{-h_1,-h_2}C(x+y+2z)\Delta_{-h_3}C(x+y+z) \\
    &\Delta_{-h_1}D(2x+y+2z)\Delta_{h_2,-h_3}D(2x+y+z)\\
                          &\Delta_{(h_2,-2h_2)}\Phi(x,y+2z)\Delta_{(-h_1,h_1),(h_3,-2h_3)}\Phi(x+z,y)\big).
  \end{align*}
We will show that, for almost every $5$-tuple $(x,y,h_1,h_2,h_3)\in (\F_p^n)^5$ for which $x,x+h_2\in A$, $\mu(x,y,h_1,h_2,h_3)$ is very close to the constant value $\alpha^4\beta^8\gamma^6\delta^6\rho^{6}$. Indeed, the first moment $\E_{\substack{x,y,h_1,h_2,h_3 \\ x,x+h_2\in A}}\mu(x,y,h_1,h_2,h_3)$ is
\begin{align*}
  \f{1}{\alpha^2+O(\ve)}\E_{x,y,z,h_1,h_2,h_3}\big(&\Delta_{-h_1,-2h_2}B(y+2z)\Delta_{-h_2,-h_3}B(y+z) \\
                          &\Delta_{-h_1,-h_2}C(x+y+2z)\Delta_{-h_3}C(x+y+z) \\
    &\Delta_{-h_1}D(2x+y+2z)\Delta_{h_2,-h_3}D(2x+y+z)\\
                          &\Delta_{(h_2,-2h_2)}\Phi(x,y+2z)\Delta_{(-h_1,h_1),(h_3,-2h_3)}\Phi(x+z,y)\big).
\end{align*}
  Lemmas~\ref{cs2} and~\ref{intersection} tell us that
  \[
    \PP\left((x,z,h_1,h_2,h_3)\in(\F_p^n)^5:\|H(x,z,h_1,h_2,h_3,\cdot)-\beta^8\gamma^6\delta^6\|_{U^2(\F_p^n)}>\ve^{1/8}\right)\ll\sqrt{\ve}
  \]
  and
  \begin{align*}
    \PP\big(&(x,x+h_2,x+z,x+z-h_1,x+z+h_3,x+z-h_1+h_3)\in A^6\\
      &:\codim\left\{y\in\F_p^n:\Delta_{(h_2,-2h_2)}\Phi(x,y+2z)\Delta_{(-h_1,h_1),(h_3,-2h_3)}\Phi(x+z,y)=1\right\}\neq 6d\big)\ll\f{\ve^{\Omega(1)}}{\rho^{O(1)}},
  \end{align*}
  where
  \begin{align*}
    H(x,y,h_1,h_2,h_3,z):=\big(&\Delta_{-h_1,-2h_2}B(y+2z)\Delta_{-h_2,-h_3}B(y+z)\Delta_{-h_1,-h_2}C(x+y+2z) \\
    &\Delta_{-h_3}C(x+y+z)\Delta_{-h_1}D(2x+y+2z)\Delta_{h_2,-h_3}D(2x+y+z)\big)
  \end{align*}
  so it follows from Lemma~\ref{subspaceavg} that the first moment equals
  \[
    \f{1}{\alpha^2+O(\ve)}\beta^8\gamma^6\delta^6\rho^6\E_{x,z,h_1,h_2,h_3}\Delta_{h_2}A(x)\Delta_{-h_1,h_3}A(x+z)+O\left(\f{\ve^{\Omega(1)}}{\rho^{O(1)}}\right),
  \]
  which equals
  \[
    \alpha^4\beta^8\gamma^6\delta^6\rho^6+O\left(\f{\ve^{\Omega(1)}}{\rho^{O(1)}}\right)
  \]
  by Corollary~\ref{Usuniformity}. The second moment $\E_{\substack{x,y,h_1,h_2,h_3 \\ x,x+h_2\in A}}\mu(x,y,h_1,h_2,h_3)^2$ is
  \begin{align*}
    \f{1}{\alpha^2+O(\ve)}\E_{x,y,z,h_1,h_2,h_3,k}\big(&\Delta_{-h_1,-2h_2,2k}B(y+2z)\Delta_{-h_2,-h_3,k}B(y+z)\\
                            &\Delta_{-h_1,-h_2,2k}C(x+y+2z)\Delta_{-h_3,k}C(x+y+z)\\
                                &\Delta_{-h_1,2k}D(2x+y+2z)\Delta_{h_2,-h_3,k}D(2x+y+z) \\
                                &\Delta_{(h_2,-2h_2),(0,2k)}\Phi(x,y+2z)\Delta_{(-h_1,h_1),(h_3,-2h_3),(k,0)}\Phi(x+z,y)\big),
  \end{align*}
  which, noting that
  \[
    \Delta_{(h_2,-2h_2),(0,2k)}\Phi(x,y+2z)=\Delta_{(h_2,-2h_2)}\Phi(x,y+2z)\Delta_{(h_2,-2h_2)}\Phi(x,2k-u),
  \]
  we can write as
  \begin{align*}
    \E_{x,y,z,h_1,h_2,h_3,k}\big(&\Delta_{-h_1,-2h_2,2k}B(y+2z)\Delta_{-h_2,-h_3,k}B(y+z)\\
                                 &\Delta_{-h_1,-h_2,2k}C(x+y+2z)\Delta_{-h_3,k}C(x+y+z)\\
                                 &\Delta_{-h_1,2k}D(2x+y+2z)\Delta_{h_2,-h_3,k}D(2x+y+z) \\
                                 &\Delta_{(h_2,-2h_2)}\Phi(x,y+2z)\Delta_{(-h_1,h_1),(h_3,-2h_3),(k,0)}\Phi(x+z,y) \\
    &\Delta_{(h_2,-2h_2)}\Phi(x,2k-u)\big),
  \end{align*}  
  Lemmas~\ref{cs2} and~\ref{intersection}, analogously to the case of the first moment, tell us that
  \[
    \PP\left((x,z,h_1,h_2,h_3,k)\in(\F_p^n)^6:\|I(x,z,h_1,h_2,h_3,k,\cdot)-\beta^{16}\gamma^{12}\delta^{12}\|_{U^2(\F_p^n)}>\ve^{1/8}\right)\ll\sqrt{\ve}
  \]
  and
  \begin{align*}
    \PP\big(&x\in A\cap (A-h_2)\text{ and }x+z\in \bigcap_{\omega\in\{0,1\}^8}(A-\omega\cdot(-h_1,h_3,k))\\
            &:\codim\left\{y\in\F_p^n:\Delta_{(h_2,-2h_2)}\Phi(x,y+2z)\Delta_{(-h_1,h_1),(h_3,-2h_3),(k,0)}\Phi(x+z,y)=1\right\}\neq 10d\big)\ll\f{\ve^{\Omega(1)}}{\rho^{O(1)}},
  \end{align*}
  where
\begin{align*}
    I(x,z,h_1,h_2,h_3,k,y):=\big(&\Delta_{-h_1,-2h_2,2k}B(y+2z)\Delta_{-h_2,-h_3,k}B(y+z)\\
                                 &\Delta_{-h_1,-h_2,2k}C(x+y+2z)\Delta_{-h_3,k}C(x+y+z)\\
                                 &\Delta_{-h_1,2k}D(2x+y+2z)\Delta_{h_2,-h_3,k}D(2x+y+z)\big),
  \end{align*}  
  so it follows from Lemma~\ref{subspaceavg} that the second moment equals $(\alpha^2+O(\ve))\1$ times
  \begin{equation}\label{APhiavg}
    \beta^{16}\gamma^{12}\delta^{12}\rho^{10}\E_{x,z,h_1,h_2,h_3,k}\Delta_{-h_1,h_3,k}A(x+z)\Delta_{(h_2,-2h_2)}\Phi(x,2k-u)+O\left(\f{\ve^{\Omega(1)}}{\rho^{O(1)}}\right).
  \end{equation}
  To estimate the main term of~\eqref{APhiavg}, we note that
  \[
    \|\Phi(\cdot,2\cdot-u)-\alpha\rho\|_{U^6(\F_p^n\times\F_p^n)}=\|\Phi-\alpha\rho\|_{U^6(\F_p^n\times\F_p^n)}
  \]
  and apply Lemmas~\ref{cs2} and~\ref{intersection} again to get that
  \[
    \PP\left((x,z,h_1,h_3)\in(\F_p^n)^4:\|J(x,z,h_1,h_3,\cdot)-\alpha^4\|_{U^2(\F_p^n)}>\ve^{1/8}\right)\ll\sqrt{\ve}
  \]
  and
  \[
    \PP\left((x,x+h_2)\in A\times A:\codim\left\{k\in\F_p^n:\Delta_{(h_2,-2h_2)}\Phi(x,2k-u)=1\right\}\neq 2d\right)\ll\f{\ve^{\Omega(1)}}{\rho^{O(1)}},
  \]
  where
  \[
    J(x,z,h_1,h_3,k):=\Delta_{-h_1,h_3}A(x+z+k).
  \]
  Thus, by Lemma~\ref{subspaceavg}, we have that
  \[
    \E_{x,z,h_1,h_2,h_3,k}\Delta_{-h_1,h_3,k}A(x+z)\Delta_{(h_2,-2h_2)}\Phi(x,2k-u)
  \]
  equals
  \[
    \alpha^4\rho^2\E_{x,z,h_1,h_2,h_3}\Delta_{h_2}A(x)\Delta_{-h_1,h_3}A(x+z)+O\left(\f{\ve^{\Omega(1)}}{\rho^{O(1)}}\right),
  \]
  which equals
  \[
    \alpha^{10}\rho^2+O\left(\f{\ve^{\Omega(1)}}{\rho^{O(1)}}\right)
  \]
  by Corollary~\ref{Usuniformity}. It therefore follows that the second moment is
  \[
    \alpha^{8}\beta^{16}\gamma^{12}\delta^{12}\rho^{12}+O\left(\f{\ve^{\Omega(1)}}{\rho^{O(1)}}\right).
  \]

  Thus,
  \[
    \E_{\substack{x,y,h_1,h_2,h_3 \\ x,x+h_2\in A}}\left|\mu(x,y,h_1,h_2,h_3)-\alpha^4\beta^8\gamma^6\delta^6\rho^6\right|^2\ll\f{\ve^{\Omega(1)}}{\rho^{O(1)}},
  \]
  and we conclude that
  \begin{align*}
        \E_{x,y,h_1,h_2,h_3}\Delta_{(0,h_1),(h_2,0),(0,-2h_3)}f_0(x,y)\mu(x,y,h_1,h_2,h_3) = \alpha^4\beta^8\gamma^6\delta^6\rho^6\|f_0\|_{\star_1}^8+O\left(\f{\ve^{\Omega(1)}}{\rho^{O(1)}}\right).
  \end{align*}
  Putting everything together gives
  \[
    \left|\Lambda(f_0,f_1,f_2,f_3)\right|^8\leq\alpha^{14}\beta^{20}\gamma^{16}\delta^{16}\rho^{18}\|f_0\|_{\star_1}^8+O\left(\f{\ve^{\Omega(1)}}{\rho^{O(1)}}\right),
  \]
  as desired.
  
  For~\eqref{gvn2}, we make a change of variables and apply the Cauchy--Schwarz inequality to bound $|\Lambda(T,f_1,f_2,f_3)|^2$ by
  \[
    \mu_{\F_p^n\times\F_p^n}(T)\cdot\E_{x,y}B(y)C(x+y)\Phi(x,y)|\E_zC(x+y-2z)D(2x+y-2z)f_1(x,y-z)f_3(x+z,y-2z)|^2.
  \]
  Expanding the square in the second quantity and making a change of variables yields
  \begin{align*}
    \E_{x,y,z,h_1}&B(y+2z)C(x+y+z)\Delta_{-2h_1}C(x+y-z)\Delta_{-2h_1}D(2x+y-2z)\Phi(x-z,y+2z) \\
                  &\Delta_{(0,-h_1)}f_1(x-z,y+z)\Delta_{(h_1,-2h_1)}f_3(x,y).
  \end{align*}
  By applying the Cauchy--Schwarz inequality again in the $x,y,$ and $h_1$ variables, the modulus squared of this is bounded above by
  \begin{equation}\label{2.6b}
    \E_{x,y,h_1}\Delta_{-2h_1}B(y)\Delta_{-h_1}C(x+y)D(2x+y)\Delta_{(h_1,-2h_1)}\Phi(x,y)
  \end{equation}
  times
  \begin{align*}
    \E_{x,y,h_1}\big(&\Delta_{-2h_1}B(y)D(2x+y)\Delta_{(h_1,-2h_1)}\Phi(x,y)\\
                     &|\E_{z}[B(y+2z)C(x+y+z)\Delta_{-2h_1}C(x+y-z)\Delta_{-2h_1}D(2x+y-2z)\Phi(x-z,y+2z)\\
    &\ \ \ \ \ \Delta_{(0,-h_1)}f_1(x-z,y+z)]|^2\big).
  \end{align*}
  Expanding the square and making a change of variables, the second factor equals
  \begin{align*}
    \E_{x,y,z,h_1,h_2}&\Delta_{2h_2}B(y+z)\Delta_{-2h_1}B(y-z)\Delta_{-2h_1,-h_2}C(x+y-z)\Delta_{h_2}C(x+y+z)\\
                      &\Delta_{-2h_1,-2h_2}D(2x+y-z)D(2x+y+z)\Delta_{(-h_2,h_2)}\Phi(x,y+z)\Delta_{(h_1,-2h_1)}\Phi(x+z,y-z) \\
    &\Delta_{(0,-h_1),(-h_2,h_2)}f_1(x,y),
  \end{align*}
  which we can write as
  \[
    \E_{x,y,h_1,h_2}\Delta_{(0,-h_1),(-h_2,h_2)}f_1(x,y)\mu'(x,y,h_1,h_2),
  \]
  where
  \begin{align*}
    \mu'(x,y,h_1,h_2):=\E_{z}&\Delta_{2h_2}B(y+z)\Delta_{-2h_1}B(y-z)\Delta_{-2h_1,-h_2}C(x+y-z)\Delta_{h_2}C(x+y+z)\\
                      &\Delta_{-2h_1,-2h_2}D(2x+y-z)D(2x+y+z)\Delta_{(-h_2,h_2)}\Phi(x,y+z)\Delta_{(h_1,-2h_1)}\Phi(x+z,y-z).
  \end{align*}
  The quantity~\eqref{2.6b} and the weight $\mu'$ can be analyzed in the same manner as the corresponding quantity and weight in the proof of~\eqref{gvn1}, so that
  \[
    |\Lambda(T,f_1,f_2,f_3)|^4\leq \alpha^{8}\beta^{8}\gamma^{10}\delta^{8}\rho^{8}\|f_1\|_{\star_2}^4+O\left(\f{\ve^{\Omega(1)}}{\rho^{O(1)}}\right).
  \]

  Finally, for~\eqref{gvn3}, we make a change of variables and apply the Cauchy--Schwarz inequality to bound $|\Lambda(T,T,f_2,f_3)|^2$ by $\mu_{\F_p^n\times\F_p^n}(T)$ times
  \begin{align*}
    \E_{x,y}B(y)C(x+y)\Phi(x,y)|\E_z &B(y+z)C(x+y-z)D(2x+y-2z)D(2x+y-z)\\
    &\Phi(x-z,y+z)f_2(x-z,y+2z)|^2
  \end{align*}
  Expanding the square in the second quantity and making a change of variables yields
    \begin{align*}
      \E_{x,y,z,h_1}&B(y)C(x+y-z)\Phi(x+z,y-2z)\Delta_{h_1}B(y-z)\Delta_{-h_1}C(x+y-2z)\\
                    &\Delta_{-2h_1}D(2x+y-2z)\Delta_{-h_1}D(2x+y-z)\Delta_{(-h_1,h_1)}\Phi(x,y-z)\Delta_{(-h_1,2h_1)}f_2(x,y),
    \end{align*}
    which can be written as
    \[
      \E_{x,y,h_1}\Delta_{(-h_1,2h_1)}f_2(x,y)\mu''(x,y,h_1),
    \]
    where
    \begin{align*}
      \mu''(x,y,h_1):=\E_{z}&B(y)C(x+y-z)\Phi(x+z,y-2z)\Delta_{h_1}B(y-z)\Delta_{-h_1}C(x+y-2z)\\
            &\Delta_{-2h_1}D(2x+y-2z)\Delta_{-h_1}D(2x+y-z)\Delta_{(-h_1,h_1)}\Phi(x,y-z).
    \end{align*}
    This weight can again be analyzed in the same manner as the weights appearing in the proof of~\eqref{gvn1}, giving
    \[
      |\Lambda(T,T,f_2,f_3)|^2\leq \alpha\beta^3\gamma^3\delta^4\rho^3\|f_2\|_{\star_3}^2+O\left(\f{\ve^{\Omega(1)}}{\rho^{O(1)}}\right),
    \]
    and completing the proof of the lemma.
\end{proof}

\section{Obtaining a density-increment}\label{inverse}

As was mentioned in Section~\ref{outline}, one ingredient in the proof of
Theorem~\ref{starinverse} is an inverse theorem for the $U^2(\Phi(x,\cdot))$-norm
localized to pseudorandom sets. Applying the standard, nonlocalized inverse theorem for
the $U^2(\Phi(x,\cdot))$-norm would yield a density-increment that gets weaker as $T$
becomes sparser, which is inadequate to close the density-increment iteration. To get a
strong enough localized version of this inverse theorem, we will need to use the
transference principle. The particular instance of it required is an immediate consequence
of the dense model lemma from~\cite{Zhao2014}, which appears as Lemma~3.3 in that paper.

\begin{lemma}[Dense model lemma for the $U^s$-norm on subspaces]\label{densemodel}
  For every natural number $s$, there exists a constant $c_s>0$ such that the following
  holds. Let $\ve>0$, $V\leq\F_p^n$ be a subspace, and $f,\nu:V\to[0,\infty)$ be functions
  satisfying
\begin{enumerate}
\item $0\leq f\leq\nu$,
\item $\E_{x\in V}f(x)\leq 1$, and
\item $\|\nu-1\|_{U^s(V)}\leq \exp(-\ve^{-c_s})$.
\end{enumerate}
Then there exists a $\tilde{f}:V\to[0,1]$ such that $\E_{x\in V}f(x)=\E_{x\in V}\tilde{f}(x)$ and $\|f-\tilde{f}\|_{U^s(V)}\leq\ve$.
\end{lemma}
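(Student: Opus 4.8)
The plan is to deduce Lemma~\ref{densemodel} as an immediate specialization of Zhao's dense model lemma, stated as Lemma~3.3 of~\cite{Zhao2014}. That result says: for each $s$ there is a constant $C_s$ such that, over any finite abelian group $G$, if $\nu\colon G\to[0,\infty)$ satisfies $\|\nu-1\|_{U^s(G)}\leq\exp(-\ve_0^{-C_s})$ and $f\colon G\to[0,\infty)$ satisfies $0\leq f\leq\nu$ pointwise and $\E_{x\in G}f(x)\leq 1$, then there is a function $\tilde f\colon G\to[0,1]$ with $\E_{x\in G}\tilde f(x)=\E_{x\in G}f(x)$ and $\|f-\tilde f\|_{U^s(G)}\leq\ve_0$. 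I would apply this with ambient group $G:=V$. A subspace $V\leq\F_p^n$ is itself a finite abelian group (indeed isomorphic to $\F_p^{\dim V}$), and the norm $\|\cdot\|_{U^s(V)}$ occurring in Lemma~\ref{densemodel} is by definition the Gowers $U^s$-norm of this group and depends only on its group structure, not on the embedding $V\hookrightarrow\F_p^n$. Hence conditions (1)--(3) of Lemma~\ref{densemodel} are literally the hypotheses of Zhao's lemma on $V$, once we set $\ve_0:=\ve^{1/4}$ and choose the constant $c_s$ in condition (3) large enough in terms of $C_s$ that $\exp(-\ve^{-c_s})$ does not exceed the threshold required by Zhao's lemma at target accuracy $\ve^{1/4}$, for all $0<\ve\leq 1$. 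The conclusion of Zhao's lemma is then exactly the assertion of Lemma~\ref{densemodel}.

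The only point that calls for comment is the exact equality of means $\E_{x\in V}f(x)=\E_{x\in V}\tilde f(x)$, and this is already built into the statement of Lemma~3.3 in~\cite{Zhao2014}: there the model is produced as a suitable truncation of the conditional expectation of $f$ onto the (bounded) $\sigma$-algebra generated by the finitely many structured functions extracted in the transference argument, and this construction is arranged to preserve the mean of $f$. If one instead started from a version of the dense model lemma yielding only $|\E_{x\in V}f-\E_{x\in V}\tilde f|$ small, one would re-run the argument with a slightly smaller target accuracy and then correct the mean by a bounded perturbation supported where $\tilde f$ is not already extremal; but since the cited statement gives exact equality, this step is unnecessary here.

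I do not expect any genuine obstacle: the content of Lemma~\ref{densemodel} is entirely contained in~\cite{Zhao2014}, and the proof is a translation of quantifiers together with the trivial observation that the $U^s$-norm on a subspace is an intrinsic invariant of that subspace as a group. The one mildly delicate point is bookkeeping -- correctly matching the pseudorandomness threshold $\exp(-\ve^{-c_s})$ in hypothesis (3) to the explicit threshold in Zhao's lemma, which is precisely what pins down the admissible range of $c_s$. Alternatively, one could reprove the statement from scratch by carrying out the standard Hahn--Banach/energy-increment dense model argument verbatim over the group $V$, but invoking~\cite{Zhao2014} is cleaner and already delivers the constants in the shape required for the application in Section~\ref{inverse}.
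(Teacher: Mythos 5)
You have identified the right source, Zhao's Lemma~3.3, which is also what the paper uses, so the route is the same. But your summary of Zhao's lemma is inaccurate, and this inaccuracy is precisely where the non-trivial content of the deduction lives. Zhao's Lemma~3.3 is \emph{not} stated with the hypothesis $\|\nu-1\|_{U^s(G)}\leq\exp(-\ve_0^{-C_s})$. His pseudorandomness hypothesis is the $(s,\ve)$-discrepancy pair condition: roughly, that averages of the form $\E_{x,h_1,\ldots,h_s}(\nu-1)(x)\prod_{\omega\neq 0}g_\omega(x+\omega\cdot(h_1,\dots,h_s))$ are small uniformly over $1$-bounded functions $g_\omega$. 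Consequently your claim that conditions (1)--(3) of Lemma~\ref{densemodel} ``are literally the hypotheses of Zhao's lemma on $V$'' is false as stated, and the deduction is not a pure translation of quantifiers.

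What is missing is the observation that a $U^s$-norm bound on $\nu-1$ \emph{implies} the discrepancy pair condition: by the Gowers--Cauchy--Schwarz inequality (Lemma~\ref{GCSU}), if one factor of the multilinear average is $\nu-1$ and all others are $1$-bounded, the whole average is at most $\|\nu-1\|_{U^s(V)}$. This is exactly the remark the paper makes after the statement of Lemma~\ref{densemodel}, and it is the one substantive verification needed to apply Zhao's result here (in addition to the bookkeeping on $c_s$ versus $\ve_0=\ve^{1/4}$, which you did note). The point about $V$ being a finite abelian group in its own right is correct and unproblematic. To repair the proposal, state Zhao's hypothesis correctly and then insert the one-line Gowers--Cauchy--Schwarz reduction showing that hypothesis (3) supplies it.
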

One can see that this lemma is a consequence of Zhao's lemma by using the Gowers--Cauchy--Schwarz inequality to translate between his $(s,\ve)$-discrepancy pair condition and our $U^s$-uniformity condition.

In the course of the proof of Theorem~\ref{starinverse}, we will encounter various
averages of linear forms that turn out to be controlled by certain degree $1$ and $2$
directional uniformity norms. Because of this, we will also need to obtain a density
increment when these norms of the balanced function $g_S=S-\sigma$ are large. The first
two subsections of this section are devoted to proving that this is possible.

\subsection{Results on degree $1$ norms}\label{deg1ss}

We first show that the relevant fibers of any set of the form~\eqref{Tform} typically have
close to their average density, provided that $A,B,C,D,$ and $\Phi$ are sufficiently pseduorandom.

\begin{lemma}\label{fibersize}
   Let $d$ be a nonnegative integer, and set $\rho:=p^{-d}$. Suppose that $A,B,C,D\subset\F_p^n$ have densities $\alpha,\beta,\gamma,\delta$, respectively, and that $\Phi\subset\F_p^n\times\F_p^n$ has density $\alpha\rho$ in $\F_p^n\times\F_p^n$ and takes the form
  \[
    \Phi=\{(x,y)\in A\times\F_p^n:y\in u+V_x\},
  \]
  where each $V_x$ is a subspace of $\F_p^n$ of codimension $d$. Let $\ve>0$ and assume that
  \[
    \|A-\alpha\|_{U^{5}(\F_p^n)},\|B-\beta\|_{U^{5}(\F_p^n)},\|C-\gamma\|_{U^{5}(\F_p^n)},\|D-\delta\|_{U^{5}(\F_p^n)},\|\Phi-\alpha\rho\|_{U^{2}(\F_p^n\times\F_p^n)}<\ve.
  \]
  Define $T$ by~\eqref{Tform}. Then
  \[
    \PP\left(x\in A:|\mu_{\F_p^n}(T(x,\cdot))-\beta\gamma\delta\rho|>\ve'\right)\ll\f{\ve^{1/8}}{(\ve')^2\alpha},
  \]
  \[
    \PP\left(y\in B:|\mu_{\F_p^n}(T(\cdot,y))-\alpha\gamma\delta\rho|>\ve'\right)\ll\f{\ve^{1/8}}{(\ve')^2\beta},
  \]
    \[
    \PP\left(z\in C:|\mu_{\F_p^n}(T(\cdot,z-\cdot))-\alpha\beta\delta\rho|>\ve'\right)\ll\f{\ve^{1/8}}{(\ve')^2\gamma}
  \]
  and
  \[
    \PP\left(w\in D:|\mu_{\F_p^n}(T(\cdot,w-2\cdot))-\alpha\beta\gamma\rho|>\ve'\right)\ll\f{\ve^{1/8}}{(\ve')^2\delta}
  \]
  for any $\ve'>0$.
\end{lemma}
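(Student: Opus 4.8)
The plan is to deduce all four estimates from the same second-moment argument, so I will describe it in detail for the first bound (slicing $T$ over the $x$-coordinate) and then indicate the modifications needed for the other three. Fix $\ve'>0$. Since $\mu_{\F_p^n}(T(x,\cdot))=\E_y B(y)C(x+y)D(2x+y)\Phi(x,y)$ vanishes for $x\notin A$, Markov's inequality applied to the square gives
\[
  \PP\left(x\in A:|\mu_{\F_p^n}(T(x,\cdot))-\beta\gamma\delta\rho|>\ve'\right)\leq\f{1}{\alpha(\ve')^2}\E_x A(x)\left|\mu_{\F_p^n}(T(x,\cdot))-\beta\gamma\delta\rho\right|^2,
\]
the factor $1/\alpha$ arising from conditioning on $x\in A$. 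Expanding the square, it therefore suffices to compute the first moment $\E_x A(x)\mu_{\F_p^n}(T(x,\cdot))$ and the second moment $\E_x A(x)\mu_{\F_p^n}(T(x,\cdot))^2$ up to an error of $O(\ve^{1/8})$ and check that they equal $\alpha\beta\gamma\delta\rho$ and $\alpha\beta^2\gamma^2\delta^2\rho^2$ respectively; the weighted variance then telescopes to $O(\ve^{1/8})$ and the claimed bound follows.

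The first moment is immediate: $\E_x A(x)\mu_{\F_p^n}(T(x,\cdot))=\E_{x,y}B(y)C(x+y)D(2x+y)\Phi(x,y)=\mu_{\F_p^n\times\F_p^n}(T)$, which is $\alpha\beta\gamma\delta\rho+O(\ve^{1/8})$ by Lemma~\ref{Tdensity}. (In the other three cases the first moment also equals $\mu_{\F_p^n\times\F_p^n}(T)$ after the obvious change of variables, since $T$ already carries the indicator of $B$, $C$, or $D$ on the slice being averaged.) For the second moment, writing out the square and introducing a difference variable $h$ gives
\[
  \E_x A(x)\mu_{\F_p^n}(T(x,\cdot))^2=\E_{x,y,h}A(x)\Delta_h B(y)\Delta_h C(x+y)\Delta_h D(2x+y)\Phi(x,y)\Phi(x,y+h).
\]
Because both copies of $\Phi$ share the base point $x$, for $x\in A$ one has $\Phi(x,y)\Phi(x,y+h)=\Phi(x,y)V_x(h)$, where $V_x(h)$ is the indicator of $h\in V_x$, so the inner $y$-average runs over the affine subspace $u+V_x$ of codimension $d$. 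Applying Lemma~\ref{cs2} to the map $(x,h)\mapsto\Delta_h B(\cdot)\Delta_h C(x+\cdot)\Delta_h D(2x+\cdot)$, whose associated system of linear forms has bounded Cauchy--Schwarz complexity (so that the $U^{5}$-pseudorandomness of $B,C,D$ applies), shows that for all but a $O(\sqrt{\ve})$-proportion of pairs $(x,h)$ this function lies within $\ve^{1/8}$ of the constant $\beta^2\gamma^2\delta^2$ in $U^2(\F_p^n)$; Lemma~\ref{subspaceavg} then yields $\E_{y\in u+V_x}\Delta_h B(y)\Delta_h C(x+y)\Delta_h D(2x+y)=\beta^2\gamma^2\delta^2+O(p^d\ve^{1/8})$, and the $p^d$ loss is exactly cancelled by the density $\rho=p^{-d}$ of $u+V_x$. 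Since $\E_{x,h}A(x)V_x(h)=\alpha\rho$, the second moment is $\alpha\beta^2\gamma^2\delta^2\rho^2+O(\ve^{1/8})$, as required.

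For the other three estimates the argument is the same, with one structural change: the two copies of $\Phi$ in the second moment now have \emph{distinct} base points $x$ and $x'$ (after the relevant change of variables they are $\Phi(x,\cdot)$ and $\Phi(x',\cdot)$, evaluated at arguments differing by a constant), so the inner average runs over an intersection $u'+(V_x\cap V_{x'})$ of two $\Phi$-fibers rather than a single fiber. Here we invoke Lemma~\ref{intersection} with $r=2$ and the linear forms $x,x'$ (which have Cauchy--Schwarz complexity $0$, using the $U^2$-pseudorandomness of $\Phi$) to guarantee that $V_x\cap V_{x'}$ has codimension exactly $2d$ for all but a negligible proportion of pairs $(x,x')$ with $x,x'\in A$; the corresponding affine subspace then has density $\rho^2=p^{-2d}$, which exactly cancels the $p^{2d}$ loss from Lemma~\ref{subspaceavg}, keeping the error $O(\ve^{1/8})$ with no $\rho$-dependence. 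Combining the first- and second-moment estimates exactly as above and applying Markov's inequality yields the four stated bounds.

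The main obstacle is the bookkeeping in this second-moment computation for the last three estimates: one must pair every appeal to Lemma~\ref{subspaceavg} (which costs a factor $p^{d}$ or $p^{2d}$) with an appeal to Lemma~\ref{intersection} or with the trivial structure of same-base-point $\Phi$-fibers, so that these losses cancel against the densities $\rho$, $\rho^2$ of the affine subspaces produced, and one must verify that the atypical pairs $(x,x')$ — those where either the $B,C,D$-fibre function fails to be $U^2$-pseudorandom or $\codim(V_x\cap V_{x'})\neq 2d$ — contribute only $O(\ve^{1/8})$ in total.
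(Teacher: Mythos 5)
Your proposal matches the paper's proof: the same second-moment plus Markov argument, with the first moment handled by Lemma~\ref{Tdensity} and the second moment by the combination of Lemma~\ref{cs2} and Lemma~\ref{subspaceavg}. The one detail you make explicit that the paper's terse ``the three other estimates are proved analogously'' elides is the role of Lemma~\ref{intersection}. As you correctly observe, for the first estimate both copies of $\Phi$ in the second moment share the base point $x$, so $\Phi(x,y)\Phi(x,y+h)=\Phi(x,y)1_{h\in V_x}$ automatically produces a codimension-$d$ affine fiber and no appeal to Lemma~\ref{intersection} is needed; for the other three, the two copies $\Phi(x,\cdot)$, $\Phi(x',\cdot)$ have distinct first coordinates, and you rightly invoke Lemma~\ref{intersection} with $r=2$ (using the $U^2$-pseudorandomness of $\Phi$) to get $\codim(V_x\cap V_{x'})=2d$ for almost all $(x,x')\in A\times A$. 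This is a genuine and worthwhile clarification. One small caution, which the paper's statement shares: because the exponent $c_{0,2}$ in Lemma~\ref{intersection} is unspecified, the contribution of atypical pairs in the last three estimates is strictly of the shape $\ve^{\Omega(1)}/\rho^{O(1)}$ rather than literally $\ve^{1/8}$; this is harmless, since every downstream use of this lemma takes $\ve$ to be a small fixed power of $\sigma\alpha\beta\gamma\delta\rho$.
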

\begin{proof}
  From Lemma~\ref{Tdensity}, we have $\E_{x\in A}\mu_{\F_p^n}(T(x,\cdot))=\beta\gamma\delta\rho+O\left(\ve^{1/8}/\alpha\right)$. For the second moment, we argue as in Section~\ref{gvn} to estimate
  \[
    \E_{x}A(x)\left|\E_{y}B(y)C(x+y)D(2x+y)\Phi(x,y)\right|^2=\alpha\beta^2\gamma^2\delta^2\rho^2+O(\ve^{1/8})
  \]
  using Lemmas~\ref{subspaceavg} and~\ref{cs2}, giving $\E_{x\in A}\mu_{\F_p^n}(T(x,\cdot))^2=\beta^2\gamma^2\delta^2\rho^2+O\left(\ve^{1/8}/\alpha\right)$. It now follows from Markov's inequality that
  \[
    \PP\left(x\in A:|\mu_{\F_p^n}(T(x,\cdot))-\beta\gamma\delta\rho|>\ve'\right)\ll\f{\ve^{1/8}}{(\ve')^2\alpha}
  \]
  for all $\ve'>0$. The three other estimates are proved analogously.
\end{proof}

Now we can obtain a density-increment when the degree $1$ uniformity norms controlled by $\|\cdot\|_{\star_1}$ and $\|\cdot\|_{\star_2}$ are large. The proof is essentially an averaging argument, like the proof of the analogous Lemma~3.1 in~\cite{Green05note}. The most substantial new feature, which will arise many times in this section, is that we must now verify that the set on which we claim to have found a density-increment actually has close to the correct density in $\F_p^n\times\F_p^n$. In the case of corners, any product set $A\times B$ trivially has density equal to the product of the densities of $A$ and $B$. This is not, in general, the case for sets of the form~\eqref{Tform} unless further assumptions are made about $A,B,C,D,$ and $\Phi$.

\begin{lemma}\label{deg1inverse}
  There exist absolute constants $0<c_1<1<c_2$ such that the following holds. Let $d$ be a nonnegative integer, and set $\rho:=p^{-d}$. Suppose that $A,B,C,D\subset\F_p^n$ have densities $\alpha,\beta,\gamma,\delta$, respectively, and that $\Phi\subset\F_p^n\times\F_p^n$ takes the form
  \[
    \Phi=\{(x,y)\in A\times\F_p^n:y\in u+V_x\},
  \]
  where each $V_x$ is a subspace of $\F_p^n$ of codimension $d$. Let $\tau>0$ and $\ve\leq c_1(\tau\alpha\beta\gamma\delta\rho)^{c_2}$, and assume that
  \[
    \|A-\alpha\|_{U^{5}(\F_p^n)},\|B-\beta\|_{U^{5}(\F_p^n)},\|C-\gamma\|_{U^{5}(\F_p^n)},\|D-\delta\|_{U^{5}(\F_p^n)},\|\Phi_0-\rho\|_{U^{2}(\F_p^n\times\F_p^n)}<\ve.
  \]
  Define $T$ by~\eqref{Tform}, and let $S\subset T$ have density $\sigma$ in $T$. Suppose that
  \begin{equation}\label{deg11}
    \E_{x,y,h}\Delta_{(0,h)}g_S(x,y)\geq\tau\alpha\beta^2\gamma^2\delta^2\rho^2,
  \end{equation}
  \begin{equation}\label{deg12}
    \E_{x,y,h}\Delta_{(h,0)}g_S(x,y)\geq\tau\alpha^2\beta\gamma^2\delta^2\rho^2,
  \end{equation}
  or
  \begin{equation}\label{deg13}
    \E_{x,y,h}\Delta_{(-h,h)}g_S(x,y)\geq\tau\alpha^2\beta^2\gamma\delta^2\rho^2.
  \end{equation}
  Then $S$ has density at least $\sigma+\Omega(\tau^{O(1)})$ on some subset $T'$ of $T$ of the form
  \[
    T'=\left\{(x,y)\in\F_p^n\times\F_p^n:A'(x)B'(y)C'(x+y)D(2x+y)\Phi(x,y)=1\right\},
  \]
  where the densities of $A',B',C'\subset\F_p^n$ are $\Omega(\tau^{O(1)}\alpha)$, $\Omega(\tau^{O(1)}\beta)$, and $\Omega(\tau^{O(1)}\gamma)$, respectively.
\end{lemma}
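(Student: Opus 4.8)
The plan is to run the standard ``second moment, then average'' density-increment argument familiar from the corners case (cf.\ the proof of the analogous Lemma~3.1 in~\cite{Green05note}); the only genuinely new point is that one must verify that the set $T'$ produced has close to its expected density in $\F_p^n\times\F_p^n$, and this is exactly what Lemma~\ref{fibersize} is for.

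First I would record two elementary observations. Since $\Phi$ --- hence $T$, hence $S$ --- is supported on $A\times\F_p^n$, the fiber $g_S(x,\cdot)$ vanishes identically whenever $x\notin A$; likewise $g_S(\cdot,y)\equiv 0$ for $y\notin B$, and $g_S(x,z-x)=0$ for all $x$ when $z\notin C$, because the corresponding fibers of $T$ are empty. Second, for real-valued $g$ one has
\[
  \E_{x,y,h}\Delta_{(0,h)}g(x,y)=\E_x\big(\E_y g(x,y)\big)^2,\qquad \E_{x,y,h}\Delta_{(h,0)}g(x,y)=\E_y\big(\E_x g(x,y)\big)^2,
\]
\[
  \E_{x,y,h}\Delta_{(-h,h)}g(x,y)=\E_z\big(\E_x g(x,z-x)\big)^2 .
\]
The three hypotheses~\eqref{deg11},~\eqref{deg12},~\eqref{deg13} are therefore symmetric under interchanging the roles of the coordinates $x$, $y$, $x+y$ and the corresponding clauses of Lemma~\ref{fibersize}, so I would prove the~\eqref{deg11} case in detail and just indicate the (identical) modifications for the other two.

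For~\eqref{deg11}, put $b(x):=\E_y g_S(x,y)=\mu_{\F_p^n}(S(x,\cdot))-\sigma\mu_{\F_p^n}(T(x,\cdot))$ for $x\in A$ and $b(x):=0$ otherwise. Then~\eqref{deg11} reads $\E_{x\in A}b(x)^2\geq\tau\beta^2\gamma^2\delta^2\rho^2$, while $\E_{x\in A}b(x)=\tfrac{1}{\alpha}(\mu_{\F_p^n\times\F_p^n}(S)-\sigma\mu_{\F_p^n\times\F_p^n}(T))=0$ since $S$ has density $\sigma$ in $T$; moreover, applying Lemma~\ref{fibersize} with the auxiliary parameter $\ve':=\tfrac{1}{2}\beta\gamma\delta\rho$ shows that, outside a subset of $A$ of relative size $\ll\ve^{1/8}/(\beta^2\gamma^2\delta^2\rho^2\alpha)$, one has $\mu_{\F_p^n}(T(x,\cdot))\leq\tfrac{3}{2}\beta\gamma\delta\rho$ and hence $|b(x)|\leq\tfrac{3}{2}\beta\gamma\delta\rho$. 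Writing $A_0$ for this good set and choosing $c_1$ small and $c_2$ large, the contributions of $A\setminus A_0$ to $\E_{x\in A}b^2$ and to $\E_{x\in A}|b|$ become negligible next to $\tau\beta^2\gamma^2\delta^2\rho^2$; combining $\E_{x\in A}b^2A_0\gg\tau\beta^2\gamma^2\delta^2\rho^2$ with the pointwise bound $b^2\leq\tfrac{3}{2}\beta\gamma\delta\rho\,|b|$ on $A_0$ and with $\E_{x\in A}bA_0=-\E_{x\in A}b(1-A_0)=O(\mu_A(A\setminus A_0))$ gives
\[
  \E_{x\in A}b_+(x)A_0(x)=\tfrac{1}{2}\big(\E_{x\in A}|b|A_0+\E_{x\in A}bA_0\big)\gg\tau\beta\gamma\delta\rho .
\]
One more averaging (using $b_+\leq\tfrac{3}{2}\beta\gamma\delta\rho$ on $A_0$) then produces $A':=\{x\in A_0:b(x)\geq c\tau\beta\gamma\delta\rho\}$ with $\mu_{\F_p^n}(A')\gg\tau\alpha$ for a suitable absolute constant $c>0$.

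Finally, set $T':=T\cap(A'\times\F_p^n)=\{(x,y):A'(x)B(y)C(x+y)D(2x+y)\Phi(x,y)=1\}$, which is of the required form with $B'=B$ (density $\beta$) and $C'=C$ (density $\gamma$), and which is nonempty because $\mu_{\F_p^n}(S(x,\cdot))\geq b(x)>0$ for $x\in A'$. Then
\[
  \mu_{T'}(S\cap T')=\frac{\sum_{x\in A'}\mu_{\F_p^n}(S(x,\cdot))}{\sum_{x\in A'}\mu_{\F_p^n}(T(x,\cdot))}=\sigma+\frac{\sum_{x\in A'}b(x)}{\sum_{x\in A'}\mu_{\F_p^n}(T(x,\cdot))}\geq\sigma+\frac{c\tau\beta\gamma\delta\rho}{\tfrac{3}{2}\beta\gamma\delta\rho}=\sigma+\Omega(\tau),
\]
using $b(x)\geq c\tau\beta\gamma\delta\rho$ and $\mu_{\F_p^n}(T(x,\cdot))\leq\tfrac{3}{2}\beta\gamma\delta\rho$ on $A'\subseteq A_0$; this is the asserted increment, $\Omega(\tau)$ being in particular $\Omega(\tau^{O(1)})$. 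The~\eqref{deg12} case is identical but restricts $B$ to a set $B'$ with $\mu_{\F_p^n}(B')\gg\tau\beta$ (keeping $A'=A$, $C'=C$) and uses clause~2 of Lemma~\ref{fibersize}; the~\eqref{deg13} case instead restricts $C$ to a set $C'$ with $\mu_{\F_p^n}(C')\gg\tau\gamma$ and uses clause~3 of Lemma~\ref{fibersize}. The only step that is not pure bookkeeping is the appeal to Lemma~\ref{fibersize}: that is where the pseudorandomness of $A,B,C,D,\Phi$ is used, and it is needed both to bound $|b|$ by roughly the correct size and to guarantee that $T'$ has close to its expected density --- without it one would get a density increment that degrades as the densities shrink. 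Everything after that is a routine averaging argument.
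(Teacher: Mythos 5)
Your proposal is correct and uses essentially the same strategy as the paper: rewrite each hypothesis as a second moment of a real-valued fiber average $b$, use Lemma~\ref{fibersize} to control the fiber densities $\mu_{\F_p^n}(T(x,\cdot))$ pointwise outside a negligible bad set, and then average to locate a large subset on which $b$ is positive and of size $\gg\tau\beta\gamma\delta\rho$. Where your write-up departs from the paper is purely cosmetic, and in one or two places is a bit cleaner. The paper first pigeonholes on $|\E_y g_S(x,\cdot)|\geq\tfrac{\sqrt\tau}{4}\beta\gamma\delta\rho$ to get a set $A_0$ of relative density $\gg\tau$, then splits $A_0$ by sign into $A_1$ and $A_0\setminus A_1$ and handles the two cases by separate (mildly awkward) estimates using $\E_{x,y}g_S=0$; you avoid the case split by working directly with $b_+=\tfrac12(|b|+b)$ and the identity $\E_{x\in A}b\,A_0\approx 0$, which packages the same information more smoothly. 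Likewise, the paper verifies the increment by first computing $\mu_{\F_p^n\times\F_p^n}(T')=\alpha'\beta\gamma\delta\rho+O(\ve^{1/16}/\alpha)$ separately, whereas you compute the ratio $\mu_{T'}(S\cap T')=\sigma+\sum_{A'}b/\sum_{A'}\mu(T(x,\cdot))$ directly, using only the pointwise upper bound $\mu(T(x,\cdot))\leq\tfrac32\beta\gamma\delta\rho$ on $A'\subset A_0$ — so in fact your argument never needs to estimate $\mu(T')$ at all, contrary to what your opening paragraph suggests is ``the only genuinely new point''. Your handling of the~\eqref{deg12} and~\eqref{deg13} cases, by symmetry through the three remaining clauses of Lemma~\ref{fibersize}, matches the paper's remark that those cases are analogous.

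One small bookkeeping comment: you should make explicit that the threshold $\ve\leq c_1(\tau\alpha\beta\gamma\delta\rho)^{c_2}$ must be chosen so that the bad-set contribution $\mu_A(A\setminus A_0)\ll\ve^{1/8}/(\alpha\beta^2\gamma^2\delta^2\rho^2)$ is negligible against both $\tau\beta^2\gamma^2\delta^2\rho^2$ (for the second-moment truncation) and $\tau\beta\gamma\delta\rho$ (for bounding $|\E_{x\in A}b\,A_0|$). This is routine — exactly as in the paper — but since $|b|$ is only known to be at most $1$ off the good set, it is the one place where the numerology of $c_1,c_2$ actually gets used.
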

\begin{proof}
  The assumption~\eqref{deg11} can be written as
  \[
    \E_{x\in A}|\E_{y}g_S(x,y)|^2\geq\tau\beta^2\gamma^2\delta^2\rho^2.
  \]
  Since $\mu_{\F_p^n}(T(x,\cdot))>2\beta\gamma\delta\rho$ for at most a
  $O(\ve^{1/8}/\alpha^3\beta^2\gamma^2\delta^2\rho^2)$-proportion of $x\in A$ by
  Lemma~\ref{fibersize}, it follows that, as long as $c_2$ is sufficiently large, there
  exists a subset $A_0\subset A$ of relative density at least $\tau/4$ for which
  \[
    \left|\E_{y}g_S(x,y)\right|\geq\f{\sqrt{\tau}\beta\gamma\delta\rho}{4}
  \]
  for all $x\in A_0$, provided that $c_1$ is small enough and $c_2$ is large enough. Note that $\E_{y}g_S(x,y)$ is a real number, and thus is either positive or negative. There must therefore exist a subset $A_1\subset A_0$ of density at least $1/2$ in $A_0$ such that either
  \[
    \E_{y}g_S(x,y)\geq\f{\sqrt{\tau}\beta\gamma\delta\rho}{4}
  \]
  for every $x\in A_1$ or
  \[
    \E_{y}g_S(x,y)\leq-\f{\sqrt{\tau}\beta\gamma\delta\rho}{4}
  \]
  for every $x\in A_1$.

  In the first case, setting $A'=A_1$ and $\alpha'=\mu_{\F_p^n}(A')$, we have
  \[
    \left|\E_{x\in A'}\E_{y}g_S(x,y)-\left(\E_{x\in A'}\E_{y}S(x,y)-\sigma\beta\gamma\delta\rho\right)\right|<\f{\sqrt{\tau}\beta\gamma\delta\rho}{8}
  \]
  by Lemma~\ref{fibersize} whenever $c_1$ is small enough and $c_2$ is large enough, so that
  \[
    \E_{x,y}A'(x)S(x,y)>\left(\sigma+\f{\sqrt{\tau}}{8}\right)\alpha'\beta\gamma\delta\rho.
  \]
  Since $\E_{x,y}A'(x)B(y)C(x+y)D(2x+y)\Phi(x,y)=\alpha'\beta\gamma\delta\rho+O(\ve^{1/16}/\alpha)$ by Lemma~\ref{fibersize}, the conclusion of the lemma follows when $c_1$ is small enough and $c_2$ is large enough by taking $B'=B$.
  
  In the second case, setting $A'=A\setminus A_1$ and $\alpha'=\mu_{\F_p^n}(A')$, we use the fact that $\E_{x,y}g_S(x,y)=0$ to deduce that
  \[
    \E_{x\in A'}\E_{y}g_S(x,y)\geq\f{\tau^{3/2}\beta\gamma\delta\rho}{16}
  \]
  whenever $c_1$ is small enough and $c_2$ is large enough. Note that $\E_{x,y}A'(x)B(y)C(x+y)D(2x+y)\Phi(x,y)=\alpha'\beta\gamma\delta\rho+O(\ve^{1/16}/\alpha)$ in this case as well by Lemma~\ref{fibersize}, so the conclusion of the lemma will follow as long as $\alpha'=\Omega(\tau^{O(1)}\alpha)$. But this also follows from Lemma~\ref{fibersize}, for we have
  \[
    \tau\alpha(\beta\gamma\delta\rho)+O(\ve^{1/16})\ll\left|\E_{x}A_1(x)g_S(x,y)\right|=\left|\E_{x}A'(x)g_S(x,y)\right|=\alpha'(\beta\gamma\delta\rho)+O(\alpha'\ve^{1/16}/\alpha).
  \]

  The proof of the lemma starting from the assumptions~\eqref{deg12} and~\eqref{deg13} is essentially identical, but using the second and third probability estimates from Lemma~\ref{fibersize}, respectively.
\end{proof}

\subsection{Results on degree $2$ norms}\label{deg2ss}

To obtain a density-increment when the relevant degree $2$ directional uniformity norms of $g_S$ are large, we first need to show that certain degree $2$ ``inner products'' are controlled by the degree $1$ directional uniformity norms studied in the previous subsection.

\begin{lemma}\label{deg2to1}
Let $d$ be a nonnegative integer, and set $\rho:=p^{-d}$. Suppose that $A,B,C,D\subset\F_p^n$ have densities $\alpha,\beta,\gamma,\delta$, respectively, and that $\Phi\subset\F_p^n\times\F_p^n$ takes the form
  \[
    \Phi=\left\{(x,y)\in A\times\F_p^n:y\in u+V_x\right\},
  \]
  where each $V_x$ is a subspace of $\F_p^n$ of codimension $d$. Let $\ve>0$ and assume that
  \[
    \|A-\alpha\|_{U^{8}(\F_p^n)},\|B-\beta\|_{U^{8}(\F_p^n)},\|C-\gamma\|_{U^{8}(\F_p^n)},\|D-\delta\|_{U^{8}(\F_p^n)},\|\Phi-\alpha\rho\|_{U^{4}(\F_p^n\times\F_p^n)}<\ve.
  \]
  Define $T$ by~\eqref{Tform}, and let $S\subset T$ and $\tau>0$.

  If
  \begin{equation}\label{deg2to11}
    \left|\E_{x,y,h,k}\prod_{\omega\in\{0,1\}^2}g_\omega((x,y)+\omega\cdot((0,h),(0,k)))\right|\geq\tau\alpha\beta^4\gamma^4\delta^4\rho^3,
  \end{equation}
  where $g_\omega$ equals $T$ or $g_S$ for all $\omega\in\{0,1\}^2$, at least one
  $g_\omega$ equals $g_S$, and at least one $g_\omega$ equals $T$, then
  \[
    \E_{x,y,h}\Delta_{(0,h)}g_S(x,y)\geq\tau^2\alpha\beta^2\gamma^2\delta^2\rho^2+O\left(\f{\ve^{\Omega(1)}}{(\alpha\beta\gamma\delta\rho)^{O(1)}}\right).
  \]
 If
  \begin{equation}\label{deg2to12}
    \left|\E_{x,y,h,k}\prod_{\omega\in\{0,1\}^2}g_\omega((x,y)+\omega\cdot((h,0),(0,k)))\right|\geq\tau\alpha^2\beta^2\gamma^4\delta^4\rho^4,
  \end{equation}
  where $g_\omega$ equals $T$ or $g_S$ for all $\omega\in\{0,1\}^2$, at least one
  $g_\omega$ equals $g_S$, and at least one $g_\omega$ equals $T$, then
  \[
    \E_{x,y,h}\Delta_{(0,h)}g_S(x,y)\geq\tau^2\alpha\beta^2\gamma^2\delta^2\rho^2+O\left(\f{\ve^{\Omega(1)}}{(\alpha\beta\gamma\delta\rho)^{O(1)}}\right)
  \]
  or
  \[
    \E_{x,y,h}\Delta_{(h,0)}g_S(x,y)\geq\tau^2\alpha^2\beta\gamma^2\delta^2\rho^2+O\left(\f{\ve^{\Omega(1)}}{(\alpha\beta\gamma\delta\rho)^{O(1)}}\right).
  \]
  If
  \begin{equation}\label{deg2to13}
    \left|\E_{x,y,h,k}\prod_{\omega\in\{0,1\}^2}g_\omega((x,y)+\omega\cdot((0,h),(-k,k)))\right|\geq\tau\alpha^2\beta^4\gamma^2\delta^4\rho^4,
  \end{equation}
  where $g_\omega$ equals $T$ or $g_S$ for all $\omega\in\{0,1\}^2$, at least one
  $g_\omega$ equals $g_S$, and at least one $g_\omega$ equals $T$, then
  \[
    \E_{x,y,h}\Delta_{(0,h)}g_S(x,y)\geq\tau^2\alpha\beta^2\gamma^2\delta^2\rho^2+O\left(\f{\ve^{\Omega(1)}}{(\alpha\beta\gamma\delta\rho)^{O(1)}}\right),
  \]
  or
  \[
    \E_{x,y,h}\Delta_{(-h,h)}g_S(x,y)\geq\tau^2\alpha^2\beta^2\gamma\delta^2\rho^2+O\left(\f{\ve^{\Omega(1)}}{(\alpha\beta\gamma\delta\rho)^{O(1)}}\right).
  \]
\end{lemma}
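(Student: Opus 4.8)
The plan is to observe that each of the three hypothesised averages is a two-dimensional box inner product of the functions $g_\omega$, $\omega\in\{0,1\}^2$ (each equal to $T$ or to $g_S$), and that the bound on the right-hand side is exactly the ``trivial'' value of the analogous box inner product of four copies of $T$. I would factor each such box average as $\E[R_1R_2]$, a product of two ``$U^1$-type'' inner products supported in independent directions, apply the Cauchy--Schwarz inequality to separate $\E R_1^2$ and $\E R_2^2$, bound $\E R_2^2$ by that trivial value up to an error $O(\ve^{\Omega(1)}/\rho^{O(1)})$ using only that each $g_\omega$ is $1$-bounded and supported on $T$, and --- after arranging the factorisation so that the two functions entering $R_1$ form a \emph{mixed} pair (one equal to $T$, one to $g_S$) --- evaluate $\E R_1^2$, showing it equals $\rho^{O(1)}(\beta\gamma\delta)^{O(1)}$ times one of the degree-$1$ averages $\E_{x,y,h}\Delta_{(0,h)}g_S(x,y)$, $\E_{x,y,h}\Delta_{(h,0)}g_S(x,y)$, or $\E_{x,y,h}\Delta_{(-h,h)}g_S(x,y)$, plus $O(\ve^{\Omega(1)}/\rho^{O(1)})$. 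Combining the two estimates and rearranging gives the advertised lower bound.

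Setting up the combinatorics: since some $g_\omega$ equals $g_S$ and some equals $T$, the four-vertex, four-edge graph of the parallelepiped is not monochromatic, so at least one edge is mixed, and using the symmetries of the parallelepiped (which merely permute the $g_\omega$) I would relabel so that this mixed edge is the first group of a convenient splitting. For \eqref{deg2to11} both differencing directions are ``vertical'', so either splitting is available and one always lands on $\E\Delta_{(0,h)}g_S$. For \eqref{deg2to12} I would split according to the $x$-difference $h$, which groups the two ``$y$-edges'' $\{00,01\}$ and $\{10,11\}$; if one of these is mixed, the factorisation gives $\E_{y,k}R_1(y,k)^2=\E_{x,x'}\big(\E_y g_S(x,y)g_S(x',y)\big)\big(\E_z T(x,z)T(x',z)\big)$, and the crucial point is that $\E_z T(x,z)T(x',z)$ is essentially the \emph{constant} $\rho^2\beta\gamma^2\delta^2$: indeed $\{z:\Phi(x,z)\Phi(x',z+w)=1\}$ has codimension exactly $2d$ for all $w$ and almost all $(x,x')\in A\times A$ by Lemma~\ref{intersection} applied to the independent forms $\psi_1(x,x')=x$, $\psi_2(x,x')=x'$ (Cauchy--Schwarz complexity $0$, using $\|\Phi-\alpha\rho\|_{U^4}<\ve$), while Lemmas~\ref{cs2} and~\ref{subspaceavg} handle the $B,C,D$ factors on the resulting coset. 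This yields $\E_{y,k}R_1^2=\rho^2\beta\gamma^2\delta^2\,\E_{x,y,h}\Delta_{(h,0)}g_S(x,y)+O(\ve^{\Omega(1)}/\rho^{O(1)})$ and hence the second alternative. For \eqref{deg2to13} I would first replace each $g_\omega$ by $\widetilde g_\omega(a,b):=g_\omega(a,b-a)$ and substitute $Y=x+y$: this turns the direction pair $\{(0,h),(-k,k)\}$ into $\{(0,h),(-k,0)\}$, putting \eqref{deg2to13} into the same shape as \eqref{deg2to12}; the $\Phi$-averages that arise are still of the form $\{y:\Phi(x,y)\Phi(x',y+w)=1\}$ for a fixed $w$ (now $w=x-x'$), so Lemma~\ref{intersection} applies as before, and the mixed-``vertical-edge'' case produces $\E_{x,y,h}\Delta_{(h,-h)}g_S(x,y)=\E_{x,y,h}\Delta_{(-h,h)}g_S(x,y)$, the second alternative of \eqref{deg2to13}.

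The one configuration not reached this way is when the mixed edges are ``perpendicular'' to the usable splitting: for \eqref{deg2to12} this is the case where the two columns of the parallelepiped are monochromatic, say $g_{00}=g_{01}=g_S$ and $g_{10}=g_{11}=T$ (the opposite assignment reduces to this by a coordinate flip in $x$), and for \eqref{deg2to13} its shear-analogue. Here the box average collapses to $\E_{x,x'}\big(\E_y g_S(x,y)T(x',y)\big)^2=\E_{y,y'}\Theta(y,y')\Gamma(y,y')$, where $\Theta(y,y'):=\E_x T(x,y)T(x,y')$ and $\Gamma(y,y'):=\E_x g_S(x,y)g_S(x,y')$. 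I would show that $\Theta$ is essentially rank one: the identity $\E_{y,y'}\Theta(y,y')^2=\E_{x,x'}\big(\E_y T(x,y)T(x',y)\big)^2$ puts the $\Phi$-average back into the form covered by Lemma~\ref{intersection}, so $\E_{y,y'}\Theta(y,y')^2=\alpha^2\beta^2\gamma^4\delta^4\rho^4+O(\ve^{\Omega(1)}/\rho^{O(1)})$, while $\E_{y,y'}\Theta(y,y')=\E_x\big(\mu_{\F_p^n}(T(x,\cdot))\big)^2=\alpha\beta^2\gamma^2\delta^2\rho^2+O(\ve^{1/8})$ by Lemma~\ref{fibersize}; since $\Theta$ is supported on $B\times B$, these two moments force $\E_{y,y'}\big(\Theta(y,y')-\alpha\gamma^2\delta^2\rho^2\,B(y)B(y')\big)^2\ll\ve^{\Omega(1)}/\rho^{O(1)}$. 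Plugging this in, bounding the error by Cauchy--Schwarz in $(y,y')$ against $(\E_{y,y'}\Gamma(y,y')^2)^{1/2}\le 1$, and using that $g_S(x,\cdot)$ is supported on $B$, the main term becomes $\alpha\gamma^2\delta^2\rho^2\,\E_x\big(\E_y g_S(x,y)\big)^2=\alpha\gamma^2\delta^2\rho^2\,\E_{x,y,h}\Delta_{(0,h)}g_S(x,y)$, giving the first alternative. I expect this monochromatic case --- in particular establishing the near-rank-one structure of $\Theta$ and its sheared cousins through the variance/swap computation, and checking the routine but delicate collection of $\Phi$-, $B$-, $C$-, $D$-averages that appears --- to be the main technical burden; everything else is bookkeeping with Cauchy--Schwarz, collecting errors of the form $O(\ve^{\Omega(1)}/(\alpha\beta\gamma\delta\rho)^{O(1)})$, and the harmless observation that in the relevant range $\tau\le1$ we may replace the resulting factor $\tau$ by $\tau^2$.
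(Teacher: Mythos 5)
Your plan is correct, and it reaches the same conclusions as the paper's proof by a closely related but somewhat differently organised route. The paper rewrites each assumed average as $\E g_S\mu$ or $\E g_Sg_S\mu$ for a weight $\mu_i$ obtained by integrating one free variable, shows each $\mu_i$ is close to a constant (times its support indicator) by first/second moments using Lemmas~\ref{subspaceavg},~\ref{cs2},~\ref{intersection}, and for the one-$T$ case performs a preparatory Cauchy--Schwarz to double the $T$'s before repeating. You instead factor the box average as $\E R_1R_2$, Cauchy--Schwarz the two halves, bound $\E R_2^2$ by the all-$T$ value using $|g_S|\le T$, and compute $\E R_1^2$; the ``near-rank-one'' analysis of $\Theta(y,y')=\E_x T(x,y)T(x,y')$ on $B\times B$ is precisely the paper's $\mu_3$ estimate, and the variance computations behind ``$\E_z T(x,z)T(x',z)$ is essentially constant'' are the same first/second-moment calculus. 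Your shear substitution for~\eqref{deg2to13}, which puts the $\Phi$-averages into the shape $\E_y\Phi(x,y)\Phi(x',y+w)$ visible to Lemma~\ref{intersection}, is a genuinely tidier way to set up the moment bounds than the unshifted form. One point you gloss over: for~\eqref{deg2to11} the ``mixed-$R_1$'' factorisation gives $\E R_1^2=\E_{x,w}\Theta_x(w)\Gamma_x(w)$ with $\Theta_x(w)=\E_zT(x,z)T(x,z+w)$, where both $T$'s share the same first coordinate; here ``$\E TT$ is essentially a constant'' is false --- $\Theta_x(w)$ is supported only on $w\in V_x$ --- and what you actually need is the conditional analogue of your rank-one argument, i.e.\ $\Theta_x(w)\approx\rho\beta^2\gamma^2\delta^2\cdot A(x)1_{w\in V_x}$ in mean square (the paper's $\mu_1$ estimate). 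Since $\Gamma_x(w)$ is automatically supported on the same set, the computation still lands on $\E_{x,y,h}\Delta_{(0,h)}g_S$, so this is a presentation issue rather than a gap, but as written the argument appears to invoke the wrong constancy statement for that case.
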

\begin{proof}
  We rewrite the various assumptions that~\eqref{deg2to11},~\eqref{deg2to12}, and~\eqref{deg2to13} hold when at least two of the $g_\omega$'s equal $T$ as
  \[
    \left|\E_{x,y,h}g_S(x,y)g_S(x,y+h)\mu_1(x,y,h)\right|\geq\tau\alpha\beta^4\gamma^4\delta^4\rho^3,
  \]
  \[
    \left|\E_{x,y}g_S(x,y)\mu_2(x,y)\right|\geq\tau\alpha\beta^4\gamma^4\delta^4\rho^3,
  \]
  \[
    \left|\E_{x,y,k}g_S(x,y)g_S(x,y+k)\mu_3(x,y,k)\right|\geq\tau\alpha^2\beta^2\gamma^4\delta^4\rho^4,
  \]
  \[
    \left|\E_{x,y,h}g_S(x,y)g_S(x+h,y)\mu_4(x,y,h)\right|\geq\tau\alpha^2\beta^2\gamma^4\delta^4\rho^4,
  \]
  \[
    \left|\E_{x,y}g_S(x,y)\mu_5(x,y)\right|\geq\tau\alpha^2\beta^2\gamma^4\delta^4\rho^4,
  \]
  \[
    \left|\E_{x,y,h}g_S(x,y)g_S(x,y+h)\mu_6(x,y,h)\right|\geq\tau\alpha^2\beta^4\gamma^2\delta^4\rho^4,
  \]
  \[
    \left|\E_{x,y,k}g_S(x,y)g_S(x-k,y+k)\mu_7(x,y,k)\right|\geq\tau\alpha^2\beta^4\gamma^2\delta^4\rho^4,
  \]
  and
  \[
    \left|\E_{x,y}g_S(x,y)\mu_8(x,y)\right|\geq\tau\alpha^2\beta^4\gamma^2\delta^4\rho^4,
  \]
  where
  \[
    \mu_1(x,y,h)=\E_{k}T(x,y+k)T(x,y+h+k),
  \]
  \[
    \mu_2(x,y)=\E_{h,k}T(x,y+h)T(x,y+k)T(x,y+h+k),
  \]
  \[
    \mu_3(x,y,k)=\E_{h}T(x+h,y)T(x+h,y+k),
  \]
  \[
    \mu_4(x,y,h)=\E_{k}T(x,y+k)T(x+h,y+k),
  \]
  \[
    \mu_5(x,y)=\E_{h,k}T(x+h,y)T(x,y+k)T(x+h,y+k),
  \]
    \[
    \mu_6(x,y,h)=\E_{k}T(x-k,y+k)T(x-k,y+h+k),
  \]
  \[
    \mu_7(x,y,k)=\E_{h}T(x,y+h)T(x-k,y+h+k),
  \]
  and
  \[
    \mu_8(x,y)=\E_{h,k}T(x,y+h)T(x-k,y+k)T(x-k,y+h+k).
  \]
  Using the definition~\eqref{Tform} of $T$ and arguing as in Section~\ref{gvn} using Lemmas~\ref{subspaceavg},~\ref{cs2}, and~\ref{intersection} gives that each of $\mu_1,\dots,\mu_8$ is typically very close to its average value on its support. Precisely, we have the estimates
  \[
    \E_{x,y,h}A(x)\Phi(x,h-u)|\mu_1(x,y,h)-\beta^2\gamma^2\delta^2\rho|^2\ll\f{\ve^{\Omega(1)}}{\rho^{O(1)}},
  \]
  \[
    \E_{x,y}A(x)\Phi(x,y)|\mu_2(x,y)-\beta^3\gamma^3\delta^3\rho^2|^2\ll\f{\ve^{\Omega(1)}}{\rho^{O(1)}},
  \]
  \[
    \E_{x,y,k}B(y)B(y+k)|\mu_3(x,y,k)-\alpha\gamma^2\delta^2\rho^2|^2\ll\f{\ve^{\Omega(1)}}{\rho^{O(1)}},
  \]
  \[
    \E_{x,y,h}A(x)A(x+h)|\mu_4(x,y,h)-\beta\gamma^2\delta^2\rho^2|^2\ll\f{\ve^{\Omega(1)}}{\rho^{O(1)}},
  \]
  \[
    \E_{x,y}A(x)B(y)|\mu_5(x,y)-\beta^2\gamma^3\delta^3\rho^3|^2\ll\f{\ve^{\Omega(1)}}{\rho^{O(1)}},
  \]
  \[
    \E_{x,y,h}C(x+y)C(x+y+h)|\mu_6(x,y,h)-\alpha\beta^2\delta^2\rho^2|^2\ll\f{\ve^{\Omega(1)}}{\rho^{O(1)}},
  \]
  \[
    \E_{x,y,k}A(x)A(x-k)|\mu_7(x,y,k)-\beta^2\gamma\delta^2\rho^2|^2\ll\f{\ve^{\Omega(1)}}{\rho^{O(1)}},
  \]
  and
  \[
    \E_{x,y}A(x)C(x+y)|\mu_8(x,y)-\alpha\beta^3\gamma\delta^3\rho^3|^2\ll\f{\ve^{\Omega(1)}}{\rho^{O(1)}},
  \]
  which imply that
  \[
    \left|\E_{x,y,h}g_S(x,y)g_S(x,y+h)\mu_1(x,y,h)\right|=\beta^2\gamma^2\delta^2\rho\E_{x,y,h}g_S(x,y)g_S(x,y+h)+O\left(\f{\ve^{\Omega(1)}}{\rho^{O(1)}}\right),
  \]
  \[
    \left|\E_{x,y}g_S(x,y)\mu_2(x,y)\right|=\beta^3\gamma^3\delta^3\rho^2|\E_{x,y}g_S(x,y)|+O\left(\f{\ve^{\Omega(1)}}{\rho^{O(1)}}\right),
  \]
  \[
    \left|\E_{x,y,k}g_S(x,y)g_S(x,y+k)\mu_3(x,y,k)\right|=\alpha\gamma^2\delta^2\rho^2\E_{x,y,k}g_S(x,y)g_S(x,y+k)+O\left(\f{\ve^{\Omega(1)}}{\rho^{O(1)}}\right),
  \]
  \[
    \left|\E_{x,y,h}g_S(x,y)g_S(x+h,y)\mu_4(x,y,h)\right|=\beta\gamma^2\delta^2\rho^2\E_{x,y,h}g_S(x,y)g_S(x+h,y)+O\left(\f{\ve^{\Omega(1)}}{\rho^{O(1)}}\right),
  \]
  \[
    \left|\E_{x,y}g_S(x,y)\mu_5(x,y)\right|=\beta^2\gamma^3\delta^3\rho^3|\E_{x,y}g_S(x,y)|+O\left(\f{\ve^{\Omega(1)}}{\rho^{O(1)}}\right),
  \]
    \[
    \left|\E_{x,y,h}g_S(x,y)g_S(x,y+h)\mu_6(x,y,h)\right|=\alpha\beta^2\delta^2\rho^2\E_{x,y,h}g_S(x,y)g_S(x,y+h)+O\left(\f{\ve^{\Omega(1)}}{\rho^{O(1)}}\right),
  \]
  \[
    \left|\E_{x,y,k}g_S(x,y)g_S(x-k,y+k)\mu_7(x,y,k)\right|=\beta^2\gamma\delta^2\rho^2\E_{x,y,k}g_S(x,y)g_S(x-k,y+k)+O\left(\f{\ve^{\Omega(1)}}{\rho^{O(1)}}\right),
  \]
  and
  \[
    \left|\E_{x,y}g_S(x,y)\mu_8(x,y)\right|=\alpha\beta^3\gamma\delta^3\rho^3\left|\E_{x,y}g_S(x,y)\right|+O\left(\f{\ve^{\Omega(1)}}{\rho^{O(1)}}\right).
  \]
  Since $|\E_{x,y}g_S(x,y)|^2\leq \alpha\E_{x,y,h}g_S(x,y)g_S(x,y+h)$ by an application of the Cauchy--Schwarz inequality, the conclusion of the lemma easily follows starting from any one of the assumptions~\eqref{deg2to11}, \eqref{deg2to12}, or~\eqref{deg2to13} when at least two of the $g_\omega$'s equal $T$.
  
  To prove the lemma when only one $g_\omega$ in~\eqref{deg2to11},~\eqref{deg2to12}, or~\eqref{deg2to13} equals $T$, we will apply the Cauchy--Schwarz inequality once, and then argue analogously. By making a change of variables, we may start from the assumption that
  \[
    \left|\E_{x,y,h,k}g_S(x,y)g_S(x,y+h)g_S(x,y+k)T(x,y+h+k)\right|\geq\tau\alpha\beta^4\gamma^4\delta^4\rho^3,
  \]
  \[
    \left|\E_{x,y,h,k}g_S(x,y)g_S(x+h,y)g_S(x,y+k)T(x+h,y+k)\right|\geq\tau\alpha^2\beta^2\gamma^4\delta^4\rho^4,
  \]
  or
  \[
    \left|\E_{x,y,h,k}g_S(x,y)g_S(x,y+h)g_S(x-k,y+k)T(x-k,y+h+k)\right|\geq\tau\alpha^2\beta^4\gamma^2\delta^4\rho^4.
  \]
  We apply the Cauchy--Schwarz inequality in each of these three cases to get that
  \begin{equation}\label{deg2to11a}
    \E_{x,y,h}\left|\E_{k}g_S(x,y+k)T(x,y+h+k)\right|^2\Delta_hB(y)\Delta_hC(x+y)\Delta_hD(2x+y)\Phi(x,y)
  \end{equation}
  times $\E_{x,y,h}\Delta_{(0,h)}T(x,y)$ is at least $\tau^2\alpha^2\beta^8\gamma^8\delta^8\rho^6$,
  \begin{equation}\label{deg2to13b}
    \E_{x,y,h}\left|\E_kg_S(x,y+k)T(x+h,y+k)\right|^2B(y)\Delta_hC(x+y)\Delta_{2h}D(2x+y)\Delta_{(h,0)}\Phi(x,y)
  \end{equation}
  times $\E_{x,y,h}\Delta_{(h,0)}T(x,y)$ is at least $\tau^2\alpha^4\beta^4\gamma^8\delta^8\rho^8$, or 
  \begin{equation}\label{deg2to13c}
    \E_{x,y,h}\left|\E_kg_S(x-k,y+k)T(x-k,y+h+k)\right|^2A(x)\Delta_hB(y)\Delta_{h}D(2x+y)\Delta_{(0,h)}\Phi(x,y)
  \end{equation}
  times $\E_{x,y,h}\Delta_{(0,h)}T(x,y)$ is at least $\tau^2\alpha^4\beta^8\gamma^4\delta^8\rho^8$.
  
  We have
  \[
    \E_{x,y,h}\Delta_{(0,h)}T(x,y)=\alpha\beta^2\gamma^2\delta^2\rho^2+O\left(\f{\ve^{\Omega(1)}}{\rho^{O(1)}}\right)
  \]
  and
  \[
    \E_{x,y,h}\Delta_{(h,0)}T(x,y)=\alpha^2\beta\gamma^2\delta^2\rho^2+O\left(\f{\ve^{\Omega(1)}}{\rho^{O(1)}}\right)
  \]
  by Lemmas~\ref{subspaceavg},~\ref{cs2}, and~\ref{intersection}. Expanding the square and making a change of variables,~\eqref{deg2to11a} equals
  \[
    \E_{x,y,l}\Delta_{(0,\ell)}g_S(x,y)\mu_9(x,y,\ell),
  \]
  where
  \begin{align*}
    \mu_9(x,y,\ell):=\E_{h,k}&\Delta_hB(y-k)\Delta_\ell B(y+h)\Delta_hC(x+y-k)\Delta_\ell
                               C(x+y+h)\\
    &\Delta_hD(2x+y-k)\Delta_\ell D(2x+y+h)\Phi(x,y-k)\Phi(x,y+h)
  \end{align*}
\eqref{deg2to13b} equals
  \[
    \E_{x,y,\ell}\Delta_{(0,\ell)}g_S(x,y)\mu_{10}(x,y,\ell),
  \]
  where
  \begin{align*}
    \mu_{10}(x,y,\ell):=\E_{h,k}&B(y-k)\Delta_hC(x+y-k)\Delta_{\ell}C(x+y+h)\\
    &\Delta_{2h}D(2x+y-k)\Delta_{\ell}D(2x+y+2h)\Delta_{(h,0)}\Phi(x,y-k)\Delta_{(0,\ell)}\Phi(x+h,y)
  \end{align*}
  and~\eqref{deg2to13c} equals
  \[
    \E_{x,y,\ell}\Delta_{(-\ell,\ell)}g_S(x,y)\mu_{11}(x,y,\ell),
  \]
  where
  \begin{align*}
    \mu_{11}(x,y,\ell):=\E_{h,k}&\Delta_hB(y-k)\Delta_{\ell}B(y+h)C(x+y+h) \\
                                &\Delta_{h}D(2x+y+k)\Delta_{-\ell}D(2x+y+h)\Delta_{(0,h)}\Phi(x+k,y-k)\Delta_{(-\ell,\ell)}\Phi(x,y+h)
  \end{align*}
  
  Analogously to the weights $\mu_1,\dots,\mu_8$, Lemmas~\ref{subspaceavg},~\ref{cs2}, and~\ref{intersection} give the estimates
  \[
    \E_{x,y,\ell}\Delta_{(0,\ell)}\Phi(x,y)\left|\mu_9(x,y,\ell)-\beta^4\gamma^4\delta^4\rho^2\right|^2\ll\f{\ve^{\Omega(1)}}{\rho^{O(1)}},
  \]
  \[
    \E_{x,y,\ell}A(x)\Delta_{\ell}B(y)\left|\mu_{10}(x,y,\ell)-\alpha\beta\gamma^4\delta^4\rho^4\right|^2\ll\f{\ve^{\Omega(1)}}{\rho^{O(1)}},
  \]
  and
  \[
    \E_{x,y,\ell}\Delta_{-\ell}A(x)\left|\mu_{11}(x,y,\ell)-\alpha\beta^4\gamma\delta^4\rho^4\right|^2\ll\f{\ve^{\Omega(1)}}{\rho^{O(1)}},   
  \]
  from which it follows that
  \[
    \left|\E_{x,y,\ell}\Delta_{(0,\ell)}g_S(x,y)\mu_9(x,y,\ell)\right|=\beta^4\gamma^4\delta^4\rho^2\E_{x,y,\ell}\Delta_{(0,\ell)}g_S(x,y)+O\left(\f{\ve^{\Omega(1)}}{\rho^{O(1)}}\right),
  \]
  \[
    \left|\E_{x,y,\ell}\Delta_{(0,\ell)}g_S(x,y)\mu_{10}(x,y,\ell)\right|=\alpha\beta\gamma^4\delta^4\rho^4\E_{x,y,\ell}\Delta_{(0,\ell)}g_S(x,y)+O\left(\f{\ve^{\Omega(1)}}{\rho^{O(1)}}\right),
  \]
  and
  \[
    \left|\E_{x,y,\ell}\Delta_{(-\ell,\ell)}g_S(x,y)\mu_{11}(x,y,\ell)\right|=\alpha\beta^4\gamma\delta^4\rho^4\E_{x,y,\ell}\Delta_{(-\ell,\ell)}g_S(x,y)+O\left(\f{\ve^{\Omega(1)}}{\rho^{O(1)}}\right).
  \]
  This completes the proof of the lemma.
\end{proof}

Now we are almost ready to prove our desired density-increment result for the localized degree $2$ directional uniformity norms controlled by $\|\cdot\|_{\star_1}$:

\begin{lemma}\label{deg2inverse}
 There exist absolute constants $0<c_1<1<c_2,c_3$ such that the following holds. Let $d$ be a nonnegative integer, and set $\rho:=p^{-d}$. Suppose that $A,B,C,D\subset\F_p^n$ have densities $\alpha,\beta,\gamma,\delta$, respectively, and that $\Phi\subset\F_p^n\times\F_p^n$ takes the form
  \[
    \Phi=\{(x,y)\in A\times\F_p^n:y\in u+V_x\},
  \]
  where each $V_x$ is a subspace of $\F_p^n$ of codimension $d$. Let $\tau>0$ and $\ve\leq c_1(\tau\alpha\beta\gamma\delta\rho)^{c_2}\exp(-(32/\tau)^{c_3})$, and assume that
  \[
    \|A-\alpha\|_{U^{8}(\F_p^n)},\|B-\beta\|_{U^{8}(\F_p^n)},\|C-\gamma\|_{U^{8}(\F_p^n)},\|D-\delta\|_{U^{8}(\F_p^n)},\|\Phi-\alpha\rho\|_{U^{4}(\F_p^n\times\F_p^n)}<\ve.
  \]
  Define $T$ by~\eqref{Tform}, and let $S\subset T$ have density $\sigma$ in $T$. Suppose that
  \begin{equation}\label{deg21}
    \E_{x,y,h,k}\Delta_{(0,h),(0,k)}g_S(x,y)\geq\tau\alpha\beta^4\gamma^4\delta^4\rho^3
  \end{equation}
  or
  \begin{equation}\label{deg22}
    \E_{x,y,h,k}\Delta_{(h,0),(0,k)}g_S(x,y)\geq\tau\alpha^2\beta^2\gamma^4\delta^4\rho^4.
  \end{equation}
  Then $S$ has density at least $\sigma+\Omega(\tau^{O(1)})$ on some subset $T'$ of $T$ of the form
  \[
    T'=\left\{(x,y)\in\F_p^n\times\F_p^n:A'(x)B'(y)C(x+y)D(2x+y)\Phi'(x,y)=1\right\},
  \]
  where the densities of $A',B'\subset\F_p^n$ are both $\Omega((\sigma\tau\alpha\beta\gamma\delta\rho)^{O(1)})$, and $\Phi'$ is of the form
  \[
    \Phi'=\left\{(x,y)\in A'\times\F_p^n:y\in u'+V'_x\right\},
  \]
  where each $V_x'$ is a subspace of $\F_p^n$ of codimension $d+1$.
\end{lemma}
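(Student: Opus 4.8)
The plan is to treat the two hypotheses by different routes. Hypothesis~\eqref{deg21} is the genuinely quadratic one: since $g_S$ is real-valued it says, after accounting for the fact that $g_S(x,\cdot)$ vanishes off $A$ in the $x$-variable and is supported on the codimension-$d$ coset $u+V_x$ in the $y$-variable, that $\E_{x\in A}\|g_S(x,\cdot)\|_{U^2(u+V_x)}^4\geq\tau\beta^4\gamma^4\delta^4$ (here we use that $\|f\|_{U^2(\F_p^n)}^4=\rho^{3}\|f\|_{U^2(u+V_x)}^4$ for $f$ supported on $u+V_x$). By Markov's inequality there is $A_0\subseteq A$ with $\mu_A(A_0)\gg\tau^{O(1)}$ on which $\|g_S(x,\cdot)\|_{U^2(u+V_x)}\gg\tau^{O(1)}\beta\gamma\delta$. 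Hypothesis~\eqref{deg22} instead says that the box norm $\|g_S\|_{\square}$ over the two copies of $\F_p^n$ is large; this is Shkredov's situation, and one obtains a density increment by the usual box-norm argument (a large box norm forces $g_S$ to correlate with the indicator of a combinatorial rectangle in $\F_p^n\times\F_p^n$), followed by a localization of the $y$-variable to a coset of a codimension-one subspace of each $V_x$ in order to bring the density of $T$ on that rectangle under control --- this localization is the source of the ``$d+1$'' in the conclusion in that case. Below I concentrate on~\eqref{deg21}, where the new idea lies.

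The heart of the~\eqref{deg21} argument is a \emph{density-preserving} fiberwise $U^2$-inverse theorem, obtained from the transference principle in the form of Lemma~\ref{densemodel}. For a typical $x\in A_0$ the fiber $T(x,\cdot)=\{y\in u+V_x:B(y)C(x+y)D(2x+y)=1\}$ has density $\beta\gamma\delta+o(1)$ in $u+V_x$ (by Lemma~\ref{cs2} and a restriction-to-subspace argument), and, crucially, the normalized majorant $\nu_x:=(\beta\gamma\delta)^{-1}T(x,\cdot)$ satisfies $\|\nu_x-1\|_{U^2(u+V_x)}\leq\exp(-\tau^{-\Omega(1)})$ for all but a negligible proportion of $x\in A_0$; this is where the factor $\exp(-(32/\tau)^{c_3})$ in the hypothesized bound on $\ve$ is consumed, via Lemmas~\ref{subspaceavg},~\ref{cs2}, and~\ref{intersection}. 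Applying Lemma~\ref{densemodel} on the subspace $u+V_x$ to $(\beta\gamma\delta)^{-1}S(x,\cdot)\leq\nu_x$ produces a $[0,1]$-valued model, and subtracting off $\sigma$ (the model of $\sigma\nu_x$) gives a $1$-bounded $\tilde g_x$ with $\|(\beta\gamma\delta)^{-1}g_S(x,\cdot)-\tilde g_x\|_{U^2(u+V_x)}=O(\tau^{O(1)})$, hence $\|\tilde g_x\|_{U^2(u+V_x)}\gg\tau^{O(1)}$. Lemma~\ref{inverseU2} then supplies a character $\psi_x$ of $u+V_x$ with $|\E_{y\in u+V_x}g_S(x,y)\psi_x(y)|\gg\tau^{O(1)}\beta\gamma\delta$ --- with no loss of density factors beyond the harmless $\beta\gamma\delta$ coming from the support of $g_S(x,\cdot)$, which is exactly what the standard (non-transferred) inverse theorem fails to provide.

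It remains to turn this into a density increment of the prescribed shape. Writing $\psi_x(y)=e_p(\xi_x\cdot(y-u))$ and letting $V_x'\leq V_x$ be a codimension-one subspace contained in $\ker(\xi_x|_{V_x})$ (an arbitrary codimension-one subspace of $V_x$ if $\xi_x$ vanishes on $V_x$), the character $\psi_x$ is constant on each of the $p$ cosets of $V_x'$ in $u+V_x$, so by the triangle inequality one of them, $\mathcal{C}_x$, satisfies $|\E_{y\in\mathcal{C}_x}g_S(x,y)|\gg\tau^{O(1)}\beta\gamma\delta$; restricting $A_0$ to the half on which this quantity has a definite sign --- and reducing the negative-sign case to the positive-sign case as in the proof of Lemma~\ref{deg1inverse}, using that the fiber means of $g_S$ average to $o(\beta\gamma\delta)$ --- costs only a constant factor. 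A first-moment argument over the basepoint fixes the shift: a uniformly random $u'\in\F_p^n$ lies in the codimension-$(d+1)$ coset $\mathcal{C}_x$ with probability $p^{-(d+1)}$, so some $u'$ works for a $\gg p^{-(d+1)}$ fraction of the surviving $x$; with $A'$ this set one has $\mathcal{C}_x=u'+V_x'\subseteq u+V_x$ for all $x\in A'$ (here it is used that the basepoint $u$ of $\Phi$ does not vary with $x$) and $\mu_{\F_p^n}(A')\gg(\sigma\tau\alpha\beta\gamma\delta\rho)^{O(1)}$. Taking $B':=B$, $\Phi':=\{(x,y)\in A'\times\F_p^n:y\in u'+V_x'\}$ (of codimension $d+1$), and $T':=\{(x,y):A'(x)B'(y)C(x+y)D(2x+y)\Phi'(x,y)=1\}\subseteq T$, and checking with the codimension-$(d+1)$ analogue of Lemma~\ref{fibersize} that $T'$ has density $\asymp\alpha'\beta\gamma\delta p^{-(d+1)}$ --- so that no factor of $\rho$ is lost --- one concludes $\mu_{T'}(S)\geq\sigma+\Omega(\tau^{O(1)})$. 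The main obstacle is the transference step: one must establish that the fibers of $T$ are $U^2$-pseudorandom at the exponentially small scale demanded by Lemma~\ref{densemodel}, i.e.\ push the $U^s(\F_p^n)$- and $U^8(\F_p^n\times\F_p^n)$-pseudorandomness of $B,C,D$ and $\Phi$ down to the individual fiber subspaces $u+V_x$; the remaining steps are bookkeeping of the kind already seen in the proof of Lemma~\ref{deg1inverse}, executed so that the increment never depends on $\rho$.
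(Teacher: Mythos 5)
Your route through \eqref{deg21} is recognizably the paper's: apply the dense model lemma fiberwise on the subspaces $u+V_x$ (feeding in the fiberwise $U^2$-pseudorandomness of the majorant $\nu_x$, which is where the $\exp(-(32/\tau)^{c_3})$ is spent), extract a large nontrivial Fourier coefficient of the model, transfer it back, pigeonhole over the $p$ cosets of $\ker\xi_x$, and fix a common shift $u'$. Your variant has an interesting twist --- you couple the model to $g_S(x,\cdot)$ rather than $S_x$, which in principle avoids the paper's explicit expansion of $\E_{x,y,h,k}\Delta_{(0,h),(0,k)}S$ into sixteen terms and the appeal to Lemma~\ref{deg2to1} to dispatch the cross terms. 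But there is a real gap here: when you apply the $U^2$ inverse theorem to $\tilde g_x$ you get a character $\psi_x$, and nothing you have written rules out $\psi_x$ trivial. That case is genuinely possible --- it occurs precisely when $\tilde\sigma_x=\E\tilde f_x$ is far from $\sigma$ --- and in it the resulting correlation is with the constant function, which does not produce a codimension-$(d+1)$ set. The paper deals with this by separately controlling the variance $\E_{x\in A}|\E_{y\in\Phi_x}S_x(y)-\sigma\beta\gamma\delta|^2$, reducing to Lemma~\ref{deg1inverse} if it is large; you need that detour (or some substitute) before ``$\xi_x$'' can safely be taken nonzero. Similarly, to get $\mu_A(A_0)\gg\tau^{O(1)}$ (rather than $\gg\tau\beta\gamma\delta$) by Markov you should use fiber pseudorandomness and the monotonicity $\|g_S(x,\cdot)\|_{U^2(\Phi_x)}^4\leq\|T_x\|_{U^2(\Phi_x)}^4\approx(\beta\gamma\delta)^4$, which you did not flag. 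Finally, your first-moment choice of $u'$ only counts how many $x$ survive; it does not, by itself, show the density increment is preserved on the survivors, which is what Lemma~\ref{uxtou}'s weighted argument is for.

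The \eqref{deg22} branch is where the proposal actually goes wrong. Dismissing it as ``Shkredov's situation'' and the ``usual box-norm argument'' undersells the problem substantially: the issue in the localized setting is not that a large box norm yields a rectangle, but that one must also show the rectangle intersected with $T$ has close to its expected density $\alpha'\beta'\gamma\delta\rho$, which is not automatic for sets of the form~\eqref{Tform}. The paper does this by a second-moment argument over $(x',y')\in S$, showing that for almost every such pair the quantity $\E_{x,y}S(x,y')S(x',y)C(x+y)D(2x+y)\Phi(x,y)$ is close to $\sigma^2\alpha\beta\gamma^3\delta^3\rho^3$ (otherwise Lemma~\ref{deg1inverse} fires), and then picks $(x',y')$ with both the density controlled and the count large. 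Moreover, the paper keeps $\Phi'=\Phi$ in this branch --- no $y$-localization occurs and no codimension is added --- so your proposed ``localization of the $y$-variable'' as ``the source of the $d+1$'' is a route the paper does not take and whose necessity and correctness you have not established; in particular, you give no argument that such a localization preserves the increment or the required uniformity hypotheses. As written, the~\eqref{deg22} case is not proved.
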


To prove this lemma starting from the assumption~\eqref{deg21}, we will apply a localized
$U^2(\Phi(x,\cdot))$-norm inverse theorem for many fixed $x$, which will produce a
density-increment on a set of the form
\begin{equation}\label{Psiform}
  \left\{(x,y)\in\F_p^n\times\F_p^n:A'(x)B(y)C(x+y)D(2x+y)\Psi(x,y)=1\right\},
\end{equation}
where
\[
  \Psi=\{(x,y)\in A'\times\F_p^n:y\in u_x'+V_x'\}.
\]
This is not yet what the conclusion of the lemma promises, since $u'_x$ varies with
$x$. To show that we can select a fixed $u'$ (at the cost of shrinking the size of $A'$ a
bit) we will need Lemma~\ref{uxtou} below.

The reader may wonder why we cannot just run the density-increment argument on sets of the
form~\eqref{Psiform} and skip having to prove Lemma~\ref{uxtou}. The issue with this
hypothetical proof is that $U^s(\F_p^n\times\F_p^n)$-uniformity of a set of the form
$\Psi$ is not strong enough to guarantee that the analogue of Lemma~\ref{intersection} is
true, regardless of how large $s$ is taken to be. Thus, such sets are not as amenable to a
Shkredov-like pseudorandomization procedure.

\begin{lemma}\label{uxtou}
  There exist absolute constants $0<c_1<1<c_2$ such that the following holds. Let $d$ be a
  nonnegative integer, and set $\rho:=p^{-d}$.  Let $A\subset\F_p^n$ have density
  $\alpha$, $\Psi\subset\F_p^n\times\F_p^n$ take the form
  \[
    \Psi=\{(x,y)\in A\times\F_p^n:y\in u_{x}+V_{x}\},
  \]
  where $u_{x}\in\F_p^n$ and each $V_{x}$ is a subspace of $\F_p^n$ of codimension $d$,
  and $K\subset A\times\F_p^n$ satisfy
  \begin{equation*}
    \left|\E_{y}K(x,y)-\kappa\right|,\left|\E_{y}K(x,y)H(y)-\kappa\rho\right|<\ve
  \end{equation*}
  for every $x\in A$ and every affine subspace $H$ of $\F_p^n$ of codimension $d$. Let
  $\tau>0$, assume that $\ve\leq c_1(\tau\alpha\beta\gamma\delta\rho)^{c_2}$, and suppose
  that $S\subset \F_p^n\times\F_p^n$ has density at least $\sigma+\tau$ in $K\cap\Psi$,
  where $\sigma>0$. Then there exists a $u\in\F_p^n$ and a subset $A'\subset A$ such that
  $S\cap T'$ has density at least $\sigma+\tau/4$ in $T'$, where
  \[
    T':=A'\times\F_p^n\cap K\cap\Phi',
  \]
  \[
    \Phi':=\{(x,y)\in A'\times\F_p^n:y\in u+V_{x}\},
  \]
  and $\mu_{\F_p^n\times\F_p^n}(A')\geq\alpha\rho\tau/2$.
\end{lemma}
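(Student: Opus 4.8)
The plan is to average over the choice of $u$. For each $x \in A$, the fiber $\Psi(x,\cdot)$ is the affine subspace $u_x + V_x$; as $t$ ranges over $\F_p^n$, the affine subspaces $t + V_x$ partition $\F_p^n$ into $p^d = 1/\rho$ cosets, exactly one of which is $u_x + V_x$ (namely the one containing $u_x$). The key observation is that if I instead replace $u_x$ by a \emph{fixed} $u$, then for each $x$ I am looking at the coset $u + V_x$, which is \emph{some} coset of $V_x$ — and for a $\rho$-proportion of the possible ``shift classes'' it will actually be the coset $u_x + V_x$ on which $S$ has extra density. More precisely, set $K_x := K(x,\cdot)$, $\Psi_x := \Psi(x,\cdot)$, and let $\sigma_x := \mu_{K_x \cap \Psi_x}(S(x,\cdot))$ denote the density of $S$ on the fiber over $x$, so that a weighted average of $\sigma_x$ over $x \in A$ (weighted by $\mu(K_x \cap \Psi_x)$, which is $\kappa\rho + O(\ve)$ uniformly in $x$ by hypothesis) is at least $\sigma + \tau$. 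For a fixed $u$, define $A'_u := \{x \in A : u \in u_x + V_x \text{ and } \sigma_x \geq \sigma + \tau/2\}$ and $\Phi'_u := \{(x,y) \in A'_u \times \F_p^n : y \in u + V_x\}$; on $A'_u$ we have $u + V_x = u_x + V_x = \Psi_x$, so $S$ has density $\sigma_x \geq \sigma + \tau/2 \geq \sigma + \tau/4$ on each fiber $K_x \cap \Phi'_u(x,\cdot) = K_x \cap \Psi_x$, whence $\mu_{T'}(S \cap T') \geq \sigma + \tau/4$ with $T' := (A'_u \times \F_p^n) \cap K \cap \Phi'_u$. It remains only to pigeonhole on $u$ to get $\mu_{\F_p^n \times \F_p^n}(A'_u) = \mu_{\F_p^n}(A'_u)\cdot 1 \geq \alpha\rho\tau/2$ (recall the second factor of $A' \times \F_p^n$ has full measure, so the density of $A' \times \F_p^n$ in $\F_p^n \times \F_p^n$ equals $\mu_{\F_p^n}(A')$; the conclusion as stated measures $A'$ this way).

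For the pigeonholing, first I would discard the ``bad'' $x$'s: by a Markov/averaging argument applied to the weighted average of $\sigma_x$, since each $\sigma_x \leq 1$ and the $\kappa\rho$-weights are within $O(\ve)$ of uniform, the set $A^* := \{x \in A : \sigma_x \geq \sigma + \tau/2\}$ satisfies $\mu_{\F_p^n}(A^*) \gg \tau\alpha$, provided $\ve \leq c_1(\tau\alpha\beta\gamma\delta\rho)^{c_2}$ is small enough to absorb the error terms (this is where the hypothesis $\ve \leq c_1(\tau\alpha\beta\gamma\delta\rho)^{c_2}$ is used). Now average over $u$: for each $x \in A^*$, the number of $u \in \F_p^n$ with $u \in u_x + V_x$ is exactly $|V_x| = p^n \rho$, so
\[
  \E_{u \in \F_p^n} \mu_{\F_p^n}(A^* \cap \{x : u \in u_x + V_x\}) = \E_{u} \E_{x} A^*(x) 1_{u \in u_x + V_x} = \E_x A^*(x) \cdot \rho = \rho\,\mu_{\F_p^n}(A^*) \gg \tau\alpha\rho.
\]
Hence there exists a $u$ with $\mu_{\F_p^n}(\{x \in A^* : u \in u_x + V_x\}) \gg \tau\alpha\rho$, and for this $u$ we have $A'_u \supseteq \{x \in A^* : u \in u_x + V_x\}$ (indeed the two sets are equal), so $\mu_{\F_p^n}(A'_u) \gg \tau\alpha\rho$; choosing the absolute constants appropriately gives $\mu_{\F_p^n}(A'_u) \geq \alpha\rho\tau/2$ as required. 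Set $A' := A'_u$, $\Phi' := \Phi'_u$.

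The only genuinely delicate point is the very first step: passing from the \emph{global} density hypothesis $\mu_{K \cap \Psi}(S) \geq \sigma + \tau$ to a \emph{fiberwise} statement, i.e., verifying that the weighted average of the fiber densities $\sigma_x$ really does equal $\mu_{K\cap\Psi}(S)$ up to a controlled error. This requires that $\mu_{\F_p^n}(K_x \cap \Psi_x) = \kappa\rho + O(\ve)$ holds for every $x \in A$ — which is exactly what the hypothesis $|\E_y K(x,y)H(y) - \kappa\rho| < \ve$ provides, taking $H = u_x + V_x$ (an affine subspace of codimension $d$) — together with the fact that $\mu_{K\cap\Psi}(S) = \big(\sum_{x \in A} |S(x,\cdot) \cap K_x \cap \Psi_x|\big)\big/\big(\sum_{x\in A}|K_x \cap \Psi_x|\big)$ is literally a weighted average of the $\sigma_x$. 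Once the weights are known to be nearly uniform, the Markov argument to extract $A^*$ with $\mu_{\F_p^n}(A^*)\gg\tau\alpha$ is routine, and the rest is the elementary averaging over $u$ sketched above. Everything else — checking that $S$ has the claimed density on $T'$, and that $T'$ has the stated form with $\Phi'$ a union of codimension-$d$ cosets with a \emph{fixed} shift $u$ — is immediate from the construction.
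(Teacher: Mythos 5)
Your proof is correct, with the caveat you already flag: the Markov threshold should be taken as $\tau/4$ rather than $\tau/2$, so that the $O(\ve)$ losses (first in passing from the $\mu(K_x\cap\Psi_x)$-weighted average to the unweighted average of $\sigma_x$, then through the pigeonhole on $u$) do not eat into the target bound $\mu_{\F_p^n}(A')\geq\alpha\rho\tau/2$; the density conclusion $\sigma+\tau/4$ still follows since each $x\in A'$ then has $\sigma_x\geq\sigma+\tau/4$. The underlying idea --- averaging over the shift $u$ --- is the same as the paper's, but the two selection steps are performed in the opposite order. The paper first fixes $u$, sets $A_u:=\{x\in A:u_x-u\in V_x\}$ with $\alpha_u:=\mu_{\F_p^n}(A_u)$, considers the aggregate density $G(u):=\mu(S\cap K\cap\Psi_u)/\mu(K\cap\Psi_u)$, and then runs a single convexity-plus-pigeonhole argument on the identity $\kappa\rho\,\E_u\alpha_uG(u)\geq(\sigma+\tau)\alpha\kappa\rho^2+O(\ve)$, splitting the sum according to whether $G(u)\geq\sigma+\tau/4$ and discarding the negligible contribution of $u$ with $\alpha_u<\tau\alpha\rho/2$. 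You instead localize in $x$ first, applying Markov to the fiber densities $\sigma_x$ to extract $A^*\subset A$ of relative density $\gg\tau$ on which $\sigma_x\geq\sigma+\tau/4$, and only then average over $u$ (using $\E_u\,1_{u\in u_x+V_x}=\rho$) to find one compatible with a $\rho$-fraction of $A^*$. Either order works and the pigeonhole content is the same; yours has the mild cosmetic advantage that the resulting $A'$ consists entirely of $x$'s with individually high fiber density, whereas the paper's $A_u$ only has high density on average over its fibers. You also correctly identify that the hypothesis $\left|\E_yK(x,y)H(y)-\kappa\rho\right|<\ve$ is exactly what makes the weights $\mu_{\F_p^n}(K_x\cap\Psi_x)$ nearly uniform in $x\in A$, which is the crux of the first step.
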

\begin{proof}
  Define
  \[
    \Psi_{u}:=\{(x,y)\in A\times\F_p^n:y\in u+V_{x}\}
  \]
  and
  \[
    A_{u}:=\{x\in A: u_{x}-u\in V_{x}\}
  \]
  for every $u\in\F_p^n$, and set $\alpha_{u}:=\mu_{\F_p^n}(A_{u})$, so that
  $\E_u\alpha_u=\alpha\rho$. By our assumption on $K$, we have
  \[
    \mu_{\F_p^n\times\F_p^n}(K)=\alpha\kappa+O\left(\ve\right)
  \]
  and
  \[
    \mu_{\F_p^n\times\F_p^n}(K\cap\Psi_{u})=\alpha_u\kappa\rho+O\left(\ve\right)
  \]
  for all $u\in\F_p^n$.

  Note that
  \[
    \E_{u}\mu_{\F_p^n\times\F_p^n}(S\cap K\cap\Psi_{u})\geq\rho(\sigma+\tau)\mu_{\F_p^n\times\F_p^n}(K)=(\sigma+\tau)\alpha\kappa\rho^2+O\left(\ve\right),
  \]
  and set
  \[
    G(u):=\f{\mu_{\F_p^n\times\F_p^n}(S\cap K\cap\Psi_{u})}{\mu_{\F_p^n\times\F_p^n}(K\cap\Psi_{u})}
  \]
  for each $u\in \F_p^n$. Then we have
  \begin{align*}
    (\sigma+\tau)\alpha\kappa\rho^2+O\left(\ve\right)&\leq\kappa\rho\E_{u}\alpha_{u}G(u) \\                                                                    &<\f{\kappa\rho}{p^n}\left(\left(\sigma+\f{\tau}{4}\right)\sum_{\substack{ u\in\F_p^n \\ G(u)<\sigma+\tau/4}}\alpha_{u}+\sum_{\substack{ u\in\F_p^n \\ G(u)\geq\sigma+\tau/4}}\alpha_u\right) \\                                                                      &<\kappa\rho\left(\left(\sigma+\f{\tau}{4}\right)\left(\alpha\rho-\eta\right)+\eta\right),
  \end{align*}
  where
  \[
    \eta:=\f{1}{p^n}\sum_{\substack{ u\in\F_p^n \\ G(u)\geq\sigma+\tau/4}}\alpha_u,
  \]
  so that
  \[
    \eta>\f{5\tau\alpha\rho}{8}
  \]
  when $c_1$ is small enough and $c_2$ is large enough. The contribution to $\eta$ coming from $u$ for which $\alpha_u<\tau\alpha\rho/2$ is obviously at most $\tau\alpha\rho/2$, which implies that
  \[
    \f{1}{p^n}\sum_{\substack{ u\in\F_p^n \\ G(u)\geq\sigma+\tau/4 \\ \alpha_u\geq\tau\alpha\rho/2}}\alpha_u>\f{\tau\alpha\rho}{8},
  \]
  which is clearly positive. We thus conclude that there must exist a $u\in\F_p^n$ for which $\alpha_u=\mu_{\F_p^n}(A_{u})\geq\tau\alpha\rho/2$ and
  \[
    \mu_{\F_p^n\times\F_p^n}(S\cap K\cap\Psi_{u})\geq\left(\sigma+\f{\tau}{4}\right)\mu_{\F_p^n\times\F_p^n}(K\cap\Psi_{u}).
  \]
  The conclusion of the lemma now follows by taking $A'=A_{u}$ and $\Phi'=\Psi_{u}\cap (A'\times\F_p^n)$.
\end{proof}

Now we can prove Lemma~\ref{deg2inverse}.
\begin{proof}[Proof of Lemma~\ref{deg2inverse}]
  First assume that~\eqref{deg21} holds. By writing $S=g_S+\sigma T$, we see that $\E_{x,y,h,k}\Delta_{(0,h),(0,k)}S(x,y)$ equals
  \[
    \sigma^4\E_{x,y,h,k}\Delta_{(0,h),(0,k)}T(x,y)+\E_{x,y,h,k}\Delta_{(0,h),(0,k)}g_S(x,y)
  \]
  plus $14$ terms of the form
  \begin{equation}\label{14terms1}
    \E_{x,y,h,k}\prod_{\omega\in\{0,1\}^2}g_\omega((x,y)+\omega\cdot((0,h),(0,k))),
  \end{equation}
  where at least one $g_\omega$ equals $g_S$ and at least one other equals $\sigma T$, and
  \[
    \sigma^4\E_{x,y,h,k}\Delta_{(0,h),(0,k)}T(x,y)+\E_{x,y,h,k}\Delta_{(0,h),(0,k)}g_S(x,y)\geq (\sigma^4+\tau)\alpha\beta^4\gamma^4\delta^4\rho^3+O\left(\f{\ve^{\Omega(1)}}{\rho^{O(1)}}\right).
  \]
  If one of the terms~\eqref{14terms1} has absolute value larger than $\tau\alpha\beta^4\gamma^4\delta^4\rho^3/32$, then combining Lemmas~\ref{deg2to1} and~\ref{deg1inverse} produces the desired density increment. Thus, we may proceed under the assumption that all have size at most $\tau\alpha\beta^4\gamma^4\delta^4\rho^3/32$, so that
  \[
    \E_{x,y,h,k}\Delta_{(0,h),(0,k)}S(x,y)\geq\left(\sigma^4+\f{\tau}{2}\right)\alpha\beta^4\gamma^4\delta^4\rho^3.
  \]

  For each $x\in\F_p^n$, set $\Phi_x(y):=\Phi(x,y)$, $T_x(y):=T(x,y)$, and
  $S_x(y)=S(x,y)$. Lemmas~\ref{subspaceavg},~\ref{cs2}, and~\ref{intersection} tell us
  that
  \[
    \E_xA(x)\left|\E_{y,h,k}\Delta_{h,k}T_x(y)-\beta^4\gamma^4\delta^4\rho^3\right|\ll\ve^{\Omega(1)},
  \]
  \[
    \E_{x}A(x)\|T_x-\beta\gamma\delta\|_{U^2(\Phi_x)}^4\ll\ve^{\Omega(1)},
  \]
  and that
  \[
    \E_xA(x)\left|\E_{y\in\Phi_x}S_x(y)-\sigma\beta\gamma\delta\right|^2<\frac{\tau^4}{64}\alpha\beta^2\gamma^2\delta^2\rho^2,
  \]
  or else Lemma~\ref{deg1inverse} will again give the desired density increment. It
  follows that there exists a subset $A_0\subset A$ of density $\gg\tau$ in $A$ such that
  \[
    \|S_x\|_{U^2(\Phi_x)}^4\geq\left(\sigma^4+\f{\tau}{4}\right)\beta^4\gamma^4\delta^4, \|T_x-\beta\gamma\delta\|_{U^2(\Phi_x)}\ll\ve^{\Omega(1)},\text{ and } \left|\E_{y\in\Phi_x}S_x(y)-\sigma\beta\gamma\delta\right|<\f{\tau\beta\gamma\delta}{64}
  \]
  for every $x\in A_0$. Setting
  \[
    f_x(y):=\f{1}{\beta\gamma\delta}S_x(y)\qquad\text{ and }\qquad\nu_x:=\f{1}{\beta\gamma\delta}T_x,
  \]
  for all $x\in A_0$ we then have $\|f_x\|^4_{U^2(\Phi_x)}\geq\sigma^4+\tau/4$, $0\leq f_x\leq\nu_x$, $\E_{y}f_x(y)\leq 1$, and
  \[
    \|\nu_x-1\|_{U^2(\Phi_x)}\ll \f{\ve^{\Omega(1)}}{(\alpha\beta\gamma\delta\rho)^{O(1)}}\leq\exp\left(-(32/\tau)^{c_3}\right),
  \]
  provided that $c_1$ is small enough and $c_2$ is large enough. Thus, as long as $c_3$ is sufficiently large, Lemma~\ref{densemodel} tells us that there exists a function $\tilde{f}_x:\Phi_x\to[0,1]$ such that $\E_{y\in\Phi_x}f_x(y)=\E_{y\in\Phi_x}\tilde{f}_x(y)$ and $\|f_x-\tilde{f}_x\|_{U^2(\Phi_x)}^4\leq\tau/32$. As a consequence, since $\|f_x\|^4_{U^2(\Phi_x)}\geq\sigma^4+\f{\tau}{4}$, we must have
  \[
    \|\tilde{f}_x\|_{U^2(\Phi_x)}^4\geq\sigma^4+\f{\tau}{8}
  \]
  as well. Set $\tilde{\sigma}_x:=\E_{y}f_x(y)=\E_{y}\tilde{f}_x(y)$ and let $v_x\in\Phi_x$, so that
  \[
    \|\tilde{f}_x\|_{U^2(\Phi_x)}^4=\tilde{\sigma}_x^4+\sum_{0\neq\xi\in\widehat{\Phi_x-v_x}}\left|(\widehat{\tilde{f}_x-\tilde{\sigma}})(\xi)\right|^4.
  \]
  Since $|\tilde{\sigma}_x-\sigma|<\tau/64$, it follows that there exists a nonzero $\xi_x\in\widehat{\Phi_x-v_x}$ such that
  \[
    \left|\E_{y\in\Phi_x}(\tilde{f}_x-\sigma)(y)e_p(\xi_x\cdot y)\right|\geq\f{\tau}{16},
  \]
  where we have crucially used that $\tilde{f}_x$ is $1$-bounded. As $\|f_x-\tilde{f}_x\|_{U^2(\Phi_x)}^4\leq\tau/32$, it therefore follows that
  \[
    \left|\E_{y\in\Phi_x}(f_x-\sigma)(y)e_p(\xi_x\cdot y)\right|\geq\f{\tau}{32}
  \]
  for every $x\in A_0$.

  Extend $x\mapsto \xi_x$ from $A_0$ to $A$ by picking a nonzero $\xi_x\in\widehat{\Phi_x-v_x}$ arbitrarily for all $x\in A\setminus A_0$. We now split the average over $y\in\Phi_x$ above into an average of averages over cosets of $\langle\xi_x\rangle^\perp$ in $\Phi_x$ and average over all of $A$ to get that
  \[
    \E_{x\in A}\E_{t\in\F_p}\left|\E_{\substack{y\in\Phi_x \\ \xi_x\cdot y=t}}(f_x-\sigma)(y)\right|\gg\tau,
    \]
    and use the fact that
  \[
    \E_{x\in A}\E_{t\in\F_p}\E_{\substack{y\in\Phi_x \\ \xi_x\cdot y=t}}(f_x-\sigma)(y)=\E_{x\in A}\E_{y\in\Phi_x}(f_x-\sigma)(y)=0
  \]
  to deduce that
  \[
    \E_{x\in A}\E_{t\in\F_p}\max\left(0,\E_{\substack{y\in\Phi_x \\ \xi_x\cdot y=t}}(f_x-\sigma)(y)\right)\gg\tau.
  \]
  By applying the pigeonhole principle in the $x$ and $t$ variables, it follows that there exists a subset $A_1\subset A$ of density $\gg\tau$ in $A$ and, for each $x\in A_1$, an element $t_x\in \F_p$ for which
  \[
    \E_{\substack{y\in\Phi_x \\ \xi_x\cdot y=t_x}}(f_x-\sigma)(y)\gg\tau.
  \]
  Thus, recalling the definition of $f_x$, we have
  \begin{equation}\label{1almostinc}
    \E_{x\in A_1}\E_{\substack{y\in\Phi_x \\ \xi_x\cdot y=t_x}}S(x,y)\geq \left(\sigma+\Omega\left(\tau\right)\right)\beta\gamma\delta.
  \end{equation}
  Define $\phi:\F_p^n\to\F_p^n$ by taking $\phi(x)=\xi_x$ for all $x\in A$ and $\phi(x)$ to be an arbitrary element of $(\Phi_x-v_x)^\perp\setminus\{0\}$ for all $x\in \F_p^n\setminus A$, and similarly extend $x\mapsto t_x$ from $A_1$ to $A$ by taking $t_x$ to be an arbitrary element of $\F_p$ for which
  \[
    \E_{\substack{y\in\Phi_x \\ \phi(x)\cdot y=t_x}}S(x,y)\geq\E_{y\in\Phi_x}S(x,y)
  \]
  for all $x\in A\setminus A_1$. Such an element must exist by the pigeonhole principle. Set
  \[
    \Psi:=\{(x,y)\in\Phi:\phi(x)\cdot y=t_x\},
  \]
  so that $\codim\{y\in\F_p^n:\Psi(x,y)=1\}=d+1$ for every $x\in A$, and $\alpha_1:=|A_1|/p^n$. Then~\eqref{1almostinc} can be rewritten as
  \[
    \E_{x,y}S(x,y)A_1(x)\Psi(x,y)\geq\left(\sigma+\Omega\left(\tau\right)\right)\f{\alpha_1\beta\gamma\delta\rho}{p}.
  \]

  It remains to check that the density $\E_{x,y}A_1(x)B(y)C(x+y)D(2x+y)\Psi(x,y)$ is close to $\alpha_1\beta\gamma\delta\rho/p$, so that we indeed have the desired density-increment. But by Lemmas~\ref{subspaceavg} and~\ref{cs2}, we have
  \[
    \E_{y}B(y)C(x+y)D(2x+y)\Psi(x,y)=\f{\beta\gamma\delta\rho}{p}+O\left(\ve^{1/8}\right)
  \]
  for all but a $O(\sqrt{\ve})$-proportion of $x\in A$, from which it follows that
  \[
    \E_{x,y}A_1(x)B(y)C(x+y)D(2x+y)\Psi(x,y)=\f{\alpha_1\beta\gamma\delta\rho}{p}+O\left(\ve^{1/8}\right).
  \]
  Thus, $S$ has density at least $\sigma+\Omega(\tau)$ on
  \[
    Q:=\{(x,y)\in\F_p^n\times\F_p^n:A_1(x)B(y)C(x+y)D(2x+y)\Psi(x,y)=1\},
  \]
  provided that $c_2$ is large enough. The conclusion of the lemma now follows from Lemma~\ref{uxtou}

  Now suppose that~\eqref{deg22} holds. By writing $S=g_S+\sigma T$ and arguing as in the
  first case, we may proceed under the assumption that
  \[
    \E_{x,x',y,y'}S(x,y)S(x,y')S(x',y)S(x',y')\geq\left(\sigma^4+\f{\tau}{2}\right)\alpha^2\beta^2\gamma^4\delta^4\rho^4.
  \]
  We will first show that either
  \[
    \E_{x,y}S(x,y')S(x',y)C(x+y)D(2x+y)\Phi(x,y)=\left(\sigma^2+O(\tau^2)\right)\alpha\beta\gamma^3\delta^3\rho^3
  \]
  for almost every pair $(x',y')\in S$, or else we can deduce the desired density-increment using Lemma~\ref{deg1inverse}.

  Consider the average
  \[
    \E_{x,x',y,y'}S(x,y')S(x',y)S(x',y')C(x+y)D(2x+y)\Phi(x,y).
  \]
  Using that $S=g_S+\sigma T$, the above can be written as
  \begin{equation}\label{x'y'avg1}
    \sigma^3\E_{x,x',y,y'}T(x,y')T(x',y)T(x',y')C(x+y)D(2x+y)\Phi(x,y)
  \end{equation}
  plus seven other terms of the form
  \begin{equation}\label{x'y'avg2}
    \E_{x,x',y,y'}g_0(x,y')g_1(x',y)g_2(x',y')C(x+y)D(2x+y)\Phi(x,y),
  \end{equation}
  where $g_0,g_1,$ and $g_2$ all equal $g_S$ or $\sigma T$ and at least one $g_i$ equals
  $g_S$. By Lemmas~\ref{subspaceavg},~\ref{cs2}, and~\ref{intersection}, the
  quantity~\eqref{x'y'avg1} equals
  $\sigma^3\alpha^2\beta^2\gamma^4\delta^4\rho^4+O(\ve^{\Omega(1)}/\rho^{O(1)})$. Suppose
  that $k$ of the functions $g_0,g_1,$ and $g_2$ in~\eqref{x'y'avg2} equal $\sigma T$. By
  a similar argument to those used to prove Lemma~\ref{deg2to1}, if any term of the
  form~\eqref{x'y'avg2} has size at least
  $\tau^4\sigma^k\alpha^2\beta^2\gamma^4\delta^4\rho^4$, then
  \[
    \E_{x,y,h}\Delta_{(0,h)}g_S(x,y)\geq \tau^8\alpha\beta^2\gamma^2\delta^2\rho^2+O\left(\f{\ve^{\Omega(1)}}{\rho^{O(1)}}\right)
  \]
  or
  \[
    \E_{x,y,h}\Delta_{(h,0)}g_S(x,y)\geq \tau^8\alpha^2\beta\gamma^2\delta^2\rho^2+O\left(\f{\ve^{\Omega(1)}}{\rho^{O(1)}}\right),
  \]
  so that the desired density-increment follows from Lemma~\ref{deg1inverse}. Thus, we may proceed under the assumption that
  \[
    \E_{(x',y')\in S}\E_{x,y}S(x,y')S(x',y)C(x+y)D(2x+y)\Phi(x,y)=\sigma^2\alpha\beta\gamma^3\delta^3\rho^3+O(\tau^4\alpha\beta\gamma^3\delta^3\rho^3)
  \]

  Now consider the average
  \[
    \E_{x',y'}S(x',y')\left|\E_{x,y}S(x,y')S(x',y)C(x+y)D(2x+y)\Phi(x,y)\right|^2.
  \]
   Using that $S=g_S+\sigma T$, the above can be written as
  \begin{equation}\label{x'y'var1}
    \sigma^5\E_{x',y'}T(x',y')\left|\E_{x,y}T(x,y')T(x',y)C(x+y)D(2x+y)\Phi(x,y)\right|^2,
  \end{equation}
  plus 31 other terms of the form
  \begin{equation}\label{x'y'var2}
    \E_{x,y,x',y',w,z}g_0(x',y')g_1(x,y')g_2(x',y)g_3(z,y')g_4(x',w)C(x+y)C(z+w)D(2x+y)D(2z+w)\Phi(x,y)\Phi(z,w),
  \end{equation}
  where $g_0,g_1,g_2,g_3,$ and $g_4$ all equal $g_S$ or $\sigma T$ and at least one $g_i$
  equals $g_S$.

  By Lemmas~\ref{subspaceavg},~\ref{cs2}, and~\ref{intersection}, the
  quantity~\eqref{x'y'var1} equals
  $\sigma^5\alpha^3\beta^3\gamma^7\delta^7\rho^7+O(\ve^{\Omega(1)}/\rho^{O(1)})$. Suppose
  that $k$ of the functions $g_0,\dots,g_4$ in~\eqref{x'y'var2} equal $\sigma
  T$. Analogously to the situation for the first moment, if any of the terms of the
  form~\eqref{x'y'var2} has size at least
  $\tau^8\sigma^k\alpha^3\beta^3\gamma^7\delta^7\rho^7$, then we will be able to deduce
  the desired density-increment. The most involved case is when $g_0=\dots=g_4=g_S$. All other cases can be handled using a simpler version of the argument we are about to carry out.

  So, consider this most involved case, i.e., that
  \[
    \E_{x,y,x',y',w,z}g_S(x',y')g_S(x,y')g_S(x',y)g_S(z,y')g_S(x',w)C(x+y)C(z+w)D(2x+y)D(2z+w)\Phi(x,y)\Phi(z,w)
  \]
  has size at least $\tau^8\alpha^3\beta^3\gamma^7\delta^7\rho^7$. Applying the
  Cauchy--Schwarz inequality in the variables $x',y',y,$ and $w$ gives that
  \[
    \E_{x',y',y,w}T(x',y')T(x',y)T(x',w)
  \]
  times
  \begin{align*}
    \E_{x',y',y,w}\big(&\left|\E_{x,z}g_S(x,y')g_S(z,y')C(x+y)C(z+w)D(2x+w)D(2z+w)\Phi(x,y)\Phi(z,w)\right|^2 \\
                       &\cdot B(y)B(w)C(x'+y')C(x'+y)C(x'+w) \\
                       &\cdot D(2x'+y')D(2x'+y)D(2x'+w)\Phi(x',y')\Phi(x',y)\Phi(x',w)\big)
  \end{align*}
  is at least $\tau^8\alpha^{6}\beta^{6}\gamma^{14}\delta^{14}\rho^{14}$. By Lemmas~\ref{subspaceavg},~\ref{cs2}, and~\ref{intersection}, $\E_{x',y',y,w}T(x',y')T(x',y)T(x',w)=\alpha\beta^3\gamma^3\delta^3\rho^3+O(\ve^{\Omega(1)}/\rho^{O(1)})$. Expanding the square in the average above, this means that
  \begin{align*}
    \E_{x',y',y,w,x,z,u,v}\big(&g_S(x,y')g_S(z,y')g_S(u,y')g_S(v,y') \\
                               &B(y)B(w)C(x'+y')C(x'+y)C(x'+w) \\
                               &C(x+y)C(z+w)C(u+y)C(v+w) \\
                               &D(2x+w)D(2z+w)D(2u+w)D(2v+w) \\
                               &D(2x'+y')D(2x'+y)D(2x'+w) \\
                               &\Phi(x,y)\Phi(z,w)\Phi(u,y)\Phi(v,w)\Phi(x',y')\Phi(x',y)\Phi(x',w)\big)
  \end{align*}
  is $\gg\tau^8\alpha^5\beta^3\gamma^{11}\delta^{11}\rho^{11}$, provided that $c_1$ is small enough and $c_2$ is large enough. We can write the above as
  \[
    \E_{y',x,z,u,v}g_S(x,y')g_S(z,y')g_S(u,y')g_S(v,y') \mu(y',x,z,u,v),
  \]
  where
  \begin{align*}
    \mu(y',x,z,u,v):=\E_{x',y,w}\big(&B(y)B(w)C(x'+y')C(x'+y)C(x'+w) \\
                                     &C(x+y)C(z+w)C(u+y)C(v+w) \\
                                     &D(2x+w)D(2z+w)D(2u+w)D(2v+w) \\
                                     &D(2x'+y')D(2x'+y)D(2x'+w) \\
                                     &\Phi(x,y)\Phi(z,w)\Phi(u,y)\Phi(v,w)\Phi(x',y')\Phi(x',y)\Phi(x',w)\big).
  \end{align*}
  Yet more applications of Lemmas~\ref{subspaceavg},~\ref{cs2}, and~\ref{intersection} give the estimate
  \[
    \E_{y',x,z,u,v}A(x)A(z)A(u)A(w)\left|\mu(y',x,z,u,v)-\alpha\beta^2\gamma^7\delta^7\rho^7\right|^2\ll\f{\ve^{\Omega(1)}}{\rho^{O(1)}},
  \]
  so that
  \[
    \E_{y',x,z,u,v}g_S(x,y')g_S(z,y')g_S(u,y')g_S(v,y')\gg \tau^8\alpha^4\beta\gamma^{4}\delta^{4}\rho^{4}.
  \]
  It follows from one more application of the Cauchy--Schwarz inequality in the variables $y',z,u,$ and $v$ and a similar analysis to above that
  \[
    \E_{x,y,h}\Delta_{(h,0)}g_S(x,y')\gg \tau^{16}\alpha^2\beta\gamma^{2}\delta^{2}\rho^{2},
  \]
  which, combined with Lemma~\ref{deg1inverse}, gives the desired density-increment.

  Thus, we may also proceed under the assumption that
  \[
    \E_{(x',y')\in S}\left|\E_{x,y}S(x,y')S(x',y)C(x+y)D(2x+y)\Phi(x,y)-\sigma^2\alpha\beta\gamma^3\delta^3\rho^3\right|^2\ll \tau^4\alpha^2\beta^2\gamma^6\delta^6\rho^6.
  \]
  By Markov's inequality, we therefore have
  \begin{equation}\label{Sdensity}
    \E_{x,y}S(x,y')S(x',y)C(x+y)D(2x+y)\Phi(x,y)=(\sigma^2+O(\tau^2))\alpha\beta\gamma^3\delta^3\rho^3
  \end{equation}
  for all but a $O(\tau^2)$-proportion of $(x',y')\in S$. As a consequence, there exists a
  pair $(x',y')\in S$ for which both~\eqref{Sdensity} holds and
  \[
    \E_{x,y}S(x,y)S(x,y')S(x',y)\geq\left(\sigma^3+\f{\tau}{4}\right)\alpha\beta\gamma^3\delta^3\rho^3,
  \]
  which together imply that
  \[
    \f{|S\cap T'|}{|T'|}\geq \sigma+\Omega(\tau)
  \]
  when we take $A'(x)=S(x,y')$, $B'(y)=S(x',y)$, $C'=C$, $D'=D$, and $\Phi'=\Phi$ in the definition of $T'$.
\end{proof}

\subsection{More preliminaries for $\|\cdot\|_{\star_1}$}

We will similarly need that certain $\|\cdot\|_{\star_1}$-inner products are controlled by the degree $1$ and $2$ directional uniformity norms appearing in the previous subsections. The proof of the lemma below is similar to the proof of Lemma~\ref{deg2to1}, but with an extra application of the Cauchy--Schwarz inequality in some cases.

\begin{lemma}\label{deg3to2}
  Let $d$ be a nonnegative integer, and set $\rho:=p^{-d}$. Suppose that $A,B,C,D\subset\F_p^n$ have densities $\alpha,\beta,\gamma,\delta$, respectively, and that $\Phi\subset\F_p^n\times\F_p^n$ takes the form
  \[
    \Phi=\{(x,y)\in A\times\F_p^n:y\in u+V_x\},
  \]
  where each $V_x$ is a subspace of $\F_p^n$ of codimension $d$. Let $\ve>0$, and assume that
  \[
    \|A-\alpha\|_{U^{10}(\F_p^n)},\|B-\beta\|_{U^{10}(\F_p^n)},\|C-\gamma\|_{U^{10}(\F_p^n)},\|D-\delta\|_{U^{10}(\F_p^n)},\|\Phi-\alpha\rho\|_{U^{8}(\F_p^n\times\F_p^n)}<\ve.
  \]
  Define $T$ by~\eqref{Tform}, and let $S\subset T$ and $\tau>0$. If
  \[
    \left|\E_{x,y,h_1,h_2,h_3}\prod_{\omega\in\{0,1\}^3}g_\omega((x,y)+\omega\cdot((0,h_1),(0,h_2),(h_3,0)))\right|\geq\tau\alpha^2\beta^4\gamma^8\delta^8\rho^6,
  \]
  where at least one $g_\omega$ equals $T$ and another equals $g_S$, then
  \[
    \E_{x,y,h}\Delta_{(0,h)}g_S(x,y)\geq\tau^8\alpha\beta^2\gamma^2\delta^2\rho^2+O\left(\f{\ve^{\Omega(1)}}{(\alpha\beta\gamma\delta\rho)^{O(1)}}\right),
  \]
  \[
    \E_{x,y,h}\Delta_{(h,0)}g_S(x,y)\geq\tau^8\alpha^2\beta\gamma^2\delta^2\rho^2+O\left(\f{\ve^{\Omega(1)}}{(\alpha\beta\gamma\delta\rho)^{O(1)}}\right),
  \]
  \[
    \E_{x,y,h,k}\Delta_{(0,h),(0,k)}g_S(x,y)\geq\tau^8\alpha\beta^4\gamma^4\delta^4\rho^3+O\left(\f{\ve^{\Omega(1)}}{(\alpha\beta\gamma\delta\rho)^{O(1)}}\right),
  \]
  or
  \[
    \E_{x,y,h,k}\Delta_{(h,0),(0,k)}g_S(x,y)\geq\tau^8\alpha^2\beta^2\gamma^4\delta^4\rho^4+O\left(\f{\ve^{\Omega(1)}}{(\alpha\beta\gamma\delta\rho)^{O(1)}}\right).
  \]
\end{lemma}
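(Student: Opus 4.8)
The plan is to follow the proof of Lemma~\ref{deg2to1} essentially verbatim, one degree higher. Write the left-hand average as $\E_{x,y,h_1,h_2,h_3}\prod_{\omega\in\{0,1\}^3}g_\omega((x,y)+\omega\cdot((0,h_1),(0,h_2),(h_3,0)))$ and argue by cases on which of the eight functions $g_\omega$ equal $T$ and which equal $g_S$. Up to the symmetries of the $\star_1$-cube — swapping the two differencing directions $(0,h_1)\leftrightarrow(0,h_2)$ and flipping each coordinate $\omega_i\mapsto 1-\omega_i$ — only a bounded number of configurations need to be treated. Note that, since $g_S$ is $1$-bounded and real-valued, each of the four target quantities $\E_{x,y,h}\Delta_{(0,h)}g_S$, $\E_{x,y,h}\Delta_{(h,0)}g_S$, $\E_{x,y,h,k}\Delta_{(0,h),(0,k)}g_S$, and $\E_{x,y,h,k}\Delta_{(h,0),(0,k)}g_S$ is a nonnegative average of an unsigned product of copies of $g_S$ over the corresponding $1$- or $2$-dimensional parallelepiped, matching the four "patterns" of sub-parallelepiped of the $\star_1$-cube.

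In each case I would first absorb the block of $T$'s that share no free differencing variable with any $g_S$-vertex into a weight $\mu$, writing the average as $\E_{\mathbf{v}}\big(\prod_{\text{$g_S$-vertices}}g_S\big)(\mathbf{v})\,\mu(\mathbf{v})$; exactly as with the weights $\mu_1,\dots,\mu_8$ of Lemma~\ref{deg2to1}, repeated use of Lemmas~\ref{subspaceavg},~\ref{cs2}, and~\ref{intersection} shows that $\mu$ has small variance about a monomial $\mu_0$ in $\alpha,\beta,\gamma,\delta,\rho$ on the support of $\prod g_S$ (which lies in an appropriate slice of $A\times\F_p^n$ or of $T$), so that $\E(\prod g_S)\mu=\mu_0\,\E(\prod g_S)+O(\ve^{\Omega(1)}/\rho^{O(1)})$ with the error evaluated via Lemma~\ref{Tdensity} and Corollary~\ref{Usuniformity}. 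When the $g_S$-vertices already form, after a linear change of variables, a $\le 2$-dimensional parallelepiped in one of the four allowed patterns, this directly produces $\mu_0\cdot\E\Delta_{\star}g_S$ up to admissible error, and a degree-$0$ residue $|\E_{x,y}g_S(x,y)|$ is converted into a degree-$1$ quantity using $|\E_{x,y}g_S(x,y)|^2\leq\alpha\,\E_{x,y,h}\Delta_{(0,h)}g_S(x,y)$; rearranging gives one of the four displayed inequalities, with the power of $\tau$ dictated by the number of squarings performed. When instead the $g_S$-vertices span all three cube directions — which is forced whenever there are at least five $g_S$-vertices, and happens in some configurations with three or four — I would apply the Cauchy--Schwarz inequality once (or, in the worst configurations, twice) in a carefully chosen subset of the variables $x,y,h_1,h_2,h_3$, so that the factor split off is an average of $T$'s only, and so that, after expanding the inner square and changing variables, the remaining factor has strictly fewer free $g_S$-directions: an unmatched pair of copies of $g_S$ collapses into a genuine difference $\Delta_{(0,\ell)}g_S$ or $\Delta_{(\ell,0)}g_S$, while surplus $1$-bounded copies of $g_S$ get swept into $\mu$. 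Iterating this reduction lands us in the already-treated situation.

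The main obstacle is not conceptual but the bookkeeping, just as in the proofs of Lemmas~\ref{lower} and~\ref{control}: each Cauchy--Schwarz step and each absorbed weight must be arranged so that the monomials in $\alpha,\beta,\gamma,\delta,\rho$ come out exactly right, so that after dividing through one is left precisely with the claimed monomial and the claimed power $\tau^8$; and in each configuration one must choose the Cauchy--Schwarz variables so that the discarded factor really is a pure $T$-average (not something still involving $g_S$) that Lemmas~\ref{subspaceavg},~\ref{cs2}, and~\ref{intersection} evaluate with error $O(\ve^{\Omega(1)}/\rho^{O(1)})$. Because the $\star_1$-cube has its "$U^1\times U^2$" structure — only one of its three differencing directions is of the form $(h,0)$ — every configuration can be folded down in this way without ever meeting the intractable three-directions-in-general-position norm, so no ideas beyond those already used for Lemma~\ref{deg2to1} are required; in the write-up I would present the two or three most involved configurations in full and indicate that the others are strictly easier.
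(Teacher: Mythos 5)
Your proposal matches what the paper actually does: the paper gives no detailed proof of this lemma, saying only that it "is similar to the proof of Lemma~\ref{deg2to1}, but with an extra application of the Cauchy--Schwarz inequality in some cases." Your plan --- split by which vertices are $T$ vs.\ $g_S$, absorb the pure-$T$ part into a weight $\mu$ close to its mean (via Lemmas~\ref{subspaceavg}, \ref{cs2}, \ref{intersection}), convert a residual $|\E g_S|$ to $\E\Delta_{(0,h)}g_S$ by Cauchy--Schwarz, and apply Cauchy--Schwarz to reduce the $g_S$-directions when they don't match one of the four targets --- is the right fleshing-out of that remark, and the tools you name are exactly the ones the paper's Lemma~\ref{deg2to1} proof uses.

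The one concrete flaw is that your dichotomy is not exhaustive. You split configurations into (a) the $g_S$-vertices form, after a change of variables, a sub-parallelepiped in one of the four allowed patterns, and (b) the $g_S$-vertices span all three cube directions. But there are configurations in neither class. For instance, two $g_S$-vertices at $\omega=(0,0,0)$ and $\omega=(1,0,1)$ give the shift $(h_3,h_1)$, which is a skew direction not among the four targets, yet these vertices only touch two of the three cube coordinates; likewise three $g_S$-vertices at $(0,0,0),(1,0,0),(0,0,1)$ form an ``L'' rather than a parallelepiped and again span only two directions. These fall through your case analysis as written. They are still tractable --- for the antipodal-in-a-face pair one observes that after absorbing $\mu$ one is left with $(\E_{x,y}g_S)^2$, which equals $0$ since $\E g_S=0$, so that whole configuration is pure error and the conclusion holds vacuously; for the ``L'' and other skew shapes one Cauchy--Schwarz in the variables omitting the $(h_3,0)$ direction collapses the extra $g_S$'s into a $\Delta_{(h,0)}$ or $\Delta_{(0,h)}$ as you describe. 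So the proof is recoverable with the tools you already have on the table, but you should not tie the need for Cauchy--Schwarz to ``spanning all three directions''; the correct criterion is simply that the multiset of $g_S$-vertices, after a change of variables, is not one of the four target patterns, and you must also separately handle the degenerate $(\E g_S)^2$ cases.
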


Our final preliminary lemma says that, for almost every $(x,x+h_3)\in A^2$, the function $\Delta_{(h_3,0)}S(x,\cdot)$ is supported on a Fourier uniform subset of the affine subspace $\{y\in\F_p^n:\Delta_{(h_3,0)}\Phi(x,y)=1\}$.
\begin{lemma}\label{RPhi1}
Let $d$ be a nonnegative integer, and set $\rho:=p^{-d}$. Suppose that $A,B,C,D\subset\F_p^n$ have densities $\alpha,\beta,\gamma,\delta$, respectively, and that $\Phi\subset\F_p^n\times\F_p^n$ takes the form
  \[
    \Phi=\{(x,y)\in A\times\F_p^n:y\in u+V_x\},
  \]
  where each $V_x$ is a subspace of $\F_p^n$ of codimension $d$. Let $\ve>0$, and assume that
  \[
    \|A-\alpha\|_{U^{4}(\F_p^n)},\|B-\beta\|_{U^{4}(\F_p^n)},\|C-\gamma\|_{U^{4}(\F_p^n)},\|D-\delta\|_{U^{4}(\F_p^n)},\|\Phi-\alpha\rho\|_{U^{4}(\F_p^n\times\F_p^n)}<\ve.
  \]
  Setting
  \[
    R_{x,h}(y):=B(y)\Delta_hC(x+y)\Delta_{2h}D(2x+y)
  \]
  and
  \[
    \Phi_{x,h}:=\Delta_{(h,0)}\Phi,
  \]
  then the probability
  \[
    \PP\left((x,x+h)\in A^2:\codim\{y\in\F_p^n:\Phi_{x,h}(y)=1\}\neq 2d\text{ or }\|R_{x,h}\Phi_{x,h}-\beta\gamma^2\delta^2\|_{U^2(\Phi_{x,h})}\geq\f{\ve^{1/32}}{\rho^{3/2}}\right)
  \]
  is $\ll\ve^{\Omega(1)}/\rho^{O(1)}$.
\end{lemma}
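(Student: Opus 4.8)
The plan is to control, separately, the event that $\codim\{y:\Phi_{x,h}(y)=1\}\neq 2d$ and the event that $\|R_{x,h}\Phi_{x,h}-\beta\gamma^2\delta^2\|_{U^2(\Phi_{x,h})}$ is too large, and then take a union bound. For the first, note that $\Phi_{x,h}(y)=\Phi(x,y)\Phi(x+h,y)$, so that when $(x,x+h)\in A^2$ the support of $\Phi_{x,h}$ is the affine subspace $u+(V_x\cap V_{x+h})$, whose codimension is at most $2d$. Applying Lemma~\ref{intersection} with $r=2$ and the linear forms $\psi_1(x,h)=x$, $\psi_2(x,h)=x+h$ (which have Cauchy--Schwarz complexity $0$, so that the hypothesis $\|\Phi-\alpha\rho\|_{U^4(\F_p^n\times\F_p^n)}<\ve$ is more than enough), and taking $w_1=w_2=0$, gives that $\codim\{y:\Phi_{x,h}(y)=1\}=2d$ for all but a $\ll\ve^{\Omega(1)}/\rho^{O(1)}$-proportion of $(x,x+h)\in A^2$.

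For the second event, the key point is that $R_{x,h}$ does not involve $A$: it is the product of the five $1$-bounded functions $B,C,C,D,D$, with average values $\beta,\gamma,\gamma,\delta,\delta$, evaluated at the linear forms $y,x+y,x+y+h,2x+y,2x+y+2h$. One checks, exactly as for the system arising in Lemma~\ref{Tdensity}, that these five forms together with their three translates by the box variables have Cauchy--Schwarz complexity at most $3$ (this is precisely why $U^4$-control of $B,C,D$ suffices). Lemma~\ref{cs2} then gives
\[
  \PP\left((x,h)\in(\F_p^n)^2:\|R_{x,h}-\beta\gamma^2\delta^2\|_{U^2(\F_p^n)}\geq\ve^{1/8}\right)\ll\sqrt{\ve},
\]
and since $x,x+h\in A$ for a $\gg\alpha^2$-proportion of $(x,h)\in(\F_p^n)^2$ by Corollary~\ref{Usuniformity}, the same bad event occupies at most a $\ll\ve^{\Omega(1)}/\rho^{O(1)}$-proportion of pairs $(x,x+h)\in A^2$.

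It remains to transfer the $U^2(\F_p^n)$-bound on $R_{x,h}-\beta\gamma^2\delta^2$ to a $U^2(\Phi_{x,h})$-bound. One should not try to expand the $U^2(\Phi_{x,h})$-norm directly, since the resulting averages of $\Phi$ involve repeated linear forms and so are not controlled by Lemma~\ref{intersection}. Instead, fix $(x,x+h)\in A^2$ lying in the intersection of the two good events above, so that $W:=u+(V_x\cap V_{x+h})$ has codimension exactly $2d$. Combining translation invariance of the Gowers norms, the identity $\|\phi\mathbf{1}_W\|_{U^2(\F_p^n)}=\rho^{3/2}\|\phi\|_{U^2(W)}$ (valid because $W$ has index $\rho^{-2}$), and the fact that $\widehat{\mathbf{1}_W}$ is supported on a single coset of $(V_x\cap V_{x+h})^\perp$ with $\|\widehat{\mathbf{1}_W}\|_{\ell^1}=1$ — so that convolution by $\widehat{\mathbf{1}_W}$ is a contraction on $\ell^4(\widehat{\F_p^n})$ and hence multiplication by $\mathbf{1}_W$ does not increase the $U^2(\F_p^n)$-norm — we obtain
\[
  \|R_{x,h}\Phi_{x,h}-\beta\gamma^2\delta^2\|_{U^2(\Phi_{x,h})}=\rho^{-3/2}\|(R_{x,h}-\beta\gamma^2\delta^2)\mathbf{1}_W\|_{U^2(\F_p^n)}\leq\rho^{-3/2}\|R_{x,h}-\beta\gamma^2\delta^2\|_{U^2(\F_p^n)}<\frac{\ve^{1/8}}{\rho^{3/2}}\leq\frac{\ve^{1/32}}{\rho^{3/2}},
\]
where the last inequality uses $\ve\leq 1$. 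A union bound over the two bad events then yields the stated estimate.

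The only genuinely new ingredient is the transfer step of the last paragraph; the rest is routine bookkeeping with Lemmas~\ref{cs2} and~\ref{intersection}. The point requiring care is that the factor $\rho^{-3/2}$ relating the $U^2$-norm on the randomly varying subspace to the $U^2$-norm on $\F_p^n$ is correct only when that subspace has codimension exactly $2d$, which is exactly why the conclusion of Lemma~\ref{intersection} must be secured first.
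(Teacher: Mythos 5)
Your proof is correct, and the first two steps (isolating the codimension event via Lemma~\ref{intersection} and the $U^2(\F_p^n)$-uniformity of $R_{x,h}$ via Lemma~\ref{cs2}, then intersecting with the event $(x,x+h)\in A^2$) are essentially identical to what the paper does. The transfer step, however, is genuinely different in execution, even if both arguments are at bottom Fourier-analytic. The paper writes out $\|R_{x,h}\Phi_{x,h}-\beta\gamma^2\delta^2\|_{U^2(\Phi_{x,h})}^4$ as $\rho^{-6}\E_{y,k,\ell}\Delta_{k,\ell}[R_{x,h}-\beta\gamma^2\delta^2](y)\Phi_{x,h}(y)\Phi_{x,h}(y+k)\Phi_{x,h}(y+\ell)$, inserts the Fourier decomposition of $\Phi_{x,h}$ into each of the three indicator factors, and applies the Gowers--Cauchy--Schwarz inequality to each of the $\rho^{-6}$ resulting character-twisted averages. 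You instead observe that multiplication by the indicator of an affine subspace never increases the $U^2(\F_p^n)$-norm, because in Fourier terms it is convolution by $\widehat{\mathbf{1}_W}$, which has $\ell^1$-norm $1$ (Young's inequality on $\ell^4(\widehat{\F_p^n})$), and then use the exact scaling identity $\|\phi\mathbf{1}_W\|_{U^2(\F_p^n)}=\rho^{3/2}\|\phi\|_{U^2(W)}$ when $\codim W=2d$. This packages the same underlying computation as a clean norm inequality and as a bonus yields the slightly sharper bound $\ve^{1/8}/\rho^{3/2}$, which you correctly note dominates $\ve^{1/32}/\rho^{3/2}$. Two tiny remarks: your parenthetical worry that expanding $\|\cdot\|_{U^2(\Phi_{x,h})}$ directly is unworkable is slightly overstated (the paper does expand it directly, just not by reusing Lemma~\ref{intersection}); and the set on which $\widehat{\mathbf{1}_W}$ is supported is $(V_x\cap V_{x+h})^\perp$ itself, i.e.\ the subspace rather than a nontrivial coset thereof, though the only thing you use --- that its $\ell^1$-mass is $1$ --- is correct.
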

\begin{proof}
  By Lemmas~\ref{cs2} and~\ref{intersection}, we have that $\|R_{x,y}-\beta\gamma^2\delta^2\|_{U^2(\F_p^n)}\geq\ve^{1/8}$ or $\codim\{y\in\F_p^n:\Phi_{x,h}(y)=1\}\neq 2d$ for at most a $O(\ve^{\Omega(1)}/\rho^{O(1)})$-proportion of pairs $(x,x+h)\in A^2$. For all of these typical pairs $(x,h)$, we have
  \[
    \|R_{x,h}\Phi_{x,h}-\beta\gamma^2\delta^2\|_{U^2(\Phi_{x,h})}^4=\rho^{-6}\E_{y,k,\ell}\Delta_{k,\ell}[R_{x,h}-\beta\gamma^2\delta^2](y)\Phi_{x,h}(y)\Phi_{x,h}(y+k)\Phi_{x,h}(y+\ell),
  \]
  which is at most
  \begin{align*}
    \sum_{\xi,\eta,\nu\in(\Phi_{x,h}-u)^\perp}\big|\E_{y,k,\ell}&[R_{x,h}-\beta\gamma^2\delta^2](y)e_p(\xi\cdot y)[R_{x,h}-\beta\gamma^2\delta^2](y+k)e_p(\eta\cdot (y+k)) \\
    &[R_{x,h}-\beta\gamma^2\delta^2](y+\ell)e_p(\nu\cdot (y+\ell))[R_{x,h}-\beta\gamma^2\delta^2](y+k+\ell)\big|
  \end{align*}
  by inserting the identity
  \[
    \Phi_{x,h}(z)=\f{1}{p^{2d}}\sum_{\xi\in(\Phi_{x,h}-u)^\perp}e_p(\xi\cdot[z-u])
  \]
  for every $z\in\F_p^n$. For each fixed triple $(\xi,\eta,\nu)$, the interior average above is bounded by $\|R_{x,h}-\beta\gamma^2\delta^2\|_{U^2(\F_p^n)}$ by the Gowers--Cauchy--Schwarz inequality. Since there are $\rho^{-6}$ possible triples to sum over, we must have
  \[
    \|R_{x,h}\Phi_{x,h}-\beta\gamma^2\delta^2\|_{U^2(\Phi_{x,h})}^4\leq\rho^{-6}\|R_{x,h}-\beta\gamma^2\delta^2\|_{U^2(\F_p^n)}<\ve^{1/8}/\rho^6,
  \]
  from which the conclusion of the lemma follows.
\end{proof}

\subsection{Proof of Theorem~\ref{starinverse}}

Now we can finally finish the proof of Theorem~\ref{starinverse}.

\begin{proof}[Proof of Theorem~\ref{starinverse}]
  First assume that
  \[
    \|g_S\|_{\star_1}\geq\tau\alpha^{1/4}\beta^{1/2}\gamma\delta\rho^{3/4}.
  \]
  Analogously to the proof of Lemma~\ref{deg2inverse}, by writing $S=g_S+\sigma T$, we see that $\|S\|_{\star_1}^8$ equals
  \[
    \sigma^8\|T\|_{\star_1}^8+\|g_S\|_{\star_1}^8
  \]
  plus 62 terms of the form
  \begin{equation}\label{star1lower}
    \E_{x,y,h_1,h_2,h_3}\prod_{\omega\in\{0,1\}^3}g_\omega((x,y)+\omega\cdot((0,h_1),(0,h_2),(h_3,0))),
  \end{equation}
  where at least one $g_\omega$ equals $\sigma T$ and at least one other equals $g_S$. Note that
  \[
    \|T\|_{\star_1}^8=\alpha^2\beta^4\gamma^8\delta^8\rho^6+O\left(\f{\ve^{\Omega(1)}}{\rho^{O(1)}}\right)
  \]
  by Lemmas~\ref{subspaceavg},~\ref{cs2}, and~\ref{intersection}. If any of the terms~\eqref{star1lower} have absolute value at least $\f{1}{128}\tau^8\alpha^2\beta^4\gamma^8\delta^8\rho^6$, then combining Lemma~\ref{deg3to2} with Lemma~\ref{deg1inverse} or Lemma~\ref{deg2inverse} produces the desired density increment. We may thus proceed under the assumption that these terms are all small, so that
  \[
    \E_{x,y,h_1,h_2,h_3}\Delta_{(0,h_1),(0,h_2),(h_3,0)}S(x,y)\geq\left(\sigma^8+\f{\tau^8}{2}\right)\alpha^2\beta^4\gamma^8\delta^8\rho^6.
  \]

  For each pair $(x,x+h)\in A^2$, let $R_{x,h}$ and $\Phi_{x,h}$ be as in
  Lemma~\ref{RPhi1}, and set $S_{x,h}(y):=\Delta_{(h,0)}S(x,y)$. By Markov's inequality, either
  \begin{equation}\label{Sxhdensity}
    \PP\left((x,x+h)\in A\times A:\left|\E_{y}S_{x,h}(y)-\sigma^2\beta\gamma^2\delta^2\rho^2\right|\geq \f{\tau^8}{64}\beta\gamma^2\delta^2\rho^2\right)<\f{\tau^8}{4},
  \end{equation}
  or else $\E_{x,x+h\in A}|\E_{y}S_{x,h}(y)-\sigma^2\beta\gamma^2\delta^2\rho^2|^2$ is
  $\gg\tau^{16}\beta^2\gamma^4\delta^4\rho^4$, in which case a combination of
  Lemmas~\ref{deg1inverse},~\ref{deg2to1}, and~\ref{deg2inverse} will produce the desired
  density increment. We may thus proceed under the assumption that~\eqref{Sxhdensity} holds.

  By~\eqref{Sxhdensity} and Lemma~\ref{RPhi1}, there exists a subset $A_0\subset \{(x,h)\in\F_p^n\times\F_p^n:x,x+h\in A\}$ of density $\gg\tau^{O(1)}\alpha^2$ in $\F_p^n\times\F_p^n$ such that
  \[
    \|S_{x,h}\|_{U^2(\Phi_{x,h})}^4\geq \left(\sigma^8+\f{\tau^8}{4}\right)\beta^4\gamma^8\delta^8,
  \]
  \[
    \left|\E_{y}S_{x,h}(y)-\sigma^2\beta\gamma^2\delta^2\rho^2\right|< \f{\tau^8}{64}\beta\gamma^2\delta^2\rho^2,
  \]
  \[
    \|R_{x,h}\Phi_{x,h}-\beta\gamma^2\delta^2\|_{U^2(\Phi_{x,h})}<\f{\ve^{1/32}}{\rho^{3/2}},
  \]
  and $\codim\{y\in\F_p^n:\Phi_{x,h}(y)=1\}=2d$ for all $(x,h)\in A_0$. Note that $S_{x,h}$ is supported on $R_{x,h}\Phi_{x,h}$, and, setting
  \[
    f_{x,h}:=\f{1}{\beta\gamma^2\delta^2}S_{x,h}\qquad\text{and}\qquad \nu_{x,h}:=\f{1}{\beta\gamma^2\delta^2}R_{x,h}\Phi_{x,h},
  \]
  we have $\|f_{x,h}\|^4_{U^2(\Phi_{x,h})}\geq\sigma^8+\tau^8/4$, $0\leq f_{x,h}\leq\nu_{x,h}$, $\E f_{x,h}\leq 1$, $|\E f_{x,h}-\sigma^2|<\tau^8/64$, and $\|\nu_{x,h}-1\|_{U^2(\Phi_{x,h})}<\ve^{1/32}/\beta\gamma^2\delta^2\rho^{3/2}<\exp(-(64/\tau)^{c_3})$, provided that $c_1$ is small enough and $c_2$ is large enough. Applying Lemma~\ref{densemodel} on the affine subspace $\Phi_{x,h}$ yields a function $\tilde{f}_{x,h}:\Phi_{x,h}\to[0,1]$ such that
  \[
    \E \tilde{f}_{x,h}=\E f_{x,h}\qquad\text{ and }\qquad\|\tilde{f}_{x,h}-f_{x,h}\|_{U^2(\Phi_{x,h})}\leq \f{\tau^8}{64},
  \]
  provided that $c_3$ is large enough. We must then also have
  \[
    \|\tilde{f}_{x,h}\|^4_{U^2(\Phi_{x,h})}\geq\sigma^8+\f{\tau^8}{16}.
  \]
  Arguing as in the proof of the first part of Lemma~\ref{deg2inverse}, it follows that, for every pair $(x,h)\in A_0$, there exists a nonzero $\xi_{x,h}\in\widehat{\Phi_{x,h}-u}$ such that
  \[
    \left|\E_{y\in\Phi_{x,h}}(\tilde{f}_{x,h}-\sigma^2)(y)e_p(\xi_{x,h}\cdot y)\right|\geq\f{\tau^8}{32},
  \]
  so that
  \[
    \left|\E_{y\in\Phi_{x,h}}(f_{x,h}-\sigma^2)(y)e_p(\xi_{x,h}\cdot y)\right|\geq\f{\tau^8}{64}
  \]
  as well.

Extend $(x,h)\mapsto \xi_{x,h}$ from $A_0$ to the set $\{(x,h)\in\F_p^n\times\F_p^n:x,x+h\in A\}$ by picking a nonzero $\xi_{x,h}\in\widehat{\Phi_{x,h}-u}$ arbitrarily for all pairs outside of $A_0$. We split the average over $y\in\Phi_{x,h}$ up into an average of averages over cosets of $\langle \xi_{x,h}\rangle^\perp$ and average over all pairs $(x,h)$ such that $x,x+h\in A$ to get that
  \begin{equation}\label{xxhavg}
    \E_{x,x+h\in A}\E_{t\in\F_p}\left|\E_{\substack{y\in\Phi_{x,h} \\ \xi_{x,h}\cdot y=t}}(f_{x,h}-\sigma^2)(y)\right|\gg\tau^{c}
  \end{equation}
  for some absolute constant $c>0$. Note that
  \begin{align*}
    \E_{x,x+h\in A}\E_{t\in\F_p}\E_{\substack{y\in\Phi_{x,h} \\ \xi_{x,h}\cdot y=t}}(f_{x,h}-\sigma^2)(y) &= \f{1}{\alpha^2\beta\gamma^2\delta^2\rho^2}\E_{x,y,h}S(x,y)S(x+h,y)-\sigma^2 \\
                                                                                                          &= \f{1}{\alpha^2\beta\gamma^2\delta^2\rho^2}\E_{x,y,h}g_S(x,y)g_S(x+h,y),
  \end{align*}
  so that, if it were the case that
  \[
    \left|\E_{x,x+h\in A}\E_{t\in\F_p}\E_{\substack{y\in\Phi_{x,h} \\ \xi_{x,h}\cdot y=t}}(f_{x,h}-\sigma^2)(y)\right|\gg\tau^{2c},
  \]
  then we would be able to deduce the desired density-increment from Lemma~\ref{deg1inverse}. Thus, we may proceed under the assumption
  \[
    \left|\E_{x,x+h\in A}\E_{t\in\F_p}\E_{\substack{y\in\Phi_{x,h} \\ \xi_{x,h}\cdot y=t}}(f_{x,h}-\sigma^2)(y)\right|\ll\tau^{2c},
  \]
  which we can combine with~\eqref{xxhavg}, a change of variables, and an application of
  the pigeonhole principle in the $t$ variable to deduce that
  \begin{equation}\label{eq:star1pigeonhole}
    \E_{x,h\in A}\max\left(0,\E_{\substack{y\in\Phi_{x,h-x} \\ \xi_{x,h-x}\cdot y=t}}(f_{x,h-x}-\sigma^2)(y)\right)\gg\tau^{c}
  \end{equation}
  for some fixed $t\in\F_p$. Set
  \begin{equation}
    \label{eq:Psidef}
    \Psi_h:=\left\{(x,y)\in\F_p^n\times\F_p^n:y\in\Phi_{x,h-x}\text{ and }\xi_{x,h-x}\cdot y= t\right\}.
  \end{equation}
  To deduce the desired density-increment by applying the pigeonhole principle
  to~\eqref{eq:star1pigeonhole}, we will have to show that almost every set of the form
  \begin{equation*}
    \{(x,y)\in T: S(h,y)A_1(x,h-x)\Psi_{h}(x,y)=1\}
  \end{equation*}
  has close to the ``correct'' density. Most of the remainder of our argument for
  $\|\cdot\|_{\star_1}$ is devoted to this task.

  Set $Q_{x,h}(y):=S(h,y)C(x+y)D(2x+y)$. We start by showing that either
  $\|Q_{x,h}-\sigma\beta\gamma^2\delta^2\|_{U^2(\Phi_{x,h-x})}$ is small for almost every
  pair $(x,h)\in A\times A$, or else we can deduce a density-increment from
  Lemmas~\ref{deg1inverse} and~\ref{deg2inverse}. Note first that
  \begin{align*}
    \E_{x,y,h}A(x)A(h)\Phi_{x,h-x}(y)Q_{x,h}(y) &= \E_{x,y,h}S(h,y)A(x)C(x+y)D(2x+y)\Phi(x,y) \\
    &=\E_{h,y}S(h,y)\mu(y),
  \end{align*}
  where $\mu(y):=\E_{x}A(x)C(x+y)D(2x+y)\Phi(x,y)$. By
  Lemmas~\ref{subspaceavg},~\ref{cs2}, and~\ref{intersection}, we have
  \[
    \PP\left(y\in\F_p^n:|\mu(y)-\alpha\gamma\delta\rho|>\ve\right)\ll\f{\ve^{\Omega(1)}}{\rho^{O(1)}},
  \]
  so that
  $|\E_{x,h,y}A(x)A(h)\Phi_{x,h-x}(y)Q_{x,h}(y)-\sigma\alpha^2\beta\gamma^2\delta^2\rho^2|<\ve^{\Omega(1)}/\rho^{O(1)}$. Similarly,
  the average
  $\E_{x,h}A(x)A(h)|\E_yQ_{x,h}(y)\Phi_{x,h-x}(y)-\sigma\beta\gamma^2\delta^2\rho^2|^2$
  equals
  \[
    \E_{x,y,h,k}A(x)A(h)\Phi_{x,h-x}(y)\Phi_{x,h-x}(y+k)Q_{x,h}(y)Q_{x,h}(y+k)-\sigma^2\alpha^2\beta^2\gamma^4\delta^4\rho^4+O\left(\f{\ve^{\Omega(1)}}{\rho^{O(1)}}\right),
  \]
  and $\E_{x,y,h,k}A(x)A(h)\Phi_{x,h-x}(y)\Phi_{x,h-x}(y+k)Q_{x,h}(y)Q_{x,h}(y+k)$ equals
  \[
    \E_{y,h,k}S(h,y)S(h,y+k)\mu'(y,k),
  \]
  where
  \[
    \mu'(y,k)=\E_xA(x)C(x+y)C(x+y+k)D(2x+y)D(2x+y+k)\Phi(x,y)\Phi(x,y+k).
  \]
  By Lemmas~\ref{subspaceavg},~\ref{cs2}, and~\ref{intersection}, we have
  \[
    \PP\left((y,k)\in\F_p^n\times\F_p^n:|\mu'(y,k)-\alpha\gamma^2\delta^2\rho^2|>\ve\right)\ll\f{\ve^{\Omega(1)}}{\rho^{O(1)}},
  \]
  so that
  \[
    \E_{x,y,h,k}A(x)A(h)\Phi_{x,h-x}(y)\Phi_{x,h-x}(y+k)Q_{x,h}(y)Q_{x,h}(y+k)
  \]
  equals
  \[
\alpha\gamma^2\delta^2\rho^2\E_{y,h,k}S(h,y)S(h,y+k)+O\left(\f{\ve^{\Omega(1)}}{\rho^{O(1)}}\right).
  \]
  
  Thus, if
  \[
    \E_{x,h}A(x)A(h)|\E_yQ_{x,h}(y)\Phi_{x,h-x}(y)-\sigma\beta\gamma^2\delta^2\rho^2|^2\gg \tau^{4c'}\sigma^2\alpha^2\beta^2\gamma^4\delta^4\rho^4
  \]
  for some $c'>0$ (to be chosen later), and $c_1$ is small enough and $c_2$ is large enough, then
  \[
    \E_{x,y,h}S(x,y)S(x,y+h)\geq \left(\sigma^2+\Omega(\tau^{4c'})\right)\alpha\beta^2\gamma^2\delta^2\rho^2,
  \]
  in which case the desired density increment follows from Lemma~\ref{deg1inverse}. We may
  thus proceed under the assumption that
  \[
    \E_{x,h}A(x)A(h)|\E_yQ_{x,h}(y)\Phi_{x,h-x}(y)-\sigma\beta\gamma^2\delta^2\rho^2|^2\ll \tau^{4c'}\sigma^2\alpha^2\beta^2\gamma^4\delta^4\rho^4,
  \]
  so that, by Markov's inequality, we have
  \[
    \PP\left((x,h)\in A\times A: |\E_yQ_{x,h}(y)\Phi_{x,h-x}(y)-\sigma\beta\gamma^2\delta^2\rho^2|\geq \tau^{2c'}\sigma\beta\gamma^2\delta^2\rho^2\right)\ll \tau^{2c'}.
  \]
  It follows that
  $\E_{x,h}A(x)A(h)\|Q_{x,h}-\sigma\beta\gamma^2\delta^2\|_{U^2(\Phi_{x,h-x})}^4$ equals
  \[
    \E_{\substack{x,h\in\F_p^n \\ y,y+k,y+\ell\in\Phi_{x,h-x}}}A(x)A(h)\Delta_{k,\ell}Q_{x,h}(y)-\sigma^4\alpha^2\beta^4\gamma^8\delta^8+O(\tau^{2c'}\alpha^2\beta^4\gamma^8\delta^8).
  \]
  
  By Lemmas~\ref{subspaceavg} and~\ref{cs2},
  \[
    \E_{\substack{x,h\in\F_p^n \\ y,y+k,y+\ell\in\Phi_{x,h-x}}}A(x)A(h)\Delta_{k,\ell}Q_{x,h}(y)=\f{\alpha\gamma^4\delta^4}{\rho^3}\E_{x,y,k,\ell}\Delta_{(0,k),(0,\ell)}S(x,y)+O\left(\f{\ve^{\Omega(1)}}{\rho^{O(1)}}\right).
  \]
  If
  \[
    \left|\E_{x,y,k,\ell}\Delta_{(0,k),(0,\ell)}S(x,y)-\sigma^4\alpha\beta^4\gamma^4\delta^4\rho^3\right|\gg \tau^{4c'}\alpha\beta^4\gamma^4\delta^4\rho^3,
  \]
  then we could deduce the desired density increment from Lemma~\ref{deg2inverse}. So, we
  may proceed under the assumption that this inequality does not hold, which implies that
  $\E_{x,h}A(x)A(h)\|Q_{x,h}-\sigma\beta\gamma^2\delta^2\|_{U^2(\Phi_{x,h-x})}^4\ll\tau^{4c'}\alpha^2\beta^4\gamma^8\delta^8$. It
  then follows from Markov's inequality that
  \begin{equation}\label{eq:pigeonholerequirement1}
    \PP\left((x,h)\in A\times A: \|Q_{x,h}-\sigma\beta\gamma^2\delta^2\|_{U^2(\Phi_{x,h-x})}\gg \tau^{2c'}\beta\gamma^2\delta^2 \right)\ll \tau^{2c'}.
  \end{equation}

  Next, we will show that, for typical $(x,h)\in A\times A$, the average size of
  $S(x,y)S(h,y)\Psi_{h}(x,y)$ is not very large. Certainly,
  \begin{align*}
    \E_{y}S(x,y)S(h,y)\Psi_h(x,y)&\leq\E_yT(x,y)T(h,y)\Psi_h(x,y)\\
    &=\E_yB(y)C(x+y)C(h+y)D(2x+y)D(2h+y)\Psi_h(x,y)
  \end{align*}
  for every $(x,h)\in A\times A$. Setting
  \begin{equation*}
    F(x,h,y):=B(y)C(x+y)C(h+y)D(2x+y)D(2h+y),
  \end{equation*}
  Lemma~\ref{cs2} says that
  \begin{equation*}
    \PP\left((x,h)\in \F_p^n\times\F_p^n:\|F(x,h,\cdot)-\beta\gamma^2\delta^2\|_{U^2(\F_p^n)}>\ve^{1/8}\right)\ll\sqrt{\ve}.
  \end{equation*}
  Thus, by~\ref{subspaceavg}, as long as $c_2$ is large enough, we have
  \begin{equation}\label{eq:pigeonholerequirement2}
    \PP\left((x,h)\in\F_p^n\times\F_p^n:\left|\E_yS(x,y)S(h,y)\Psi_h(x,y)\right|>100\frac{\beta\gamma^2\delta^2\rho^2}{p}\right)\ll \sqrt{c_1}(\sigma\tau\alpha\beta\gamma\delta\rho)^{16c},
  \end{equation}
  say.

  Now, setting $c'=8c$, the contribution to the left-hand side
  of~\eqref{eq:star1pigeonhole} coming from pairs $(x,h)\in A\times A$ for which
  $\|Q_{x,h}-\sigma\beta\gamma^2\delta^2\|_{U^2(\Phi_{x,h-x})}\gg\tau^{16c}\beta\gamma^2\delta^2$
  or $|\E_y S(x,y)S(h,y)\Psi_h(x,y)|>100\beta\gamma^2\delta^2\rho^2/p$ is
  $\ll\tau^{16c}$. Thus, by the pigeonhole principle, there exists an $h\in A$ and a
  subset $A'\subset A$ of density $\alpha'\gg\tau^{O(1)}\alpha$ in $\F_p^n$ such that
  \begin{equation*}
    \E_{x\in A'}\E_{(x,y)\in\Psi_h}(f_{x,h}-\sigma^2)(y)\gg\tau^c
  \end{equation*}
  and
  $\|Q_{x,h}-\sigma\beta\gamma^2\delta^2\|_{U^2(\Phi_{x,h-x})}\ll\tau^{16c}\beta\gamma^2\delta^2$
  for every $x\in A'$. Recalling the definition of $f_{x,h}$, the above displayed equation
  says that
  \begin{equation*}
    \E_{x,y}S(x,y)A'(x)S(h,y)\Psi_h(x,y)\geq(\sigma+\Omega(\tau^c))\frac{\alpha'\sigma\beta\gamma^2\delta^2\rho^2}{p}.
  \end{equation*}  
  We now use Lemma~\ref{uxtou} to find a subset $A''\subset A'$ of density
  $\alpha''\gg\alpha'\rho\tau^{O(1)}$ and a $u'\in\F_p^n$ such that
  \[
    \E_{x,y}S(x,y)A'(x)S(h,y)\Phi_h'(x,y)\geq\left(\sigma+\Omega(\tau^{c})\right)\f{\alpha''\sigma\beta\gamma^2\delta^2\rho^2}{p},
  \]
  where
  \[
    \Phi_h'=\left\{(x,y)\in \F_p^n\times\F_p^n:y\in\Phi_{x,h-x}\text{ and }\xi_{x,h}\cdot(y-u')=0\right\}.
  \]
  This gives the conclusion of the lemma.

  The proofs of the two remaining cases are very similar to those appearing in earlier subsections, so we will be more brief in our arguments. Next, assume that
  \[
    \|g_S\|_{\star_2}\geq\tau\alpha^{1/2}\beta\gamma^{1/2}\delta\rho.
  \]
  As in the second part of the proof of Lemma~\ref{deg2inverse}, we may proceed under the assumption that
  \[
    \E_{x,y,x',y'}S(x,y)S(x,y'-x)S(-x',x+y+x')S(-x',x'+y')\geq\left(\sigma^4+\f{\tau^4}{2}\right)\alpha^2\beta^4\gamma^2\delta^4\rho^4,
  \]
  and use it to show that either
  \[
    \E_{x,y}B(y)D(2x+y)\Phi(x,y)S(x,y'-x)S(-x',x+y+x')=\left(\sigma^2+O(\tau^5)\right)\alpha\beta^3\gamma\delta^3\rho^3
  \]
  for almost every pair $(x',y')$ for which $(-x',x'+y')\in S$, or else the desired density-increment follows from Lemma~\ref{deg1inverse}. By Lemmas~\ref{subspaceavg},~\ref{cs2}, and~\ref{intersection} and the Cauchy--Schwarz inequality, either
  \[
    \E_{\substack{x',y' \\ (-x',x'+y')\in S}}\E_{x,y}B(y)D(2x+y)\Phi(x,y)S(x,y'-x)S(-x',x+y+x')=\sigma^2\alpha\beta^3\gamma\delta^3\rho^3+O(\tau^{16}\alpha\beta^3\gamma\delta^3\rho^3)
  \]
  and
  \[
    \E_{\substack{x',y' \\ (-x',x'+y')\in S}}\left|\E_{x,y}B(y)D(2x+y)\Phi(x,y)S(x,y'-x)S(-x',x+y+x')-\sigma^2\alpha\beta^3\gamma\delta^3\rho^3\right|^2\ll\tau^{32}\alpha^2\beta^6\gamma^2\delta^6\rho^6
  \]
  or else we have the desired density-increment from Lemma~\ref{deg1inverse}. It then follows from Markov's inequality that
  \[
    \E_{x,y}B(y)D(2x+y)\Phi(x,y)S(x,y'-x)S(-x',x+y+x')=\left(\sigma^2+O(\tau^5)\right)\alpha\beta^3\gamma\delta^3\rho^3
  \]
  for all but a $\tau/4$-proportion of pairs $(x',y')$ for which $(-x',x'+y')\in S$, which means that we can find such a pair for which we also have
  \[
    \E_{x,y}S(x,y)S(x,y'-x)S(-x',x+y+x')\geq\left(\sigma^4+\f{\tau^4}{2}\right)\alpha^2\beta^4\gamma^2\delta^4\rho^4,
  \]
  so that we get the desired density-increment by taking $A'=S(x,y'-x)$, $B'=B$, $C'=S(-x',x+y+x')$, $D'=D$, and $\Phi'=\Phi$ in the definition of $T'$.

  Now assume that
  \[
    \|g_S\|_{\star_3}\geq\tau \alpha\beta\gamma\delta^{1/2}\rho,
  \]
  which means
  \[
    \E_{z\in D}|\E_{2x+y=z}g_S(x,y)|^2\geq \tau^2 \alpha^2\beta^2\gamma^2\rho^2.
  \]
  By Lemma~\ref{fibersize} and the pigeonhole principle, there exists a subset $D_0\subset D$ of density at least $\tau^2/2$ for which
  \[
    |\E_{2x+y=z}g_S(x,y)|\geq \f{\tau \alpha\beta\gamma\rho}{4}
  \]
  whenever $z\in D_0$. As in the proof of Lemma~\ref{deg1inverse}, there is a subset $D_1\subset D_0$ of density at least $1/2$ in $D_0$ such that either $\E_{2x+y=z}g_S(x,y)\geq \tau \alpha\beta\gamma\rho/4$ or $\E_{2x+y=z}g_S(x,y)\leq -\tau \alpha\beta\gamma\rho/4$ for all $z\in D_1$. As in the proof of Lemma~\ref{deg1inverse}, we can simply take $D'=D_1$ and $D'=D\setminus D_1$ in these cases, respectively, since, by Lemma~\ref{fibersize}, the fibers $\{(x,y)\in T:2x+y=z\}$ typically have very close to their average size.
\end{proof}

\section{Pseudorandomization}\label{pseudo}

This section begins with yet more preliminaries. To prove Lemma~\ref{pseudoprop}, we will need a result of Cohen and Tal, which says that, for any finite set of polynomials in $\F_p[x_1,\dots,x_m]$, one can find a partition of $\F_p^m$ into affine subspaces of relatively large dimension on which all of the polynomials are constant.

\begin{theorem}[Cohen and Tal, Theorem~3.6 of~\cite{CohenTal15} specialized to prime fields]\label{CT}
  Let $d,m,$ and $t$ be natural numbers. There exists a positive integer $m'$ satisfying
  \[
    m'\gg \f{m^{1/(d-1)!}}{t^e}
  \]
  such that, for any polynomials $P_1,\dots,P_t\in\F_p[x_1,\dots,x_m]$ of degree at most
  $d$, there is a partition of $\F_p^m$ into affine subspaces of dimension $m'$ such that $P_1,\dots,P_t$
  are constant on each affine subspace.
\end{theorem}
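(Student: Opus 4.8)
The plan is to follow Cohen and Tal~\cite{CohenTal15}; the argument is short, so I sketch it here in the range $d<p$ that covers our application (the polynomial degrees arising from the Gowers--Mili\'cevi\'c inverse theorem are at most $9$ and $p\geq11$, so $d<p$; the general case is in~\cite{CohenTal15}). The proof is an induction on $d$ whose heart is a single \emph{degree-reduction step} that, in one stroke, lowers the degrees of all $t$ polynomials by one at the cost of refining to the cosets of a subspace whose dimension is a fixed root of the ambient one; the exponent $1/(d-1)!$ then appears by composing this step $d-1$ times.

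\textbf{Degree-reduction step.} Given $P_1,\dots,P_t\colon\F_p^m\to\F_p$ of degree at most $j$, with $2\leq j<p$, I claim there is a subspace $W\leq\F_p^m$ with $\dim W\gg_j(m/t)^{1/(j-1)}$ such that every $P_s$ has degree at most $j-1$ on each coset of $W$. By polarization (this is where $j<p$ is used), the degree-$j$ homogeneous part of $P_s$ is described by a symmetric $j$-linear form $M_s$, and $P_s$ has degree $<j$ on every coset of $W$ precisely when $M_s$ vanishes on $W^j$. I therefore build a common totally isotropic subspace $W=\spn(v_1,\dots,v_k)$ greedily: having chosen $v_1,\dots,v_\ell$, let $V_\ell$ be the subspace of those $v$ for which $M_s(v,w_1,\dots,w_{j-1})=0$ for all $s$ and all multisets $\{w_1,\dots,w_{j-1}\}$ drawn from $\{v_1,\dots,v_\ell\}$; this imposes $O_j(t\ell^{j-1})$ linear conditions, so $\codim V_\ell\ll_j t\ell^{j-1}$. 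Inside $V_\ell$ one then needs a nonzero $v_{\ell+1}$ lying outside $\spn(v_1,\dots,v_\ell)$ that kills the remaining evaluations of the $M_s$ -- those in which $v_{\ell+1}$ occupies at least two of the $j$ slots -- and these are $O_j(t\ell^{j-2})$ polynomial equations of degree at most $j$. By the Chevalley--Warning theorem in its quantitative (Warning second) form, their common zero set is nonempty (it contains $0$) and hence has at least $p^{\dim V_\ell-O_j(t\ell^{j-2})}$ points, which exceeds $|\spn(v_1,\dots,v_\ell)|=p^\ell$ as soon as $\dim V_\ell\gg_j t\ell^{j-2}+\ell$; so such a $v_{\ell+1}$ exists. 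The construction therefore runs for $k\gg_j(m/t)^{1/(j-1)}$ steps, and by symmetry and multilinearity every evaluation of any $M_s$ on $j$ vectors of $W$ reduces to one of the vanishing cases above. Partitioning $\F_p^m$ into the cosets of $W$ completes the step.

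\textbf{The induction.} For $d=1$ the statement is immediate: the common zero set $W$ of the linear parts of $P_1,\dots,P_t$ has $\codim W\leq t$ and each $P_s$ is constant on every coset of $W$, so $m'\geq m-t$. For $d\geq2$, run the degree-reduction step with $j=d$ to partition $\F_p^m$ into cosets of a subspace of dimension $m_{d-1}\gg_d(m/t)^{1/(d-1)}$ on which every $P_s$ has degree at most $d-1$, then recurse inside each such coset (an affine subspace $\cong\F_p^{m_{d-1}}$) with $d$ replaced by $d-1$, refining the partition; after the steps $j=d,d-1,\dots,2$ one lands on polynomials of degree at most $1$ and invokes the base case. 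Composing the bounds, the exponent of $m$ becomes $\prod_{j=2}^{d}\f{1}{j-1}=\f{1}{(d-1)!}$, while the step at level $j$ introduces a factor $t^{-1/(j-1)}$ which the later steps raise to $t^{-1/(j-1)!}$, so the total loss in $t$ is $t^{-\sum_{j=2}^{d}1/(j-1)!}$ with $\sum_{j=2}^{d}\f{1}{(j-1)!}<e$; hence $m'\gg_d m^{1/(d-1)!}/t^{e}$. (When $m$ is too small for the Chevalley--Warning steps to get going, the right-hand side is $O(1)$ and one may simply take $m'=1$.)

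The main obstacle is the exponent bookkeeping inside the degree-reduction step: one must verify that the linear constraints cutting out $V_\ell$ grow like $\ell^{j-1}$ while the genuinely nonlinear ones grow only like $\ell^{j-2}$, since it is exactly this one-unit gap that lets Chevalley--Warning sustain the greedy construction up to $\ell\asymp(m/t)^{1/(j-1)}$ and, compounded over the $d-1$ steps, produces the $m^{1/(d-1)!}$ of the theorem. Equally essential is reducing all $t$ polynomials simultaneously: handling them one at a time would iterate a constant- (or worse) factor loss $t$ times, forcing an exponential dependence on $t$ into the exponent of $m$, whereas the simultaneous argument confines the $t$-loss to the harmless factor $t^{e}$.
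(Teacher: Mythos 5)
The paper does not prove this theorem: it cites Cohen and Tal's Theorem~3.6 directly, and its only in-text contribution is the one-sentence remark that one can refine the cells so that they all have the same dimension. So you have taken a genuinely different (and more self-contained) route by reconstructing the Cohen--Tal argument, where the paper opts for a black-box citation.

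Your reconstruction is correct in its main thrust. The polarization of the top-degree part into a symmetric $j$-linear form (legitimate since $d<p$, which you rightly note covers the paper's $d\leq 9$, $p\geq 11$ application), the greedy construction of a common totally isotropic subspace, the one-unit gap between the $O_j(t\ell^{j-1})$ linear constraints absorbed into $V_\ell$ and the $O_j(t\ell^{j-2})$ genuinely nonlinear constraints handled by Warning's second theorem, and the compounding of exponents $\prod_{j=2}^{d}\frac{1}{j-1}=\frac{1}{(d-1)!}$ together with $\sum_{j=2}^{d}\frac{1}{(j-1)!}<e$ for the $t$-loss are all right, and you correctly identify the $\ell^{j-1}$ versus $\ell^{j-2}$ split as the load-bearing point.

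The one slip is the closing parenthetical. When $m$ is too small for the greedy construction to get off the ground you cannot ``simply take $m'=1$'': a polynomial of positive degree need not be constant on the lines of any partition of $\F_p^m$. For instance with $t=1$, $m=2$, $d=2$, $p\equiv1\Mod{4}$, the polynomial $x_1^2+x_2^2$ is constant on a line $a+\F_p v$ only when $v$ is isotropic and $a$ lies on one of two specific lines, so no partition of $\F_p^2$ into such lines exists. The correct way to handle the degenerate regime is to let the implied constant in $\gg$ absorb it, or to allow $m'=0$ (single-point cells); as it happens the paper only invokes the theorem under an explicit largeness assumption on $n$, so the regime never arises there, but the fix you wrote is not a fix.
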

To see that this formulation of Cohen and Tal's theorem is equivalent to Theorem~3.6
of~\cite{CohenTal15}, note that one can make all of the affine subspaces in the partitions
produced by their theorem have the same dimension by simply partitioning each subspace not
of the minimum possible dimension into more subspaces.

We will also need a ``bilinear'' version of Cohen and Tal's result, which we will use to
find partitions of $\F_p^n\times\F_p^n$ into product spaces of the form $(u+V)\times(w+V)$
on which polynomials in two sets of variables $x_1,\dots,x_m,y_1,\dots,y_m$ are constant.

\begin{corollary}\label{bilCT}
  Let $d$ and $m$ be natural numbers. There exists a positive integer $m'$ satisfying
  \[
    m'\gg\f{m^{1/(d-1)!^2}}{d^{e}}
  \]
  such that, for any polynomial $R\in\F_p[x_1,\dots,x_m,y_1,\dots,y_m]$ of degree at most $d$, there is a partition of $\F_p^m\times\F_p^m$ into products of affine subspaces of the form
  \[
    (u+V)\times(w+V),
  \]
  with each $\dim V= m'$, such that $R$ is constant on each product of affine subspaces.
\end{corollary}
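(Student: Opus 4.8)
The plan is to prove Corollary~\ref{bilCT} by adapting Cohen and Tal's argument so that it outputs a partition into \emph{product} cells $(u+V)\times(w+V)$ rather than into arbitrary affine subspaces of $\F_p^m\times\F_p^m$. The starting observation is that it suffices to handle the bihomogeneous components of $R$: writing $R=\sum_{i+j\le d}R_{i,j}$ with $R_{i,j}$ homogeneous of degree $i$ in $x_1,\dots,x_m$ and degree $j$ in $y_1,\dots,y_m$, there are at most $\binom{d+2}{2}$ components, a number depending only on $d$, and a partition on each of whose cells every $R_{i,j}$ is constant is one on which $R$ is constant. Keeping the number of polynomials controlled purely in terms of $d$ — rather than in terms of $m$, which is what happens if one naively expands $R$ in one block of variables — is essential, since the bound in Theorem~\ref{CT} degrades like $t^{e}$ in the number $t$ of polynomials, and the price for running that theorem in two nested stages is exactly that its exponent $1/(d-1)!$ gets squared.

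With this reduction in hand, I would run an induction on $d$ mirroring Cohen and Tal's, but with all cells kept in the form $(u+V)\times(w+V)$. At the inductive step one peels off the top-degree bihomogeneous parts $R_{i,d-i}$ (only $d+1$ of them) and produces a subspace $V_1\le\F_p^m$, with $\dim V_1\gg m^{1/(d-1)!}/d^{O(1)}$, such that on each product cell $(u+V_1)\times(w+V_1)$ all of $R_{0,d},\dots,R_{d,0}$ are constant; restricting to such a cell then leaves a polynomial of degree at most $d-1$, to which the inductive hypothesis applies \emph{within} $V_1\times V_1$, shrinking $V_1$ to the final $V$ with $\dim V\gg(\dim V_1)^{1/(d-1)!}/d^{O(1)}\gg m^{1/(d-1)!^2}/d^{O(1)}$. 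The degree-reduction step is where Theorem~\ref{CT} itself enters: one applies it in the $x$-block and then in the $y$-block, the bihomogeneity of the $R_{i,d-i}$ being precisely what lets the two applications be decoupled while each involves only boundedly many polynomials of bounded degree, after which one assembles the two coset partitions into a product partition and checks by routine computation (of the sort carried out throughout Section~\ref{gvn}) that each $R_{i,d-i}$ is constant on the resulting cells.

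The hard part is this degree-reduction step under the product constraint. A black-box application of Theorem~\ref{CT} to $R$, or even to its $d+1$ top bihomogeneous parts, viewed as polynomials in all $2m$ variables, would return cells $(a,b)+U$ with $U$ a generic subspace of $\F_p^{2m}$, and such a $U$ need contain no subspace of the form $V\times V$ of positive dimension at all; so the product form cannot be extracted after the fact and must be built in from the start by working one coordinate block at a time and re-using the subspace produced for the first block when treating the second. Making that re-use legitimate — checking that the polynomials governing the behaviour of $R$ in the $y$-block after the $x$-block has been shrunk really do depend on only $O_d(1)$ pieces of data of degree $\le d$, uniformly over the cell chosen in the $x$-block — is the technical heart of the argument, and is exactly what the bihomogeneous decomposition is there to supply; tracking the resulting chain of dimension bounds so that nothing worse than a fixed power of $d$ is lost then gives the stated $m^{1/(d-1)!^2}/d^{e}$.
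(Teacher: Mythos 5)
Your two-stage blueprint --- apply Theorem~\ref{CT} in the $\mathbf{x}$-block, then again within the resulting cosets in the $\mathbf{y}$-block, keeping everything in product form --- is the same skeleton the paper uses (though the paper does a single direct pass rather than inducting on $d$). The genuine gap in your sketch sits exactly where you flag the technical heart, and your claim that the bihomogeneous decomposition resolves it does not hold up. For the first application of Theorem~\ref{CT} to cost only a factor bounded in terms of $d$, you must exhibit $O_d(1)$ polynomials in $\mathbf{x}$ of degree at most $d$ whose constancy on cosets of $V_1$ renders each $R_{i,j}$, restricted to a cell $(u+V_1)\times(w+V_1)$, a function of $\mathbf{y}$ alone. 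The bihomogeneous decomposition does not do this: each piece $R_{i,j}(\mathbf{x},\mathbf{y})=\sum_{|\beta|=j}c_\beta(\mathbf{x})\,\mathbf{y}^\beta$ has on the order of $m^j$ coefficient polynomials $c_\beta$, and it is these, not the $O_d(1)$ pieces $R_{i,j}$ themselves, that govern the $\mathbf{x}$-dependence of $R$ on a product cell. Their number grows with $m$, so feeding them to Theorem~\ref{CT} loses a factor $t^e$ that is polynomial in $m$ and destroys the bound. Bihomogeneity decouples the $\mathbf{x}$-degree from the $\mathbf{y}$-degree but says nothing about the number of $\mathbf{x}$-coefficients, and your sketch never specifies which $O_d(1)$ polynomials actually get handed to Theorem~\ref{CT} at the first stage; there is no such collection that does the job even for the single bilinear piece $R_{1,1}=\sum_i x_iy_i$.

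The paper instead asserts a decomposition $R(\mathbf{x},\mathbf{y})=\sum_{i=0}^d P_i(\mathbf{x})Q_i(\mathbf{y})$ with $\deg P_i\le i$ and $\deg Q_i\le d-i$, so that only the $d+1$ polynomials $P_0,\dots,P_d$ enter the first stage and the single polynomial $\sum_i P_i(u_j+V_j)Q_i(\mathbf{y})$ enters the second; this, not a bihomogeneous splitting, is what produces the $d^e$ denominator in the stated bound, and it is precisely the ingredient your plan does not supply. You should, however, examine that decomposition with some suspicion before relying on it: for $R=x_1y_1+x_2y_2$ with $d=2$, the bilinear component of any sum $P_0Q_0+P_1Q_1+P_2Q_2$ obeying those degree constraints has rank at most $1$, whereas $R$ has bilinear rank $2$, so the decomposition cannot hold verbatim for general $R$. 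However the argument is ultimately repaired, the nonnegotiable requirement is a way to encode the $\mathbf{x}$-dependence of $R$ in $O_d(1)$ polynomials of bounded degree before the first application of Theorem~\ref{CT}, and bihomogeneity alone does not provide one.
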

\begin{proof}
   Write
  \[
    R(\mathbf{x},\mathbf{y})=\sum_{i=0}^dP_i(\mathbf{x})Q_i(\mathbf{y}),
  \]
  where $P_0,\dots,P_d,Q_0,\dots,Q_d\in\F_p[z_1,\dots,z_m]$ satisfy $\deg{P_i}\leq i$ and $\deg{Q_i}\leq d-i$ for all $0\leq i\leq d$. Applying Theorem~\ref{CT} to $P_0,\dots,P_d$ gives us a positive integer $m'_0\gg\f{m^{1/(d-1)!}}{(d+1)^e}$ and a partition
    \[
      \F_p^m=\coprod_{j\in J}(u_j+V_j)
    \]
    of $\F_p^m$, with $\dim{V_j}=m'_0$ for each $j\in J$, such that $P_0,\dots,P_d$ are all constant on each affine subspace $u_j+V_j$.

    Now write
    \[
      \F_p^m\times\F_p^m=\coprod_{j\in J}(u_j+V_j)\times \F_p^m = \coprod_{j\in J} \coprod_{w+V_j\in \F_p^m/V_j}(u_j+V_j)\times (w+V_j).
    \]
    Since restricting a polynomial to a subspace cannot increase its degree, for each $j\in J$, we can apply Theorem~\ref{CT} on each affine subspace $w+V_j$ to the polynomial
    \[
      Q_{j,w}(\mathbf{y}):=\sum_{i=0}^dP_i(u_j+V_j)Q_i(\mathbf{y})
    \]
    to get that there exists a positive integer $m'$ satisfying
    \[
      m'\gg\f{(m'_0)^{1/(d-1)!}}{(d+1)^e}\gg\f{m^{1/(d-1)!^2}}{d^{e}}
    \]
    and a partition
    \[
      w+V_j = \coprod_{k\in K_{j,w}}(w_k+V_k'),
    \]
    with each subspace $V_k'\leq V_j$ having $\dim V_k'=m'$, such that $Q_{j,w}$ is constant on each $w_k+V_k'$.

    Thus, we can write
    \[
      \F_p^m\times\F_p^m=\coprod_{j\in J}\coprod_{w+V_j\in\F_p^m/V_j}\coprod_{k\in K_{j,w}}(u_j+V_j)\times(w_k+V_k'),
    \]
    where $R$ is constant on each product $(u_j+V_j)\times(w_k+V_k')$. Since $V_k'\leq V_j$ we can further refine this partition to one of the desired form
    \[
      \F_p^m\times\F_p^m=\coprod_{j\in J}\coprod_{w+V_j\in\F_p^m/V_j}\coprod_{k\in K_{j,w}}\coprod_{u+V_k'\in V_j/V_k'}(u+V_k')\times(w_k+V_k'),
    \]
    on each part of which $R$ is still constant.
\end{proof}

Finally, we will recall the recent quantitative inverse theorem of Gowers and Mili\'cevi\'c for the $U^s$-norms on vector spaces over finite fields:

\begin{theorem}[Gowers and Mili\'cevi\'c, Theorem~7 of~\cite{GowersMilicevic20}]\label{GM}
  Let $s$ be a natural number and assume that $p\geq s$. There exist constants $c_s,c_{s,p}'>0$ so that, if $f:\F_p^m\to\C$ is a $1$-bounded function satisfying
  \[
    \|f\|_{U^s}\geq \delta,
  \]
  then there exists a polynomial $P\in\F_p[x_1,\dots,x_m]$ of degree $\deg{P}\leq s-1$ such that
  \[
    \left|\E_{x}f(x)e_p(P(x))\right|\gg_{s,p}\f{1}{\exp^{c_s}(c_{s,p}'/\delta)}.
  \]
\end{theorem}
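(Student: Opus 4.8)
The plan is to prove Theorem~\ref{GM} by induction on $s$, carrying out the strategy of Green, Tao, and Ziegler but implementing every step with explicit (tower-type) quantitative bounds, exactly as Gowers and Mili\'cevi\'c do. The base case $s=2$ is Lemma~\ref{inverseU2}: a large $U^2$-norm produces a frequency, i.e. a degree-$1$ phase, with only polynomial loss and no exponential. For the inductive step, suppose $\|f\|_{U^s}\geq\delta$. Expanding one layer of the definition gives $\E_h\|\Delta_hf\|_{U^{s-1}}^{2^{s-1}}\geq\delta^{2^s}$, so by Markov's inequality $\|\Delta_hf\|_{U^{s-1}}\gg\delta^{O(1)}$ for all $h$ in a set $H$ of density $\gg\delta^{O(1)}$. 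Applying the inductive hypothesis (the $U^{s-1}$ case, using $p\geq s-1$) to each $\Delta_hf$ with $h\in H$ yields a polynomial $\phi_h\in\F_p[x_1,\dots,x_m]$ of degree at most $s-2$ with $|\E_x\Delta_hf(x)e_p(-\phi_h(x))|\gg 1/\exp^{c_{s-1}}(c'/\delta)$. Replacing $\phi_h$ by its top-degree homogeneous part, we may regard the leading coefficient of $\phi_h$ as a symmetric $(s-2)$-linear form $\beta_h$ on $\F_p^m$ depending on the parameter $h$.

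The heart of the argument is to upgrade this $h$-indexed family of approximate structures into a single symmetric $(s-1)$-linear form. First, using the cochain identity $\Delta_{h_1+h_2}f=\Delta_{h_1}f\cdot(\Delta_{h_2}f)(\,\cdot\,+h_1)$, a Gowers--Cauchy--Schwarz argument (the ``approximate additivity'' step) shows that for $\gg\delta^{O(1)}$-many pairs $(h_1,h_2)$ one has $\phi_{h_1+h_2}(x)\approx\phi_{h_1}(x)+\phi_{h_2}(x+h_1)$, and taking one further derivative in $x$ removes the shift, giving $\beta_{h_1+h_2}=\beta_{h_1}+\beta_{h_2}$ for such pairs. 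A quantitative ``approximately additive on a dense set implies globally linear'' argument — obtained by running the $U^2$-inverse theorem on the graph of $h\mapsto\beta_h$ and iterating, with ranks kept under control — then produces a genuine multilinear form $\beta(h,x_1,\dots,x_{s-2})$, linear in $h$, agreeing with $\beta_h$ for most $h$. The second ingredient is the \emph{symmetry argument}: exploiting that $\Delta_h\Delta_{h'}f$ is symmetric in $h,h'$, together with the quantitative theory of \emph{bias versus rank} of multilinear forms (the analytic-rank/partition-rank comparison, which itself carries a tower-type bound), one forces $\beta$ to be, up to acceptable error, symmetric in all $s-1$ of its slots. Finally one ``integrates'': since $\beta$ is symmetric and $p\geq s$, one may choose $P\in\F_p[x_1,\dots,x_m]$ of degree $s-1$ with $\partial_{h_1,\dots,h_{s-1}}P$ a scalar multiple of $\beta$; absorbing the remaining lower-degree discrepancy by one more appeal to the inductive hypothesis then yields $|\E_xf(x)e_p(-P(x))|\gg 1/\exp^{c_s}(c'_{s,p}/\delta)$, with $c_s$ growing at each stage of the induction.

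The main obstacle, and the source of essentially all of the $\exp^{c_s}$ loss, is the passage from ``$\Delta_hf$ correlates with a degree-$(s-2)$ phase for each $h$ in a dense set'' to ``these phases are the derivatives of a single degree-$(s-1)$ polynomial.'' Concretely this needs a quantitative inverse theorem for approximately additive families, run \emph{inside} the already inductively quantified $U^{s-1}$ inverse theorem so that the towers compound, together with quantitatively effective bounds on the rank of biased multilinear forms, needed both to make the additivization and symmetrization arguments robust and to convert the resulting approximate algebraic structure into an exact one. Each of these inputs currently costs a bounded number of iterated exponentials, so $c_s$ accumulates across the induction on $s$; only controlling it, not optimizing it, is required for Theorem~\ref{main}.
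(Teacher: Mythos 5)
The paper does not prove this statement: it is quoted verbatim as Theorem~7 of Gowers and Mili\'cevi\'c~\cite{GowersMilicevic20} and used as a black box (in Lemma~\ref{energyinc} and hence Lemma~\ref{pseudoprop}), so there is no in-paper argument to compare yours against. Taken on its own terms, your sketch is a reasonable high-level description of the standard inverse-theorem template (induction on $s$, peel off one derivative, apply the $U^{s-1}$ case to $\Delta_h f$, additivize the $h$-parametrised phases, symmetrize, integrate using $p\geq s$), which is indeed the skeleton that Gowers and Mili\'cevi\'c make quantitative. You also correctly locate where the iterated-exponential loss accumulates: the upgrade from an approximately additive family $h\mapsto\beta_h$ to a genuine multilinear form, the symmetrization, and the quantitative comparison of bias (analytic rank) with partition rank.

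That said, what you have written is a roadmap, not a proof: each of the three steps you name is itself a substantial theorem with a long independent proof, and none of them is actually carried out or even precisely stated here. In particular, ``a quantitative `approximately additive on a dense set implies globally linear' argument --- obtained by running the $U^2$-inverse theorem on the graph of $h\mapsto\beta_h$ and iterating, with ranks kept under control'' compresses the Balog--Szemer\'edi--Gowers/Freiman-type machinery and the rank-stability arguments that occupy the bulk of~\cite{GowersMilicevic20} into a single sentence, and the bias-versus-rank input you invoke is a separate nontrivial result with its own tower-type bound. Since Theorem~\ref{GM} is used in this paper purely as a cited input, there is no need to reprove it, and the honest thing to do is simply to cite it as the paper does rather than to suggest that a two-paragraph sketch constitutes a proof.
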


\subsection{Proof of Lemma~\ref{pseudoprop}}

Our proof of Lemma~\ref{pseudoprop} is modeled after the corresponding pseudorandomization arguments in~\cite{Shkredov06I,Shkredov06II} and~\cite{Green05note}. As was said in the outline, some new features arise from our desire for $B',C',D',$ and $\Phi'$ to be uniform with respect to $U^s$-norms of degree greater than $2$ and from $\Phi$'s particular structure as a union of affine subspaces of the second factor of $\F_p^n\times\F_p^n$. The first of these two complications can be overcome by using Theorem~\ref{GM} and then Theorem~\ref{CT} or Corollary~\ref{bilCT}. We will discuss how $\Phi$'s structure influences our proof shortly.

The proof of Lemma~\ref{pseudoprop} proceeds via an energy-increment argument. Each step of the energy-increment iteration will produce a partition $\mathscr{C}_{j}=(\mathcal{C}_{i,j})_{i\in I_j}$ of $\F_p^n\times \F_p^n$ into cells $\mathcal{C}_{i,j}$ of the form
  \[
    \mathcal{C}_{i,j}=(u_{i,j}+V_{i,j})\times(w_{i,j}+V_{i,j})
  \]
  for some subspace $V_{i,j}\leq \F_p^n$. For each cell $\mathcal{C}=(u+V)\times(w+V)$ in a partition $\mathscr{C}$, we set
  \begin{itemize}
  \item $B_{\mathcal{C}}:= B\cap (w+V)$,
  \item $C_{\mathcal{C}}:= C\cap(u+w+V)$,
  \item $D_{\mathcal{C}}:= D\cap(2u+w+V)$, and
  \item $\Phi_{\mathcal{C}}:=\Phi\cap\mathcal{C}$,
\end{itemize}
and, correspondingly,
\begin{itemize}
  \item $\beta(\mathcal{C}):=\mu_{w+V}(B_{\mathcal{C}})$,
  \item $\gamma(\mathcal{C}):=\mu_{u+w+V}(C_{\mathcal{C}})$,
  \item $\delta(\mathcal{C}):=\mu_{2u+w+V}(D_{\mathcal{C}})$, and
  \item $\phi(\mathcal{C}):=\mu_{\mathcal{C}}(\Phi_{\mathcal{C}})$,
  \end{itemize}
  so that
  \[
    T\cap\mathcal{C}=\left\{(x,y)\in\mathcal{C}:B_{\mathcal{C}}(y)C_{\mathcal{C}}(x+y)D_{\mathcal{C}}(2x+y)\Phi_{\mathcal{C}}(x,y)=1\right\}
  \]
  Analogously to the pseudorandomization procedure for corners, we will show that if $\|B_{\mathcal{C}}-\beta(\mathcal{C})\|_{U^8(\F_p^n)},\|C_{\mathcal{C}}-\gamma(\mathcal{C})\|_{U^8(\F_p^n)},$ or $\|D_{\mathcal{C}}-\delta(\mathcal{C})\|_{U^8(\F_p^n)}$ is large for a substantial portion of cells $\mathcal{C}$ in a partition $\mathscr{C}$, then there exists a refinement $\mathscr{C}'$ of $\mathscr{C}$ which has substantially larger energy. But even if $\Phi_{\mathcal{C}}$ is a union of affine subspaces of the same codimension $d$ for most cells in the partition, this may not be the case for the $\Phi_{\mathcal{C}'}$'s corresponding to the cells $\mathcal{C}'$ in the refinement $\mathscr{C}'$. The codimensions of the affine subspaces $\{y\in w'+V':\Phi_{\mathcal{C}'}(x,y)=1\}$ can range from $0$ to $d$, so before even considering how to obtain a pseudorandom $\Phi'$, we have already found an obstacle to even getting a set of the same general form as $\Phi'$.

  To get around this issue, we will pseudorandomize each of the sets
  \begin{equation}\label{phileqdef}
    \Phi^{\leq i}_{\mathcal{C}}:=\left\{(x,y)\in\Phi_{\mathcal{C}}:\E_{z\in w+V}\Phi(x,z)\geq p^{-i}\right\}
  \end{equation}
  for $0\leq i\leq d$, instead of just $\Phi_{\mathcal{C}}=\Phi^{\leq d}_{\mathcal{C}}$ itself. The definition~\eqref{phileqdef} of $\Phi^{\leq i}_{\mathcal{C}}$ selects all of the subspaces of the second factor of $\mathcal{C}$ comprising $\Phi_{\mathcal{C}}$ that have codimension at most $i$. This will pseudorandomize each of
  \[
    \Phi^i_{\mathcal{C}}:=\left\{(x,y)\in\Phi_{\mathcal{C}}:\E_{z\in w+V}\Phi(x,z)=p^{-i}\right\}=\Phi^{\leq i}_{\mathcal{C}}\setminus\Phi^{\leq i-1}_{\mathcal{C}}
  \]
  as well. At the end of the proof of Lemma~\ref{pseudoprop}, we will use an averaging argument to choose $\Phi'$ to be some suitable $\Phi_{\mathcal{C}}^i$.

  For each $0\leq i\leq d$, set $\phi^{\leq i}(\mathcal{C}):=\mu_{\mathcal{C}}(\Phi_{\mathcal{C}}^{\leq i})$. We define the energy $E(\mathscr{C})$ of a partition $\mathscr{C}$ of $\F_p^n\times\F_p^n$ by
  \[
    E(\mathscr{C}):=\f{1}{4+d}\sum_{\mathcal{C}\in\mathscr{C}}\left(\beta(\mathcal{C})^2+\gamma(\mathcal{C})^2+\delta(\mathcal{C})^2+\sum_{i=0}^d\phi^{\leq i}(\mathcal{C})^2\right)\mu_{\F_p^n\times\F_p^n}(\mathcal{C})
  \]
  Note that the energy of any partition is bounded above by $1$. We will now prove a couple of lemmas concerning this energy.

  \begin{lemma}\label{msdgeq}
    Let $m'>m$ and $d$ be nonnegative integers, $A,B,C,D\subset\F_p^n$, $\Phi\subset\F_p^n\times\F_p^n$ be of the form
    \[
      \Phi=\{(x,y)\in A\times\F_p^n:y\in u+V_x\},
    \]
    where each $V_x$ is a subspace of $\F_p^n$ of codimension between $0$ and $d$, $\mathscr{C}$ be a partition of $\F_p^n\times\F_p^n$ with each cell $\mathcal{C}$ taking the form
  \[
      \mathcal{C}=(u_{\mathcal{C}}+V_{\mathcal{C}})\times(w_{\mathcal{C}}+V_{\mathcal{C}})
    \]
    for some subspace $V_{\mathcal{C}}\leq\F_p^n$ of codimension $m$, and suppose that $\mathscr{C}'$ is a refinement of $\mathscr{C}$ with each cell $\mathcal{C}'$ taking the form
    \[
      \mathcal{C}'=(u_{\mathcal{C}'}'+V_{\mathcal{C}'}')\times(w_{\mathcal{C}'}'+V_{\mathcal{C}'}')
    \]
    for some subspace $V'_{\mathcal{C}'}$ of codimension $m'$. Then
    \[
      \sum_{\mathcal{C}'\in\mathscr{C}'}\beta(\mathcal{C}')^2\mu_{\F_p^n\times\F_p^n}(\mathcal{C}')\geq\sum_{\mathcal{C}\in\mathscr{C}}\beta(\mathcal{C})^2\mu_{\F_p^n\times\F_p^n}(\mathcal{C}),
    \]
    \[
 \sum_{\mathcal{C}'\in\mathscr{C}'}\gamma(\mathcal{C}')^2\mu_{\F_p^n\times\F_p^n}(\mathcal{C}')\geq\sum_{\mathcal{C}\in\mathscr{C}}\gamma(\mathcal{C})^2\mu_{\F_p^n\times\F_p^n}(\mathcal{C}),
    \]
    \[
 \sum_{\mathcal{C}'\in\mathscr{C}'}\delta(\mathcal{C}')^2\mu_{\F_p^n\times\F_p^n}(\mathcal{C}')\geq\sum_{\mathcal{C}\in\mathscr{C}}\delta(\mathcal{C})^2\mu_{\F_p^n\times\F_p^n}(\mathcal{C}),
    \]
    and
    \[
      \sum_{\mathcal{C}'\in\mathscr{C}'}\phi^{\leq i}(\mathcal{C}')^2\mu_{\F_p^n\times\F_p^n}(\mathcal{C}')\geq \sum_{\mathcal{C}\in\mathscr{C}}\phi^{\leq i}(\mathcal{C})^2\mu_{\F_p^n\times\F_p^n}(\mathcal{C})
    \]
    for every $0\leq i\leq d$.
  \end{lemma}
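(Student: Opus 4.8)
The plan is to deduce all four displayed inequalities from a single cell-by-cell convexity estimate: for every cell $\mathcal{C}\in\mathscr{C}$ and every choice of $\xi\in\{\beta,\gamma,\delta,\phi^{\leq i}\}$, I claim
\[
  \sum_{\substack{\mathcal{C}'\in\mathscr{C}' \\ \mathcal{C}'\subseteq\mathcal{C}}}\xi(\mathcal{C}')^2\mu_{\F_p^n\times\F_p^n}(\mathcal{C}')\geq\xi(\mathcal{C})^2\mu_{\F_p^n\times\F_p^n}(\mathcal{C}),
\]
after which summing over $\mathcal{C}\in\mathscr{C}$ (and using that the cells of $\mathscr{C}'$ refine those of $\mathscr{C}$) gives the lemma. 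The point is that $\mu_{\F_p^n\times\F_p^n}(\mathcal{C}')/\mu_{\F_p^n\times\F_p^n}(\mathcal{C})=|\mathcal{C}'|/|\mathcal{C}|=p^{-2(m'-m)}$ is a uniform probability weight on the $p^{2(m'-m)}$ sub-cells of $\mathcal{C}$, so the displayed inequality is exactly Jensen's inequality for $t\mapsto t^2$ provided one knows the mean bound $\E_{\mathcal{C}'\subseteq\mathcal{C}}\xi(\mathcal{C}')\geq\xi(\mathcal{C})$, the expectation taken over sub-cells with these weights. Thus the whole argument reduces to verifying this mean bound for each of the four quantities.

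For $\beta$, $\gamma$, and $\delta$ this is immediate, and in fact an equality, because each such density is the average over the cell of a function that does not depend on the cell. Writing $\mathcal{C}=(u+V)\times(w+V)$, one has $\beta(\mathcal{C})=\E_{(x,y)\in\mathcal{C}}B(y)$, $\gamma(\mathcal{C})=\E_{(x,y)\in\mathcal{C}}C(x+y)$, and $\delta(\mathcal{C})=\E_{(x,y)\in\mathcal{C}}D(2x+y)$, since $y$, $x+y$, and $2x+y$ are equidistributed over $w+V$, $u+w+V$, and $2u+w+V$ respectively as $(x,y)$ ranges uniformly over $\mathcal{C}$ (the last one using $p\neq 2$). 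Since the sub-cells $\mathcal{C}'\subseteq\mathcal{C}$ partition $\mathcal{C}$, the tower property $\E_{\mathcal{C}'\subseteq\mathcal{C}}\E_{(x,y)\in\mathcal{C}'}g=\E_{(x,y)\in\mathcal{C}}g$ gives $\E_{\mathcal{C}'\subseteq\mathcal{C}}\beta(\mathcal{C}')=\beta(\mathcal{C})$ and likewise for $\gamma$ and $\delta$, and Jensen finishes these three cases.

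The one genuinely subtle case is $\phi^{\leq i}$, and this is where I expect essentially all the work to lie: $\Phi^{\leq i}_{\mathcal{C}}$ really does depend on $\mathcal{C}$, since its definition involves averaging $\Phi(x,\cdot)$ over the fibre coset $w+V$, and that coset shrinks upon passing to a sub-cell. The key step I would isolate is a monotonicity claim: if $\mathcal{C}'=(u'+V')\times(w'+V')\subseteq\mathcal{C}=(u+V)\times(w+V)$ and $(x,y)\in\Phi$ with $y\in w'+V'$, then $\E_{z\in w'+V'}\Phi(x,z)\geq\E_{z\in w+V}\Phi(x,z)$. This holds because $\Phi(x,\cdot)$ is the indicator of the affine subspace $u+V_x$ (with $\codim V_x\leq d$), so both averages are powers of $p$, namely $p^{-\codim_{V'}(V_x\cap V')}$ and $p^{-\codim_V(V_x\cap V)}$; the identity $\codim_W(V_x\cap W)=\codim(V_x)-\codim(V_x+W)$, combined with $V'\leq V$ (which follows from $\mathcal{C}'\subseteq\mathcal{C}$) and hence $V_x+V'\leq V_x+V$, yields $\codim_{V'}(V_x\cap V')\leq\codim_V(V_x\cap V)$. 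Consequently $\Phi^{\leq i}_{\mathcal{C}}\cap\mathcal{C}'\subseteq\Phi^{\leq i}_{\mathcal{C}'}$, so $\phi^{\leq i}(\mathcal{C}')\geq\E_{(x,y)\in\mathcal{C}'}\Phi^{\leq i}_{\mathcal{C}}(x,y)=:q(\mathcal{C}')$; the tower property gives $\E_{\mathcal{C}'\subseteq\mathcal{C}}q(\mathcal{C}')=\E_{(x,y)\in\mathcal{C}}\Phi^{\leq i}_{\mathcal{C}}(x,y)=\phi^{\leq i}(\mathcal{C})$, and then
\[
  \sum_{\mathcal{C}'\subseteq\mathcal{C}}\phi^{\leq i}(\mathcal{C}')^2\mu_{\F_p^n\times\F_p^n}(\mathcal{C}')\geq\sum_{\mathcal{C}'\subseteq\mathcal{C}}q(\mathcal{C}')^2\mu_{\F_p^n\times\F_p^n}(\mathcal{C}')\geq\phi^{\leq i}(\mathcal{C})^2\mu_{\F_p^n\times\F_p^n}(\mathcal{C})
\]
by Jensen applied to $q$. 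Summing over $\mathcal{C}\in\mathscr{C}$ gives the fourth inequality and completes the proof; apart from the codimension bookkeeping just described, every step is the standard ``refining a partition cannot decrease the mean square'' computation.
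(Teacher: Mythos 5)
Your proof is correct and follows essentially the same route as the paper's: reduce to a single cell, establish a mean bound (equality for $\beta,\gamma,\delta$ via the tower property, and the inequality $\E_{\mathcal{C}'\subseteq\mathcal{C}}\phi^{\leq i}(\mathcal{C}')\geq\phi^{\leq i}(\mathcal{C})$ via the inclusion $\Phi^{\leq i}_{\mathcal{C}}\cap\mathcal{C}'\subseteq\Phi^{\leq i}_{\mathcal{C}'}$), and conclude by convexity, where your Jensen step and the paper's Cauchy--Schwarz step are interchangeable because all sub-cells of a given cell carry equal measure. The one point you develop more fully than the paper is the codimension bookkeeping behind $\Phi^{\leq i}_{\mathcal{C}}\cap\mathcal{C}'\subseteq\Phi^{\leq i}_{\mathcal{C}'}$, which the paper asserts without justification; your argument via $\codim_W(V_x\cap W)=\codim V_x-\codim(V_x+W)$ and $V'\leq V$ is correct.
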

  \begin{proof}
    Note that it suffices to prove the result with the sum over $\mathcal{C}\in\mathscr{C}$ restricted to a single cell $\mathcal{C}_0$ and the sum over $\mathcal{C}'\in\mathscr{C}'$ restricted to all cells contained in $\mathcal{C}_0$, since one can then just sum over $\mathcal{C}_0$ in $\mathscr{C}$ to get the desired result. So, we may assume without loss of generality that $\mathscr{C}$ is the trivial partition $\{\F_p^n\times\F_p^n\}$.

    Let $\beta,\gamma,$ and $\delta$ denote the densities of $B,C,$ and $D$, respectively, in $\F_p^n$. Since $\F_p^n\times B$ has density $\beta$ in $\F_p^n\times\F_p^n$, we have
    \[
      \beta=\sum_{\mathcal{C}'\in\mathscr{C}'}\beta(\mathcal{C}')\mu_{\F_p^n\times\F_p^n}(\mathcal{C}'),
    \]
    so that, by the Cauchy--Schwarz inequality,
    \begin{align*}
      \beta^2&\leq\left(\sum_{\mathcal{C}'\in\mathscr{C}'}\beta(\mathcal{C}')^2\right)\left(\sum_{\mathcal{C}'\in\mathscr{C}'}\mu_{\F_p^n\times\F_p^n}(\mathcal{C}')^2\right) \\
      &=\sum_{\mathcal{C}'\in\mathscr{C}'}\beta(\mathcal{C}')^2\mu_{\F_p^n\times\F_p^n}(\mathcal{C}'),
    \end{align*}
    as desired, since
    \[
      \sum_{\mathcal{C}'\in\mathscr{C}'}\mu_{\F_p^n\times\F_p^n}(\mathcal{C}')^2=\f{p^{2n}}{p^{2n-2m'}}\cdot \f{1}{p^{4m'}}=\f{1}{p^{2m'}}=\mu_{\F_p^n\times\F_p^n}(\mathcal{C}')
    \]
    for all cells $\mathcal{C}'$ of $\mathscr{C}'$. Similarly, since $\{(x,y)\in\F_p^n\times\F_p^n:x+y\in C\}$ has density $\gamma$ in $\F_p^n\times\F_p^n$ and $\{(x,y)\in\F_p^n\times\F_p^n:2x+y\in D\}$ has density $\delta$ in $\F_p^n\times\F_p^n$, we have
    \[
       \gamma=\sum_{\mathcal{C}'\in\mathscr{C}'}\gamma(\mathcal{C}')\mu_{\F_p^n\times\F_p^n}(\mathcal{C}')
    \]
    and
    \[
      \delta=\sum_{\mathcal{C}'\in\mathscr{C}'}\delta(\mathcal{C}')\mu_{\F_p^n\times\F_p^n}(\mathcal{C}'),
    \]
    it follows again from the Cauchy--Schwarz inequality that
    \[
      \sum_{\mathcal{C}'\in\mathscr{C}'}\gamma(\mathcal{C}')^2\mu_{\F_p^n\times\F_p^n}(\mathcal{C}')\geq\gamma^2
    \]
    and
    \[
      \sum_{\mathcal{C}'\in\mathscr{C}'}\delta(\mathcal{C}')^2\mu_{\F_p^n\times\F_p^n}(\mathcal{C}')\geq\delta^2.
    \]

    The argument for the $\phi^{\leq i}$'s is also essentially identical, but with one small difference. For ease of notation, set $\Phi^{\leq i}:=\Phi^{\leq i}_{\F_p^n\times\F_p^n}$ and $\phi^{\leq i}:=\phi^{\leq i}(\F_p^n\times\F_p^n)$ for all $0\leq i\leq d$. Then we actually have
    \[
      \phi^{\leq i}\leq\sum_{\mathcal{C}'\in\mathscr{C}'}\phi^{\leq i}(\mathcal{C}')\mu_{\F_p^n\times\F_p^n}(\mathcal{C}'),
    \]
    instead of equality, since $\Phi^{\leq i}\cap\mathcal{C}'\subset \Phi^{\leq i}_{\mathcal{C}'}$ (which is why we run the energy-increment argument with the $\Phi^{\leq i}$'s, instead of the $\Phi^i$'s). It therefore follows yet again from the Cauchy--Schwarz inequality that
    \[
      \sum_{\mathcal{C}'\in\mathscr{C}'}\phi^{\leq i}(\mathcal{C}')^2\mu_{\F_p^n\times\F_p^n}(\mathcal{C}')\geq(\phi^{\leq i})^2
    \]
    for all $0\leq i\leq d$.
  \end{proof}

  \begin{lemma}\label{energyinc}
    Let $m$ and $d$ be nonnegative integers, $A,B,C,D\subset\F_p^n$, $\Phi\subset\F_p^n\times\F_p^n$ be of the form
    \[
      \Phi=\{(x,y)\in A\times\F_p^n:y\in u+V_x\},
    \]
    where each $V_x$ is a subspace of $\F_p^n$ of codimension between $0$ and $d$,  and $\mathscr{C}$ be a partition of $\F_p^n\times\F_p^n$ with each cell $\mathcal{C}$ taking the form
  \[
      \mathcal{C}=(u_{\mathcal{C}}+V_{\mathcal{C}})\times(w_{\mathcal{C}}+V_{\mathcal{C}})
    \]
    for some subspace $V_{\mathcal{C}}\leq\F_p^n$ of dimension $m$. There exists a positive integer
    \[
      m'\gg m^{1/(8!)^2}
    \]
    and positive integers $c,c'_p>0$, such that the following holds.

    Let $\mathcal{C}\in\mathscr{C}$.
    \begin{enumerate}
    \item If $\|B_{\mathcal{C}}-\beta(\mathcal{C})\|_{U^{10}(w_\mathcal{C}+V_{\mathcal{C}})}\geq\ve$, then there exists a partition $\mathscr{C}'_{\mathcal{C}}$ of $\mathcal{C}$ with each cell $\mathcal{C}'$ taking the form
      \[
        \mathcal{C}'=(u_{\mathcal{C}'}'+V_{\mathcal{C}'}')\times(w_{\mathcal{C}'}'+V_{\mathcal{C}'}'),
      \]
      with each $\dim V_{C'}'=m'$, such that
      \[
        \sum_{\mathcal{C}'\in\mathscr{C}'_{\mathcal{C}}}\beta(\mathcal{C}')^2\mu_{\mathcal{C}}(\mathcal{C}')\geq\beta(\mathcal{C})^2+\Omega\left(\f{1}{\exp^c(c'_p/\ve)^2}\right).
      \]
    \item If $\|C_{\mathcal{C}}-\gamma(\mathcal{C})\|_{U^{10}(u_{\mathcal{C}}+w_\mathcal{C}+V_{\mathcal{C}})}\geq\ve$, then there exists a partition $\mathscr{C}'_{\mathcal{C}}$ of $\mathcal{C}$ with each cell $\mathcal{C}'$ taking the form
      \[
        \mathcal{C}'=(u_{\mathcal{C}'}'+V_{\mathcal{C}'}')\times(w_{\mathcal{C}'}'+V_{\mathcal{C}'}'),
      \]
      with each $\dim V_{C'}'=m'$, such that
      \[
        \sum_{\mathcal{C}'\in\mathscr{C}'_{\mathcal{C}}}\gamma(\mathcal{C}')^2\mu_{\mathcal{C}}(\mathcal{C}')\geq\gamma(\mathcal{C})^2+\Omega\left(\f{1}{\exp^c(c'_p/\ve)^2}\right).
      \]
      \item If $\|D_{\mathcal{C}}-\delta(\mathcal{C})\|_{U^{10}(2u_{\mathcal{C}}+w_\mathcal{C}+V_{\mathcal{C}})}\geq\ve$, then there exists a partition $\mathscr{C}'_{\mathcal{C}}$ of $\mathcal{C}$ with each cell $\mathcal{C}'$ taking the form
      \[
        \mathcal{C}'=(u_{\mathcal{C}'}'+V_{\mathcal{C}'}')\times(w_{\mathcal{C}'}'+V_{\mathcal{C}'}'),
      \]
      with each $\dim V_{C'}'=m'$, such that
      \[
        \sum_{\mathcal{C}'\in\mathscr{C}'_{\mathcal{C}}}\delta(\mathcal{C}')^2\mu_{\mathcal{C}}(\mathcal{C}')\geq\delta(\mathcal{C})^2+\Omega\left(\f{1}{\exp^c(c'_p/\ve)^2}\right).
      \]
      \item Let $0\leq i\leq d$. If $\|\Phi^{\leq i}_{\mathcal{C}}-\phi^{\leq i}(\mathcal{C})\|_{U^{8}(\mathcal{C})}\geq\ve$, then there exists a partition $\mathscr{C}'_{\mathcal{C}}$ of $\mathcal{C}$ with each cell $\mathcal{C}'$ taking the form
      \[
        \mathcal{C}'=(u_{\mathcal{C}'}'+V_{\mathcal{C}'}')\times(w_{\mathcal{C}'}'+V_{\mathcal{C}'}'),
      \]
      with each $\dim V_{C'}'=m'$, such that
      \[
        \sum_{\mathcal{C}'\in\mathscr{C}'_{\mathcal{C}}}\phi^{\leq i}(\mathcal{C}')^2\mu_{\mathcal{C}}(\mathcal{C}')\geq\phi^{\leq i}(\mathcal{C})^2+\Omega\left(\f{1}{\exp^c(c'_p/\ve)^2}\right).
      \]
    \end{enumerate}
  \end{lemma}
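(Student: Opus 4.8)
The four parts are all instances of a single energy-increment step, so the plan is to carry out the $B$-part in full and then indicate the (minor) changes for $C$, $D$, and $\Phi^{\leq i}$. For the $B$-part I would first translate by $w_{\mathcal{C}}$ and fix a linear identification $V_{\mathcal{C}}\cong\F_p^m$, so that $B_{\mathcal{C}}-\beta(\mathcal{C})$ becomes a $1$-bounded, mean-zero function on $\F_p^m$ of $U^{10}$-norm at least $\ve$. Since $p\geq 11\geq 10$, Theorem~\ref{GM} produces a polynomial $P\in\F_p[x_1,\dots,x_m]$ of degree at most $9$ with
\[
  \eta:=\left|\E_{x\in w_{\mathcal{C}}+V_{\mathcal{C}}}(B_{\mathcal{C}}(x)-\beta(\mathcal{C}))e_p(P(x))\right|\gg\f{1}{\exp^{c_{10}}(c_{10,p}'/\ve)}.
\]
I would then feed this single polynomial into Theorem~\ref{CT}, obtaining a partition of $w_{\mathcal{C}}+V_{\mathcal{C}}$ into affine subspaces $b_k+W_k$ (with $W_k\leq V_{\mathcal{C}}$), all of a common dimension $\gg m^{1/(8!)^2}$, on which $P$ is constant, and I would build $\mathscr{C}'_{\mathcal{C}}$ by pairing each $b_k+W_k$ in the second coordinate with an arbitrary partition of $u_{\mathcal{C}}+V_{\mathcal{C}}$ into cosets of the \emph{same} subspace $W_k$, so that every cell $(a+W_k)\times(b_k+W_k)$ has the required product shape. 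Because $\beta$ of such a cell depends only on $b_k+W_k$, the quantity $\sum_{\mathcal{C}'}\beta(\mathcal{C}')^2\mu_{\mathcal{C}}(\mathcal{C}')$ collapses to $\E_k\beta(b_k+W_k)^2$; since $e_p(P)$ is constant on each $b_k+W_k$ and $B_{\mathcal{C}}-\beta(\mathcal{C})$ has mean zero, expanding $\eta$ over the $b_k+W_k$ and applying the Cauchy--Schwarz inequality shows that $\E_k\beta(b_k+W_k)^2\geq\beta(\mathcal{C})^2+\eta^2$. Taking $c\geq c_{10}$ and $c'_p\geq c_{10,p}'$ then gives the claimed increment.

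The $C$- and $D$-parts will be identical once Theorem~\ref{GM} and Theorem~\ref{CT} are applied instead on the $m$-dimensional affine subspace $u_{\mathcal{C}}+w_{\mathcal{C}}+V_{\mathcal{C}}$, respectively $2u_{\mathcal{C}}+w_{\mathcal{C}}+V_{\mathcal{C}}$: after refining $\mathcal{C}$ into product cells $(a+W)\times(b+W)$ with $W\leq V_{\mathcal{C}}$ chosen so that the polynomial coming from the inverse theorem is constant on the relevant cosets, the density $\gamma$, respectively $\delta$, of such a cell depends only on the coset $a+b+W$, respectively $2a+b+W$, and as $a+W$ and $b+W$ range independently this coset runs over every coset of $W$ in $u_{\mathcal{C}}+w_{\mathcal{C}}+V_{\mathcal{C}}$, respectively $2u_{\mathcal{C}}+w_{\mathcal{C}}+V_{\mathcal{C}}$, the same number of times (here one only uses $\lambda W=W$ for $\lambda\in\F_p^\times$). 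The energy computation is then word for word the one for $B$.

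The $\Phi^{\leq i}$-part is where the only genuinely new point appears. Now $\Phi^{\leq i}_{\mathcal{C}}-\phi^{\leq i}(\mathcal{C})$ is a $1$-bounded, mean-zero function on $\mathcal{C}\cong\F_p^m\times\F_p^m$ of $U^{8}$-norm at least $\ve$, so, using $p\geq 8$, Theorem~\ref{GM} produces a polynomial $R\in\F_p[x_1,\dots,x_m,y_1,\dots,y_m]$ of degree at most $7$ correlating with it at level $\eta\gg 1/\exp^{c_8}(c_{8,p}'/\ve)$, and Corollary~\ref{bilCT} then partitions $\mathcal{C}$ into product cells $(u'+V')\times(w'+V')$ of a common dimension $\gg m^{1/(8!)^2}$ on which $R$ is constant. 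The same Cauchy--Schwarz step as for $B$ gives
\[
  \sum_{\mathcal{C}'}\left(\E_{(x,y)\in\mathcal{C}'}\Phi^{\leq i}_{\mathcal{C}}(x,y)\right)^2\mu_{\mathcal{C}}(\mathcal{C}')\geq\phi^{\leq i}(\mathcal{C})^2+\eta^2,
\]
so the remaining task is to replace $\E_{(x,y)\in\mathcal{C}'}\Phi^{\leq i}_{\mathcal{C}}(x,y)$ on the left by $\phi^{\leq i}(\mathcal{C}')$. For this I would check the containment $\Phi^{\leq i}_{\mathcal{C}}\cap\mathcal{C}'\subseteq\Phi^{\leq i}_{\mathcal{C}'}$: if $(x,y)\in\Phi\cap\mathcal{C}'$ and $\E_{z\in w_{\mathcal{C}}+V_{\mathcal{C}}}\Phi(x,z)\geq p^{-i}$, then the fibre $u+V_x$ meets $w_{\mathcal{C}}+V_{\mathcal{C}}$ in an affine subspace of codimension at most $i$ there, so $\dim(V_x\cap V_{\mathcal{C}})\geq\dim V_{\mathcal{C}}-i$; intersecting with $V'\leq V_{\mathcal{C}}$ gives $\dim(V_x\cap V')\geq\dim V'-i$, hence $\E_{z\in w'+V'}\Phi(x,z)\geq p^{-i}$ (the fibre meets $w'+V'$ because it contains $y$), so $(x,y)\in\Phi^{\leq i}_{\mathcal{C}'}$. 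Since all the relevant quantities are nonnegative, this yields $\E_{(x,y)\in\mathcal{C}'}\Phi^{\leq i}_{\mathcal{C}}(x,y)\leq\phi^{\leq i}(\mathcal{C}')$, and the increment inequality passes to $\sum_{\mathcal{C}'}\phi^{\leq i}(\mathcal{C}')^2\mu_{\mathcal{C}}(\mathcal{C}')$. I expect this monotonicity to be the main obstacle: it is exactly what fails for the sets $\Phi^{i}_{\mathcal{C}}$, and it is the reason the energy $E(\mathscr{C})$ is assembled from the $\Phi^{\leq i}_{\mathcal{C}}$'s rather than the $\Phi^{i}_{\mathcal{C}}$'s; everything else is routine bookkeeping. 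Finally, each of the four dimension bounds is $\gg m^{1/(8!)^2}$ — coarsen the partitions if needed so that all new cells share a single dimension $m'$ — and taking $c=\max(c_{10},c_8)$ and $c'_p=\max(c_{10,p}',c_{8,p}')$ makes every increment at least $\Omega(1/\exp^c(c'_p/\ve)^2)$.
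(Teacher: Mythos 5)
Your overall approach matches the paper's: apply the Gowers--Mili\'cevi\'c inverse theorem to produce a low-degree polynomial, use Cohen--Tal (respectively its bilinear form, Corollary~\ref{bilCT}) to partition into affine subspaces where the polynomial is constant, run Cauchy--Schwarz on the mean-zero correlation, and then lift to a partition of the whole cell $\mathcal{C}$. Your verification of the containment $\Phi^{\leq i}_{\mathcal{C}}\cap\mathcal{C}'\subseteq\Phi^{\leq i}_{\mathcal{C}'}$ in part~(4) is correct and even a bit more explicit than the paper, which merely asserts it in the neighbouring Lemma~\ref{msdgeq}.

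However, your description of the product partition in parts~(2) and~(3) is imprecise in a way that needs fixing. Theorem~\ref{CT} produces a partition of $u_{\mathcal{C}}+w_{\mathcal{C}}+V_{\mathcal{C}}$ into affine subspaces $v_i+V_i$ all of dimension $m'$, but the underlying subspaces $V_i$ need \emph{not} be parallel, so there is no single $W\leq V_{\mathcal{C}}$ for which ``$a+W$ and $b+W$ range independently'' over the two factors and cover every sum-coset the same number of times. If instead you let $W$ vary with the Cohen--Tal part (taking $W=V_i$), then ranging $a+V_i$ and $b+V_i$ \emph{independently} causes $a+b+V_i$ to run over \emph{all} cosets of $V_i$, not just the chosen one $v_i+V_i$ on which $P$ is constant — so the pairing must be constrained, not free. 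The paper handles this by the explicit parametrization
\[
\mathcal{C}=\coprod_{i\in I}\coprod_{v'+V_i\in V_{\mathcal{C}}/V_i}(v_i-w_{\mathcal{C}}+v'+V_i)\times(w_{\mathcal{C}}-v'+V_i),
\]
which forces the sum-coset of each cell to equal $v_i+V_i$, so that $\gamma(\mathcal{C}')=\gamma(v_i+V_i)$ depends only on $i$ and the weighted sum collapses to $\mathbb{E}_{i\in I}\gamma(v_i+V_i)^2$. (The analogous pairing with $2u_{\mathcal{C}}+2v'$ handles $D$.) Your reasoning for part~(1) avoids this because $\beta(\mathcal{C}')$ depends only on the second factor, so an arbitrary partition of the first factor into cosets of $W_k$ works; for parts~(2) and~(3) the dependence is on the sum, and the paired parametrization above is exactly what replaces your free-ranging step.
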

  \begin{proof}
    Let $c$ and $c'_p$ denote  the constants $c_{10}$ and $c_{10,p}'$, respectively, from Theorem~\ref{GM}, and $m'$ denote the smaller of the two minimum values of $m'$ appearing in Theorem~\ref{CT} when we take $m$ as in this lemma, $d=9$, and $t=1$ and $m'$ appearing in Corollary~\ref{bilCT} when we take $m$ as in this lemma and $d=7$.

    First assume that $\|B_{\mathcal{C}}-\beta(\mathcal{C})\|_{U^{10}(w_{\mathcal{C}}+V_{\mathcal{C}})}\geq\ve$. Then applying Theorem~\ref{GM} with $s=10$ yields a polynomial $P\in\F_p[x_1,\dots,x_m]$ of degree at most $9$ such that
    \[
      \left|\E_{x\in w_{\mathcal{C}}+V_{\mathcal{C}}}(B_{\mathcal{C}}-\beta(\mathcal{C}))(x)e_p(P(x))\right|\gg\f{1}{\exp^{c}(c'_p/\ve)}.
    \]
    By Theorem~\ref{CT}, there exists a partition $(w_i+V_i)_{i\in I}$ of $w_{\mathcal{C}}+V_{\mathcal{C}}$ into affine subspaces of $w_{\mathcal{C}}+V_{\mathcal{C}}$ of dimension $m'$, on each of which $P$ is constant. Thus, by the triangle inequality,
    \[
      \E_{i\in I}\left|\E_{x\in w_{i}+V_{i}}(B_{\mathcal{C}}-\beta(\mathcal{C}))(x)\right|\gg\f{1}{\exp^{c}(c'_p/\ve)},
    \]
    so that, by the Cauchy--Schwarz inequality,
    \[
      \E_{i\in I}\left|\E_{x\in w_{i}+V_{i}}(B_{\mathcal{C}}-\beta(\mathcal{C}))(x)\right|^2\gg\f{1}{\exp^{c}(c'_p/\ve)^2}.
    \]
    Expanding the square, this means that
   \begin{equation}\label{binc}
      \E_{i\in I}\left|\E_{x\in w_{i}+V_{i}}B_{\mathcal{C}}(x)\right|^2\geq\beta(\mathcal{C})^2+\Omega\left(\f{1}{\exp^{c}(c'_p/\ve)^2}\right).
    \end{equation}
    
    Now we partition the whole cell of interest $\mathcal{C}$ by writing
    \[
      \mathcal{C}=(u_{\mathcal{C}}+V_{\mathcal{C}})\times\coprod_{i\in I}(w_i+V_i)=\coprod_{i\in I}(u_{\mathcal{C}}+V_{\mathcal{C}})\times(w_i+V_i),
    \]
    and, for each $i\in I$, use that $V_i\leq V_{\mathcal{C}}$ to partition $u_{\mathcal{C}}+V_{\mathcal{C}}$ into cosets of $V_i$ to get
    \[
      \mathcal{C}=\coprod_{i\in I}\coprod_{u'+V_i\in V_{\mathcal{C}}/V_i}(u_{\mathcal{C}}+u'+V_i)\times(w_i+V_i)=:\coprod_{\mathcal{C}'\in\mathscr{C}_{\mathcal{C}}}\mathcal{C}'.
    \]
    Since $\mu_{\mathcal{C}}(\mathcal{C}')=|\mathscr{C}'|\1$ for each $\mathcal{C}'\in\mathscr{C}_{\mathcal{C}}'$,~\eqref{binc} reads
    \[
      \sum_{\mathcal{C}'\in\mathscr{C}_{\mathcal{C}}'}\beta(\mathcal{C}')^2\mu_{\mathcal{C}}(\mathcal{C}')\geq\beta(\mathcal{C})^2+\Omega\left(\f{1}{\exp^{c}(c'_p/\ve)^2}\right).
    \]

    The arguments for $C_{\mathcal{C}}$ and $D_{\mathcal{C}}$ are again analogous, but we include them for the sake of completeness. Next, assume that $\|C_{\mathcal{C}}-\gamma(\mathcal{C})\|_{U^{10}(u_{\mathcal{C}}+w_{\mathcal{C}}+V_{\mathcal{C}})}\geq\ve$. Applying Theorem~\ref{GM} yields a polynomial $P\in\F_p[x_1,\dots,x_m]$ of degree at most $9$ such that
    \[
      \left|\E_{x\in u_{\mathcal{C}}+w_{\mathcal{C}}+V_{\mathcal{C}}}(C_{\mathcal{C}}-\gamma(\mathcal{C}))(x)e_p(P(x))\right|\gg\f{1}{\exp^{c}(c'_p/\ve)}.
    \]
    By Theorem~\ref{CT}, there exists a partition $(v_i+V_i)_{i\in I}$ of $u_{\mathcal{C}}+w_{\mathcal{C}}+V_{\mathcal{C}}$ into affine subspaces of $u_{\mathcal{C}}+w_{\mathcal{C}}+V_{\mathcal{C}}$ of dimension $m'$ on each of which $P$ is constant. Thus, by the Cauchy--Schwarz inequality again,
    \[
      \E_{i\in I}\left|\E_{x\in v_{i}+V_{i}}C_{\mathcal{C}}(x)\right|^2\geq\gamma(\mathcal{C})^2+\Omega\left(\f{1}{\exp^{c}(c'_p/\ve)^2}\right).
    \]
    Now we partition the whole cell $\mathcal{C}$ by writing
    \[
      \mathcal{C}=\coprod_{i\in I}\coprod_{v'+V_i\in V_{\mathcal{C}}/V_i}(v_i-w_{\mathcal{C}}+v'+V_i)\times(w_{\mathcal{C}}-v'+V_i)=:\coprod_{\mathcal{C}'\in\mathscr{C}_{\mathcal{C}}}\mathcal{C}'.
    \]
    Since $(v_i-w_{\mathcal{C}}+v'+V_i)+(w_{\mathcal{C}}-v'+V_i)=v_i+V_i$, we conclude that
    \[
      \sum_{\mathcal{C}'\in\mathscr{C}_{\mathcal{C}}'}\gamma(\mathcal{C}')^2\mu_{\mathcal{C}}(\mathcal{C}')\geq\gamma(\mathcal{C})^2+\Omega\left(\f{1}{\exp^{c}(c'_p/\ve)^2}\right).
    \]

   Now assume that $\|D_{\mathcal{C}}-\delta(\mathcal{C})\|_{U^{10}(2u_{\mathcal{C}}+w_{\mathcal{C}}+V_{\mathcal{C}})}\geq\ve$. Applying Theorem~\ref{GM} yields a polynomial $P\in\F_p[x_1,\dots,x_m]$ of degree at most $9$ such that
    \[
      \left|\E_{x\in 2u_{\mathcal{C}}+w_{\mathcal{C}}+V_{\mathcal{C}}}(D_{\mathcal{C}}-\delta(\mathcal{C}))(x)e_p(P(x))\right|\gg\f{1}{\exp^{c}(c'_p/\ve)}.
    \]
    By Theorem~\ref{CT}, there exists a partition $(v_i+V_i)_{i\in I}$ of $2u_{\mathcal{C}}+w_{\mathcal{C}}+V_{\mathcal{C}}$ into affine subspaces of $2u_{\mathcal{C}}+w_{\mathcal{C}}+V_{\mathcal{C}}$ of dimension $m'$ on each of which $P$ is constant. Thus, by the Cauchy--Schwarz inequality yet again,
    \[
      \E_{i\in I}\left|\E_{x\in v_{i}+V_{i}}D_{\mathcal{C}}(x)\right|^2\geq\delta(\mathcal{C})^2+\Omega\left(\f{1}{\exp^{c}(c'_p/\ve)^2}\right).
    \]
    Now we partition the whole cell $\mathcal{C}$ by writing
    \[
      \mathcal{C}=\coprod_{i\in I}\coprod_{v'+V_i\in V_{\mathcal{C}}/V_i}(u_{\mathcal{C}}+v'+V_i)\times(v_i-2u_{\mathcal{C}}-2v'+V_i)=:\coprod_{\mathcal{C}'\in\mathscr{C}_{\mathcal{C}}}\mathcal{C}'.
    \]
    Since $(2u_{\mathcal{C}}+2v'+V_i)+(v_i-2u_{\mathcal{C}}-2v'+V_i)=v_i+V_i$, we conclude that
    \[
      \sum_{\mathcal{C}'\in\mathscr{C}_{\mathcal{C}}'}\delta(\mathcal{C}')^2\mu_{\mathcal{C}}(\mathcal{C}')\geq\delta(\mathcal{C})^2+\Omega\left(\f{1}{\exp^{c}(c'_p/\ve)^2}\right).
    \]

    Finally, suppose that $\|\Phi^{\leq i}_{\mathcal{C}}-\phi^{\leq i}(\mathcal{C})\|_{U^{8}(\mathcal{C})}\geq\ve$ for some $0\leq i\leq d$. Theorem~\ref{GM} then says that there exists a polynomial $R\in\F_p[x_1,\dots,x_m,y_1,\dots,y_m]$ of degree at most $7$ such that
    \[
      \left|\E_{(x,y)\in\mathcal{C}}(\Phi^{\leq i}_{\mathcal{C}}-\phi^{\leq i}(\mathcal{C}))(x,y)e_p(R(x,y))\right|\gg\f{1}{\exp^{c}(c'_p/\ve)}.
    \]
    By Corollary~\ref{bilCT}, there exists a partition $\mathscr{C}'_{\mathcal{C}}$ of $\mathcal{C}$ into affine subspaces of the form $(u+V)\times(w+V)$ with $\dim V=m'$, on each of which $R$ is constant. Thus, by the Cauchy--Schwarz inequality, we have
    \[
      \E_{\mathcal{C}'\in \mathscr{C}'_{\mathcal{C}}}\left|\E_{(x,y)\in\mathcal{C}'}\Phi^{\leq i}_{\mathcal{C}}(x,y)\right|^2\geq\phi^{\leq i}(\mathcal{C})^2+\Omega\left(\f{1}{\exp^{c}(c'_p/\ve)^2}\right).
    \]
    Since
    \[
      \E_{(x,y)\in\mathcal{C}'}\Phi^{\leq i}_{\mathcal{C}}(x,y)\leq
      \E_{(x,y)\in\mathcal{C}'}\Phi^{\leq i}_{\mathcal{C}'}(x,y)=\phi^{\leq i}(\mathcal{C}'),
    \]
    the conclusion
    \[
       \sum_{\mathcal{C}'\in\mathscr{C}'_{\mathcal{C}}}\phi^{\leq i}(\mathcal{C}')^2\mu_{\mathcal{C}}(\mathcal{C}')\geq\phi^{\leq i}(\mathcal{C})^2+\Omega\left(\f{1}{\exp^c(c'_p/\ve)^2}\right)
     \]
     now follows.
   \end{proof}
   Now we can prove Lemma~\ref{pseudoprop}.
   \begin{proof}[Proof of Lemma 2.6]
     We proceed via an energy-increment argument, as described at the beginning of the subsection. A cell $\mathcal{C}=(u+V)\times(w+V)$ in a partition $\mathscr{C}_j$ is said to be \textit{expired} if $\beta(\mathcal{C}),\gamma(\mathcal{C}),\delta(\mathcal{C}),$ or $\phi^{\leq d}(\mathcal{C})$ is less than $\tau\mu_{\F_p^n\times\F_p^n}(T)/4$, and a nonexpired cell $\mathcal{C}$ is said to be \textit{uniform} if
     \[
       \|B_{\mathcal{C}}-\beta(\mathcal{C})\|_{U^{10}(w+V)},\|C_{\mathcal{C}}-\gamma(\mathcal{C})\|_{U^{10}(u+w+V)},\|D_{\mathcal{C}}-\delta(\mathcal{C})\|_{U^{10}(2u+w+V)}<\ve
     \]
     and
     \[
       \|\Phi^{\leq i}_{\mathcal{C}}-\phi^{\leq i}\|_{U^{8}(\mathcal{C})}<\ve
     \]
     for all $0\leq i\leq d$. We will denote the subset of expired cells of $\mathscr{C}_j$ by $\mathscr{E}_j$, the subset of uniform cells by $\mathscr{U}_j$, and the subset of nonexpired, nonuniform cells by $\mathscr{N}_j$, so that $\mathscr{E}_j,\mathscr{U}_j$, and $\mathscr{N}_j$ partition $\mathscr{C}_j$. For any subset $K\subset I_j$, we define $\eta(K)$ to be the measure of all cells indexed by $K$:
     \[
       \eta(K):=\mu_{\F_p^n\times\F_p^n}\left(\coprod_{k\in K}\mathcal{C}_{k,j}\right).
     \]
     Finally, we define a sequence of integers $(m_j)_{j=0}^{\infty}$ by setting $m_0=n$
     and, for every $j>0$, $m_{j}$ to be the minimum of the value of $m'$ appearing in
     Theorem~\ref{CT} when we take $m=m_{j-1}$, $d=9$, and $t=1$ and of $m'$ appearing in Corollary~\ref{bilCT} when we take $m=m_{j-1}$ and $d=7$, so that
     \[
       m_j\geq c_1 n^{c_2^j}
     \]
     for some absolute constants $0<c_1,c_2<1$.

     Set $\mathscr{C}_0$ to be the trivial partition $\{\F_p^n\times\F_p^n\}$ of $\F_p^n\times\F_p^n$. Letting $c=c_{10}$ and $c'=c'_{10,p}$ be as in Lemma~\ref{energyinc}, then, as long as $\eta(\mathscr{N}_j)\geq\tau\mu_{\F_p^n\times\F_p^n}(T)/2$ and $m_{j+1}\geq 1$, there exists a refinement $\mathscr{C}_{j+1}$ of $\mathscr{C}_j$ such that
     \begin{enumerate}
     \item $\dim{V_{i,j+1}}\geq m_{j+1}$ for every $i\in I_{j+1}$ and $\dim{V_{i,j+1}}=m_{j+1}$ whenever $\mathcal{C}_{i,j+1}\in \mathscr{N}_{j+1}$ and
     \item $E(\mathscr{C}_{j+1})\geq E(\mathscr{C}_j)+\Omega\left(\f{\tau\mu_{\F_p^n\times\F_p^n}(T)}{d\exp^c(c'_p/\ve)^2}\right)$.
     \end{enumerate}
     Indeed, suppose that $\eta(\mathscr{N}_j)\geq \tau\mu_{\F_p^n\times\F_p^n}(T)/2$. Each cell $\mathcal{C}$ in $\mathscr{N}_j$ must be of dimension $m_j\times m_j$. By Lemmas~\ref{msdgeq} and~\ref{energyinc}, there exists a partition $\mathscr{C}_{k,j+1}=(\mathcal{C}_{k,j+1})_{k\in K_{\mathcal{C}}}$ of each $\mathcal{C}\in \mathscr{N}_j$ such that
     \[
       \f{1}{4+d}\sum_{k\in K_{\mathcal{C}}}\left(\beta(\mathcal{C}_{k,j+1})^2+\gamma(\mathcal{C}_{k,j+1})^2+\delta(\mathcal{C}_{k,j+1})^2+\sum_{i=0}^d\phi^{\leq i}(\mathcal{C}_{k,j+1})^2\right)\mu_{\mathcal{C}}(\mathcal{C}_{k,j+1})
     \]
     is at least
     \[
       \f{\beta(\mathcal{C})^2+\gamma(\mathcal{C})^2+\delta(\mathcal{C})^2+\sum_{i=0}^d\phi^{\leq i}(\mathcal{C})^2}{4+d}+\Omega\left(\f{1}{d\exp^c(c'/\ve)^2}\right)
     \]
     and each $\mathcal{C}_{k,j+1}$ is of the form $(u'+V')\times(w'+V')$ with
     $\dim{V'}=m_{j+1}\geq 1$. Taking
     \[
     \mathscr{C}_{j+1}:=\left\{\mathcal{C}_{k,j+1}:k\in K_{\mathcal{C}},\mathcal{C}\in\mathscr{N}_j\right\}\cup\mathscr{E}_j\cup\mathscr{U}_{j},
   \]
   we see that multiplying both sides of the above by
   $\mu_{\F_p^n\times\F_p^n}(\mathcal{C})$ and summing over $\mathcal{C}\in\mathscr{N}_j$
   yields
     \[
       E(\mathscr{C}_{j+1})\geq E(\mathscr{C}_j)+\Omega\left(\f{\tau\mu_{\F_p^n\times\F_p^n}(T)}{d\exp^c(c'/\ve)^2}\right).
     \]
     Since $E(\mathscr{C})\leq 1$ for all partitions $\mathscr{C}$, this iteration must
     terminate for some $j=j_0\ll\f{d\exp^c(c'/\ve)^2}{\tau\mu_{\F_p^n\times\F_p^n}(T)}$,
     at which point either $\eta(\mathscr{N}_{j_0})<\tau\mu_{\F_p^n\times\F_p^n}(T)/2$ or
     $m_{j_0+1}<1$. Assuming that $n\geq c_1^{-c_2^{-(j_0+1)}}$ ensures that the latter
     case cannot occur.

     Since $\eta(\mathscr{E}_j)<\tau\mu_{\F_p^n\times\F_p^n}(T)/4$, we have
     \[
       \mu_{\F_p^n\times\F_p^n}\left(S\cap \bigcup_{\mathcal{C}\in\mathscr{U}_{j_0}}\mathcal{C}\right)\geq \left(\sigma+\f{\tau}{4}\right)\mu_{\F_p^n\times\F_p^n}(T).
     \]
     This certainly implies that
     \[
       \sum_{\mathcal{C}\in\mathscr{U}_{j_0}}\mu_{\F_p^n\times\F_p^n}(S\cap\mathcal{C})\geq \left(\sigma+\f{\tau}{4}\right)\sum_{\mathcal{C}\in\mathscr{U}_{j_0}}\mu_{\F_p^n\times\F_p^n}(T\cap\mathcal{C}),
     \]
     so that, by the pigeonhole principle, there exists a cell $\mathcal{C}_0$ in $\mathscr{U}_{j_0}$ for which
     \[
       \mu_{\F_p^n\times\F_p^n}(S\cap\mathcal{C}_0)\geq \left(\sigma+\f{\tau}{4}\right)\mu_{\F_p^n\times\F_p^n}(T\cap\mathcal{C}_0).
     \]
     Since $\Phi_{\mathcal{C}_0}=\coprod_{i=0}^d\Phi^{i}_{\mathcal{C}_0}$, another application of the pigeonhole principle tells us that there exists a $0\leq i\leq d$ for which we also have the density-increment
     \[
       \mu_{\F_p^n\times\F_p^n}(S\cap\mathcal{C}_0\cap\Phi_{\mathcal{C}_0}^i)\geq \left(\sigma+\f{\tau}{4}\right)\mu_{\F_p^n\times\F_p^n}(T\cap\mathcal{C}_0\cap\Phi_{\mathcal{C}_0}^i).
     \]
     As noted at the beginning of this subsection,
     \[
       \|\Phi^{i}_{\mathcal{C}_0}-\phi^i(\mathcal{C}_0)\|_{U^8(\mathcal{C}_0)}=\|\Phi^{\leq i}_{\mathcal{C}_0}-\phi^{\leq i}(\mathcal{C}_0)+\phi^{\leq i-1}(\mathcal{C}_0)-\Phi^{\leq i-1}_{\mathcal{C}_0}\|_{U^8(\mathcal{C}_0)}<2\ve,
     \]
     so that the conclusion of the lemma now follows.
   \end{proof}
\section{The density-increment argument}\label{dinc}

Now we can finally prove Theorem~\ref{main} by iterating Lemma~\ref{densityinc}.

\begin{proof}[Proof of Theorem~\ref{main}]
  Suppose that $S\subset\F_p^n\times\F_p^n$ has density $\sigma$ and contains no
  nontrivial L-shaped configurations. Set $S_0:=S$, $n_0:=n$, $d_0=0$, $\ve_0:=1$,
  $A_0=B_0=C_0=D_0=\F_p^n$, and $\Phi_0:=\F_p^n\times\F_p^n$. Applying
  Lemma~\ref{densityinc} repeatedly produces sequences of $S_i$'s, $n_i$'s, $d_i$'s,
  $\ve_i$'s, $A_i$'s, $B_i$'s, $C_i$'s, $D_i$'s, and $\Phi_i$'s, with
  $A_i,B_i,C_i,D_i\subset\F_p^{n_i}$ and $\Phi_i\subset A_i\times\F_p^{n_i}$ of the form
  \[
    \Phi_i=\left\{(x,y)\in A_i\times\F_p^{n_i}:y\in u+V_x\right\},
  \]
  where each $V_x\leq \F_p^{n_i}$ is a subspace of codimension $d_i$, such that, on setting
  \[
    T_i:=\left\{(x,y)\in\F_p^{n_i}\times\F_p^{n_i}:B_i(y)C_i(x+y)D_i(2x+y)\Phi_i(x,y)=1\right\},
  \]
  $\alpha_i:=\mu_{\F_p^{n_i}}(A_i)$, $\beta_i:=\mu_{\F_p^{n_i}}(B_i)$, $\gamma_i:=\mu_{\F_p^{n_i}}(C_i)$, $\delta_i:=\mu_{\F_p^{n_i}}(D_i)$, and $\rho_i:=\mu_{\F_p^{n_i}\times\F_p^{n_i}}(\Phi_i)/\alpha_i=p^{-d_i}$, we have, for each $i\geq 1$, that
  \begin{enumerate}
  \item $S_i\subset T_i$ has density $\sigma_i$ in $T_i$, where $\sigma_i\geq\sigma_{i-1}+\Omega\left(\sigma^{O(1)}\right)$,
  \item $n_i\gg n_{i-1}^{c_1^{O\left(\exp^c(c'/\ve_i)/(\sigma\alpha_{i-1}\beta_{i-1}\gamma_{i-1}\delta_{i-1}\rho_{i-1})^{O(1)}\right)}}$,
  \item $\ve_i\leq (\sigma\alpha_{i}\beta_{i}\gamma_{i}\delta_{i}\rho_{i})^{O(1)}\exp(-(64/\sigma)^{O(1)})$,
  \item $\alpha_i,\beta_i,\gamma_i,\delta_i\gg(\sigma\alpha_{i-1}\beta_{i-1}\gamma_{i-1}\delta_{i-1}\rho_{i-1})^{O(1)}$,
  \item $d_i\leq d_{i-1}+1$,
  \item $\|A_i-\alpha_i\|_{U^{10}(\F_p^{n_i})},\|B_i-\beta_i\|_{U^{10}(\F_p^{n_i})},\|C_i-\gamma_i\|_{U^{10}(\F_p^{n_i})},\|D_i-\delta_i\|_{U^{10}(\F_p^{n_i})},\|\Phi_i-\alpha_i\rho_i\|_{U^8(\F_p^{n_i}\times\F_p^{n_i})}<\ve_i$,
  \item and $S_i$ contains no nontrivial L-shaped configurations,
  \end{enumerate}
  provided that
  \begin{equation}\label{lowern}
    n_{i-1}\geq \exp^2\left(O\left(\f{\exp^c(c'/\ve_i)}{(\sigma\alpha_{i-1}\beta_{i-1}\gamma_{i-1}\delta_{i-1}\rho_{i-1})^{c_4}}\right)\right).
  \end{equation}
  Since no set can have density larger than $1$, the lower bound~\eqref{lowern} must fail for some $i=i_0+1\ll\sigma^{-O(1)}$. Thus, there exists an absolute constant $c''>1$ such that
  \[
    n_{i_0}\ll\exp^{c''}(O(1/\sigma^{O(1)}))
  \]
  while, on the other hand
  \[
    n_{i_0}\gg n^{c_1^{O\left(\exp^{c''}\left(O\left(1/\sigma^{O(1)}\right)\right)\right)}}.
  \]
  Comparing the upper and lower bounds for $n_{i_0}$ and taking the $c''$-fold iterated logarithm of both sides yields the bound in Theorem~\ref{main}.
\end{proof}

\bibliographystyle{plain}
\bibliography{bib}

\begin{thebibliography}{10}

\bibitem{Austin13I}
T.~Austin.
\newblock Partial difference equations over compact {A}belian groups, {I}:
  modules of solutions.
\newblock 2013.
\newblock arXiv:1305.7269.

\bibitem{Austin13II}
T.~Austin.
\newblock Partial difference equations over compact {A}belian groups, {I}{I}:
  step-polynomial solutions.
\newblock 2013.
\newblock arXiv:1309.3577.

\bibitem{CohenTal15}
G.~Cohen and A.~Tal.
\newblock Two structural results for low degree polynomials and applications.
\newblock In {\em Approximation, randomization, and combinatorial optimization.
  {A}lgorithms and techniques}, volume~40 of {\em LIPIcs. Leibniz Int. Proc.
  Inform.}, pages 680--709. Schloss Dagstuhl. Leibniz-Zent. Inform., Wadern,
  2015.

\bibitem{FurstenbergKatznelson78}
H.~Furstenberg and Y.~Katznelson.
\newblock An ergodic {S}zemer\'edi theorem for commuting transformations.
\newblock {\em J. Analyse Math.}, 34:275--291 (1979), 1978.

\bibitem{Gowers98S}
W.~T. Gowers.
\newblock Fourier analysis and {S}zemer\'{e}di's theorem.
\newblock In {\em Proceedings of the {I}nternational {C}ongress of
  {M}athematicians, {V}ol. {I} ({B}erlin, 1998)}, number Extra Vol. I, pages
  617--629, 1998.

\bibitem{Gowers98}
W.~T. Gowers.
\newblock A new proof of {S}zemer\'edi's theorem for arithmetic progressions of
  length four.
\newblock {\em Geom. Funct. Anal.}, 8(3):529--551, 1998.

\bibitem{Gowers01S}
W.~T. Gowers.
\newblock Arithmetic progressions in sparse sets.
\newblock In {\em Current developments in mathematics, 2000}, pages 149--196.
  Int. Press, Somerville, MA, 2001.

\bibitem{Gowers01}
W.~T. Gowers.
\newblock A new proof of {S}zemer\'edi's theorem.
\newblock {\em Geom. Funct. Anal.}, 11(3):465--588, 2001.

\bibitem{Gowers07}
W.~T. Gowers.
\newblock Hypergraph regularity and the multidimensional {S}zemer\'edi theorem.
\newblock {\em Ann. of Math. (2)}, 166(3):897--946, 2007.

\bibitem{GowersMilicevic20}
W.~T. Gowers and L.~Mili\'cevi\'c.
\newblock An inverse theorem for {F}reiman multi-homomorphisms.
\newblock {\em preprint}, 2020.
\newblock arXiv:2002.11667.

\bibitem{Graham97}
R.~L. Graham.
\newblock Euclidean {R}amsey theory.
\newblock In {\em Handbook of discrete and computational geometry}, CRC Press
  Ser. Discrete Math. Appl., pages 153--166. CRC, Boca Raton, FL, 1997.

\bibitem{Green05note}
B.~Green.
\newblock An argument of {S}hkredov in the finite field setting.
\newblock {\em preprint}, 2005.
\newblock http://people.maths.ox.ac.uk/greenbj/papers/corners.pdf.

\bibitem{Green05}
B.~Green.
\newblock Finite field models in additive combinatorics.
\newblock In {\em Surveys in combinatorics 2005}, volume 327 of {\em London
  Math. Soc. Lecture Note Ser.}, pages 1--27. Cambridge Univ. Press, Cambridge,
  2005.

\bibitem{GreenTao10}
B.~Green and T.~Tao.
\newblock Linear equations in primes.
\newblock {\em Ann. of Math. (2)}, 171(3):1753--1850, 2010.

\bibitem{Manners2018}
F.~Manners.
\newblock {Quantitative bounds in the inverse theorem for the {G}owers
  $U^{s+1}$-norms over cyclic groups}.
\newblock {\em preprint}, 2018.
\newblock arXiv:1811.00718.

\bibitem{NagleRodlSchacht06}
B.~Nagle, V.~R\"odl, and M.~Schacht.
\newblock The counting lemma for regular {$k$}-uniform hypergraphs.
\newblock {\em Random Structures Algorithms}, 28(2):113--179, 2006.

\bibitem{Prendiville15}
S.~Prendiville.
\newblock Matrix progressions in multidimensional sets of integers.
\newblock {\em Mathematika}, 61(1):14--48, 2015.

\bibitem{RodlSkokan04}
V.~R\"odl and J.~Skokan.
\newblock Regularity lemma for {$k$}-uniform hypergraphs.
\newblock {\em Random Structures Algorithms}, 25(1):1--42, 2004.

\bibitem{Shkredov06I}
I.~D. Shkredov.
\newblock On a generalization of {S}zemer\'edi's theorem.
\newblock {\em Proc. London Math. Soc. (3)}, 93(3):723--760, 2006.

\bibitem{Shkredov06II}
I.~D. Shkredov.
\newblock On a problem of {G}owers.
\newblock {\em Izv. Ross. Akad. Nauk Ser. Mat.}, 70(2):179--221, 2006.

\bibitem{Shkredov09}
I.~D. Shkredov.
\newblock On an inverse theorem for $\mathbf{U}^3 (\square)$-norm.
\newblock In {\em Modern problems of mathematics and mechanics, Mathematics,
  Dynamical systems}, volume 4, 2, pages 55--127, 2009.

\bibitem{Tao06}
T.~Tao.
\newblock A variant of the hypergraph removal lemma.
\newblock {\em J. Combin. Theory Ser. A}, 113(7):1257--1280, 2006.

\bibitem{Tao12}
T.~Tao.
\newblock {\em Higher order {F}ourier analysis}, volume 142 of {\em Graduate
  Studies in Mathematics}.
\newblock American Mathematical Society, Providence, RI, 2012.

\bibitem{Zhao2014}
Y.~Zhao.
\newblock An arithmetic transference proof of a relative {S}zemer\'{e}di
  theorem.
\newblock {\em Math. Proc. Cambridge Philos. Soc.}, 156(2):255--261, 2014.

\end{thebibliography}

\end{document}